\newtheorem{thm}{Theorem}[chapter]
\newtheorem{cor}[thm]{Corollary}
\newtheorem{lem}[thm]{Lemma}
\newtheorem{exam}[thm]{Example}
\newtheorem{prop}[thm]{Proposition}
\theoremstyle{definition}
\theoremstyle{remark}
\newtheorem{rem}[thm]{Remark}
\numberwithin{equation}{chapter}
\newcommand{\si}{\sigma}
\newcommand{\de}{\delta}
\newcommand{\om}{\omega}
\newcommand{\ga}{\gamma}
\newcommand{\mbb}{\mathbb}
\newcommand{\ra}{\rightarrow}
\newcommand{\pa}{\partial}
\newcommand{\ov}{\overline}
\newcommand{\sm}{\setminus}
\newcommand{\ep}{\epsilon}
\newcommand{\no}{\noindent}
\newcommand{\Om}{\Omega}
\newcommand{\ti}{\tilde}
\newcommand{\la}{\lambda}
\numberwithin{equation}{chapter}
\begin{document}
\thispagestyle{empty}
\begin{center}
{\LARGE {\bf Dynamical properties of families of holomorphic mappings }}\\
\end{center}
\vspace{0.8in}
\begin{center}
{\large  A Dissertation \\
submitted in partial fulfilment \\
\vspace{0.04in}
of the requirements for the award of the  }\\
\vspace{0.04in}
{\large{ degree of}} \\
\vspace{0.06in}
{\calligra \Large Doctor of Philosophy} \\
\vspace{0.75in}
{\em by}\\
\vspace{0.05in}
{\calligra\large Ratna Pal}\\
\vspace{1.5in}
\includegraphics[scale=0.3]{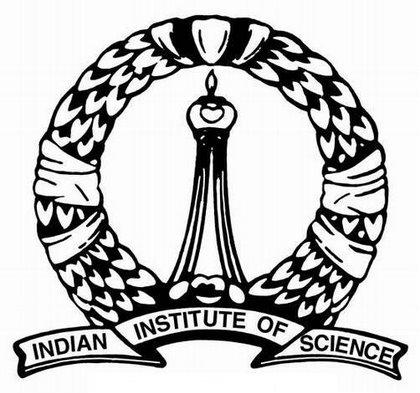} \\
\vspace{0.2in}
{\large{Department of Mathematics}}\\
{\large{Indian Institute of Science}} \\
{\large{Bangalore \,-\, 560012}}\\
{July 2015}
\end{center}
\newpage

\thispagestyle{empty}
\cleardoublepage
\pagenumbering{roman}

\addcontentsline{toc}{chapter}{Declaration}
\chapter*{Declaration}

\vspace*{.80in}

I hereby declare that the work reported in this thesis is entirely
original and has been carried out by me under the supervision of
Dr. Kaushal Verma at the Department of Mathematics, Indian
Institute of Science, Bangalore. I further declare that this work
has not been the basis for the award of any degree, diploma,
fellowship, associateship or similar title of any University or
Institution.
\vspace*{.5in}

\begin{flushright}
Ratna Pal\\
S. R. No. 6910-110-101-07964
\end{flushright}
Indian Institute of Science,\\
Bangalore,\\
July, 2015.
\smallskip

\begin{flushright}
 Dr. Kaushal Verma \\
 (Research advisor)
\end{flushright}

\newpage

\thispagestyle{empty}
\cleardoublepage
\chapter*{}
\begin{center}
{\bf{
To my Father}}
\end{center}

\thispagestyle{empty}
\cleardoublepage
\chapter*{Acknowledgement}

Completing a Ph.D. is a marathon task and writing a thesis is the end of it. I take this opportunity to express my deeply-felt thanks to all of those who helped me moving.

\medskip

First of all, I would like to give my sincere thanks to my advisor Prof. Kaushal Verma, not only for his warm encouragement and thoughtful guidance, but also for listening to me with immense patience whenever I had any ideas. I could not have imagined a better advisor for my Ph.D. 

\medskip

My heart-felt thanks go to my student colleagues for providing me a stimulating and fun-filled environment. My special thanks go to Hari for his distinguished helping nature and encouragement. Sayani deserves a special mention, numerous discussions with her really helped me to understand the subject better. Soma, Atreyee, Jyoti, Lakshmi, Soumitra, Amar, Monojit, Somnath, Sanjay, Sourav, Samarpita, Papri -- you people are amazing, you made my work place as cozy as home. 

\medskip

Words cannot express how deeply grateful I am to my father whose firm and kind-hearted personality always taught me to be steadfast, and my dear mother who was my friend, teacher and dedicated a lot of her time and energy to shape my life -- I wish you were around. A huge thanks to my elder sister whose motherly love and care always gave me a shelter.

\thispagestyle{empty}
\cleardoublepage
\addcontentsline{toc}{chapter}{Abstract}
\chapter*{Abstract}

In the first part of the thesis,  we study some dynamical properties of skew products of H\'{e}non maps of $\mbb C^2$ that are fibered over a compact metric space $M$. The problem reduces to understanding the dynamical behavior of the composition of a pseudo-random sequence of H\'{e}non mappings. In analogy with the dynamics of the iterates of a single H\'{e}non map, it is possible to construct fibered Green functions that satisfy suitable invariance properties and the corresponding stable and unstable currents. Further, it is shown that the successive pullbacks of a suitable current by the skew H\'{e}non maps converge to a multiple of the fibered stable current.

\medskip

Second part of the thesis generalizes most of the above-mentioned results for a completely random sequence of H\'{e}non maps. In addition, for this random system of H\'{e}non maps, we introduce the notion of average Green functions and average Green currents  which carry many typical features of the classical Green functions and Green currents. 

\medskip

Third part consists of some results about the global dynamics of a special class of skew maps. To prove these results, we use the knowledge of dynamical behavior of pseudo-random sequence of H\'{e}non maps widely. We show that the global skew map is strongly mixing for a class of invariant measures and also provide a lower bound  on the topological entropy of the skew product.   

\medskip

We conclude the thesis by studying another class of maps which are skew products of holomorphic endomorphisms of $\mbb P^k$ fibered over a compact base. We define the fibered Fatou components and show that they are pseudoconvex and Kobayashi hyperbolic.

\thispagestyle{empty}

\cleardoublepage \addcontentsline{toc}{chapter}{Contents}
\tableofcontents \cleardoublepage

\pagestyle{fancy}
\renewcommand{\chaptermark}[1]{\markboth{\textsl{\thechapter.\ #1}}{}}
\renewcommand{\sectionmark}[1]{\markright{\textsl{\thesection.\ #1}}}
\fancyhf{}
\renewcommand{\headrulewidth}{.05pt}
\fancyhead[LE]{\thepage} \fancyhead[RO]{\thepage}
\fancyhead[RE]{\leftmark} \fancyhead[LO]{\rightmark}

\fancypagestyle{plain}{
\fancyhead{} 
\renewcommand{\headrulewidth}{0pt} 
 }

%
\pagenumbering{arabic} \setcounter{chapter}{0}
\chapter{Introduction}

A holomorphic automorphism $F:(x,y)\mapsto (f(x,y),g(x,y))$ of $\mathbb{C}^2$ is called a polynomial automorphism, if both $f$ and $g$ are polynomials in the variables $x, \ y$. In \cite{FM} Friedland and Milnor showed that any polynomial automorphism of $\mathbb{C}^2$, upto conjugation by a polynomial automorphism, is one of the followings:
\begin{itemize}
\item
an affine map
\[
(x,y)\mapsto (ax+by+c,a'x+b'y+c') 
\]
with $ab'-a'b\neq 0$;
\item
an elementary map
\[
(x,y)\mapsto (ax+b,sy+p(x))
\]
with $as\neq 0$ and $p(x)$ a polynomial in the single variable $x$;
\item
a  composition of generalized H\'{e}non maps of the form
\[
(x,y)\mapsto(y,p(y)-\delta x)
\]
with $\delta\neq 0$ and $p(y)$ a polynomial in the single variable $y$ of degree greater than or equal to $2$.
\end{itemize}

In contrast to the first two classes of automorphisms which exhibit simple dynamical behavior, the class of generalized H\'{e}non maps possesses very rich dynamics. So to understand dynamics of polynomial automorphisms of $\mathbb{C}^2$, we can restrict ourselves to the class of generalized H\'{e}non maps. In a series of papers (\cite{BS1}, \cite{BS2}, \cite{BS3}), Bedford and Smillie investigated various dynamical properties of these maps.

\medskip

In the first part of this thesis, we study some dynamical properties of skew products of H\'{e}non maps of $\mathbb{C}^2$ that are fibered over a compact metric space $M$. The problem reduces to understanding the dynamical behavior of the composition of a pseudo-random sequence of H\'{e}non maps. In the subsequent chapters, we shall see that it is possible to get many results in the pseudo-random case in analogy with the dynamics of the iterates of a single H\'{e}non map.  

\medskip

\no  We will consider families of the form $H : M \times
\mbb C^2 \ra M \times \mbb C^2$, defined by
\begin{equation}
H(\la, x, y) = (\sigma(\la), H_{\la}(x, y)) \label{1}
\end{equation}
where $M$ is an appropriate parameter space, $\sigma$ is a self map of $M$ and for each $\la \in M$, the map
\begin{equation}
H_{\la}(x, y) = H_{\la}^{(m)} \circ H_{\la}^{(m-1)} \circ \cdots \circ H_{\la}^{(1)}(x, y) \label{1.5}
\end{equation}
where for every $1 \le j \le m$,
\[
H_{\la}^{(j)}(x, y) = (y, p_{j, \la}(y) - a_{j}(\la) x)
\]
is a generalized H\'{e}non map with $p_{j, \la}(y)$ a monic polynomial of degree $d_j \ge 2$ whose coefficients and $a_{j}(\la)$ are functions on $M$. The degree of $H_{\la}$ is $d =
d_1d_2 \cdots d_m$ which does not vary with $\la$. Here $M$ is a compact metric space
and $\sigma$, $a_j$ and the coefficients of $p_{j, \la}$ are continuous functions on $M$.
Further, $a_j$ is assumed to be a non-vanishing function on $M$. We are interested
in studying the ergodic properties of such a family of mappings. Part of the reason for this choice stems from the
Forn{\ae}ss-Wu classification (\cite{FW}) of polynomial automorphisms of $\mbb C^3$ of degree at most $2$, according to which any such map is affinely conjugate to one of the followings:

\begin{enumerate}
\item[(a)] an affine automorphism;
\item[(b)] an elementary polynomial automorphism of the form
\[
E(x, y, z) = (P(y, z) + ax, Q(z) + by, cz + d)
\]
where $P, Q$ are polynomials with $\max \{\deg (P), \deg (Q) \} = 2$ and $abc \not= 0$; or
\item[(c)] to one of the followings:
\begin{itemize}
\item $H_1(x, y, z) = (P(x, z) + ay, Q(z) + x, cz + d)$
\item $H_2(x, y, z) = (P(y, z) + ax, Q(y) + bz, y)$
\item $H_3(x, y, z) = (P(x, z) + ay, Q(x) + z, x)$
\item $H_4(x, y, z) = (P(x, y) + az, Q(y) + x, y)$
\item $H_5(x, y, z) = (P(x, y) + az, Q(x) + by, x)$
\end{itemize}
\no
where $P, Q$ are polynomials with $\max \{ \deg(P), \deg(Q) \} = 2$ and $abc \not= 0$.
\end{enumerate}

\no The six classes in (b) and (c) put together were studied in \cite{CF} and \cite{CG} where suitable Green functions and associated invariant measures were constructed for them. As
observed in \cite{FW}, several maps in (c) are in fact families of H\'{e}non maps for special values of the parameters $a, b, c$ and for judicious choices of the
polynomials $P, Q$. For instance, if $Q(z) = 0$ and $P(x, z) = x^2 + \cdots$, then $H_1(x, y, z) = (P(x, z) + ay, x, z)$ which is conjugate to
\[
(x, y, z) \mapsto (y, P(y, z) + ax, cz + d) = (y, y^2 + \cdots + ax, cz + d)
\]
by the inversion $\tau_1(x, y, z) = (y, x, z)$. Here $\si(z) = cz + d$. Similarly, if $a = 1, P(y, z) = 0$ and $Q$ is a quadratic polynomial, then $H_2(x, y, z) = (x, Q(y) + bz, y)$
which is conjugate to
\[
(x, y, z) \mapsto (x, z, Q(z) + by) = (x, z, z^2 + \cdots + by)
\]
by the inversion $\tau_3(x, y, z) = (x, z, y)$. Here $\si(x) = x$ and finally, if $b = 1, Q(x) = 0$ and $P(x, y) = x^2 + \cdots$, then $H_5(x, y, z) = (P(x, y) + az, y, x)$ which is
conjugate to
\[
(x, y, z) \mapsto (z, y, P(z, y) + ax) = (z, y, z^2 + \cdots + ax)
\]
by the inversion $\tau_5(x, y, z) = (z, y, x)$ where again $\si(y) = y$. All of these are examples of the kind described in (\ref{1})
 with $M = \mbb C$. In the first example, if
$c \not= 1$ then an affine change of coordinates involving only the $z$-variable can make $d = 0$ and if further $\vert c \vert \le 1$, then we may take a closed disc around the origin
in $\mbb C$ which
will be preserved by $\sigma(z) = cz$. This provides an example of a H\'{e}non family that is fibered over a compact base $M$. Further, since the parameter mapping $\sigma$ in the last
two examples is just the identity, we may restrict it to a closed ball to obtain more examples of the case when $M$ is compact.

\medskip

The maps considered in (\ref{1}) are in general $q$-regular, for some $q \ge 1$, in the sense of Guedj--Sibony (\cite{GS}) as the following example shows. Let $\mathcal H : \mbb C^3 \ra
\mbb C^3$ be given by
\[
\mathcal H(\la, x, y) = (\la, y, y^2 - ax), a \not= 0
\]
which in homogeneous coordinates becomes
\[
\mathcal H([\la : x : y : t]) = [\la t : yt : y^2 - axt : t^2].
\]
The indeterminacy set of this map is $I^+ = [\la : x : 0 : 0]$ while that for $\mathcal H^{-1}$ is $I^{-1} = [\la : 0 : y : 0]$. Thus $I^+ \cap I^- = [1 : 0 : 0 : 0]$ and it can be
checked that $X^+ = \ov{\mathcal H \big( (t = 0) \sm I^+ \big)} = [0: 0: 1: 0]$ which is disjoint from $I^+$. Also, $X^- = \ov {\mathcal H^- \big( (t = 0) \sm I^- \big)} =
[0:1:0:0]$ which is disjoint from $I^-$. All these observations imply that $\mathcal H$ is $1$-regular in the sense of \cite{GS}. Further, $\deg(\mathcal H)
= \deg(\mathcal H^{-1}) = 2$. This global view point does have several advantages as the results in \cite{GS}, \cite{G} show. However, thinking of (\ref{1}) as a family of maps was
seconded by the hope that the methods of Bedford--Smillie (\cite{BS1}, \cite{BS2} and
\cite{BS3}) and Forn{\ae}ss--Sibony \cite{FS} that were developed to handle the case of a single generalized H\'{e}non map would be amenable to this situation -- in fact, they are to a
large extent. Finally, in view of the systematic treatment of families of rational maps of the sphere by Jonsson (see \cite{JM}, \cite{J}), considering families of 
H\'{e}non maps appeared to be a natural next choice. Several pertinent remarks about the family $H$ in (1.1) with $\sigma(\lambda)=\lambda$ can be found in \cite{DS}.

\medskip

For $n \ge 0$, let
\[
H_{\la}^{\pm n} = H_{\si^{n-1}(\la)}^{\pm 1} \circ \cdots \circ H_{\si(\la)}^{\pm 1} \circ H_{\la}^{\pm 1}.
\]
Note that $H_{\la}^{+n}$ is the second coordinate of the $n$-fold iterate of $H(\la, x, y)$. Furthermore,
\[
(H_{\la}^{+n})^{-1} = H_{\la}^{-1} \circ H_{\si(\la)}^{-1} \circ \cdots \circ H_{\si^{n-1}(\la)}^{-1} \not= H_{\la}^{-n}
\]
and
\[
(H_{\la}^{-n})^{-1} = H_{\la} \circ H_{\si(\la)} \circ \cdots \circ H_{\si^{n-1}(\la)} \not= H_{\la}^{+n}
\]
for $n \ge 2$. The presence of $\si$ creates an asymmetry which is absent in the case of a single H\'{e}non map and which requires the consideration of these maps as will be seen
later. In what follows, no conditions on $\si$ except continuity are assumed unless stated otherwise.

\medskip

The first thing to do is to construct invariant measures for the family $H(\la, x, y)$ that respect the action of
$\sigma$. The essential step toward this is to construct a uniform filtration $V_R$, $V_R^{\pm}$ for the maps $H_\lambda$ where $R>0$ is sufficiently large.

\medskip

For each $\lambda \in M$, the sets $I_\lambda^{\pm}$ of escaping points and the sets $K_\lambda^{\pm}$ of non-escaping points under suitable random iteration determined by $\sigma$ on $M$ are defined as follows:
\[
I_\lambda^{\pm}=\{z\in \mathbb{C}^2: \Vert H_{\la}^{\pm n}(x, y) \Vert \rightarrow \infty \; \text{as} \;
n\rightarrow \infty \},
\]
\[
K_\lambda^{\pm}=\{z\in \mathbb{C}^2: \; \text{the sequence}\;  \{ H_{\la}^{\pm n} (x, y)\}_n \; \text{is bounded}\}.
\]
Clearly, $H_\lambda^{\pm 1}(K_\lambda^{\pm})= K_{\sigma(\lambda)}^{\pm}$ and
$H_\lambda^{\pm 1}(I_\lambda^{\pm})= I_{\sigma(\lambda)}^{\pm}$. Define $K_{\la} = K_{\la}^+ \cap
K_{\la}^-, J_{\la}^{\pm} = \pa K_{\la}^{\pm}$ and $J_{\la} = J_{\la}^+ \cap J_{\la}^-$. For each $\la \in M$ and $n \ge 1$, let
\[
G_{n, \la}^{\pm}(x, y) = \frac{1}{d^n} \log^+ \Vert H_{\la}^{\pm n}(x, y) \Vert
\]
where $\log^+ t=\max \{\log t,0\}$.

\begin{prop}\label{pr1}
The sequence $G_{n, \la}^{\pm}$ converges uniformly on compact subsets of $\mbb C^2$ to the continuous function $G_{\la}^{\pm}$ as $n \ra \infty$ that satisfies
\[
d G_{\la}^{\pm} = G_{\si(\la)}^{\pm} \circ H_{\la}^{\pm 1}
\]
on $\mbb C^2$. Moreover the convergence is independent of $\lambda$. The functions $G_{\la}^{\pm}$ are positive pluriharmonic on $\mbb C^2 \setminus K_{\la}^{\pm}$, plurisubharmonic on $\mbb C^2$ and vanish precisely on $K_{\la}^{\pm}$.
The correspondence $\lambda \mapsto G_\lambda^{\pm}$ is continuous. In case $\si$ is surjective, $G_{\la}^+$ is locally uniformly H\"{o}lder continuous, i.e., for each compact $S
\subset
\mbb C^2$, there exist constants $\tau, C > 0$ such that
\[
\big\vert G_{\la}^+(x, y) - G_{\la}^+(x', y') \big\vert \le C \Vert (x, y) - (x', y') \Vert^{\tau}
\]
for all $(x, y), (x', y') \in S$. The constants $\tau, C$ depend on $S$ and the map $H$ only.
\end{prop}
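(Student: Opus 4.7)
The plan is to follow the classical Bedford--Smillie and Forn{\ae}ss--Sibony construction of Green functions for a single H\'{e}non map, pushing the estimates through the family $H_\la$ by exploiting the fact that the filtration $V_R, V_R^\pm$ built just before the proposition is uniform in $\la \in M$. The basic input is a one-step asymptotic on $V_R^+$ of the form
\[
\big\vert \log \Vert H_\la(x,y)\Vert - d\log\Vert (x,y)\Vert\big\vert \le C
\]
for all $\la \in M$ and $(x,y) \in V_R^+$, together with forward invariance of $V_R^+$ under every $H_\la$. Applying this to $H_{\si^n(\la)}$ at the point $H_\la^{+n}(x,y) \in V_R^+$ and dividing by $d^{n+1}$ yields
\[
\big\vert G_{n+1,\la}^+(x,y) - G_{n,\la}^+(x,y) \big\vert \le C d^{-(n+1)},
\]
so $(G_{n,\la}^+)$ is uniformly Cauchy on $V_R^+$, with bounds independent of $\la$. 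The negative case proceeds identically using $V_R^-$.

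To extend the convergence to all of $\mbb C^2$, fix a compact $S \subset \mbb C^2$. The uniform filtration provides $N = N(S)$ such that, for every $\la \in M$ and every $(x,y) \in S$, either $H_\la^{+N}(x,y)$ lands in $V_R^+$ or the orbit remains in a fixed bounded set (forcing $(x,y) \in K_\la^+$ and $G_{n,\la}^+ \to 0$ uniformly). In either case one obtains uniform Cauchyness on $S$, so $G_\la^+$ is defined on $\mbb C^2$, vanishes exactly on $K_\la^+$, and inherits the invariance $d G_\la^+ = G_{\si(\la)}^+ \circ H_\la$ by passing to the limit in $d G_{n+1,\la}^+ = G_{n,\si(\la)}^+ \circ H_\la$.

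Plurisubharmonicity of $G_\la^+$ follows from uniform convergence of the psh functions $G_{n,\la}^+$. On $V_R^+$, where $\Vert H_\la^{+n}\Vert$ is comparable to the modulus of the dominant coordinate of $H_\la^{+n}$ for $n$ large, each $G_{n,\la}^+$ is in fact pluriharmonic, and the functional equation $G_\la^+ = d^{-n} G_{\si^n(\la)}^+ \circ H_\la^{+n}$ spreads pluriharmonicity to all of $\mbb C^2 \setminus K_\la^+$. Joint continuity of $(\la, x, y) \mapsto G_{n,\la}^+(x,y)$, which holds because $H_\la^{\pm 1}$ depends continuously on $\la$, combined with the uniform-in-$\la$ convergence proved above, yields continuity of $\la \mapsto G_\la^+$ in the topology of uniform convergence on compact sets.

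The main obstacle is the uniform H\"{o}lder estimate when $\si$ is surjective. The strategy is to split
\[
G_\la^+(z) - G_\la^+(w) = \big( G_\la^+ - G_{N,\la}^+\big)(z) - \big( G_\la^+ - G_{N,\la}^+\big)(w) + \big(G_{N,\la}^+(z) - G_{N,\la}^+(w)\big),
\]
to bound the first two terms by $O(d^{-N})$ using the Cauchy tail, and to bound the third term by $d^{-N}\vert \log\Vert H_\la^{+N}(z)\Vert - \log\Vert H_\la^{+N}(w)\Vert\vert$ via the Lipschitz estimate for $H_\la^{+N}$ on $S$ together with $\vert \log u - \log v\vert \le \vert u-v\vert / \min(u,v)$; the free parameter $N$ is then chosen as a function of $\Vert z - w\Vert$ to balance both contributions and extract an exponent $\tau > 0$. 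The delicate point is that the Lipschitz and distortion constants for $H_\la^{+N}$ depend on the entire orbit $\{\si^k(\la)\}_{k=0}^{N-1}$, and one needs these constants bounded independently of $\la$; surjectivity of $\si$, combined with compactness of $M$, ensures that the supremum of the relevant coefficients along such orbits can be taken over all of $M$ without loss, turning a priori $\la$-dependent constants into uniform ones. Once this book-keeping is done, balancing $d^{-N}$ against the Lipschitz contribution produces $\tau, C > 0$ depending only on $S$ and the family $H$, completing the proof.
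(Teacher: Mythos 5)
Your one-step asymptotic and the resulting Cauchy estimate $\lvert G_{n+1,\la}^+ - G_{n,\la}^+\rvert \lesssim d^{-(n+1)}$ on $V_R^+$, the invariance identity, the extension of pluriharmonicity to $\mbb C^2\setminus K_\la^+$, and the continuity in $\la$ via the three-term triangle inequality all match the paper. However, there is a genuine gap in the step extending uniform convergence from $V_R^+$ to an arbitrary compact $S$. You claim there is an $N=N(S)$ so that for every $\la\in M$ and $(x,y)\in S$, either $H_\la^{+N}(x,y)\in V_R^+$ or $(x,y)\in K_\la^+$. This dichotomy is false: points of $S$ close to $K_\la^+$ but outside it have unbounded escape time, so no single $N$ separates the two cases. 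The paper handles this with a slicing-and-maximum-principle argument: for each fixed $x=a$, the function $G_{n,\la}^+-G_\la^+$ is harmonic on $\{(a,w):\lvert w\rvert<A\}\setminus K_\la^+$; it is controlled on the outer boundary $\{\lvert w\rvert=A\}$ by a Remark \ref{re1}-type escape estimate (valid there because that sphere is uniformly disjoint from all $K_\la^+$), and on the $K_\la^+$ part by a separate orbit-tracking argument showing $G_{n,\la}^+<\epsilon$ uniformly on $K_\la^+\cap V_R$ for $n\ge n_0$. Without something like this, your claim that $G_{n,\la}^+\to 0$ uniformly on $K_\la^+$ is also unjustified.

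For the H\"older estimate you take a route genuinely different from the paper's. The paper differentiates the invariance identity, bounds the gradient of $(G_{\la_0}^+)^\gamma$ on level strips $\{\de<G_{\la_0}^+\le d\de\}$, and then uses surjectivity of $\si$ in an essential structural way: for points far from $K_{\la_0}^+$ one must iterate \emph{backwards}, which requires producing $\mu_0$ with $\si^N(\mu_0)=\la_0$ and invoking $G_{\mu_0}^+\circ (H_{\mu_0}^{+N})^{-1}=d^{-N}G_{\la_0}^+$; surjectivity guarantees the existence of $\mu_0$. Your tail-plus-Lipschitz decomposition is closer in spirit to the paper's proof of Theorem \ref{R thm1}, and it is a legitimate and arguably simpler approach, provided you note that the exponential tail bound $\lvert G_\la^+ - G_{N,\la}^+\rvert=O(d^{-N})$ holds on all of a compact $S$ because $S\subset V_R\cup V_R^+$ for $R$ large and $H_\la(V_R\cup V_R^+)\subset V_R\cup V_R^+$, so the whole orbit stays where $v_\la$ is uniformly bounded. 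But then your discussion of surjectivity is both incorrect and superfluous: uniformity of the Lipschitz and coefficient constants over $\la$ already follows from compactness of $M$ and continuity of the data, not from surjectivity of $\si$, and the tail method needs no backward iteration at all. So with your approach, surjectivity can be dropped (yielding a stronger statement than the proposition), and the reason you offer for needing it does not hold up.
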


\no As a result, $\mu_{\la}^{\pm} = dd^c G_{\la}^{\pm}$ are well-defined positive closed $(1, 1)$-currents on $\mbb C^2$ of mass $2\pi$ and hence $\mu_{\la} = \mu_{\la}^+ \wedge \mu_{\la}^-$
defines a measure of finite mass on $\mbb C^2$ whose support is contained in $V_R$ for every $\la \in M$. Moreover, the correspondence $\lambda\mapsto \mu_\lambda$ is continuous. That these
objects are well behaved under the pull-back and push-forward operations by $H_{\la}$ and at the same time respect the action of $\si$ is recorded in the following:

\begin{prop}\label{pr2}
With $\mu_{\la}^{\pm}, \mu_{\la}$ as above, we have
\[
{(H_{\la}^{\pm 1})}^{\ast} \mu_{\si(\la)}^{\pm} = d \mu_{\la}^{\pm} \text{ and }  (H_{\la}^{\pm 1})_{\ast} \mu_{\la}^{\pm} = d^{-1} \mu_{\si(\la)}^{\mp}.
\]
The support of $\mu^{\pm}_{\la}$ equals $J_{\la}^{\pm}$ and the correspondence $\la \mapsto J_{\la}^{\pm}$ is lower semi-continuous. Furthermore, for each $\lambda\in
M$, the pluricomplex Green function of $K_\lambda$ is $\max\{G_\lambda^+, G_\lambda^-\}$, $\mu_\lambda$ is the complex
equilibrium measure of $K_\lambda$ and ${\rm supp}(\mu_\lambda)\subseteq J_\lambda$.

\medskip

In particular, if $\si$ is the identity on $M$, then $(H_{\la}^{\pm 1})^{\ast} \mu_{\la} = \mu_{\la}$.
\end{prop}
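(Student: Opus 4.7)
The plan is to derive every assertion from the transformation rule $d G_{\la}^{\pm} = G_{\si(\la)}^{\pm} \circ H_{\la}^{\pm 1}$ established in Proposition \ref{pr1}. Applying $dd^{c}$ to both sides and using that $dd^{c}$ commutes with holomorphic pullback yields $(H_{\la}^{\pm 1})^{\ast} \mu_{\si(\la)}^{\pm} = d \mu_{\la}^{\pm}$ at once; the pushforward identities then follow because each $H_{\la}^{\pm 1}$ is a biholomorphism of $\mbb C^{2}$, so $(H_{\la}^{\pm 1})_{\ast}$ is the formal inverse of pullback on currents. For the support identity $\mathrm{supp}(\mu_{\la}^{\pm}) = J_{\la}^{\pm}$, the inclusion $\subseteq$ is immediate from the pluriharmonicity of $G_{\la}^{\pm}$ on $\mbb C^{2} \sm K_{\la}^{\pm}$ together with its vanishing on $K_{\la}^{\pm}$. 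For the reverse inclusion, if some open $U$ met $J_{\la}^{+}$ with $\mu_{\la}^{+}|_{U} = 0$, then $G_{\la}^{+}$ would be pluriharmonic and non-negative on $U$, attaining the value $0$ at an interior point of $U$ lying in $K_{\la}^{+}$; the minimum principle for pluriharmonic functions would then force $G_{\la}^{+} \equiv 0$ on a neighborhood of that point, contradicting $G_{\la}^{+} > 0$ off $K_{\la}^{+}$.

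Lower semicontinuity of $\la \mapsto J_{\la}^{\pm}$ follows from the continuity of $\la \mapsto \mu_{\la}^{\pm}$, which is itself a consequence of the locally uniform convergence $G_{n,\la}^{\pm} \to G_{\la}^{\pm}$ in Proposition \ref{pr1}. If $U$ is open and meets $J_{\la_{0}}^{+} = \mathrm{supp}(\mu_{\la_{0}}^{+})$ then $\mu_{\la_{0}}^{+}(U) > 0$, and the Portmanteau inequality $\liminf_{\la \to \la_{0}} \mu_{\la}^{+}(U) \ge \mu_{\la_{0}}^{+}(U)$ forces $U$ to meet $J_{\la}^{+}$ for all $\la$ near $\la_{0}$. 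To identify $G_{\la} := \max(G_{\la}^{+}, G_{\la}^{-})$ with the pluricomplex Green function $L_{K_{\la}}^{\ast}$ of $K_{\la}$ and $\mu_{\la}$ with the equilibrium measure, I would adapt the single H\'enon map argument of Bedford--Smillie: $G_{\la}$ lies in the Lelong class (via the logarithmic growth $G_{\la}^{\pm}(z) = \log^{+}\|z\| + O(1)$ built into Proposition \ref{pr1}) and vanishes precisely on $K_{\la}$, so $G_{\la} \le L_{K_{\la}}^{\ast}$; conversely, verifying that $(dd^{c} G_{\la})^{2}$ vanishes off $K_{\la}$ together with a pluripotential comparison principle pins $G_{\la}$ down as $L_{K_{\la}}^{\ast}$. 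Expanding $(dd^{c} G_{\la})^{2}$ and using pluriharmonicity of $G_{\la}^{\pm}$ on the complement of $K_{\la}^{\pm}$ then yields $(dd^{c} G_{\la})^{2} = 2\, \mu_{\la}^{+} \wedge \mu_{\la}^{-}$ up to a fixed constant, identifying $\mu_{\la}$ with the equilibrium measure. The inclusion $\mathrm{supp}(\mu_{\la}) \subseteq J_{\la}$ then follows from the support identities for $\mu_{\la}^{\pm}$ together with the general principle that the wedge of two positive closed currents is supported in the intersection of their supports. Finally, when $\si = \mathrm{id}$, the pullback formulas specialize to $H_{\la}^{\ast} \mu_{\la}^{+} = d \mu_{\la}^{+}$ and $H_{\la}^{\ast} \mu_{\la}^{-} = d^{-1} \mu_{\la}^{-}$, whose wedge gives $H_{\la}^{\ast} \mu_{\la} = \mu_{\la}$, and the same computation for $H_{\la}^{-1}$ handles the other sign.

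The main technical obstacle I foresee is the identification $(dd^{c} G_{\la})^{2} = 2\, \mu_{\la}^{+} \wedge \mu_{\la}^{-}$, since a priori the Monge--Amp\`ere measure of a maximum can carry extra mass along the coincidence set $\{G_{\la}^{+} = G_{\la}^{-}\}$, which contains all of $K_{\la}$. I would handle this by exploiting that $G_{\la}^{+}$ is pluriharmonic wherever it is positive, and symmetrically for $G_{\la}^{-}$, combined with the continuity of both $G_{\la}^{\pm}$ (from Proposition \ref{pr1}) to legitimize the Bedford--Taylor wedge product and reduce the computation to the already-known self-wedge identities; this forces the diagonal contribution to collapse to precisely the mixed wedge.
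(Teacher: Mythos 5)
Your proposal follows essentially the same route as the paper: differentiate the invariance relation $dG_\la^\pm = G_{\si(\la)}^\pm\circ H_\la^{\pm 1}$ to get the pullback formula, use biholomorphy for the pushforward, a minimum-principle contradiction for the support identity, continuity of $\la\mapsto\mu_\la^+$ (i.e.\ the Portmanteau lower bound on open sets) for lower semicontinuity, and the Bedford--Smillie/Bedford--Taylor machinery for the pluricomplex Green function and equilibrium measure. One concrete slip: you assert $(dd^c\max\{G_\la^+,G_\la^-\})^2 = 2\,\mu_\la^+\wedge\mu_\la^-$ ``up to a fixed constant.'' The correct statement, and the one the paper needs, is the exact identity $(dd^c\max\{G_\la^+,G_\la^-\})^2 = \mu_\la^+\wedge\mu_\la^-$ with constant $1$; a hedge leaving the constant undetermined would only identify $\mu_\la$ with the equilibrium measure up to scaling, which is weaker than claimed. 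The paper gets the exact constant by the standard $\ep$-regularization: setting $G_{\la,\ep}^\pm=\max\{G_\la^\pm,\ep\}$ and $u_{\la,\ep}=\max\{G_{\la,\ep}^+,G_{\la,\ep}^-\}$, one has the Bedford--Taylor identity $(dd^c u_{\la,\ep})^2 = dd^c G_{\la,\ep}^+\wedge dd^c G_{\la,\ep}^-$, which holds because near any point at least one of $G_{\la,\ep}^+,G_{\la,\ep}^-$ is pluriharmonic (so the ``diagonal'' contribution you worry about vanishes identically, not just up to a constant), and then both sides converge monotonically as $\ep\downarrow 0$. You gesture at this at the end but leave the constant unresolved; this is the one place your sketch needs to be tightened to actually close the proof.
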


\no Let $T$ be a positive closed $(1, 1)$-current in a domain $\Om \subset \mbb C^2$ and let $\psi \in C^{\infty}_0(\Om)$ with $\psi \ge 0$ be such that $\text{supp}(\psi) \cap
\text{supp}(dT) = \phi$. Theorem 1.6 in \cite{BS3} shows that for a single H\'{e}non map $H$ of degree $d$, the sequence $d^{-n} H^{n \ast}(\psi T)$ always converges to $c \mu^+$
where $c = \frac{1}{4\pi^2}\int \psi T \wedge \mu^- > 0$. In the same vein, for each $\la \in M$, let $S_{\la}(\psi, T)$ be the set of all possible limit points of the sequence
$d^{-n}\big( H_{\la}^{+n}\big)^{\ast}(\psi T)$.

\begin{thm}\label{thm1}
Let $T$ and  $\psi$ be as above. Then for all $\lambda\in M$, $S_{\la}(\psi, T)$ is nonempty and each $\ga_{\la} \in S_{\la}(\psi, T)$ is a positive multiple of  $\mu_{\la}^+$. Moreover, 
\begin{itemize}
\item[$(i)$]
$\lim_{n\ra \infty} (d^{-n})\big( H_{\la}^{+n}\big)^{\ast}(T\wedge d\psi)=\lim_{n\ra \infty} (d^{-n})\big( H_{\la}^{+n}\big)^{\ast}(T\wedge d^c\psi)=0$;
\item[$(ii)$]
$\lim_{n\ra \infty} (d^{-n})\big( H_{\la}^{+n}\big)^{\ast}(T\wedge dd^c\psi)=0$.
\end{itemize}
\end{thm}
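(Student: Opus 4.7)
The plan is to adapt the proof of Theorem 1.6 of \cite{BS3} to the fibered setting. Two features from the previous propositions make this work: the filtration $V_R, V_R^{\pm}$ is independent of $\la$, and the convergence $d^{-n}\log^+\Vert H_\la^{+n}\Vert \to G_\la^+$ together with the H\"older estimate of Proposition \ref{pr1} is uniform in $\la$. Consequently, the essentially local arguments of Bedford-Smillie go through fiber by fiber, the main formal change being that the pull-back relation now reads $(H_\la^{+n})^{\ast} \mu_{\si^n(\la)}^+ = d^n \mu_\la^+$, obtained by iterating Proposition \ref{pr2}.

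I would first establish (i) and (ii), since they will be used to check that any weak limit of $d^{-n}(H_\la^{+n})^{\ast}(\psi T)$ is closed. Because $T$ is $d$-closed on a neighborhood of $\mathrm{supp}(\psi)$, one has $d(\psi T) = d\psi \wedge T$, $d^c(\psi T) = d^c\psi \wedge T$ and $dd^c(\psi T) = dd^c\psi \wedge T$, all compactly supported currents. For (ii), $T \wedge dd^c\psi$ is a compactly supported signed measure; each $H_\la^{+n}$ is a polynomial automorphism of $\mbb C^2$ with constant Jacobian, hence its pull-back has mass bounded independently of $n$, and the factor $d^{-n}$ sends this to zero. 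For (i), I would locally write $T = dd^c u$ for a psh potential $u$, pair against a test $1$-form, and integrate by parts, using the local uniform boundedness of $u$ on $\mathrm{supp}(\psi)$ and the uniform H\"older regularity of $G_\la^+$ to absorb the growth of $u \circ H_\la^{+n}$ after division by $d^n$.

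With (i) and (ii) in hand, set $\alpha_{n,\la} = d^{-n}(H_\la^{+n})^{\ast}(\psi T)$ and bound its mass. Pairing against $dd^c\chi$ for a plateau function $\chi$ that equals $\Vert (x,y)\Vert^2$ on a sufficiently large ball gives
\[
\int \alpha_{n,\la} \wedge dd^c\chi = \int \psi T \wedge d^{-n}(H_\la^{+n})_{\ast}(dd^c\chi),
\]
and the right-hand side is bounded uniformly in $n$ by using $(H_\la^{+n})^{\ast} \mu_{\si^n(\la)}^+ = d^n \mu_\la^+$ together with the uniform convergence of $G_{\si^n(\la)}^+ \circ H_\la^{+n}/d^n$ to $G_\la^+$ from Proposition \ref{pr1}. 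Banach-Alaoglu then yields $S_\la(\psi,T) \ne \emptyset$. Any limit $\ga_\la$ is positive, and it is closed by (i) applied term by term. Moreover $\mathrm{supp}(\ga_\la) \subseteq K_\la^+$: if $z \notin K_\la^+$, then $H_\la^{+n}(z)$ eventually enters $V_R^+$ and escapes every fixed bounded set (uniformly in $\la$), so a neighborhood of $z$ is pushed out of $\mathrm{supp}(\psi T)$ by $H_\la^{+n}$ for large $n$, forcing $\alpha_{n,\la}$ to vanish there.

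To identify $\ga_\la$ with a positive multiple of $\mu_\la^+$, I would invoke the slicing/uniqueness argument behind Theorem 1.6 of \cite{BS3}: a positive closed $(1,1)$-current of finite mass supported in $K_\la^+$ that arises as a weak limit of $d^{-n}(H_\la^{+n})^{\ast}$ applied to a positive current must be proportional to $\mu_\la^+$. Pairing against $\mu_\la^-$ and invoking Proposition \ref{pr2} then pins down the multiplier as $c_\la = \frac{1}{4\pi^2}\int \psi T \wedge \mu_\la^-$. The principal obstacle lies precisely in this last identification: the rigidity of positive closed $(1,1)$-currents supported in $K_\la^+$ in the non-autonomous setting. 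This should ultimately follow from the uniform H\"older regularity of $G_\la^+$ in Proposition \ref{pr1}, which makes the Bedford-Smillie slicing/intersection argument go through uniformly in $\la$.
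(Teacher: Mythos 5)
Your overall plan (follow Bedford--Smillie, exploit the $\lambda$-uniform filtration and the uniform convergence of the Green functions) is the right one and matches the paper's strategy, and your treatment of part $(ii)$ and the support argument are fine. But there are two genuine gaps.

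First, the mass bound in your Step 1 does not work with $\chi$ a plateau of $\|(x,y)\|^2$. After moving the push-forward across, you face $\int \psi T \wedge (H_\la^{+n})_{\ast}(dd^c\chi)$, whose potential behaves like $\|(H_\la^{+n})^{-1}\|^2$ on $\mathrm{supp}(\psi T)$. That grows like $\vert z\vert^{2d^n}$, and $d^{-n}\vert z\vert^{2d^n}$ is not bounded, so your claimed uniform bound fails. The logarithm is essential, not cosmetic: the paper dominates the test form by $\omega = \tfrac14 dd^c\log(1+\Vert z\Vert^2)$ and uses the estimate $d^{-n}\log^+\Vert (H_\la^{+n})^{-1}(z)\Vert \le \log^+\vert z\vert + C$ (a pluripotential analogue of what Proposition \ref{pr1} gives for forward iterates), which is what makes the mass bound $\lesssim \Vert\varphi\Vert$ uniform in $n$ and $\la$.

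Second, and more seriously, the identification of $\ga_\la$ with a multiple of $\mu_\la^+$ is precisely the hard part of the theorem, and you have not given it. Invoking ``the slicing/uniqueness argument behind Theorem 1.6 of [BS3]'' is not enough here, because in the fibered setting the potential of $\ga_\la$ satisfies an invariance relation that ties together \emph{different} Green functions along the $\si$-orbit, $d^{-N}(H_\la^{+N})^{\ast}U_{\ga,\si^N(\la)} = U_{\ga,\la}$, and one has to establish $U_{\ga,\la}=G_\la^+$ from this directly. The paper does so by: (a) representing $\ga_\la = c\,dd^c U_{\ga,\la}$ with $U_{\ga,\la}$ in the class $\mathcal L_y$ via Proposition 8.3.6 of \cite{MNTU} and fixing the additive constant by the normalization at $\vert y\vert\to\infty$; (b) proving the $N$-step invariance above; (c) constructing a region $B \supset V_R^+$ with $K_\la^+\cap B=\emptyset$ for all $\la$, on which one gets quantitative bounds $\vert U_{\ga,\la}-\log\vert y\vert\vert \lesssim r/\vert y\vert$, and propagating them by invariance to show $U_{\ga,\la}=G_\la^+$ off $K_\la^+$; and (d) a capacity/Jacobian area estimate (using the uniform lower bound on $\vert a_\la\vert$) to rule out $U_{\ga,\la}<0$ on the interior of $K_\la^+$. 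This machinery is what replaces the single-map rigidity argument; the H\"older regularity you flag plays no role here. Finally, your closing assertion that the multiplier is $c_\la = \tfrac{1}{4\pi^2}\int\psi T\wedge\mu_\la^-$ is false for general continuous $\si$: Theorem \ref{thm1} only asserts that each limit is \emph{some} positive multiple, and the determination of a unique multiplier (via the limiting behavior of $d^{-n}\log^+\Vert (H_\la^{+n})^{-1}\Vert$) is the content of Proposition \ref{pr3}, valid only when $\si$ is the identity or a contraction.
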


In general, $S_\lambda(\psi, T)$ may be a large set. However, there are two cases for which it is possible to
determine the cardinality of $S_{\la}(\psi, T)$ and both are illustrated by the examples mentioned earlier. Please look at Remark \ref{des} for a description of $S_\lambda(\psi, T)$ when $\sigma$ is a homemorphism on $M$.

\begin{prop}\label{pr3}
If $\sigma$ is the identity on $M$ or when $\sigma : M \ra M$ is a contraction, i.e., there exists 
 $\la_0 \in M$ such that $\si^n(\la) \ra \la_0$ for all $\la \in M$, the set
$S_{\la}(\psi, T)$ consists of precisely one element. Consequently, in each of these cases there exists a constant $c_{\la}(\psi, T) > 0$ such that
\[
\lim_{n \ra  \infty} d^{-n} \big( H_{\la}^{+n}\big)^{\ast}(\psi T) = c_{\la}(\psi, T) \mu_{\la}^+.
\]
\end{prop}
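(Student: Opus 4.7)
By Theorem \ref{thm1}, every $\gamma_\lambda \in S_\lambda(\psi, T)$ has the form $c\mu_\lambda^+$ with $c \ge 0$. Set $C(\lambda) := \{c\ge 0 : c\mu_\lambda^+ \in S_\lambda(\psi, T)\}$; the task is to show $C(\lambda)$ is a singleton in both cases. When $\sigma$ is the identity on $M$, the iterate $H_\lambda^{+n}$ coincides with the $n$-fold composition $(H_\lambda)^n$ of the single H\'{e}non map $H_\lambda$, so Bedford-Smillie's Theorem 1.6 in \cite{BS3} applies directly and yields $C(\lambda) = \bigl\{(4\pi^2)^{-1}\int \psi T \wedge \mu_\lambda^-\bigr\}$.

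In the contraction case, $\sigma^n(\lambda)\to \lambda_0$ together with the continuity of $\sigma$ forces $\sigma(\lambda_0) = \lambda_0$; thus at $\lambda_0$ the iteration is autonomous, and the previous paragraph gives $C(\lambda_0) = \{c_0\}$ with $c_0 = (4\pi^2)^{-1}\int \psi T \wedge \mu_{\lambda_0}^-$. To transfer uniqueness to an arbitrary $\lambda$, use the decomposition $H_\lambda^{+n} = H_{\sigma^N(\lambda)}^{+(n-N)} \circ H_\lambda^{+N}$ for each fixed $N \ge 0$. The exact identity $(H_\lambda^{+N})^*\mu_{\sigma^N(\lambda)}^+ = d^N\mu_\lambda^+$, obtained by iterating Proposition \ref{pr2}, together with the corresponding push-forward $(H_\lambda^{+N})_*\mu_\lambda^+ = d^{-N}\mu_{\sigma^N(\lambda)}^+$, shows that if $\gamma_{n_k} \to c\mu_\lambda^+$ weakly, then the auxiliary sequence $\gamma_m^{(N)} := d^{-m}(H_{\sigma^N(\lambda)}^{+m})^*(\psi T)$ satisfies $\gamma_{n_k - N}^{(N)} \to c\mu_{\sigma^N(\lambda)}^+$; the reverse implication is analogous. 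Consequently $C(\lambda) = C(\sigma^N(\lambda))$ for all $N \ge 0$, so $C$ is constant along the entire $\sigma$-orbit of $\lambda$. Since $\sigma^N(\lambda)\to \lambda_0$, a continuity argument (sketched below) then forces $C(\lambda) \subseteq C(\lambda_0) = \{c_0\}$, giving the unique limit $c_\lambda(\psi, T) = c_0$.

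The hardest step is the final passage $N\to\infty$: one must argue that any $c$ lying in $C(\sigma^N(\lambda))$ for every $N$ in fact belongs to $C(\lambda_0)$. Each such $c$ supplies, for every $N$, a subsequence $m_k^{(N)} \to \infty$ with $\gamma_{m_k^{(N)}}^{(N)} \to c\mu_{\sigma^N(\lambda)}^+$, and a diagonal choice $m_k := m_k^{(N_k)}$ with $N_k \to \infty$ should produce $\gamma_{m_k}^{\lambda_0} \to c\mu_{\lambda_0}^+$, placing $c$ in $C(\lambda_0) = \{c_0\}$. The tools needed are the continuity $\mu \mapsto H_\mu$ (so that $H_{\sigma^N(\lambda)}^{+m}$ approaches $(H_{\lambda_0})^m$ for each fixed $m$ as $N\to\infty$), the continuity $\mu \mapsto \mu_\mu^\pm$ from Proposition \ref{pr1}, and the uniform confinement of supports to the filtration region $V_R$ guaranteed by its construction. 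The reason a perturbative argument is required, rather than a direct pairing identity, is precisely the asymmetry created by $\sigma$ noted after Proposition \ref{pr2}: in the skew setting there is no analogue of $(H^n)_*\mu^- = d^n\mu^-$, so the single-H\'enon shortcut $\int \gamma_n \wedge \mu^- = \int \psi T\wedge \mu^-$ is not available.
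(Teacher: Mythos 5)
Your identity-case argument is correct and matches the paper's conclusion; the paper actually re-derives the Bedford--Smillie constant via the pairing $\int\ga_\la\wedge\theta$ with $\theta=\tfrac12\,dd^c\log(1+|x|^2)$, but invoking Theorem~1.6 of \cite{BS3} directly is a legitimate shortcut once $H_\la^{+n}=(H_\la)^n$.

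For the contraction case your route is genuinely different from the paper's, and the decisive step has a real gap. The transfer identity $C(\la)=C(\si^N(\la))$ that you isolate is a correct structural observation: since $H_\la^{+N}$ is a fixed biholomorphism, pull-back and push-forward by it are weak-topology continuous and mutually inverse on currents with controlled mass, and the identities $(H_\la^{+N})^*\mu_{\si^N(\la)}^+=d^N\mu_\la^+$, $(H_\la^{+N})_*\mu_\la^+=d^{-N}\mu_{\si^N(\la)}^+$ turn any $c\mu_\la^+$ into $c\mu_{\si^N(\la)}^+$ and back, so the two limit sets coincide. However, this only equates $C(\la)$ with $C(\si^N(\la))$ for each finite $N$; none of the points $\si^N(\la)$ need to equal $\la_0$, so some form of continuity of $\mu\mapsto C(\mu)$ at $\la_0$ is still required, and this is precisely where your argument breaks.

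The proposed diagonalization does not close this: for each $N$ you select $m_k^{(N)}\ra\infty$ with $\ga_{m_k^{(N)}}^{(N)}\ra c\mu_{\si^N(\la)}^+$, and then set $m_k:=m_k^{(N_k)}$. But $\ga_{m_k}^{(N_k)}=d^{-m_k}\big(H_{\si^{N_k}(\la)}^{+m_k}\big)^*(\psi T)$ is \emph{not} the quantity $d^{-m_k}(H_{\la_0}^{+m_k})^*(\psi T)$ that you would need to converge in order to place $c$ in $C(\la_0)$. To pass from one to the other you must compare the non-autonomous composition $H_{\si^{N_k+m_k-1}(\la)}\circ\cdots\circ H_{\si^{N_k}(\la)}$ with the $m_k$-fold iterate of $H_{\la_0}$; each single factor is $\epsilon_{N_k}$-close to $H_{\la_0}$, but the error compounds across $m_k$ compositions, and the subsequential construction gives no upper bound on how fast $m_k$ may grow relative to $N_k$. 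In particular there is nothing forcing the comparison error to tend to zero, so the diagonal sequence has no reason to converge to $c\mu_{\la_0}^+$.

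The paper sidesteps this difficulty altogether. After establishing the scalar identity $\int\ga_\la\wedge\theta=4\pi^2 c_{\ga,\la}$, it suffices to show that the numerical sequence $\int\ga_n\wedge\theta$ converges; that sequence reduces to integrals against $\tfrac{1}{d^n}\log^+\lVert(H_\la^{+n})^{-1}\rVert=\tilde G_{n,\la}^-$, and the paper proves directly that $\tilde G_{n,\la}^-$ converges locally uniformly. The contraction hypothesis enters only in guaranteeing that, after a \emph{fixed} number $n_0$ of backward steps, a compact set disjoint from $\tilde K_\la^-$ lands in a fixed compact piece of $V_R^-$, where a uniform Cauchy estimate for the remaining factors already holds. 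Your transfer identity would be a nice addition, but to complete a proof along your lines you would still need a quantitative comparison of the two non-autonomous iterations as $N,m\ra\infty$ simultaneously, which would essentially reconstruct the convergence estimate for $\tilde G_{n,\la}^-$ that the paper uses.
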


\medskip

The techniques used to understand the dynamical behavior of a pseudo-random system of H\'{e}non maps, determined by a continuous map $\sigma$, can also be implemented to describe various dynamical aspects of a completely random system of H\'{e}non maps. 

\medskip

Another motivation for studying the dynamics of a completely random system of H\'{e}non maps is \cite{FWe}. In this paper, Forn{\ae}ss and Weickert developed a pluripotential theory for random iterations of holomorphic endomorphisms on $k$-dimensional complex  projective space $\mathbb{P}^k$. The success of this theory comes from the fact that the notion of an exceptional set vanishes as soon as we allow a slight randomness in the system. In this connection, they introduced an average Green function and the corresponding average Green current which interact nicely with each holomorphic endomorphism in the system and also carry many typical features of classical Green functions and Green currents.  

\medskip

It is natural to be inquisitive about the validity of this phenomenon if we start with a random system of regular maps on $\mathbb{P}^k$, which is definitely the next obvious choice in this direction. In this case, the indeterminacy sets of regular maps are the additional troublesome factor. So one can start with the basic examples -- H\'{e}non maps on $\mathbb{C}^2$ which can be viewed as birational maps on $\mathbb{P}^2$. For a given H\'{e}non map $H:\mathbb{C}^2 \ra \mathbb{C}^2$ of degree $d$ defined by 
$$
H(x,y)=(y,p(y)-\delta x)
$$
where $p(y)$ is a monic polynomial of degree $d$ and $\delta\neq 0$, one can consider the following extension to $\mathbb{P}^2$:
\[
[t:x:y] \ra \big[ t^d: yt^{d-1}: t^d \big(p(\frac{y}{t})-\delta\frac{x}{t} \big)\big],
\] 
which for simplicity we still denote by $H$. Note that each of the extended H\'{e}non maps $H$ has a common attracting fixed point $I^-=[0:0:1]$ which is the unique indeterminacy point of $H^{-1}$ and a common repelling fixed point $I^+=[0:1:0]$ which is the unique indeterminacy point of $H$ in $\mathbb{P}^2$. Further, note that the orbit of backward indeterminacy set of $I^+$ and forward indeterminacy set of $I^-$ under random iteration of any sequence of H\'{e}non maps ${\{H_{n}\}}_{n\geq 1}$, are uniformly separated, i.e., 
$I_{\infty}^+ \cap I_{\infty}^-=\phi$
where $$
I_{\infty}^+=\bigcup_{n=0}^\infty \big (H_{n}^{-1}\circ \cdots \circ H_{1}^{-1}\big)(I^+)=I^+
$$
and 
$$
I_{\infty}^-=\bigcup_{n=0}^\infty \big(H_{n}\circ \cdots \circ H_{1} \big )(I^-)=I^-.
$$
So the indeterminacy sets of H\'{e}non maps are extremely well-behaved under random iterations which allows us to develop several analogous results for random system of H\'{e}non maps as in \cite{FWe}. 

\medskip

Let $M$ be a compact set in $\mathbb{C}^k$ and $\nu$ be the normalized Lebesgue measure on $M$, then 
\[
\bar{\nu}:=\nu \times \nu \times \cdots \times \nu\times \cdots
\]
is a probability measure on $X:=\Pi_{i=1}^\infty M$ where $X$ is endowed with the product topology. Let ${(H_\lambda)}_{\lambda\in M}$ be a family of generalized H\'{e}non maps as in (\ref{1.5}) of fixed degree $d$. 
For each $\Lambda=(\lambda_1, \lambda_2, ...)\in X$ and $\lambda\in M$, let $\lambda \Lambda=(\lambda,\lambda_1,\lambda_2,\ldots)$ and $\Lambda_n=(\lambda_n, \lambda_{n+1},\ldots)$ for all $n\geq 1$. Further, we define 
\[
H_{n,\Lambda}^{\pm}=H_{\lambda_n}^{\pm 1}\circ \cdots \circ H_{\lambda_1}^{\pm 1}
\]
for each $\Lambda\in X$ and for each $n\geq 1$.

\medskip

To gather dynamical information about the above-mentioned system of random H\'{e}non maps, we first associate Green functions $G_\Lambda^{\pm}$ to any random sequence of H\'{e}non maps ${\{H_{\lambda_n}\}}_{n\geq 1}$ where $\Lambda=(\lambda_1,\lambda_2,...)\in X$. For each $\Lambda\in X$ and for all $n\geq 1$, let
\[
G_{n,\Lambda}^{\pm}(z)=\frac{1}{d^n}\log^+ \big \lVert H_{n,\Lambda}^{\pm}(z)\big\rVert
\]
where $\log^+t=\max \{\log t,0\}$.

For each $\Lambda\in X$, we define the sets $I_\Lambda^{\pm}$ of escaping points and the sets $K_\Lambda^{\pm}$ of non-escaping points as follows:
$$
I_\Lambda^{\pm}=\Big \{z\in \mathbb{C}^2: \lVert H_{\lambda_n}^{\pm 1}\circ \cdots \circ H_{\lambda_1}^{\pm 1}(z) \rVert\ra \infty \text{ as } n\ra \infty \Big \}
$$ 
and
$$
K_\Lambda^{\pm}=\Big \{z\in \mathbb{C}^2: {\big(H_{\lambda_n}^{\pm 1}\circ \cdots \circ H_{\lambda_1}^{\pm 1}(z)  \big )}_{n\geq 0} \text{ is bounded in } \mathbb{C}^2\Big \}.
$$

\begin{thm} \label{R thm1}
For each $\Lambda\in X$, the sequence ${\big\{G_{n,\Lambda}^{\pm}\big \}}_{n\geq 1}$ converges locally uniformly on compact subset of $\mathbb{C}^2$ to the locally H\"{o}lder continuous plurisubharmonic function $G_\Lambda^{\pm}$ as $n\ra \infty$ which satisfies 
\[
G_\Lambda^{\pm}\circ H_\lambda^{\pm 1}=d G_{\Lambda'}^{\pm}
\]
where $\Lambda'=\lambda \Lambda$. Moreover, $G_\Lambda^{\pm}$ are strictly positive pluriharmonic function on $\mathbb{C}^2\setminus K_\Lambda^{\pm}$ and vanish precisely on $K_\Lambda^{\pm}$. Moreover, the correspondence $\Lambda\ra G_\Lambda^{\pm}$ is continuous. 
\end{thm}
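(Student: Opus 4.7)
The statement is the random analogue of Proposition~\ref{pr1}, and the plan is to replay the pseudo-random argument where the fixed orbit $\{\si^n(\la)\}$ is replaced by an arbitrary sequence $\Lambda=(\la_1,\la_2,\ldots)$. The first and most important ingredient is a uniform filtration. Since $M$ is compact and the coefficients of the polynomials $p_{j,\la}$ together with $a_j(\la)$ are continuous, one can choose $R>0$ so large that the standard Bedford--Smillie regions $V_R,V_R^\pm\subset\mbb C^2$ form a filtration \emph{simultaneously} for every $H_\la$, $\la\in M$. In particular, one obtains constants $c,C>0$, independent of $\la$, such that $H_\la(V_R^+)\subset V_R^+$, $H_\la^{-1}(V_R^-)\subset V_R^-$, and
\[
c\,\norm{z}^d \le \norm{H_\la^{\pm1}(z)} \le C\,\norm{z}^d\qquad\text{for }z\in V_R^\pm.
\]
This is the only place where the structure of $H_\la$ as a product of Hénon mappings is really used.

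With the uniform filtration in hand, I would establish locally uniform convergence via a telescoping argument. Writing $\Lambda_n=(\la_n,\la_{n+1},\ldots)$, note that $H_{n+1,\Lambda}^+=H_{\la_{n+1}}\circ H_{n,\Lambda}^+$. On the set $\{H_{n,\Lambda}^+(z)\in V_R^+\}$ the uniform filtration estimate yields
\[
\bigl|\log^+\norm{H_{n+1,\Lambda}^+(z)} - d\log^+\norm{H_{n,\Lambda}^+(z)}\bigr|\le \log C,
\]
and outside this set both terms remain bounded by $\log R + O(1)$. Dividing by $d^{n+1}$ gives
\[
\bigl|G_{n+1,\Lambda}^+(z)-G_{n,\Lambda}^+(z)\bigr|\le \frac{\text{const}}{d^{n+1}}
\]
with the constant independent of $\Lambda$ and locally uniform in $z$. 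Hence $\{G_{n,\Lambda}^+\}$ is Cauchy locally uniformly in $z$, \emph{uniformly} in $\Lambda\in X$. The limit $G_\Lambda^+$ is therefore continuous, and plurisubharmonic as a decreasing-in-error limit of plurisubharmonic functions $G_{n,\Lambda}^+=d^{-n}\log^+\norm{H_{n,\Lambda}^+(\cdot)}$. The same argument works for $G_\Lambda^-$, replacing $V_R^+$ by $V_R^-$.

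The functional equation $G_\Lambda^\pm\circ H_\la^{\pm1}=d\,G_{\Lambda'}^\pm$ for $\Lambda'=\la\Lambda$ is immediate from the definition: the $(n+1)$-fold composition associated to $\Lambda'$ is $H_{\la_n}\circ\cdots\circ H_{\la_1}\circ H_\la$, so dividing the defining Cesàro quantity by $d^{n+1}$ and passing to the limit yields the identity. The characterization of $\{G_\Lambda^\pm=0\}=K_\Lambda^\pm$ and the strict positivity elsewhere follow from the filtration: on $\mbb C^2\sm K_\Lambda^\pm$ some iterate lands in $V_R^\pm$, and then the lower filtration bound gives $G_\Lambda^\pm(z)\ge d^{-N}\log R>0$; on $K_\Lambda^\pm$, $\norm{H_{n,\Lambda}^\pm(z)}$ is bounded, so $G_{n,\Lambda}^\pm(z)\to 0$. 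Pluriharmonicity on $\mbb C^2\sm K_\Lambda^+$ is obtained as in the classical setting: once the orbit enters $V_R^+$, one rewrites $G_\Lambda^+$ in terms of $\log$ of the $y$-component of $H_{n,\Lambda}^+$, which is a nonvanishing holomorphic function in $V_R^+$, so its logarithm is pluriharmonic.

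The remaining points are continuity of $\Lambda\mapsto G_\Lambda^\pm$ and local H\"older regularity. Continuity is automatic: each $G_{n,\Lambda}^\pm$ is jointly continuous in $(\Lambda,z)$ because the coefficients of $H_\la$ are continuous in $\la$ and composition of finitely many such maps is continuous in $X\times\mbb C^2$ (with the product topology on $X$); the uniform-in-$\Lambda$ convergence from the previous paragraph then transfers continuity to the limit. For local H\"older continuity, I would mimic the classical Bedford--Smillie/Forn\ae ss--Sibony telescoping argument: using the uniform polynomial bound $\norm{H_\la(z)-H_\la(w)}\le A\,\norm{z-w}(1+\norm{z}+\norm{w})^{d-1}$ on compact sets, together with the functional equation, produces a geometric series in $d^{-n}$ whose partial sums give a H\"older estimate with exponent $\tau$ and constant $C$ depending only on the compact set and on $H$, not on $\Lambda$. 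The main technical obstacle throughout is uniformity: every estimate---filtration, telescoping, H\"older bound---must be shown to hold with constants independent of the sequence $\Lambda$, and this reduces in each case to the compactness of $M$ and continuity of the data, exactly as in the pseudo-random case.
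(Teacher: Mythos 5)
Your treatment of convergence, the functional equation, the identification of the zero set, pluriharmonicity off $K_\Lambda^\pm$, and continuity in $\Lambda$ all coincide in substance with the paper's argument: both proceed by a uniform filtration, a telescoping estimate giving $|G_{n+1,\Lambda}^\pm-G_{n,\Lambda}^\pm|\lesssim d^{-n}$ independently of $\Lambda$, and a $3\epsilon$-argument for continuity in $\Lambda$. The one place where your proposal has a genuine gap is the local H\"older continuity.

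You propose to chain the polynomial Lipschitz bound $\norm{H_\la(z)-H_\la(w)}\le A\,\norm{z-w}(1+\norm{z}+\norm{w})^{d-1}$ ``on compact sets'' and sum a geometric series in $d^{-n}$. This fails because the orbit $H_{n,\Lambda}^+(z)$ of a point $z\notin K_\Lambda^+$ escapes every compact subset of $\mathbb C^2$, and $\norm{H_{n,\Lambda}^+(z)}\sim\exp\!\big(d^{\,n}G_\Lambda^+(z)\big)$ grows doubly exponentially. The Lipschitz constant of the $n$-fold composition accumulated from the polynomial bound therefore grows like $\exp\!\big(c\,d^{\,n}\big)$, and $d^{-n}\exp\!\big(c\,d^{\,n}\big)\to\infty$: no geometric series emerges, and the partial sums do not yield a H\"older estimate. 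The paper circumvents this by compactifying: since $I^+=[0:1:0]$ is a common attracting fixed point for every extended $H_\la$ on $\mathbb P^2$, one can choose a neighborhood $U$ of $I^+$ with $H_\la(Y)\subset\subset Y$ for all $\la$, where $Y=\mathbb P^2\setminus\overline U$. On the compact set $Y$ with the Fubini--Study metric, the maps $H_\la$ and the increments $v_\la=\frac1d\log^+\norm{H_\la(\cdot)}-\log^+\norm{\cdot}$ have uniform Lipschitz constants $L$ and $A$, and iterates of points in $Y$ stay in $Y$. Writing $G_\Lambda^+=\log^+\norm{\cdot}+\sum_{n\ge0}d^{-n}\,v_{\la_{n+1}}\!\circ H_{n,\Lambda}^+$, splitting the series at an index $N$ tuned to the distance between two points then gives the genuinely geometric comparison $\sum_{n<N}(L/d)^n\,d_{FS}(a,b)+\sum_{n\ge N}d^{-n}$, and the H\"older exponent $\min\{1,\log d/\log L\}$ falls out. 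Without that change of metric your argument cannot close the H\"older estimate; the rest of the proof is fine.
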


As in \cite{FWe}, we define average Green functions $EG^{\pm}$ on $\mathbb{C}^2$ over the set $X$ as follows:
\begin{equation}
EG^\pm(z)=\int_X G_\Lambda^\pm(z)d\bar{\nu}(\Lambda)\label{R1}
\end{equation}
for each $z\in \mathbb{C}^2$. That for a fixed $z\in \mathbb{C}^2$, $G_\Lambda^{\pm}$ depend continuously on $\Lambda$ implies that the integral in (\ref{R1}) makes sense. Further, note that $0\leq EG^\pm(z)< \infty$ since $$
0\leq G_\Lambda^\pm(z)< \log^+\lvert z \rvert+ L
$$
for some $L>0$, for all $\Lambda\in X$ and for all $z\in \mathbb{C}^2$.

\begin{prop} \label{R pr1}
$EG^{\pm}$ are  plurisubharmonic locally H\"{o}lder continuous functions on $\mathbb{C}^2$. Moreover, $EG^{\pm}$ are strictly positive pluriharmonic functions outside $\overline{\bigcup_{\Lambda} K_\Lambda^{\pm}}$ and vanish precisely on $\bigcap_\Lambda K_\Lambda^\pm$. 
\end{prop}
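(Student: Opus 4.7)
The plan is to obtain each claim by integrating the corresponding fibrewise property of $G_\Lambda^{\pm}$ provided by Theorem~\ref{R thm1} against $\bar{\nu}$, using Fubini's theorem together with continuity of $\Lambda \mapsto G_\Lambda^{\pm}$; the pointwise bound $0 \le G_\Lambda^{\pm}(z) \le \log^+\|z\| + L$, valid uniformly in $\Lambda$, legitimises every swap of integration. For plurisubharmonicity I would write the submean inequality $G_\Lambda^{\pm}(z) \le |B|^{-1}\int_B G_\Lambda^{\pm}\,dV$ on a ball $B$ centred at $z$, integrate in $\Lambda$, and swap the order of integration. Upper semicontinuity will then follow from the H\"older estimate, whose derivation begins with verifying that the H\"older constants produced in Theorem~\ref{R thm1} are uniform in $\Lambda$: they depend only on the common escape radius and on coefficient bounds for the H\'enon maps, both of which are uniform over the compact parameter space $M$. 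Integrating the resulting pointwise H\"older inequality against $\bar{\nu}$ at once propagates it to $EG^{\pm}$.

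For pluriharmonicity and strict positivity off $\overline{\bigcup_\Lambda K_\Lambda^{\pm}}$, fix $z$ in the complement of this closed set; a neighbourhood $U$ of $z$ is then disjoint from every $K_\Lambda^{\pm}$, so each $G_\Lambda^{\pm}$ is pluriharmonic and strictly positive on $U$. Testing $dd^c EG^{\pm}$ against any smooth form $\phi$ compactly supported in $U$ and applying Fubini gives
\[
\langle dd^c EG^{\pm},\,\phi\rangle \;=\; \int_X \langle dd^c G_\Lambda^{\pm},\,\phi\rangle\,d\bar{\nu}(\Lambda) \;=\; 0,
\]
so $EG^{\pm}$ is pluriharmonic on $U$. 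Continuity of $\Lambda \mapsto G_\Lambda^{\pm}(z)$ on the compact space $X$ produces a strictly positive lower bound for the integrand, whence $EG^{\pm}(z) > 0$.

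The zero-set identification is immediate in one direction: $z \in \bigcap_\Lambda K_\Lambda^{\pm}$ forces $G_\Lambda^{\pm}(z) \equiv 0$ and therefore $EG^{\pm}(z) = 0$. For the converse, if $z$ escapes under some $\Lambda_0 \in X$, then $\|H_{N,\Lambda_0}^{\pm}(z)\|$ already exceeds the filtration radius at some finite stage $N$; since this condition depends only on $(\lambda_1^0,\dots,\lambda_N^0)$ and is open there by continuity in parameters, it defines a cylinder set in $X$ of positive $\bar{\nu}$-measure on which $G_\Lambda^{\pm}(z)$ is bounded below by a positive constant, forcing $EG^{\pm}(z) > 0$. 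The main obstacle throughout the plan is the uniform-in-$\Lambda$ H\"older estimate: one has to revisit the proof of Theorem~\ref{R thm1} and check that its constants depend only on the uniform filtration data ($R$, degree $d$, coefficient bounds on $M$) rather than on the particular random sequence $\Lambda$. All remaining steps are formal consequences of Fubini's theorem and the continuous dependence of $G_\Lambda^{\pm}$ on $\Lambda$.
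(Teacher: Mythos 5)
Your plan is correct and follows the paper's proof almost step for step: plurisubharmonicity by integrating the fibrewise submean inequality against $\bar{\nu}$ and swapping integrals via Fubini, local H\"older continuity from the uniformity in $\Lambda$ of the constants produced in Theorem~\ref{R thm1}, and pluriharmonicity plus strict positivity off $\overline{\bigcup_\Lambda K_\Lambda^\pm}$ by the same neighbourhood--Fubini--compactness reasoning. The one place you take a genuinely different route is the converse inclusion $\{EG^\pm=0\}\subset\bigcap_\Lambda K_\Lambda^\pm$. The paper argues directly that if $\int_X G_\Lambda^\pm(z)\,d\bar{\nu}=0$ with $\Lambda\mapsto G_\Lambda^\pm(z)$ continuous and nonnegative, then $G_\Lambda^\pm(z)\equiv 0$. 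You instead pass to the contrapositive: escape under some $\Lambda_0$ happens at a finite stage $N$ into $V_R^\pm$, this is an open condition on $(\lambda_1,\dots,\lambda_N)$ defining a positive-measure cylinder, and on that cylinder $G_\Lambda^\pm(z)=d^{-N}G_{\Lambda_{N+1}}^\pm\big(H^\pm_{N,\Lambda}(z)\big)\geq d^{-N}(\log R - M)>0$ by the invariance relation and the lower bound on $V_R^\pm$. Both arguments rest on the same underlying fact (that $\bar{\nu}$ charges nonempty open cylinders in $X$); the paper's version is shorter, while yours is more constructive and would survive with only measurable, rather than continuous, dependence of $G_\Lambda^\pm$ on $\Lambda$. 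One small slip to correct: to get \emph{pluri}subharmonicity you should integrate the submean inequality on complex linear disks through $z$, as the paper does, not on solid balls $B\subset\mathbb{C}^2$ with $dV$-averages --- the ball-average inequality for a continuous function yields only subharmonicity. Since you already obtain continuity from the H\"older estimate, the fix is immediate.
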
  

In view of previous results, we can define $\mu_\Lambda^\pm=\frac{1}{2\pi}dd^c G_\Lambda^{\pm}$ for all $\Lambda\in X$  and $\mu^{\pm}=\frac{1}{2\pi} dd^c(EG^{\pm})$. These are well-defined positive closed $(1,1)$-currents of mass $1$ on $\mathbb{C}^2$. 

\medskip

Given a family of $(1,1)$-currents ${(S_\Lambda)}_{\Lambda\in X}$ defined on $\mathbb{C}^2$ varying continuously in $\Lambda$, we define a positive $(1,1)$-current $\int_X S_\Lambda$ on $\mathbb{C}^2$ as follows:
\begin{equation}
\Big \langle \int_X S_\Lambda,\varphi \Big \rangle=\int_X \big \langle S_\Lambda,\varphi \big \rangle d\bar{\nu} \label{R6}
\end{equation}
for a test form $\varphi$ in $\mathbb{C}^2$. Since the correspondence $\Lambda\mapsto \mu_\Lambda^{\pm}$ is continuous and $\mu_\Lambda^{\pm}$ have mass $1$ for each $\Lambda\in X$, the currents $\int \mu_\Lambda^\pm$ both are well-defined objects in $\mathbb{C}^2$.

\medskip

The next proposition records the action of pull back and push forward operations by a random H\'{e}non map on Green currents and shows that the currents associated with the average Green functions agree with the corresponding average Green currents.

\begin{prop}\label{R pr2}
With $\mu_\Lambda^\pm$ and $\mu^\pm$ as above, we have
\[
\mu^{\pm}= \int_X \mu_\Lambda^{\pm}.
\]
Furthermore,
\begin{enumerate}
\item[$(i)$] ${(H_\lambda^{\pm 1})}^*(\mu_\Lambda^{\pm})=d^{\pm 1}(\mu_{\lambda\Lambda}^\pm) \text{ for any } \lambda\in M \text{ and } \Lambda\in X$;
\item[$(ii)$] $\int_{M} {EG}^{\pm}\circ H_\lambda^{\pm 1}(z)d\nu(\lambda)= d {(EG)}^{\pm}(z)$;
\item[$(iii)$] $\int_{M} {(H_\lambda^{\pm 1})}^* \mu^{\pm}=d \mu^{\pm}$.
\end{enumerate}
The support of $\mu_\Lambda^\pm$ is $J_\Lambda^\pm$ and the correspondences $\Lambda\mapsto J_\Lambda^\pm$ are lower semi-continuous. In addition, we have ${ \rm supp} (\mu^{\pm})=\overline{\bigcup_{\Lambda\in X} J_\Lambda^\pm}$.
\end{prop}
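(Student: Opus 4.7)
The plan is to verify the four assertions sequentially, leveraging the functional equation of Theorem \ref{R thm1} together with Fubini on the product measure $\bar\nu$.

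For the identity $\mu^{\pm} = \int_X \mu_\Lambda^{\pm}$, joint continuity of $(\Lambda, z) \mapsto G_\Lambda^{\pm}(z)$ together with the uniform estimate $0 \le G_\Lambda^{\pm}(z) \le \log^{+}\Vert z\Vert + L$ ensures Borel measurability and local integrability. Hence for any smooth compactly supported test form $\varphi$,
\[
\langle \mu^{\pm}, \varphi\rangle = \tfrac{1}{2\pi}\int_{\mathbb{C}^2} EG^{\pm}\, dd^c\varphi = \tfrac{1}{2\pi}\int_{\mathbb{C}^2}\!\int_X G_\Lambda^{\pm}\, d\bar\nu(\Lambda)\, dd^c\varphi,
\]
and Fubini swaps the two integrals to give $\int_X \langle \mu_\Lambda^{\pm},\varphi\rangle\, d\bar\nu(\Lambda)$, which equals $\bigl\langle \int_X \mu_\Lambda^{\pm},\varphi\bigr\rangle$ by (\ref{R6}).

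For (i), I would apply $dd^c$ to both sides of $G_\Lambda^{\pm} \circ H_\lambda^{\pm 1} = d\, G_{\lambda\Lambda}^{\pm}$ from Theorem \ref{R thm1}; since $H_\lambda^{\pm 1}$ is holomorphic, the chain rule $dd^c(u \circ H_\lambda^{\pm 1}) = (H_\lambda^{\pm 1})^{\ast}\, dd^c u$ gives the stated transformation law. For (ii), I would first integrate the functional equation against $\bar\nu$ to obtain $EG^{\pm} \circ H_\lambda^{\pm 1}(z) = d\int_X G_{\lambda\Lambda}^{\pm}(z)\, d\bar\nu(\Lambda)$, then integrate over $\lambda \in M$ against $\nu$. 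Fubini, combined with the fact that the prepending map $(\lambda, \Lambda) \mapsto \lambda\Lambda$ pushes $\nu \times \bar\nu$ onto $\bar\nu$ by construction of the infinite product measure, collapses the double integral to $EG^{\pm}(z)$. Then (iii) follows at once by applying $\tfrac{1}{2\pi}dd^c$ to (ii) and interchanging $dd^c$ with the integral over $M$, which is permissible as $EG^{\pm}$ is locally H\"older (Proposition \ref{R pr1}) and depends continuously on $\lambda$.

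For the support claims, $G_\Lambda^{\pm}$ is pluriharmonic on $\mathbb{C}^2 \setminus K_\Lambda^{\pm}$ and vanishes on $K_\Lambda^{\pm}$, so $\mu_\Lambda^{\pm}$ is supported in $\partial K_\Lambda^{\pm} = J_\Lambda^{\pm}$; the reverse inclusion is the standard pluripotential-theoretic fact that a non-constant positive plurisubharmonic function vanishing on a closed set cannot be pluriharmonic across any of its boundary points, so every neighborhood of $z_0 \in J_\Lambda^{\pm}$ carries positive $dd^c G_\Lambda^{\pm}$ mass. Lower semi-continuity of $\Lambda \mapsto J_\Lambda^{\pm}$ follows from joint continuity of $G_\Lambda^{\pm}$: if $z_0 \in J_{\Lambda_0}^{\pm}$ and $U \ni z_0$ is open, then $G_{\Lambda_0}^{\pm}|_U$ is not pluriharmonic and this obstruction persists for all $\Lambda$ close to $\Lambda_0$, forcing $J_\Lambda^{\pm} \cap U \neq \emptyset$. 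The inclusion $\operatorname{supp}(\mu^{\pm}) \subseteq \overline{\bigcup_\Lambda J_\Lambda^{\pm}}$ is immediate from the first part of the proposition. For the converse, given $z_0 \in J_{\Lambda_0}^{\pm}$ and a neighborhood $U$ of $z_0$, LSC produces an open $W \ni \Lambda_0$ in $X$ on which $\mu_\Lambda^{\pm}(U) > 0$; since $\operatorname{supp}(\nu) = M$ gives $\bar\nu(W) > 0$, and $\Lambda \mapsto \mu_\Lambda^{\pm}(U)$ is lower semi-continuous hence Borel measurable, integration yields $\mu^{\pm}(U) > 0$.

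The principal obstacle I expect is the equality $\operatorname{supp}(\mu_\Lambda^{\pm}) = J_\Lambda^{\pm}$: the outer inclusion is routine, but the reverse requires that non-pluriharmonicity of $G_\Lambda^{\pm}$ at every boundary point of $K_\Lambda^{\pm}$ survives the random composition. This should reduce to adapting the maximum-principle comparison arguments from the deterministic H\'enon setting (cf.\ \cite{BS1}, \cite{FS}); the Fubini steps and the LSC argument, by contrast, are entirely routine once continuity of $\Lambda \mapsto G_\Lambda^{\pm}$ is in hand.
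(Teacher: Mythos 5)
Your proposal is correct and takes essentially the same route as the paper's proof: Fubini plus the equivariance $G_\Lambda^\pm \circ H_\lambda^{\pm 1}=d\,G_{\lambda\Lambda}^\pm$ for the three transformation identities, the maximum-principle argument for ${\rm supp}(\mu_\Lambda^\pm)=J_\Lambda^\pm$, and continuity of $\Lambda\mapsto\mu_\Lambda^\pm$ for lower semi-continuity and the final support equality. You are slightly more explicit than the paper about two points that it leaves tacit: that the prepending map $(\lambda,\Lambda)\mapsto\lambda\Lambda$ pushes $\nu\times\bar\nu$ onto $\bar\nu$ (which is what makes (ii) go through), and that ${\rm supp}(\nu)=M$ is what guarantees $\bar\nu(W)>0$ in the final step.
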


For each $\Lambda\in X$, the positive closed $(1,1)$-current $\mu_\Lambda^+$ can be extended by $0$ as a positive closed $(1,1)$-current of mass $1$ on $\mathbb{P}^2$ and this, by a little abuse of notation, will still be denoted by $\mu_\Lambda^+$. Likewise, the average Green current $\mu^+$ can also be viewed as a positive closed $(1,1)$-current of mass $1$ on $\mathbb{P}^2$ and we continue to call it as $\mu^+$. 

\medskip

Let ${(S_n)}_{n\geq 1}$ be a sequence of positive closed $(1,1)$-currents of mass $1$ on $\mathbb{P}^2$  such that ${(S_n)}_{n\geq 1}$ do have uniformly bounded quasi-potential near $I^-$. Then Theorem $6.6$ in \cite{DS} shows that for a single H\'{e}non map $H$ of degree $d$ the sequence $d^{-n}{(H^n)}^*(S_n)$ converges exponentially fast to the corresponding stable Green current. In lieu of taking a single H\'{e}non map, if we start with a random sequence of H\'{e}non maps and pull the currents back appropriately by the composition of these random H\'{e}non maps, we show that the resulting sequence of currents converges to the corresponding random stable Green current exponentially. 

\begin{thm}\label{R thm2}
Let $\{S_n\}_{n\geq 1}$ be a sequence of positive closed $(1,1)$-currents of mass $1$ on $\mathbb{P}^2$. Moreover, we assume that there exists $C>0$ and some neighborhood $U$ of $I^{-}$ in $\mathbb{P}^2$ such that each ${S_n}$ admits a quasi-potential $u_n$ satisfying $\lvert u_n \rvert \leq C$ on $U$. Then there exists $A>0$ such that
$$
\big\lvert \big\langle  d^{-n} {(H_{\lambda_n}\circ \cdots \circ H_{\lambda_1} )}^* (S_n)-\mu_\Lambda^+, \varphi \big\rangle \big\rvert \leq And^{-n} {\lVert \varphi \rVert}_{C^1}
$$
for all $\Lambda\in X$ and for every test form $\varphi$ of class $C^2$ on $\mathbb{P}^2$. 
\end{thm}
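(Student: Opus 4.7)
The approach is to adapt Dinh--Sibony's proof of Theorem $6.6$ in \cite{DS} to the random setting, exploiting the pullback invariance of the random stable currents established in Proposition \ref{R pr2} to absorb the normalizing factor $d^n$, and estimating the resulting $dd^c$-exact error via a standard dsh pairing inequality.

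Let $\omega$ denote the Fubini--Study form on $\mathbb{P}^2$. Since $S_n$ and $\mu_{\Lambda_{n+1}}^+$ are both positive closed $(1,1)$-currents of mass $1$, write
\[
S_n = \omega + dd^c u_n, \qquad \mu_{\Lambda_{n+1}}^+ = \omega + dd^c g_{\Lambda_{n+1}},
\]
with canonical $\omega$-plurisubharmonic quasi-potentials $u_n$, $g_{\Lambda_{n+1}}$. The hypothesis gives $|u_n|\leq C$ on $U$; continuity of $\Lambda \mapsto \mu_\Lambda^+$ from Proposition \ref{R pr2} together with compactness of $X$ yields a uniform bound $|g_{\Lambda'}|\leq C'$ on $U$ for all $\Lambda'\in X$. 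Setting $v_n := u_n - g_{\Lambda_{n+1}}$ gives $S_n - \mu_{\Lambda_{n+1}}^+ = dd^c v_n$ with $|v_n|\leq C''$ on $U$, uniformly in $\Lambda$ and $n$. Iterating the identity $(H_\lambda^{+1})^*\mu_\Lambda^+ = d\,\mu_{\lambda\Lambda}^+$ from Proposition \ref{R pr2} produces $(H_{\lambda_n}\circ\cdots\circ H_{\lambda_1})^*\mu_{\Lambda_{n+1}}^+ = d^n \mu_\Lambda^+$, whence
\[
d^{-n}(H_{\lambda_n}\circ\cdots\circ H_{\lambda_1})^* S_n - \mu_\Lambda^+ \;=\; dd^c F_n, \qquad F_n := d^{-n}\, v_n \circ (H_{\lambda_n}\circ\cdots\circ H_{\lambda_1}).
\]
Testing against $\varphi$ reduces the theorem to showing $|\langle dd^c F_n,\varphi\rangle| \leq A\,n\,d^{-n} \|\varphi\|_{C^1}$ uniformly in $\Lambda\in X$.

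The function $v_n \circ H_{n,\Lambda}^+$ is the difference of two quasi-plurisubharmonic functions on $\mathbb{P}^2$, so by the standard dsh pairing inequality
\[
\bigl|\langle dd^c F,\varphi\rangle\bigr| \leq C\bigl(\|F\|_{L^1} + \tau\bigr)\|\varphi\|_{C^1} \quad\text{whenever } F = F^+ - F^-,\; dd^c F^\pm \geq -\tau\omega,
\]
it suffices to prove the dynamical estimate $\|v_n \circ H_{n,\Lambda}^+\|_{L^1(\mathbb{P}^2)} \leq A'\,n$, uniformly in $\Lambda$, the quasi-psh constants $\tau_n = O(d^n)$ being absorbed by the $d^{-n}$ prefactor in $F_n$. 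For this $L^1$-bound, use the uniform filtration $\{V_R,V_R^\pm\}$ for $\{H_\lambda\}_{\lambda\in M}$ constructed earlier: the common attracting fixed point $I^-$ absorbs the forward orbit of every $z$ outside a small neighborhood of $I^+$, so $H_{n,\Lambda}^+(z) \in U$ after a bounded number of iterations, on which $|v_n|\leq C''$. Near $I^+$, the logarithmic growth of the Green functions $G_\Lambda^+$ together with their locally uniform convergence (Theorem \ref{R thm1}) supplies a bound proportional to $n$, the factor $n$ emerging from summing the $O(1)$ discrepancies between the quasi-potentials of $S_n$ and of $\mu_{\Lambda_{n+1}}^+$ across the $n$ intermediate iterates.

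The main obstacle is making all of these bounds uniform in $\Lambda \in X$. Unlike the single-map case of \cite{DS}, the quasi-potentials $g_{\Lambda_{n+1}}$, the filtration constants, and the ``entry times'' into $U$ all depend on the shifted tail $\Lambda_{n+1}$; extracting constants independent of $\Lambda$ requires the continuity of $\Lambda\mapsto G_\Lambda^+$ (Theorem \ref{R thm1}) together with the compactness of $X$. A more subtle technical point is separating the cohomological $d^n$-growth of $(H_{n,\Lambda}^+)^*\omega$ from the intrinsic bounded growth of $v_n$; this separation is precisely what the absorption via invariance of $\mu_\Lambda^+$ accomplishes in the displayed identity for $dd^c F_n$.
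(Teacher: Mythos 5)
Your setup is right and, in spirit, dual to the paper's: both reduce to pairing the d.s.h.\ function $v_{\Lambda_n}=u_n-q^+_{\Lambda_{n+1}}$ (whose $dd^c$ is $S_n-\mu^+_{\Lambda_{n+1}}$) against a pushed-forward object and controlling the result. But the two steps that actually produce the factor $n$ are wrong in your sketch.

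First, the claimed ``dsh pairing inequality'' $|\langle dd^c F,\varphi\rangle|\le C(\|F\|_{L^1}+\tau)\|\varphi\|_{C^1}$ cannot close the argument even if it were available: for $F_n=d^{-n}v_{\Lambda_n}\circ H_{n,\Lambda}^+$ one has $dd^c F_n=d^{-n}(H_{n,\Lambda}^+)^*(S_n-\mu^+_{\Lambda_{n+1}})$, whose positive and negative parts each have mass $1$, so the best quasi-psh constant is $\tau_{F_n}=O(1)$, not $o(1)$. Your inequality would then only yield an $O(1)$ bound, not $O(nd^{-n})$, so the decay would be lost. (There is also a separate issue with getting a $C^1$ rather than a $C^2$ norm on the right, which the paper's own statement shares, so I will not press it.)

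Second, and more seriously, the proposed justification of $\|v_{\Lambda_n}\circ H_{n,\Lambda}^+\|_{L^1}\lesssim n$ is not valid. The assertion that ``$H_{n,\Lambda}^+(z)\in U$ after a bounded number of iterations'' for all $z$ outside a small neighborhood of $I^+$ is false: every $z\in K^+_\Lambda$ has bounded forward orbit in $\mathbb C^2$ and never enters the neighborhood $U$ of $I^-$, and $K^+_\Lambda$ can have positive Lebesgue measure; even off $K^+_\Lambda$ the entry time is unbounded as $z\to K^+_\Lambda$. And ``summing the $O(1)$ discrepancies between the quasi-potentials of $S_n$ and of $\mu^+_{\Lambda_{n+1}}$ across the $n$ intermediate iterates'' is not a mechanism that appears, or that works, here; there is no telescoping structure producing a linear-in-$n$ bound. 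The factor $n$ in the paper's proof comes from a completely different source: one pairs $v_{\Lambda_n}$ with $\gamma_n=(H_{\lambda_n}\circ\cdots\circ H_{\lambda_1})_*(dd^c\varphi)$, splits $\gamma_n=\gamma_n'+\gamma_n''$ into the parts outside and inside $U$, observes that $\|\gamma_n'\|_\infty\le L^{4n}$ because $H_\lambda^{-1}$ has bounded $C^1$ norm $L$ on $\mathbb P^2\setminus U$, notes that the family $\{v_{\Lambda_n}\}$ is uniformly bounded in the $\mathrm{DSH}$ norm (Lemma \ref{pre 4}), and then applies the exponential-integrability estimate of Corollary \ref{pre 6}, which converts the $L^\infty$ bound on $\gamma_n'$ into $\langle\gamma_n',|v_{\Lambda_n}|\rangle\lesssim 1+\log^+ L^{4n}\lesssim n$. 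Your sketch never invokes either the $L^\infty$ growth bound on the pushforward measure or the uniform $\mathrm{DSH}$/exponential-integrability estimate; without them the linear factor $n$ has no source, so this is a genuine gap rather than a stylistic difference.
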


For a positive closed $(1,1)$-current $S$ on $\mathbb{P}^2$, we consider an average pull-back of $S$ as follows:
\begin{equation}
 \Theta(S)=\frac{1}{d} \int_{M} H_\lambda^* S.
\end{equation}
Clearly $\Theta(S)$ is also a  positive closed $(1,1)$-current on $\mathbb{P}^2$.

\begin{thm} \label{R thm3}
Let ${(S_n)}_{n\geq 1}$ be a sequence of  positive closed $(1,1)$-currents on $\mathbb{P}^2$ of mass $1$ such that each ${S_n}$ admits a quasi-potential $u_n$ satisfying $\lvert u_n \rvert \leq C$ on a fixed neighborhood $U$ of $I^-$ in $\mathbb{P}^2$ for some $C>0$. Then $\Theta^n(S_n)\ra \mu^+$ as $n\ra \infty$ in the sense of currents. 
\end{thm}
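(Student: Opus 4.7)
The plan is to unfold the iterated average pull-back $\Theta^n(S_n)$ into a single integral over the product space $X$, and then invoke Theorem \ref{R thm2} fiberwise together with the integral representation $\mu^+ = \int_X \mu_\Lambda^+\, d\bar\nu(\Lambda)$ furnished by Proposition \ref{R pr2}.

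First, I would establish by induction on $n$ together with Fubini that
$$
\Theta^n(S_n) = \frac{1}{d^n}\int_X (H_{n,\Lambda}^+)^{\ast} S_n\, d\bar\nu(\Lambda).
$$
At each step of the induction one expands the outer $\Theta$, commutes the pull-back $H_{\lambda_j}^{\ast}$ with the inner integral (valid when tested against a smooth form by continuity of $H_\lambda$ in $\lambda$ and compactness of $M$), invokes the chain rule
$$
H_{\lambda_1}^{\ast} \circ H_{\lambda_2}^{\ast} \circ \cdots \circ H_{\lambda_n}^{\ast} = (H_{\lambda_n} \circ \cdots \circ H_{\lambda_1})^{\ast} = (H_{n,\Lambda}^+)^{\ast},
$$
and uses the symmetry of the product measure $\bar\nu$ to relabel the integration variables so that the order of composition matches $H_{n,\Lambda}^+$.

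Second, fix a smooth test form $\varphi$ on $\mathbb{P}^2$. Combining the identity above with Proposition \ref{R pr2} yields
$$
\bigl\langle \Theta^n(S_n) - \mu^+, \varphi\bigr\rangle = \int_X \bigl\langle d^{-n}(H_{n,\Lambda}^+)^{\ast} S_n - \mu_\Lambda^+, \varphi \bigr\rangle\, d\bar\nu(\Lambda).
$$
Since the hypotheses on $\{S_n\}$ coincide with those of Theorem \ref{R thm2}, that result supplies a constant $A > 0$, independent of $\Lambda$ and $n$, such that the integrand is bounded in absolute value by $A n d^{-n}\Vert \varphi \Vert_{C^1}$. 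Integrating over $X$ (of total $\bar\nu$-mass $1$) gives
$$
\bigl|\bigl\langle \Theta^n(S_n) - \mu^+, \varphi\bigr\rangle\bigr| \leq A n d^{-n}\Vert \varphi \Vert_{C^1},
$$
which tends to $0$ as $n \to \infty$ because $d \geq 2$. This proves the stated convergence in the sense of currents and, as a bonus, gives an explicit $O(nd^{-n})$ rate.

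The main obstacle is the first step: justifying rigorously that the averaged pull-back $\Theta$ can be iterated and rewritten as a single integral over $X$. The critical points are the commutation of $H_\lambda^{\ast}$ with the $d\nu$-integral at every layer and the $\bar\nu$-measurability of $\Lambda \mapsto \langle (H_{n,\Lambda}^+)^{\ast} S_n, \varphi\rangle$; both follow from continuous dependence of $H_\lambda$ on $\lambda \in M$ together with the uniform mass identity for $(H_{n,\Lambda}^+)^{\ast} S_n$. Once these measure-theoretic points are in place, Theorem \ref{R thm2} and Proposition \ref{R pr2} deliver the conclusion immediately.
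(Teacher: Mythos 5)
Your proof is correct and takes essentially the same route as the paper: unfold $\Theta^n(S_n)$ into $d^{-n}\int_X (H_{n,\Lambda}^+)^*S_n\,d\bar\nu$, invoke Theorem \ref{R thm2} fiberwise, and integrate. The only (harmless) variation is at the final step: the paper observes that $F_n(\Lambda):=\langle d^{-n}(H_{n,\Lambda}^+)^*S_n,\varphi\rangle$ converges pointwise to $\langle\mu_\Lambda^+,\varphi\rangle$ with the uniform bound $|F_n(\Lambda)|\lesssim\|\varphi\|_\infty$ and concludes by dominated convergence, whereas you use the stronger $\Lambda$-uniform estimate $And^{-n}\|\varphi\|_{C^1}$ (which the paper records in the remark following Theorem \ref{R thm2}) and simply integrate it over the probability space $X$, which has the pleasant side effect of yielding an explicit $O(nd^{-n})$ rate.
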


Let $A$ be a closed subset of a complex manifold $X$. We say that $A$ is {\it{rigid}} in $X$ if $A$ supports at most one non-zero positive closed $(1,1)$-current upto a multiplicative constant.

\medskip

For each $\Lambda\in X$, let $\overline{K_\Lambda^+}$ is the closure of $K_\Lambda$ in $\mathbb{P}^2$, i.e., $\overline{K_\Lambda^+}=K_\Lambda \cup I^+$.  We prove the following.

\begin{thm}\label{R thm4}
For each $\Lambda\in X$, the Green current $\mu_\Lambda^+$ is the unique closed positive $(1,1)$-current of mass $1$ supported on the set $\overline{K_\Lambda^+}$. Hence $\overline{K_\Lambda^+}$ is rigid in $\mathbb{P}^2$.
\end{thm}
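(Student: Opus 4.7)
The plan is to use Theorem \ref{R thm2} together with a $dd^c$-comparison to reduce the uniqueness to showing that a certain potential is constant. First, I would observe the key geometric fact that $I^-\notin\overline{K_\Lambda^+}$: since $\overline{K_\Lambda^+}=K_\Lambda^+\cup\{I^+\}$ and $I^+\neq I^-$ in $\mathbb{P}^2$, any positive closed $(1,1)$-current $S$ of mass $1$ with $\text{supp}(S)\subseteq\overline{K_\Lambda^+}$ vanishes on an open neighbourhood $U$ of $I^-$. Hence any local quasi-potential of $S$ on $U$ is smooth and bounded. The Green current $\mu_\Lambda^+$, supported on $J_\Lambda^+\subseteq\overline{K_\Lambda^+}$, enjoys the same property.

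Next, I would apply Theorem \ref{R thm2} to the two constant sequences $S_n\equiv S$ and $S_n\equiv \mu_\Lambda^+$, both of which satisfy the uniform-quasi-potential hypothesis by the above observation. This yields
\[
d^{-n}\bigl(H_{\lambda_n}\circ\cdots\circ H_{\lambda_1}\bigr)^{*}S \longrightarrow \mu_\Lambda^+ \text{ and } d^{-n}\bigl(H_{\lambda_n}\circ\cdots\circ H_{\lambda_1}\bigr)^{*}\mu_\Lambda^+ \longrightarrow \mu_\Lambda^+
\]
exponentially fast, as currents on $\mathbb{P}^2$. (The second convergence may also be read off from the pullback rule in Proposition \ref{R pr2} together with the continuity $\Lambda\mapsto \mu_\Lambda^+$.) Since both $S$ and $\mu_\Lambda^+$ have mass $1$ and $H^{1,1}(\mathbb{P}^2,\mathbb{R})\cong\mathbb{R}$, the global $dd^c$-lemma produces a function $w$---a difference of two $\omega_{FS}$-plurisubharmonic functions---with $S-\mu_\Lambda^+=dd^c w$. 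Both currents vanish on $\mathbb{P}^2\setminus\overline{K_\Lambda^+}$, so $w$ is pluriharmonic there and in particular bounded on $U$. Subtracting the two convergences yields
\[
dd^c\bigl(d^{-n}\, w\circ H_{\lambda_n}\circ\cdots\circ H_{\lambda_1}\bigr)\longrightarrow 0
\]
as currents on $\mathbb{P}^2$.

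The main remaining step---and the hardest---is to deduce from this that $w$ must be constant, which would give $S=\mu_\Lambda^+$ and hence the rigidity of $\overline{K_\Lambda^+}$. I plan to combine the local boundedness of $w$ near $I^-$ with a normal-families argument on $\mathbb{P}^2$: the $dd^c$-vanishing above together with the uniform-in-$\Lambda$ exponential rate in Theorem \ref{R thm2} should force any cluster point of $d^{-n}\, w\circ (H_{\lambda_n}\circ\cdots\circ H_{\lambda_1})$ to be a global pluriharmonic function on $\mathbb{P}^2$, hence a constant; tracing this back via the $dd^c$-lemma then gives $w$ constant and $S=\mu_\Lambda^+$. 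The principal difficulty will lie in controlling these pullbacks despite the fact that $w$ is a priori unbounded outside $U$, and in handling the birational nature of the Hénon extensions to $\mathbb{P}^2$---specifically, that the indeterminacy point $I^+$ sits inside $\overline{K_\Lambda^+}$ so that $w$ can have singularities there.
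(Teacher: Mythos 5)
Your proposal correctly identifies the two essential ingredients — that $I^-$ lies outside $\overline{K_\Lambda^+}$, so any positive closed $(1,1)$-current supported there has a bounded quasi-potential near $I^-$, and that Theorem~\ref{R thm2} is the engine that should drive the uniqueness — but it misses the one maneuver that makes the proof close in a single step, and the substitute you sketch is left genuinely incomplete.

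The paper does not apply Theorem~\ref{R thm2} to the constant sequence $S_n\equiv S$. Instead it sets
\[
S_{n,\Lambda}\;=\;d^{\,n}\bigl(H_{\lambda_n}\circ\cdots\circ H_{\lambda_1}\bigr)_{*}S
\]
on $\mathbb{C}^2$, extended trivially through the line at infinity. Two things then happen simultaneously. First, since $H_{\lambda_n}\circ\cdots\circ H_{\lambda_1}$ maps $K_\Lambda^+$ onto $K_{\Lambda_n}^+$, the current $S_{n,\Lambda}$ has support in $\overline{K_{\Lambda_n}^+}$, and hence (after normalizing to mass $1$ and invoking Proposition~8.3.6 of \cite{MNTU} for a potential of the form $\log|y|+\text{bounded}$ on $V_R^+$) the whole sequence $\{S_{n,\Lambda}\}$ has quasi-potentials uniformly bounded on a fixed neighborhood of $I^-$, exactly as Theorem~\ref{R thm2} requires. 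Second — and this is the step you do not have — because each $H_\lambda$ is a biholomorphism of $\mathbb{C}^2$, the operations of pushing forward and pulling back are inverse to each other on currents living in $\mathbb{C}^2$, so
\[
d^{-n}\bigl(H_{\lambda_n}\circ\cdots\circ H_{\lambda_1}\bigr)^{*}S_{n,\Lambda}\;=\;S
\]
\emph{exactly}, for every $n$, as currents on $\mathbb{P}^2$. Theorem~\ref{R thm2} says the left-hand side converges to $\mu_\Lambda^+$; since it is constant equal to $S$, we get $S=\mu_\Lambda^+$ with no further work.

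By contrast, your route produces only $d^{-n}(H_{\lambda_n}\circ\cdots\circ H_{\lambda_1})^{*}S\to\mu_\Lambda^+$, which on its own does not imply $S=\mu_\Lambda^+$: the displayed convergence is a statement about the long-run behaviour of pullbacks, not about $S$ itself. You then try to remedy this by passing to the quasi-potential $w$ of $S-\mu_\Lambda^+$ and arguing that $w$ must be constant via a normal-families argument, but you explicitly flag this as the hard step and do not carry it out. That gap is real: $w$ is a priori only d.s.h., it can be unbounded along $J_\Lambda^+$ and at the indeterminacy point $I^+$, and the quantity $d^{-n}\,w\circ(H_{\lambda_n}\circ\cdots\circ H_{\lambda_1})$ need not converge pointwise, let alone to a global pluriharmonic (hence constant) function on $\mathbb{P}^2$; the only thing Theorem~\ref{R thm2} controls is the current $dd^c$ of this expression, not the function itself. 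So as written, the proposal does not prove the theorem. The missing idea is precisely the push-forward-then-pull-back device of the paper, which converts the limit statement into an identity and sidesteps the delicate potential-theoretic analysis entirely.
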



So far, we have considered two different families of H\'{e}non maps, namely the pseudo-random case when $\sigma:M\ra M$ is assumed to be merely continuous and secondly, the random case in which $\sigma$ corresponds to a right shift on a set of sequences. In case $\sigma:M\ra M$ is a homeomorphism where $M$ is compact subset of $\mathbb{C}^k$ for $k\geq 1$, it is possible to make some further remarks on the global dynamics of the skew map $H: M\times \mathbb{C}^2 \ra M\times \mathbb{C}^2$, defined by
\begin{equation}
H(\lambda,x,y)=(\sigma(\lambda), H_\lambda(x,y)) \label{homeo}
\end{equation}
where for each $\lambda\in M$, $H_\lambda$ as in (\ref{1.5}). 
Define 
\begin{equation*}
\mathcal{G}^{\pm}_n(\lambda,x,y)= \frac{1}{d^n} \log^+ \lVert H^{\pm n}(\lambda,x,y)\rVert
\end{equation*}
for $n\geq 0$ and for $(\lambda,x,y)\in M\times \mathbb{C}^2$. 

\medskip
 
One can easily check that 
\begin{equation}
H^n(\lambda,x,y)=\big (\sigma^n(\lambda), H_{\sigma^{n-1}(\lambda)}\circ H_{\sigma^{n-2}(\lambda)}\circ\cdots\circ H_\lambda(x,y)\big )
\end{equation}
and
\begin{equation}
H^{-n}(\lambda,x,y)=\big (\sigma^{-n}(\lambda), H_{\sigma^{-n}(\lambda)}^{-1}\circ H_{\sigma^{-{(n-1)}}(\lambda)}^{-1}\circ\cdots\circ H_{\sigma^{-1}(\lambda)}^{-1}(x,y)\big ). 
\end{equation}
For each $n\geq 0$ and $\lambda\in M$, define
$$
\mathcal{H}_\lambda^n= H_{\sigma^{n-1}(\lambda)}\circ H_{\sigma^{n-2}(\lambda)}\circ\cdots\circ H_\lambda
$$
and
$$
\mathcal{H}_\lambda^{-n}= H_{\sigma^{-n}(\lambda)}^{-1}\circ H_{\sigma^{-{(n-1)}}(\lambda)}^{-1}\circ\cdots\circ H_{\sigma^{-1}(\lambda)}^{-1}.
$$
Note that $\mathcal{H}_\lambda^{\pm n}$ is the second coordinate of the $n$-fold iterate of $H^{\pm 1}$. For simplicity $\mathcal{H}_\lambda^n$ will be denoted by $H_\lambda^n$ since they both are the same map. 

\medskip

Now we give definitions of another set of fibered Green functions which behave well with respect to the map $\sigma$ and also harmonize satisfactorily with the global Green functions on $M\times \mathbb{C}^2$. For each $n\geq 0$ and $\lambda\in M$, define
\begin{equation}
\mathcal{G}^{\pm}_{n,\lambda}(x,y)=\frac{1}{d^n} \log^+ \lVert \mathcal{H}_\lambda^{\pm n}(x,y)\rVert \label{G0}
\end{equation}  
for $(x,y)\in \mathbb{C}^2$ where $\log^+(t)=\max \{\log t,0\}$. Note that in general, for each $n\geq 1$ and for each $\lambda\in M$,  $G_{n,\lambda}^+=\mathcal{G}_{n,\lambda}^+$ and $G_{n,\lambda}^- \neq \mathcal{G}_{n,\lambda}^-$.

\medskip

Now for each $\lambda\in M$, we define the sets $\mathcal{I}_\lambda^{\pm}$ of escaping points and the sets $\mathcal{K}_\lambda^{\pm}$ of non-escaping points under suitable random iteration determined by $\sigma$. These definitions appear as follows:
\[
\mathcal{I}_\lambda^{\pm}=\Big \{z\in \mathbb{C}^2: \lVert (\mathcal{H}_\lambda^{\pm n}(z))\rVert\ra \infty \text{ as } n\ra \infty \Big \}
\] 
and
\[
\mathcal{K}_\lambda^{\pm}=\Big \{z\in \mathbb{C}^2: {\big (\mathcal{H}_\lambda^{\pm n}(z)\big )}_{n\geq 0} \text{ is bounded in } \mathbb{C}^2\Big \}.
\]
For each $\lambda\in M$, we have $H_\lambda(\mathcal{K}_\lambda^{\pm})=\mathcal{K}_{\sigma(\lambda)}^{\pm}$ and $H_\lambda(\mathcal{I}_\lambda^{\pm})=\mathcal{I}_{\sigma(\lambda)}^{\pm}$. Define $\mathcal{K}_\lambda=\mathcal{K}_\lambda^+ \cap \mathcal{K}_\lambda^{-}$, $\mathcal{J}_\lambda^{\pm}=\partial \mathcal{K}_\lambda^{\pm}$ and $\mathcal{J}_\lambda=\mathcal{J}_\lambda^+\cap \mathcal{J}_\lambda^{-}$. Thus $H_\lambda(\mathcal{K}_\lambda)= \mathcal{K}_{\sigma(\lambda)}$ and $H_\lambda(\mathcal{J}_\lambda)=\mathcal{J}_{\sigma(\lambda)}$.  

\medskip

Define the sets of escaping points $\mathcal{I}^{\pm}$ and the sets of non-escaping points $\mathcal{K}^{\pm}$ under iteration of $H$ as follows:
\[
\mathcal{I}^{\pm}=\Big \{(\lambda,z)\in M\times \mathbb{C}^2: \lVert (H^{\pm n}(\lambda,x,y))\rVert\ra \infty \text{ as } n\ra \infty \Big \}
\]
and
\[
\mathcal{K}^{\pm}=\Big \{(\lambda,z)\in M\times \mathbb{C}^2: {\big (H^{\pm n}(\lambda,x,y)\big )}_{n\geq 0} \text{ is bounded in } M\times \mathbb{C}^2\Big \}.
\]

\medskip

It is easy to check that
\begin{equation}\label{Esc}
\mathcal{I}^{\pm}= \bigcup_{\lambda\in M} \{\lambda\}\times \mathcal{I}_\lambda^{\pm}
\text{ and } 
\mathcal{K}^{\pm}= \bigcup_{\lambda\in M} \{\lambda\}\times \mathcal{K}_\lambda^{\pm}
\end{equation}
which are by definition both forward and backward invariant under the map $H$.

\medskip

As Proposition \ref{pr1}, it can be shown that the sequence of functions $\mathcal{G}_{n,\lambda}^{\pm}$ converges uniformly on compact subset of 
$\mathbb{C}^2$ to a continuous function $\mathcal{G}_\lambda^{\pm}$ as $n\ra \infty$ which satisfies
\begin{equation}
\mathcal{G}_{\sigma(\lambda)}^{\pm}\circ H_\lambda=d^{\pm 1}\mathcal{G}_\lambda^{\pm} \label{G1}
\end{equation}
on $\mathbb{C}^2$. The functions $\mathcal{G}_\lambda^{\pm}$ are positive pluriharmonic on $\mathbb{C}^2\setminus \mathcal{K}_\lambda^{\pm}$, plurisubharmonic on $\mathbb{C}^2$ and vanishes precisely on $\mathcal{K}_\lambda^{\pm}$. The correspondences $\lambda \mapsto \mathcal{G}_\lambda^{\pm}$ are continuous and $\mathcal{G}_\lambda^{\pm}$ are locally uniformly H\"{o}lder continuous on $\mathbb{C}^2$.

\medskip

Thus for each $\lambda\in M$, $\vartheta_\lambda^{\pm}= \frac{1}{2\pi}dd^c \mathcal{G}_\lambda^{\pm}$ are positive closed $(1,1)$-currents on $\mathbb{C}^2$ of mass $1$ and hence $\vartheta_\lambda=\vartheta_\lambda^+ \wedge \vartheta_\lambda^-$ gives a family of probability measures on $\mathbb{C}^2$. 

\medskip

Using similar techniques as Proposition \ref{pr2}, one can prove that for $\vartheta_{\la}^{\pm} \text{ and } \vartheta_{\la}$, we have
\[
(H_{\la})^{\ast} \vartheta_{\si(\la)}^{\pm} = d^{\pm 1} \vartheta_{\la}^{\pm}, \; (H_{\la})_{\ast} \vartheta_{\la}^{\pm} = d^{\mp 1} \vartheta_{\si(\la)}^{\pm}, \; (H_{\la})^{\ast} \vartheta_{\sigma(\la)}=\vartheta_\lambda.
\]
The support of $\vartheta^{\pm}_{\la}$ equals $\mathcal{J}_{\la}^{\pm}$ and the correspondence $\la \mapsto \mathcal{J}_{\la}^{\pm}$ is lower-semicontinuous. Furthermore, for each $\lambda\in 
M$, the pluricomplex Green function of $\mathcal{K}_\lambda$ is $\max\{\mathcal{G}_\lambda^+, \mathcal{G}_\lambda^-\}$, $\vartheta_\lambda$ is the complex
equilibrium measure of $\mathcal{K}_\lambda$ and ${\rm supp}(\vartheta_\lambda)\subseteq \mathcal{J}_\lambda$.

\begin{prop}\label{G pr1}
The sequences $\mathcal{G}_{n}^{\pm}$ converge uniformly on compact subsets of $M \times \mbb C^2$ to a continuous plurisubharmonic function $\mathcal{G}^{\pm}$ as $n \ra \infty$ that satisfies
\begin{equation}
d^{\pm} \mathcal{G}^{\pm} = \mathcal{G}^{\pm} \circ H^{\pm 1}.\label{G2}
\end{equation}
The functions $\mathcal{G}^{\pm}$ are positive on $(M\times \mbb C^2) \setminus \mathcal{K}^{\pm}$ and vanish precisely on $\mathcal{K}^{\pm}$.
\end{prop}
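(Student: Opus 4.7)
The plan is to reduce everything to the fibre statements about $\mathcal{G}_{n,\la}^{\pm}$ that have already been asserted just above the proposition, and then observe that the $M$-component of $H^{\pm n}$ contributes nothing in the limit. Since $M$ is a compact subset of $\mbb C^k$, there is a constant $R_0 > 0$ with $\Vert \mu \Vert \le R_0$ for every $\mu \in M$; in particular $\Vert \si^{\pm n}(\la) \Vert \le R_0$ for all $n \ge 0$ and all $\la \in M$. Combining the identity $H^{\pm n}(\la, x, y) = (\si^{\pm n}(\la), \mathcal{H}_{\la}^{\pm n}(x, y))$ with the elementary inequality $\log^+(a+b) \le \log^+ a + \log^+ b + \log 2$ yields the sandwich
\[
\mathcal{G}_{n, \la}^{\pm}(x, y) \;\le\; \mathcal{G}_n^{\pm}(\la, x, y) \;\le\; \mathcal{G}_{n, \la}^{\pm}(x, y) + \frac{\log^+ R_0 + \log 2}{d^n}.
\]
As already noted in the text, the convergence $\mathcal{G}_{n,\la}^{\pm} \to \mathcal{G}_{\la}^{\pm}$ on compact subsets of $\mbb C^2$ is (by direct analogy with Proposition \ref{pr1}) independent of $\la$. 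Consequently, defining $\mathcal{G}^{\pm}(\la, x, y) := \mathcal{G}_{\la}^{\pm}(x, y)$, the sandwich estimate immediately upgrades this to uniform convergence of $\mathcal{G}_n^{\pm}$ to $\mathcal{G}^{\pm}$ on compact subsets of $M \times \mbb C^2$.

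Joint continuity of each $\mathcal{G}_n^{\pm}$ on $M \times \mbb C^2$ is clear: $\si$ is continuous, the coefficients of $H_\mu$ depend continuously on $\mu$, and $\mathcal{H}_{\la}^{\pm n}$ is a finite composition, so $(\la, x, y) \mapsto H^{\pm n}(\la, x, y)$ is jointly continuous, and $\log^+ \Vert \cdot \Vert$ preserves continuity. The uniform limit $\mathcal{G}^{\pm}$ is therefore continuous on $M \times \mbb C^2$. Plurisubharmonicity is a slicewise statement: for each fixed $\la$, the slice $\mathcal{G}^{\pm}(\la, \cdot)$ equals $\mathcal{G}_{\la}^{\pm}$, which has already been shown to be plurisubharmonic on $\mbb C^2$.

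The invariance (\ref{G2}) drops out of (\ref{G1}) under the identification $\mathcal{G}^{\pm}(\la, x, y) = \mathcal{G}_{\la}^{\pm}(x, y)$: for the forward function,
\[
\mathcal{G}^+(H(\la, x, y)) = \mathcal{G}_{\si(\la)}^+(H_{\la}(x, y)) = d\,\mathcal{G}_{\la}^+(x, y) = d\,\mathcal{G}^+(\la, x, y),
\]
while the same computation with the lower sign gives $\mathcal{G}^- \circ H = d^{-1}\mathcal{G}^-$; composing with $H^{-1}$ (which makes sense because $\si$ is a homeomorphism and each $H_\mu$ is a polynomial automorphism of $\mbb C^2$) rearranges this to $\mathcal{G}^- \circ H^{-1} = d\,\mathcal{G}^-$. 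The description of the zero set is now immediate from (\ref{Esc}): $(\la, x, y) \in \mathcal{K}^{\pm}$ iff $(x, y) \in \mathcal{K}_{\la}^{\pm}$ iff $\mathcal{G}_{\la}^{\pm}(x, y) = 0$ iff $\mathcal{G}^{\pm}(\la, x, y) = 0$; strict positivity on the complement follows identically from the fibre version.

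The only real point requiring attention is the passage from the global norm $\Vert H^{\pm n}(\la, x, y) \Vert$ to the fibre norm $\Vert \mathcal{H}_{\la}^{\pm n}(x, y) \Vert$; compactness of $M$ forces this error to be $O(d^{-n})$ uniformly in $(\la, x, y)$, after which the rest of the proposition is a direct transcription of the already-established fibre statements.
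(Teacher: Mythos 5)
Your proposal is correct and takes essentially the same route as the paper: identify $\mathcal{G}^{\pm}(\lambda,z)$ with the fibre function $\mathcal{G}_{\lambda}^{\pm}(z)$, transfer the convergence and the functional equation from (\ref{G1}), and read off the vanishing set from (\ref{Esc}). Where you improve on the paper's terse version is the explicit sandwich estimate controlling the difference between $\log^+\Vert H^{\pm n}(\la, x, y)\Vert$ and $\log^+\Vert \mathcal H_\la^{\pm n}(x,y)\Vert$ via the boundedness of $M$; the paper simply asserts $\mathcal{G}^{\pm}(\la,z)=\mathcal{G}_\la^{\pm}(z)$ without justifying that the $\sigma^{\pm n}(\la)$ component washes out, so making this a clean $O(d^{-n})$ bound uniformly in $(\la,x,y)$ is a genuine tightening. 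One small remark on the minus sign: what you derive, $\mathcal{G}^- \circ H = d^{-1}\mathcal{G}^-$ (equivalently $\mathcal{G}^-\circ H^{-1}=d\,\mathcal{G}^-$), is what the paper's proof also obtains and is consistent with the pullback identity $H^*\mathcal{T}^{\pm}=d^{\pm 1}\mathcal{T}^{\pm}$ stated just after the proposition; the display (\ref{G2}) as printed, $d^{-1}\mathcal{G}^-=\mathcal{G}^-\circ H^{-1}$, has a sign typo, and your computation is the correct one.
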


Therefore, $\mathcal{T}^{\pm}=dd^c \mathcal{G}^{\pm}$ are positive closed $(1,1)$-currents on $M\times \mathbb{C}^2$. Further, by (\ref{G2}), we have the following:
\[
H^*(\mathcal{T}^{\pm})=d^{\pm 1}\mathcal{T}^{\pm}.
\]

\medskip

For every probability measure $\vartheta'$ on $M$,
\begin{equation}
\langle \vartheta, \varphi \rangle = \int_{M} \bigg( \int_{\{\la\} \times \mbb C^2} \varphi \; \vartheta_{\la} \bigg) \vartheta'(\la)\label{mu}
\end{equation}
defines a measure on $M \times \mbb C^2$ by describing its action on continuous functions $\varphi$ on $M \times \mbb C^2$. This is not a dynamically natural measure as $\vartheta'$ is arbitrary. Note that,
since for each $\lambda\in M$, we have $H_\lambda^*(\vartheta_{\sigma(\lambda)})=\vartheta_\lambda$, it follows that $H^*(\vartheta)=\vartheta$ on $M\times \mathbb{C}^2$ when $\vartheta'$ is an invariant measure for $\sigma$ on $M$.

\medskip

Let $T$ be a measure preserving transformation on a probability space $(X,\vartheta)$. Then $T$ is called {\it{mixing}} for the measure $\vartheta$ if 
\[
\lim_{n\ra \infty} \int (T^{n*}\varphi)\psi d\vartheta = \int \varphi d\vartheta \int \psi d\vartheta
\]
for all continuous functions $\varphi$ and $\psi$ with compact support in $X$. We have the following theorem.

\begin{thm}\label{G thm2}
If $\sigma$ is mixing for $\vartheta'$, then $H$ is mixing for $\vartheta$.
\end{thm}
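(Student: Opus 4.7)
The plan is to reduce mixing of $H$ on $M\times \mathbb{C}^2$ to the given mixing of $\sigma$ on $M$, with the help of the covariance of the fibered Green currents. By Stone--Weierstrass, the linear span of products $\varphi(\lambda,z)=f(\lambda)\chi(z)$ with $f\in C(M)$ and $\chi\in C_c(\mathbb{C}^2)$ is uniformly dense in $C_c(M\times \mathbb{C}^2)$, so it suffices to verify the mixing identity for $\varphi(\lambda,z)=f(\lambda)\chi(z)$ and $\psi(\lambda,z)=g(\lambda)\eta(z)$ of this form. Using $H^n(\lambda,z)=(\sigma^n\lambda,\mathcal{H}_\lambda^n z)$ together with (\ref{mu}), Fubini gives
\begin{equation*}
\int (\varphi\circ H^n)\,\psi\, d\vartheta \;=\; \int_M f(\sigma^n\lambda)\,g(\lambda)\,\Phi_n(\lambda)\,d\vartheta'(\lambda),
\end{equation*}
where $\Phi_n(\lambda):=\int_{\mathbb{C}^2}\chi(\mathcal{H}_\lambda^n z)\,\eta(z)\,d\vartheta_\lambda(z)$.

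The decisive intermediate step is a \emph{fibered mixing} claim: as $n\to\infty$,
\begin{equation*}
\Phi_n(\lambda)-\bar\chi(\sigma^n\lambda)\,\bar\eta(\lambda)\;\longrightarrow\; 0 \quad\text{uniformly in }\lambda\in M,
\end{equation*}
where $\bar\chi(\mu):=\int\chi\,d\vartheta_\mu$ and $\bar\eta(\mu):=\int\eta\,d\vartheta_\mu$. I would establish this by writing $\vartheta_\lambda=\vartheta_\lambda^+\wedge \vartheta_\lambda^-$ and using the covariance $(\mathcal{H}_\lambda^n)^*\vartheta_{\sigma^n\lambda}^{\pm}=d^{\pm n}\vartheta_\lambda^{\pm}$ to recast
\begin{equation*}
\Phi_n(\lambda)\;=\;d^{-n}\!\int (\mathcal{H}_\lambda^n)^*\!\bigl(\chi\,\vartheta_{\sigma^n\lambda}^+\bigr)\wedge \eta\,\vartheta_\lambda^-,
\end{equation*}
and then invoking a fibered Bedford--Smillie convergence in the spirit of Theorem \ref{thm1}, which tells us that $d^{-n}(\mathcal{H}_\lambda^n)^*(\chi\,\vartheta_{\sigma^n\lambda}^+)$ is asymptotically $\bar\chi(\sigma^n\lambda)\,\vartheta_\lambda^+$; pairing this with $\eta\,\vartheta_\lambda^-$ and using $\vartheta_\lambda^+\wedge \vartheta_\lambda^-=\vartheta_\lambda$ produces the constant $\bar\chi(\sigma^n\lambda)\,\bar\eta(\lambda)$. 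The principal obstacle is controlling this pullback convergence \emph{uniformly} in $\lambda$, which is delicate because both the source current $\chi\,\vartheta_{\sigma^n\lambda}^+$ and the composition $\mathcal{H}_\lambda^n$ vary simultaneously with $n$ and $\lambda$; the uniform H\"older estimates for $\mathcal{G}_\lambda^{\pm}$, the continuity of $\mu\mapsto \vartheta_\mu^{\pm}$, and the uniform filtration $V_R$, $V_R^\pm$ are the natural ingredients needed to obtain the necessary uniformity.

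Granting the fibered mixing claim, boundedness of $f,g$ and uniform convergence of $\Phi_n$ give
\begin{equation*}
\int (\varphi\circ H^n)\,\psi\,d\vartheta \;=\; \int_M F(\sigma^n\lambda)\,G(\lambda)\,d\vartheta'(\lambda)+o(1),
\end{equation*}
with $F(\mu):=f(\mu)\,\bar\chi(\mu)$ and $G(\mu):=g(\mu)\,\bar\eta(\mu)$. Continuity of $\mu\mapsto \vartheta_\mu$ guarantees $F,G\in C(M)$, and the assumed mixing of $\sigma$ for $\vartheta'$ then yields
\begin{equation*}
\int_M (F\circ \sigma^n)\,G\,d\vartheta'\;\longrightarrow\; \int_M F\,d\vartheta'\cdot \int_M G\,d\vartheta'.
\end{equation*}
Unwinding (\ref{mu}) gives $\int F\,d\vartheta' = \int \varphi\,d\vartheta$ and $\int G\,d\vartheta' = \int \psi\,d\vartheta$, completing the proof on product test functions and therefore, by density, for all $\varphi,\psi\in C_c(M\times \mathbb{C}^2)$.
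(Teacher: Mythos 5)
Your strategy mirrors the paper's: reduce mixing of $H$ to mixing of $\sigma$ on the base plus a fibered decorrelation estimate, which is exactly the content of the paper's Lemma \ref{G le1}. The Stone--Weierstrass reduction to product test functions is a harmless variant; the paper handles general $\varphi,\psi$ directly by introducing $\tilde\varphi(\lambda)=\langle\vartheta_\lambda,\varphi_\lambda\rangle$ and $\tilde\psi(\lambda)=\langle\vartheta_\lambda,\psi_\lambda\rangle$ and arrives at the same split. However, the point you flag as the ``principal obstacle'' is not where the difficulty lies, while a step you treat as routine is where the real work is.

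Uniformity in $\lambda$ is not required: $\lvert\Phi_n(\lambda)\rvert\le\lVert\chi\rVert_\infty\lVert\eta\rVert_\infty$ is uniformly bounded and $\vartheta'$ is a probability measure, so pointwise convergence of $\Phi_n(\lambda)-\bar\chi(\sigma^n(\lambda))\bar\eta(\lambda)$ together with the dominated convergence theorem handles the integral over $M$; the paper uses exactly this device. The genuine gap is the step where you ``pair'' the weakly convergent currents $d^{-n}(\mathcal{H}_\lambda^n)^*(\chi\vartheta_{\sigma^n(\lambda)}^+)$ against $\eta\vartheta_\lambda^-$ and conclude that the wedge products converge. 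Weak convergence of positive closed $(1,1)$-currents does not commute with wedging against a fixed current whose local potential $\mathcal{G}_\lambda^-$ is merely continuous, so this step cannot be taken for granted. The paper handles it in (\ref{G5}) by writing $\vartheta_\lambda^-=\frac{1}{2\pi}dd^c\mathcal{G}_\lambda^-$ and integrating by parts so that $dd^c$ lands on the smooth test function; the resulting auxiliary terms involving $d$, $d^c$, $dd^c$ of the fiberwise test data are then shown to vanish as $n\to\infty$ by the analogues of Theorem \ref{thm1}(i)--(ii). This integration-by-parts argument, together with the mass computation that pins down the coefficient $\bar\chi(\sigma^n(\lambda))$ in Lemma \ref{G le1}, is the technical crux of the proof and cannot be elided.
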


One can also generalize the notion of mixing in random set up. In \cite{P}, the concept of random mixing is discussed for a sequence of measures $\{\vartheta_n\}_{n\geq 0}$ which are preserved by a sequence of transformations $\{T_n\}_{n\geq 0}$ on a space $X$, i.e., $T_{n}^*(\vartheta_{n+1})= \vartheta_n$ on $X$ for each $n\geq 0$. 

\medskip

A measure preserving sequence ${\{T_n\}}_{n\geq 0}$ is randomly mixing for the sequence of measures ${\{\vartheta_n\}}_{n\geq 0}$ if for all continuous functions $\varphi$ and $\psi$ on $X$, the following holds:
\[
\int (T_n^* \varphi)\circ \psi d \vartheta_0- \int \varphi d \vartheta_n \int \psi \vartheta_0 \ra 0
\]   
as $n\ra \infty$.

\medskip

Note that for each $\lambda\in M$, the sequence of H\'{e}non maps $\{H_{\sigma^n(\lambda)}\}$ is measure preserving for the sequence of probability measures $\{\vartheta_{\sigma^n(\lambda)}\}$ on $\mathbb{C}^2$. The following proposition shows that in the above-mentioned system, mixing happens at the fiber level also.

\begin{prop}\label{G pr2}
For each $\lambda\in M$, the sequence of H\'{e}non maps ${\{H_{\sigma^n(\lambda)}\}}_{n\geq 0}$ is randomly mixing for the sequence of probability measures ${\{\vartheta_{\sigma^n(\lambda)}\}}_{n\geq 0}$.
\end{prop}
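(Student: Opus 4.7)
Let $\Phi_n := \mathcal{H}_\lambda^n$. Iterating the fiberwise identity $H_\mu^*\vartheta_{\sigma(\mu)} = \vartheta_\mu$ stated just above Proposition \ref{G pr1}, one obtains the measure-preservation condition $\Phi_n^*\vartheta_{\sigma^n(\lambda)} = \vartheta_\lambda$ as well as $\Phi_n^*\vartheta_{\sigma^n(\lambda)}^\pm = d^{\pm n}\vartheta_\lambda^\pm$ and $(\Phi_n)_*\vartheta_\lambda^- = d^n\vartheta_{\sigma^n(\lambda)}^-$. Random mixing amounts to
\[
I_n := \int (\varphi \circ \Phi_n)\,\psi \, d\vartheta_\lambda - \int \varphi \, d\vartheta_{\sigma^n(\lambda)} \cdot \int \psi \, d\vartheta_\lambda \longrightarrow 0
\]
for continuous, compactly supported $\varphi, \psi$ on $\mathbb{C}^2$, and the plan is to adapt the proof of mixing for a single H\'enon map of Bedford--Smillie \cite{BS3} to the present pseudo-random setting, using the fibered equidistribution Theorem \ref{thm1} as the principal ingredient.

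First I would substitute $\vartheta_\lambda^+ = d^{-n}\Phi_n^*\vartheta_{\sigma^n(\lambda)}^+$ into $d\vartheta_\lambda = \vartheta_\lambda^+ \wedge \vartheta_\lambda^-$ and apply the adjoint identity $\int \Phi_n^* R \wedge S = \int R \wedge (\Phi_n)_* S$ for the biholomorphism $\Phi_n$; a direct computation then reduces the mixing difference to
\[
I_n = d^{-n}\int \varphi \, \vartheta_{\sigma^n(\lambda)}^+ \wedge (\Phi_n)_*\bigl(\tilde\psi\, \vartheta_\lambda^-\bigr), \qquad \tilde\psi := \psi - \int \psi \, d\vartheta_\lambda,
\]
a mean-zero statement. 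Expanding $\tilde\psi \vartheta_\lambda^- = \frac{1}{2\pi}\tilde\psi\, dd^c \mathcal{G}_\lambda^-$ by the Leibniz rule into a $dd^c$-exact piece $dd^c(\tilde\psi \mathcal{G}_\lambda^-)$ plus three lower-order pieces involving $d\tilde\psi \wedge d^c \mathcal{G}_\lambda^-$, $d\mathcal{G}_\lambda^- \wedge d^c \tilde\psi$, and $\mathcal{G}_\lambda^- dd^c \tilde\psi$, and then integrating by parts against the closed current $\vartheta_\lambda^+$ (using $dd^c(\varphi \circ \Phi_n) = \Phi_n^*(dd^c\varphi)$), one finds that the principal term decays exponentially: it takes the form $\frac{1}{2\pi d^n}\int \tilde\psi\, \mathcal{G}_\lambda^- \cdot \Phi_n^*(dd^c\varphi \wedge \vartheta_{\sigma^n(\lambda)}^+)$, whose modulus is controlled by $d^{-n}\|\varphi\|_{C^2}$ times the uniform sup of $\tilde\psi\mathcal{G}_\lambda^-$ on the common filtration $V_R$ (the support of every $\vartheta_\mu^\pm$). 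The three lower-order contributions are handled by parallel integration-by-parts estimates, invoking parts $(i)$ and $(ii)$ of Theorem \ref{thm1} for the pull-back of $T \wedge d\tilde\psi$, $T \wedge d^c\tilde\psi$, and $T \wedge dd^c\tilde\psi$, together with the local H\"older continuity of $\mathcal{G}_\lambda^\pm$ recorded after Proposition \ref{G pr1}; the mean-zero condition $\int \tilde\psi \, d\vartheta_\lambda = 0$ ensures that the leading constants coming from each contribution cancel in the limit.

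The principal technical obstacle is that the target current $\vartheta_{\sigma^n(\lambda)}^+$ paired against the normalized pull-back itself varies with $n$, whereas Theorem \ref{thm1} is stated for pull-backs with a fixed target current. The remedy is supplied by the continuity $\mu \mapsto \vartheta_\mu^\pm$ of Proposition \ref{pr1}, the compactness of $M$, and the common filtration $V_R$ containing the supports of all $\vartheta_\mu^\pm$: given any subsequence $\{n_k\}$, a further extraction yields $\vartheta_{\sigma^{n_{k_j}}(\lambda)}^\pm \to \vartheta_{\mu_0}^\pm$ weakly for some $\mu_0 \in M$, whereupon the varying-target estimates reduce to fixed-target estimates governed directly by Theorem \ref{thm1}, and the uniform filtration provides the mass bounds needed to pass to the diagonal limit.
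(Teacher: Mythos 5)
The plan has two genuine gaps.

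First, the asserted exponential decay of the principal term is not correct. You bound $\frac{1}{2\pi d^n}\int \tilde\psi\, \mathcal{G}_\lambda^- \cdot \Phi_n^*(dd^c\varphi \wedge \vartheta_{\sigma^n(\lambda)}^+)$ by $d^{-n}$ times a sup of $\tilde\psi\,\mathcal{G}_\lambda^-$ ``on $V_R$,'' but the support of $\Phi_n^*(dd^c\varphi \wedge \vartheta_{\sigma^n(\lambda)}^+)$ is $\Phi_n^{-1}$ of a compact subset of $V_R$, and this preimage escapes $V_R$ as $n$ grows (points of $\mathcal{K}^+_{\sigma^n(\lambda)}\setminus\mathcal{K}_{\sigma^n(\lambda)}$ flow out into $V_R^-$ under $\Phi_n^{-1}$). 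In fact, the invariance $\mathcal{G}_\lambda^- = d^n\,\mathcal{G}_{\sigma^n(\lambda)}^-\circ\Phi_n$ shows that $\mathcal{G}_\lambda^-$ is of order $d^n$ on $\Phi_n^{-1}(\operatorname{supp}\varphi)$; a change of variables turns this term into $\frac{1}{2\pi}\int(\tilde\psi\circ\Phi_n^{-1})\,\mathcal{G}_{\sigma^n(\lambda)}^-\,dd^c\varphi\wedge\vartheta_{\sigma^n(\lambda)}^+$, with no $d^{-n}$ left over. This term is $O(1)$ and in fact is precisely where the nontrivial equidistribution argument must occur; it does not vanish for soft mass reasons, and the mean-zero normalization by itself does not save it.

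Second, the reduction of the $n$-dependent target current to Theorem~\ref{thm1} via subsequence extraction does not close the argument. Theorem~\ref{thm1} shows, for a \emph{fixed} $T$, that every limit point of $d^{-n}(H_\lambda^{+n})^*(\psi T)$ is \emph{some} positive multiple of $\mu_\lambda^+$, but for an arbitrary homeomorphism $\sigma$ the set $S_\lambda(\psi,T)$ can contain more than one point (see Remark~\ref{des}), so no constant is identified, and passing to a subsequential limit $\vartheta_{\mu_0}^\pm$ does not control the diagonal sequence $d^{-n}\Phi_n^*(\psi_{\sigma^n(\lambda)}\vartheta_{\sigma^n(\lambda)}^+)$. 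What is actually needed is the paper's Lemma~\ref{G le1}, which treats the $n$-dependent pulled-back current directly: the mass is pinned down by the explicit computation (\ref{C1}), and the \emph{identification} of the limit as $\vartheta_\lambda^+$ comes from the rigidity Theorem~\ref{R thm4} (uniqueness of the positive closed $(1,1)$-current of mass $1$ supported on $\overline{K_\Lambda^+}$) — an ingredient absent from your outline. The paper's proof of Proposition~\ref{G pr2} consequently does not subtract the mean; it leaves $dd^c\varphi_\lambda$ \emph{un}-pulled-back in (\ref{G5}), shows the three derivative terms vanish, and lets Lemma~\ref{G le1} produce the correct cross-product $\langle\vartheta_{\sigma^n(\lambda)},\psi\rangle\langle\vartheta_\lambda,\varphi\rangle$ in the limit. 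Your decomposition is structurally different and, as written, the key estimate fails.
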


It turns out that the support of $\vartheta$ is contained in
\[
\mathcal {J} =\overline{ \bigcup_{\la \in M} \left( \{ \la \} \times \mathcal{J}_{\la} \right)} \subset M \times V_R.
\]

So one can consider $H$ as a self map of ${\rm supp}(\vartheta$) with invariant measure $\vartheta$. The following theorem holds.

\begin{thm}\label{thm2} 
The measure theoretic entropy of $H$ with respect to $\vartheta$ is at least $\log d$. In particular, the topological
entropy of $H : \cal J \ra \cal J$ is at least $\log d$.
\end{thm}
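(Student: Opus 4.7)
The plan is to lower-bound $h_\vartheta(H)$ by reducing to a fibred entropy estimate via the Abramov--Rokhlin formula, and then to extract $\log d$ worth of fibred entropy from the degree identity $(H_\lambda)^{\ast}\vartheta^+_{\sigma(\lambda)}=d\,\vartheta^+_\lambda$ established earlier in this chapter as the analogue of Proposition~\ref{pr2} for the currents $\vartheta^{\pm}_\lambda$. Once $h_\vartheta(H)\ge\log d$ is in hand, the topological entropy bound on $\mathcal{J}$ is immediate from the variational principle, since $\vartheta$ is an $H$-invariant probability measure supported on the compact set $\mathcal{J}$.

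\medskip

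First, the projection $\pi\colon M\times\mathbb{C}^2\to M$ exhibits $(M,\vartheta',\sigma)$ as a measure-theoretic factor of $(M\times\mathbb{C}^2,\vartheta,H)$, where $\vartheta'$ is the $\sigma$-invariant measure used to build $\vartheta$ via (\ref{mu}). The Abramov--Rokhlin formula for skew products then gives
\[
h_\vartheta(H)\;=\;h_{\vartheta'}(\sigma)\;+\;h_\vartheta(H\mid\pi),
\]
and since $h_{\vartheta'}(\sigma)\ge 0$ it suffices to prove $h_\vartheta(H\mid\pi)\ge\log d$.

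\medskip

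For the fibred bound I would adapt the Bedford--Lyubich--Smillie calculation $h_\mu(f)=\log d$ for a single H\'{e}non map $f$, applied uniformly in $\lambda$ to the random composition $H_\lambda^{(n)}=H_{\sigma^{n-1}\lambda}\circ\cdots\circ H_\lambda$, which is polynomial of algebraic degree $d^n$. Concretely I would construct a finite measurable partition $\xi$ of $M\times V_R$ whose atoms sit inside single fibres, with each fibre partition $\xi_\lambda$ consisting of $d$ atoms of equal $\vartheta_\lambda$-mass $1/d$, chosen \emph{subordinate to the unstable lamination} of $\vartheta^-_\lambda$. Such a $\xi$ exists because $\vartheta_\lambda$ has no atoms and depends continuously on $\lambda$, and because the laminarity of the fibred Green currents is inherited fibrewise from single H\'{e}non map theory. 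Using the push-forward relation $(H_\lambda)_{\ast}\vartheta^-_\lambda=d^{-1}\vartheta^-_{\sigma(\lambda)}$, a transverse-measure computation on each fibre shows that the iterated refinement $\xi^{(n)}=\bigvee_{i=0}^{n-1}H^{-i}\xi$ satisfies $H_{\vartheta_\lambda}(\xi^{(n)}_\lambda)\ge n\log d$ for $\vartheta'$-a.e.\ $\lambda$. Integrating over the base and using the fibre disintegration of $\vartheta$,
\[
H_\vartheta\bigl(\xi^{(n)}\,\bigm|\,\pi^{-1}\mathcal{B}_M\bigr)\;=\;\int_M H_{\vartheta_\lambda}\!\bigl(\xi^{(n)}_\lambda\bigr)\,d\vartheta'(\lambda)\;\ge\;n\log d,
\]
and dividing by $n$ yields $h_\vartheta(H\mid\pi)\ge h_\vartheta(H,\xi\mid\pi)\ge\log d$.

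\medskip

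The hard step is the fibrewise estimate $H_{\vartheta_\lambda}(\xi^{(n)}_\lambda)\ge n\log d$, i.e.\ making the ``splits into $d$ pieces per iteration'' count rigorous. The cleanest route is through the Rokhlin--Ledrappier formalism for measurable partitions subordinate to the unstable lamination, in which the degree identity of Proposition~\ref{pr2} (in its $\vartheta^{\pm}_\lambda$ incarnation) translates into a transverse Jacobian of modulus $d$. Alternatively one could bypass the partition computation altogether and invoke a fibred Misiurewicz--Przytycki inequality applied to $H_\lambda^{(n)}$, whose algebraic degree is exactly $d^n$. In either implementation, the essential geometric input is that pull-back and push-forward by $H_\lambda$ scale the fibred Green currents by $d^{\pm 1}$, and it is this scaling that supplies the $\log d$ of entropy per iteration.
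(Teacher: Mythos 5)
Your proposal takes a genuinely different route from the paper, but it has two significant gaps.

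The paper follows the Bedford--Smillie strategy from \cite{BS3}: it approximates the invariant measure $\vartheta$ by measures $\vartheta_n$ built as averaged pushforwards of discrete probability measures $\rho_n = \frac{1}{2\pi d^n}[\mathcal{D}]\wedge(H_\lambda^n)^*\Theta$, controls the $\rho_n$-mass of atoms in $\bigvee_{j=0}^{n-1}H^{-j}\mathcal{P}$ by the area of $H^n(\mathcal B \cap (\mathcal D\times M))$, and then applies Misiurewicz's variational-principle argument together with Yomdin's theorem on subexponential volume growth to absorb the error term. No laminarity, no fibred Pesin theory, no conditional partitions along unstable manifolds; the $\log d$ enters through the normalizing factor $d^{-n}$ against the area of the image of a disc under $H_\lambda^n$.

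Your first gap is the proposed ``alternative'' via a Misiurewicz--Przytycki inequality. That inequality bounds $h_{\mathrm{top}}$ below by $\log$ of the \emph{topological} degree. The maps $H_\lambda^n$ are polynomial \emph{automorphisms}, so their topological degree is $1$, not $d^n$; it is only the \emph{algebraic} degree that equals $d^n$. Misiurewicz--Przytycki gives nothing here. The algebraic degree enters entropy estimates only through volume growth of subvarieties, which is precisely the Gromov/Yomdin mechanism that the paper actually uses.

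Your main route --- Abramov--Rokhlin plus a fibrewise Bedford--Lyubich--Smillie partition argument subordinate to the unstable lamination --- is conceptually coherent, since this is how one computes $h_\mu(f)=\log d$ \emph{exactly} for a single H\'enon map. But the essential inputs are not available here: nothing in this chapter (or the earlier ones) establishes that the fibred currents $\vartheta^-_\lambda$ are laminar, nor does it develop the random Pesin theory needed to construct measurable partitions subordinate to fibrewise unstable manifolds, nor is the ``transverse Jacobian of modulus $d$'' computation carried out. You flag the fibrewise estimate $H_{\vartheta_\lambda}(\xi^{(n)}_\lambda)\ge n\log d$ as ``the hard step,'' and that is exactly where the argument is missing rather than merely deferred. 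The degree identity $(H_\lambda)_*\vartheta^-_\lambda = d^{-1}\vartheta^-_{\sigma(\lambda)}$ controls masses of currents; converting it into a statement about conditional entropy of refined partitions requires the full laminar/Pesin machinery, which is a separate and substantial project. So while your outline points in a plausible direction, it is not a proof, and the paper's volume-growth argument is both self-contained and considerably lower-tech.
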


It would be both interesting and desirable to obtain the above results for an arbitrary continuous function $\sigma$ in (\ref{1}).

\medskip

We will now consider continuous families of holomorphic endomorphisms of $\mbb P^k$. For a compact metric space $M$, $\si$ a continuous self map of $M$, define $F : M \times \mbb P^k
\ra M \times \mbb P^k$ as
\begin{equation}
F(\la, z) = (\si(\la), f_{\la}(z)) \label{2}
\end{equation}
where $f_{\la}$ is a holomorphic endomorphism of $\mbb P^k$ that depends continuously on $\la$. Each $f_{\la}$ is assumed to have a fixed degree $d \ge 2$. Corresponding to each
$f_{\la}$  there exists a non-degenerate homogeneous holomorphic mapping $F_{\la} : \mbb C^{k+1} \ra \mbb C^{k+1}$ such that $\pi \circ F_{\la} = f_{\la} \circ \pi$ where
$\pi : \mbb C^{k+1} \setminus \{0\} \ra \mbb P^k$ is the canonical projection. Here, non-degeneracy means that $F_{\la}^{-1}(0) = 0$ which in turn implies that
there are uniform constants $l, L >0$ with
\begin{eqnarray}
l \Vert x \Vert^d \le \Vert F_{\la}(x) \Vert \le L \Vert x \Vert^d \label{ineq}
\end{eqnarray}
for all $\la \in M$ and $x \in \mbb C^{k+1}$. Therefore, for $0 < r \leq (2L)^{-1/(d-1)}$
\[
\Vert F_\lambda(x) \Vert \leq (1/2) \Vert x \Vert
\]
for all $\lambda\in M$ and $\Vert x \Vert \leq r$. Likewise, for $R\geq (2l)^{-1/(d-1)}$
\[
\Vert F_\lambda(x) \Vert \geq 2 \Vert x \Vert
\]
for all $\lambda\in M$ and  $\Vert x \Vert \geq R$.

\medskip

While the ergodic properties of such a family have been considered in \cite{T1}, \cite{T2} for instance, we are interested in looking at
the basins of attraction which may be defined for each $\la \in M$ as
\[
\mathcal A_{\la} =  \big\{ x \in \mbb C^{k+1} : F_{\si^{n-1}(\la)} \circ \cdots \circ F_{\si(\la)} \circ F_{\la}(x) \ra 0 \; \text{as} \; n \ra \infty \big\}
\]
and for each $\lambda\in M$, the region of normality $\Om'_{\la} \subset \mbb P^k$ which consists of all points $z \in \mbb P^k$ for which there is a neighborhood $V_z$ on which the sequence
$\big \{f_{\si^{n-1}(\la)} \circ \cdots \circ f_{\si(\la)} \circ f_{\la} \big\}_{n \ge 1}$ is normal. Analogs of $\mathcal A_{\la}$ arising from composing a given sequence of
automorphisms of $\mbb C^n$ have been considered in \cite{PW} where an example can be found for which these are not open in $\mbb C^n$. However, since each $F_{\la}$ is homogeneous, it
is straightforward to verify that each $\mathcal A_{\la}$ is a nonempty, pseudoconvex complete circular domain. As in the
case of a single holomorphic endomorphism of $\mbb P^k$ (see \cite{HP}, \cite{U}), the link between these two domains is provided by the Green function.

\medskip

For each $\la \in M$ and $n \ge 1$, let
\[
G_{n, \la}(x) = \frac{1}{d^n} \log \Vert F_{\si^{n-1}(\la)} \circ \cdots \circ F_{\si(\la)} \circ F_{\la}(x) \Vert.
\]

\begin{prop}\label{pr4}
For each $\la \in M$, the sequence $G_{n, \la}$ converges uniformly on $\mbb C^{k+1}$ to a continuous plurisubharmonic function $G_{\la}$ which satisfies
\[
G_{\la}(c x) = \log \vert c \vert + G_{\la}(x)
\]
for $c \in \mbb C^{\ast}$. Further, $d G_{\la} = G_{\si(\la)} \circ F_{\la}$, and $G_{\la_n} \ra G_{\la}$ locally uniformly on $\mbb C^{k+1} \setminus \{0\}$ as $\la_n \ra \la$ in
$M$. Finally,
\[
\mathcal A_{\la} = \{x \in \mbb C^{k+1} : G_{\la}(x) < 0\}
\]
for each $\la \in M$.
\end{prop}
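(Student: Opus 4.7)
The approach rests on the two-sided comparison (\ref{ineq}), which makes the sequence $\{G_{n,\la}\}$ uniformly Cauchy. First I would introduce the shorthand $H_{n,\la}(x) := F_{\sigma^{n-1}(\la)} \circ \cdots \circ F_\la(x)$, so that $H_{n+1,\la}(x) = F_{\sigma^n(\la)}(H_{n,\la}(x))$. Applying (\ref{ineq}) to $y = H_{n,\la}(x)$, taking logarithms and dividing by $d^{n+1}$, one obtains the telescoping estimate
\[
|G_{n+1,\la}(x) - G_{n,\la}(x)| \;\leq\; \frac{C}{d^{n+1}}, \qquad x \neq 0, \; \la \in M,
\]
with $C = \max\{|\log l|,|\log L|\}$, the constants $l, L$ being uniform in $\la$. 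Summing the geometric series, $\{G_{n,\la}\}$ converges uniformly on $\mbb{C}^{k+1}\setminus\{0\}$ (uniformly in $\la$ as well) to a continuous limit $G_\la$, which satisfies $|G_\la(x) - \log\|x\|| \leq C/(d-1)$. In particular $G_\la(x) \to -\infty$ as $x \to 0$, so extending $G_\la(0) := -\infty$ gives an upper semicontinuous function on $\mbb{C}^{k+1}$; since each $G_{n,\la}$ is plurisubharmonic and the convergence is locally uniform on $\mbb{C}^{k+1}\setminus\{0\}$, the limit inherits plurisubharmonicity there, and the logarithmic upper bound together with upper semicontinuity extends this to $\mbb{C}^{k+1}$.

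The homogeneity relation follows at once: since each $F_\la$ is homogeneous of degree $d$, induction gives $H_{n,\la}(cx) = c^{d^n}H_{n,\la}(x)$, so $G_{n,\la}(cx) = \log|c| + G_{n,\la}(x)$, and the limit yields $G_\la(cx) = \log|c| + G_\la(x)$. A relabelling also identifies $G_{n,\sigma(\la)}\circ F_\la$ with $d\cdot G_{n+1,\la}$, and passing to the limit gives $dG_\la = G_{\sigma(\la)}\circ F_\la$. For continuity in $\la$: joint continuity of $(\la, x) \mapsto F_\la(x)$ (hence of each $G_{n,\la}(x)$ on $M\times(\mbb{C}^{k+1}\setminus\{0\})$) combined with the uniform-in-$\la$ Cauchy estimate above implies that $G_\la(x)$ is jointly continuous on $M\times(\mbb{C}^{k+1}\setminus\{0\})$; a standard compactness argument then yields local uniform convergence $G_{\la_n}\to G_\la$ on $\mbb{C}^{k+1}\setminus\{0\}$ whenever $\la_n \to \la$ in $M$.

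For the basin characterization I would argue in both directions. If $G_\la(x) < 0$, then the Cauchy estimate gives $|G_{n,\la}(x) - G_\la(x)| \leq C/((d-1)d^n)$, hence $G_{n,\la}(x) \leq G_\la(x)/2$ for large $n$, so $\log\|H_{n,\la}(x)\| \leq d^n G_\la(x)/2 \to -\infty$, proving $x \in \mathcal{A}_\la$. Conversely, suppose $x \in \mathcal{A}_\la$: then eventually $\|H_{n_0,\la}(x)\| \leq r$ with $r = (2L)^{-1/(d-1)}$, and in this regime the upper estimate in (\ref{ineq}) gives $\|F_\la(y)\| \leq \tfrac12\|y\|$, so the orbit decays; iterating the inequality $\log\|H_{n+1,\la}(x)\| \leq \log L + d\log\|H_{n,\la}(x)\|$ and normalizing by $d^{n+1}$ yields $G_\la(x) \leq G_{n_0,\la}(x) + \tfrac{\log L}{(d-1)d^{n_0}} < 0$ once $n_0$ is chosen with $\|H_{n_0,\la}(x)\|$ small enough. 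Thus $\mathcal{A}_\la = \{G_\la < 0\}$.

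Each step is routine once (\ref{ineq}) is in hand; the only technical care needed is the behaviour at the origin and the precise book-keeping of constants to ensure that the comparisons in the basin characterization yield a strictly negative value of $G_\la$. The machinery and outcome closely parallel the classical construction of the Green function of a homogeneous polynomial endomorphism of $\mbb{C}^{k+1}$, with the role of the single map replaced by the orbit under $\sigma$.
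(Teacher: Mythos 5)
Your proof is correct and follows the same skeleton as the paper's for the convergence, homogeneity, invariance and continuity-in-$\la$ statements; the one genuinely different step is the inclusion $\mathcal{A}_\la \subset \{G_\la < 0\}$. The paper first observes only that $G_\la(x) \le 0$ for $x \in \mathcal{A}_\la$ and then rules out equality by contradiction: assuming $G_{\la_0}(x_0) = 0$, it combines the already-established continuity in $\la$ with compactness of $M$ and the maximum principle to produce a \emph{uniform} constant $C > 0$ with $G_\la \le -C$ on the small ball $B_r$ for every $\la$, and then applies the invariance relation $d^n G_{\la_0} = G_{\si^n(\la_0)} \circ F_{\si^{n-1}(\la_0)} \circ \cdots \circ F_{\la_0}$ along the orbit (which eventually enters $B_r$) to force $0 \le -C$. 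Your argument instead produces a strictly negative value of $G_\la(x)$ directly by a one-sided telescoping estimate along the tail of the sequence: once $\|H_{n_0,\la}(x)\| \le r$, iterating $\log\|H_{n+1,\la}(x)\| \le \log L + d\log\|H_{n,\la}(x)\|$ and normalizing yields
\[
G_\la(x) \le \frac{1}{d^{n_0}}\Bigl(\log r + \frac{\log L}{d-1}\Bigr) = -\frac{\log 2}{(d-1)\,d^{n_0}} < 0
\]
using $r = (2L)^{-1/(d-1)}$. This is more elementary, avoids invoking the maximum principle and the continuity-in-$\la$ clause, and has the side benefit of giving an explicit negative bound rather than merely the sign. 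Both routes are sound; the paper's is slightly longer but illustrates how the different parts of the proposition feed into one another, whereas yours is self-contained at this step.
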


For each $\la \in M$, let $\mathcal Q_{\la} \subset \mbb C^{k+1}$ be the collection of those points in a neighborhood of which $G_{\la}$ is pluriharmonic and define $\Om_{\la} =
\pi(\mathcal Q_{\la}) \subset \mbb P^k$.

\begin{prop}\label{pr5}
For each $\la \in M$, $\Om_{\la} = \Om'_{\la}$. Further, each $\Om_{\la}$ is pseudoconvex and Kobayashi hyperbolic.
\end{prop}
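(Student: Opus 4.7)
The plan is to follow the strategy used for a single holomorphic endomorphism of $\mbb P^k$ in \cite{HP} and \cite{U}, relying on the uniform estimate $|G_\la(x) - \log\|x\|| \le C$ with $C$ independent of $\la$, which follows from (\ref{ineq}) by iteration and is built into the proof of Proposition \ref{pr4}.

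For the inclusion $\Om_\la \subseteq \Om'_\la$, I would start with $z \in \Om_\la$ and a lift $x_0 \in \mathcal Q_\la \cap \pi^{-1}(z)$. On a small neighborhood $W$ of $x_0$, pluriharmonicity of $G_\la$ gives $G_\la = \log|h|$ for some non-vanishing holomorphic $h : W \to \mbb C^{\ast}$. Setting $\Phi_n := h^{-d^n}(F_{\si^{n-1}(\la)} \circ \cdots \circ F_\la)$, the functional equation $d^n G_\la = G_{\si^n(\la)} \circ (F_{\si^{n-1}(\la)} \circ \cdots \circ F_\la)$ combined with the uniform bound forces $\log\|\Phi_n\| = O(1)$ on $W$. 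Montel then makes $\{\Phi_n\}$ normal, and since $\pi \circ \Phi_n = f_{\si^{n-1}(\la)} \circ \cdots \circ f_\la \circ \pi$, the family $\{f_{\si^{n-1}(\la)} \circ \cdots \circ f_\la\}$ is normal near $z$, so $z \in \Om'_\la$.

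Conversely, given $z \in \Om'_\la$, pick a simply connected neighborhood $V$ of $z$ with a holomorphic section $s : V \to \mbb C^{k+1}\setminus\{0\}$ sending $z$ to $x_0$, and set $\Phi_n := F_{\si^{n-1}(\la)} \circ \cdots \circ F_\la \circ s$. For any $n_j \to \infty$, normality lets me pass to a further subsequence along which $f_{n_j} := f_{\si^{n_j-1}(\la)} \circ \cdots \circ f_\la$ converges uniformly on compacts of some $V' \subseteq V$ to a holomorphic $g : V' \to \mbb P^k$. Fixing an affine chart $U_i$ containing $g(V')$ and setting $\al_{n_j} := (\Phi_{n_j})_i$, these are non-vanishing holomorphic on $V'$ for large $n_j$, and $\log\|\Phi_{n_j}/\al_{n_j}\|$ remains uniformly bounded on compacts. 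Combining with the identity $G_\la \circ s = d^{-n_j}\log\|\Phi_{n_j}\| + O(d^{-n_j})$ then yields $d^{-n_j}\log|\al_{n_j}| \to G_\la \circ s$ uniformly on compacts of $V'$. Each $d^{-n_j}\log|\al_{n_j}|$ is pluriharmonic, so the uniform limit $G_\la \circ s$ is too; the homogeneity $G_\la(cx) = \log|c| + G_\la(x)$ then extends pluriharmonicity from $s(V')$ to a full neighborhood of $x_0$ in $\mbb C^{k+1}$, giving $z \in \Om_\la$.

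For pseudoconvexity, I would work in an affine chart $U_i$ with canonical section $s_i$, where $\Om_\la \cap U_i$ equals the pluriharmonic locus of the continuous PSH function $u_i := G_\la \circ s_i$; a Kontinuit\"atssatz-type argument leveraging the uniform H\"older continuity of $G_\la$ and the covariance built into the functional equation should prevent holomorphic discs with boundary in $\Om_\la$ from escaping $\Om_\la$. For Kobayashi hyperbolicity, I would argue by contradiction via Brody's lemma: a non-constant entire curve $\varphi : \mbb C \to \Om_\la$ with bounded Fubini-Study derivative lifts to $\ti\varphi : \mbb C \to \mathcal Q_\la$ by simple connectivity of $\mbb C$, making $G_\la \circ \ti\varphi$ entire and harmonic; a suitable normalization of the lift together with the uniform estimate $|G_\la - \log\|\cdot\|| \le C$ forces $G_\la \circ \ti\varphi$ to be constant by a Liouville-type argument, after which an analysis of $F_{\si^{n-1}(\la)} \circ \cdots \circ F_\la \circ \ti\varphi$ using the degree growth $d^n \to \infty$ yields the desired contradiction. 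The main obstacle is the pseudoconvexity step: the single-map arguments rely on expansion of $f$ near the Julia set to control propagation of holomorphic discs, and in the present fibered setting this expansion must be formulated uniformly across the variable sequence of iterates; the uniform H\"older and convergence estimates furnished by Proposition \ref{pr4} are precisely what should enable the adaptation.
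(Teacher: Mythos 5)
Your argument for $\Om_\la = \Om'_\la$ is essentially the paper's, modulo bookkeeping. The paper organizes both inclusions through a single characterization (its Step 1): $p \in \Om_\la$ iff there is a local holomorphic section $s_\la$ with $s_\la(U_{\la,p}) \subset \pa\mathcal A_\la$, i.e.\ $G_\la \circ s_\la \equiv 0$; your $h$ with $G_\la = \log|h|$ plays the same role after the renormalization $s_\la = h^{-1}\cdot(\text{identity lift})$, and your affine-chart extraction in the converse direction is the same mechanism the paper uses with its cutoff function $s_\la(z)$ near the hyperplane. So that part is fine.

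The genuine gap is the pseudoconvexity step, which you flag yourself as the obstacle. You look for a Kontinuit\"atssatz argument driven by expansion near the Julia set. This is not the right tool, and none of the dynamical control (H\"older continuity, the functional equation) is needed. The paper invokes Lemma 2.4 of Ueda \cite{U}: for \emph{any} plurisubharmonic function $h$, the set of points near which $h$ is pluriharmonic is pseudoconvex. This is a purely potential-theoretic statement. Applied to $G_\la$ on $\mbb C^{k+1}\setminus\{0\}$ it gives at once that $\mathcal Q_\la$ is pseudoconvex; and since $G_\la(cx) = \log|c| + G_\la(x)$ makes $\mathcal Q_\la$ a $\mbb C^\ast$-invariant cone with $\mathcal Q_\la = \pi^{-1}(\Om_\la)$, pseudoconvexity of $\Om_\la \subset \mbb P^k$ follows because it is a local property and $\pi$ is locally a product. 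Your plan, by contrast, has no clear path to a proof.

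Your Brody approach to Kobayashi hyperbolicity can be made to work but takes a detour. If $\varphi: \mbb C \to U \subseteq \Om_\la$ is an entire curve, you lift via a local section and write $G_\la \circ \ti\varphi = \mathrm{Re}\,h$ with $h$ entire (by simple connectivity); then $e^{-h}\ti\varphi$ is a new holomorphic lift into $\pa\mathcal A_\la \subset B_R$, which is bounded, so Liouville gives $\ti\varphi$ and hence $\varphi$ constant — you'd need to supply that normalization and the final Liouville step, which your sketch leaves at the level of ``a suitable normalization \dots forces \dots constant.'' The paper reaches the same conclusion more directly: analytically continue the section $s_\la$ to a holomorphic map $\ti s_\la : \ti U \to \mbb C^{k+1}$ on the universal cover $\ti U$ of each component $U$, which is injective and lands in $B(0,2R)$ because $\mathcal A_\mu \subset B_R$ uniformly in $\mu$; a manifold that embeds holomorphically and injectively into a bounded ball is hyperbolic, and hyperbolicity descends from the cover. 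Both routes ultimately use the same two facts — $\pa\mathcal A_\la$ is bounded uniformly, and sections to $\pa\mathcal A_\la$ can be continued on the universal cover — so neither is wrong, but the Brody version is an indirect way of re-deriving them.
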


\medskip

This thesis is adorned as follows. Chapter $2$ contains some preliminaries. In Chapter $3$, we study dynamics of skew products of H\'{e}non maps. Chapter $4$ generalizes some of the results obtained in Chapter $3$ and some other relevant results for a random system of H\'{e}non maps.
In Chapter $5$, we mainly study some important ergodic theoretic aspect of the global map $H$ defined in (\ref{1}). Chapter $6$ intends to make some remark about the dynamics of skew products of holomorphic endomorphisms of $\mathbb{P}^k$ that are fibered over a compact base.

\thispagestyle{empty}

\thispagestyle{empty}

\chapter{Some Preliminaries}
The purpose of this chapter is to gather some standard facts about plurisubharmonic functions and positive currents which will be used later.
\section{Positive currents and pluripotential theory on complex manifolds}
Here we gather some standard facts about the positive currents which we will be using later. All of the results discussed are classical and their proofs are omitted here. These results can be found in \cite{DEM} and \cite{HDS}. 

\medskip

Let $X$ be a complex manifold of dimension $k$ and $\omega$ a Hermitian $(1,1)$-form on $X$ which is positive definite at every point. The space of currents of degree $l$ ( or dimension $(2k-l)$) on $X$ is the space $\mathcal{D}_{2k-l}'(X)$ of continuous linear form $S$ on the space $\mathcal{D}_{2k-l}(X)$ of smooth $(2k-l)$-forms with compact support in $X$. It turns out that in any local coordinate chart a current can be thought of as a differential form with distribution coefficients. In particular, currents of order zero on $X$ can be considered as differential forms with measure coefficients. In the sequel, we let $\langle S, \varphi \rangle$ be the pairing between a current $S$ and a test form $\varphi$. The support of $S$, denoted by ${\rm{supp}}(S)$, is the smallest closed subset of $X$ such that $\langle S,\varphi \rangle=0$ when $\varphi$ is supported on $X\setminus {\rm{supp}}(S)$.

\begin{exam}
Let $f$ be a differential form of degree $q$ on $X$ with $L_{\rm{loc}}^1$ coefficients. We can associate to $f$ a current of dimension $(2k-q)$ as follows:
\[
\langle T_f, \varphi\rangle:=\int_X f\wedge \varphi
\]
for $\varphi\in \mathcal{D}_{2k-q}(X)$. 
\end{exam}

Many of the operations available for differential forms can be extended to currents by simple duality arguments. In particular, one can define the operators $d,\partial, \bar{\partial}$ on currents using duality. For example, one can define a $(l+1)$-current $dS$ on $X$ as follows: 
\[
\langle dS,\varphi \rangle:={(-1)}^{l+1}\langle S, d\varphi\rangle\] 
for  $\varphi\in \mathcal{D}_{2k-l-1}(X)$. It can be checked that when $S$ is a smooth form, the above identity is a consequence of Stokes' formula. That the linear form $dS$ is continuous follows from the continuity of the map $d$. 

\medskip

Let $Y$ be a complex manifold of dimension $k'$ with $2k'\geq 2k-l$ and $F:X \ra X'$ a holomorphic map such that the restriction of $F$ to ${\rm{supp}}(S)$ is proper, i.e., if ${\rm{supp}}(S)\cap F^{-1}(K)$ is compact for every compact subset $K\subset X'$. Then the linear form $\varphi\mapsto \langle S, F^* \varphi\rangle$ is well-defined and continuous on $\mathcal{D}_{2k-l}(Y)$. Hence there exists a unique current $F_*S$, called the push-forward current of $S$, such that 
\[
\langle F_*S, \varphi\rangle:=\langle S,F^*(\varphi)\rangle
\]
for $\varphi\in \mathcal{D}_{2k-l}(Y)$. Note that the currents $S$ and $F_* S$ are of the same dimension. One can easily verify that support of $F_* S$ lies in the image of ${\rm{supp}}(S)$ under $F$. Further, the operators $d$ and $F_*$ commute with each other, i.e., $d(F_* S)=F_*(dS)$. If $Y$ is a complex manifold of dimension $k'\geq k$ and if $F: Y\ra X$ is a submersion, we can define the pull-back $F^*S$ of $S$ by $F$ as follows:
\[
\langle F^*S, \varphi\rangle:=\langle S,F_*(\varphi)\rangle
\]
for $\varphi\in \mathcal{D}_{2k'-l}(Y)$. The current $F^*S$ is an $l$-current supported on $F^{-1}({\rm{supp}}(S))$. As before, in this case also the operators $d$ and $F^*$ commute with each other, i.e., $d(F^* S)=F^*(dS)$.

\medskip

A current $S$ is of bidegree $(p,q)$ and of bidimension $(k-p,k-q)$ if it vanishes on the forms of bidegree $(r,s)\neq (k-p,k-q)$. The push-forward and the pull-back operations can be defined as above. And here also the push-forward preserves the bidimension and the pull-back preserves the bidegree.

\medskip

Let $V$ be a complex vector space of dimension $k$ with coordinates $(z_1,...,z_k)$. We denote by $({\partial}/{\partial z_1},...,{\partial}/{\partial z_n})$ the corresponding basis of the tangent space of $TV$ of $V$ and by $(d z_1,...,d z_n)$ its dual basis in ${TV}^*$. Consider the exterior algebra $\bigwedge^{p,q} {TV}^*=\bigwedge^p {TV}^* \otimes \bigwedge^q\overline{{TV}^*}$ where $\bigwedge^p {TV}^*$ and $\bigwedge^p \overline{{TV}^*}$ are the collection of $p$-forms on ${TV}^*$ and $\overline{{TV}^*}$ respectively. A $(p,p)$-form $\varphi \in \bigwedge^{p,p}{TV}^*$ is positive if for all $\alpha_j\in {TV}^*$ with $1\leq j \leq q=k-p$, 
\[
\varphi\wedge i\alpha_1\wedge\bar{\alpha}_1\wedge \cdots \wedge i\alpha_q\wedge\bar{\alpha}_q
\] 
is a positive $(k,k)$-form. A $(q,q)$-form $\psi\in \bigwedge^{q,q}{TV}^*$ is said to be strongly positive if $\psi$ is a convex combination as follows:
\[
\psi=\sum \gamma_t  \big( i\alpha_{t,1}\wedge\bar{\alpha}_{t,1}\wedge \cdots \wedge i\alpha_{t,q}\wedge\bar{\alpha}_{t,q}\big)
\] 
where $\alpha_{t,j}\in {TV}^*$ and $\gamma_t\geq 0$. Equivalently, a form $\varphi\in \bigwedge^{p,p}{TV}^* $ is positive if and only if its restriction to any $p$-dimensional subspace of  $S\subset TV$ is a positive volume form.  Moreover, one can prove that all positive forms $\varphi$ are real, i.e., $\varphi=\bar{\varphi}$. The notion of positivity and strong positivity differ in all bidegrees $(p,p)$ with $2\leq p \leq (k-2)$ but  they coincide for $p=0,1,k-1,k$. 

\medskip

The sets of positive and strongly positive forms are closed convex cones, i.e., closed and stable under convex combination and by definition the positive cone is dual to strongly positive cone. The duality between these two cones allows us to define corresponding positivity notions for currents. A $(p,p)$-current $S$ is said to be positive (resp. strongly positive) if $\langle S,\varphi \rangle \geq 0$ for all $(k-p,k-p)$-forms  $\varphi$ which are strongly positive (resp. positive) at each point of $X$. It is clear that (strong) positivity is a local property and any strongly positive current is always positive. In any local coordinate chart a positive current corresponds to a differential form whose coefficients are complex measures. Positivity property of currents remain preserved under direct or inverse image by holomorphic maps.  

\begin{exam}
Let $Z$ be an analytic subset of pure codimension $p$ of $X$. It can be shown that the $2(k-p)$-dimensional volume of $Z$ is locally finite in $X$. Hence we can define the current of integration over $Z$, denoted by $[Z]$, as follows:
\[
\langle [Z],\varphi \rangle :=\int_{reg(Z)} \varphi
\] 
for $\varphi \in \mathcal{D}_{k-p,k-p}$, the space of smooth $(k-p,k-p)$-forms with compact support in $X$ and $reg(Z)$ is the regular part of $Z$ in $X$. Lelong proved that this current is positive and closed. 
\end{exam}

\medskip

The theory of closed positive $(1,1)$-currents is well developed thanks to the use of plurisubharmonic functions. They are the counterparts of subharmonic functions in higher dimension. An upper semi-continuous function $u: X\ra \mathbb{R}\cup \{-\infty\}$, not identically $-\infty$ on any component of $X$, is said to be plurisubharmonic (abbreviated as p.s.h.) if it is subharmonic or identically $-\infty$ on any holomorophic disc in $X$. As in the case of subharmonic function, here also we have the submean inequality: in local coordinates, the value at $a$ of a p.s.h. function is smaller or equal to the average of the function on a sphere centered at $a$. Indeed, this average increases with the radius of the sphere. Further, the submean inequality implies the maximum principle which says that if a p.s.h. function on a connected manifold $X$ has a maximum, it is constant. The upper semi-continuity of p.s.h. functions implies that these functions are bounded above. For a complex manifold $Y$, if $F:X\ra Y$ is a holomorphic map, then it is easy to check that $u\circ F$ is p.s.h. in $X$. A function $u$ is pluriharmonic if $u$ and $-u$ both are plurisubharmonic. Pluriharmonic functions are locally the real part of a holomorphic function. In particular, they are real analytic. 

\begin{exam}
The function $\log \lvert z \rvert$ is subharmonic on $\mathbb{C}$, thus $\log\lvert F \rvert$ is plurisubharmonic in $X$ for every complex valued holomorophic function $F$ on $X$. More generally 
\[
\log ({\lvert f_1 \rvert}^{\alpha_1}+ \cdots +{\lvert f_q \rvert}^{\alpha_q})
\]
is a plurisubharmonic function in $X$ for holomorphic functions $f_j$ on $X$ and $\alpha_j \geq 0$.
\end{exam} 

The following results are already known for subharmonic functions and it is straightforward to extend them to the case of p.s.h. functions.

\begin{prop}
Let $\Omega \subset \mathbb{C}^k$ and ${(u_n)}_{n\geq 1}$ be a sequence of p.s.h. functions on $\Omega$, then the limit $u=\lim u_n$ is p.s.h. on $\Omega$.
\end{prop}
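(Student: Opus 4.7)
The statement, as phrased, is the standard preservation result for p.s.h.\ functions under decreasing limits (which is the form one needs in applications and which appears elsewhere in the chapter). So I will plan the proof under the standing assumption that $u_n \searrow u$ pointwise on $\Omega$ with $u \not\equiv -\infty$ on any connected component. The plan is to verify the two defining conditions of a p.s.h.\ function, namely upper semi-continuity and the sub-mean value inequality along holomorphic discs, and then separately check that $u$ is not identically $-\infty$ where that is relevant.

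First I would establish upper semi-continuity. Since every $u_n$ is u.s.c., the sublevel set $\{u_n < c\}$ is open for every $c \in \mathbb{R}$. Because the sequence is decreasing, one has
\begin{equation*}
\{z \in \Omega : u(z) < c\} = \bigcup_{n \ge 1} \{z \in \Omega : u_n(z) < c\},
\end{equation*}
which is a union of open sets and hence open. Thus $u$ is u.s.c.\ on $\Omega$.

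Next I would verify the submean inequality. Fix a point $a \in \Omega$ and a holomorphic disc $\varphi : \overline{\mathbb{D}} \to \Omega$ with $\varphi(0) = a$ such that the image lies in a relatively compact subset of $\Omega$. Each $u_n \circ \varphi$ is subharmonic (or identically $-\infty$) on a neighborhood of $\overline{\mathbb{D}}$, so
\begin{equation*}
u_n(a) \le \frac{1}{2\pi}\int_0^{2\pi} u_n\bigl(\varphi(e^{i\theta})\bigr)\, d\theta.
\end{equation*}
Since the sequence $u_n \circ \varphi$ is decreasing and, on the compact image of $\overline{\mathbb{D}}$, uniformly bounded above by $\sup u_1 < \infty$, the monotone convergence theorem (applied to $C - u_n$ for a suitable constant $C$) lets me pass to the limit on the right. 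This yields
\begin{equation*}
u(a) = \lim_n u_n(a) \le \lim_n \frac{1}{2\pi}\int_0^{2\pi} u_n\bigl(\varphi(e^{i\theta})\bigr)\, d\theta = \frac{1}{2\pi}\int_0^{2\pi} u\bigl(\varphi(e^{i\theta})\bigr)\, d\theta,
\end{equation*}
which is the desired sub-mean inequality. Applying this to every affine disc through $a$ shows that $u$ is subharmonic along each holomorphic disc, hence plurisubharmonic in the fiber-wise sense.

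The main subtlety, and the step I would flag as the principal obstacle, is the non-triviality condition: one must rule out $u \equiv -\infty$ on a connected component of $\Omega$. This is where the assumption $u \not\equiv -\infty$ on each component is invoked; without it the result fails in the sense that $u$ is merely ``plurisubharmonic or $-\infty$.'' The technical point inside the sub-mean step is the integrability of $u \circ \varphi$ on circles, which is guaranteed by uniform upper boundedness on the compact image of $\overline{\mathbb{D}}$ together with the monotone convergence machinery; once that is handled, the rest is bookkeeping.
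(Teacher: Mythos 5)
The paper offers no proof of this proposition: it appears in the preliminaries chapter as a classical fact carried over from the theory of subharmonic functions, with no argument supplied. You have correctly observed that, taken verbatim, the statement is false — without a monotonicity hypothesis the pointwise limit of p.s.h.\ functions need not be upper semi-continuous (e.g.\ $u_n(z) = \max(\log\lvert z\rvert/n, -1)$ on the unit disc is an increasing sequence of subharmonic functions whose pointwise limit equals $0$ off the origin and $-1$ at the origin). Your reading of the proposition as the decreasing-limit result is the one the paper uses downstream, and your proof of it is correct: the u.s.c.\ step via $\{u < c\} = \bigcup_n \{u_n < c\}$ is exactly right for decreasing sequences, and passing to the limit in the sub-mean inequality by monotone convergence, with $u_1 \circ \varphi$ supplying the integrable upper bound once the trivial case $u_1 \circ \varphi \equiv -\infty$ is set aside, is sound. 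One small stylistic point: strictly speaking $\varphi$ should be taken holomorphic on a neighborhood of $\overline{\mathbb{D}}$ (or equivalently, restrict to a slightly smaller disc) so that $u_n \circ \varphi$ is subharmonic up to the boundary circle; this is how the integrability needed for monotone convergence is actually secured. Your closing caveat that $u$ may be $\equiv -\infty$ on a component, and that the conclusion is properly phrased as ``$u$ is p.s.h.\ or identically $-\infty$ on each component,'' is the right observation and the standard way the literature states this.
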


\begin{prop}
Let $u_1,\ldots ,u_p$ are p.s.h. on $\Omega$ and $\chi: \mathbb{R}^k \ra \mathbb{R}$ be a convex function such that $\chi(t_1,\ldots ,t_k)$ is non-decreasing in each $t_j$. Then $\chi(t_1,\ldots,t_k)$ is p.s.h. on $\Omega$. In particular, $u_1+\cdots +u_k$, $\max\{u_1,\ldots ,u_k\}$, $\log (e^{u_1}+ \cdots +e^{u_k})$ are p.s.h. on $\Omega$.
\end{prop}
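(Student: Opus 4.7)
The plan is to verify the two defining properties of a plurisubharmonic function — upper semi-continuity and the submean inequality along every holomorphic disc — for $v := \chi(u_1, \ldots, u_p)$, and then to derive the three stated consequences by choosing explicit $\chi$. The first preliminary task is to extend $\chi$ to $[-\infty,\infty)^p$, since the $u_j$ may take the value $-\infty$: using monotonicity of $\chi$ in each coordinate, set $\chi(t_1,\ldots,t_p) := \lim_{s\downarrow -\infty} \chi(\widetilde t_1,\ldots,\widetilde t_p)$ with $\widetilde t_j = s$ whenever $t_j = -\infty$ and $\widetilde t_j = t_j$ otherwise. Convex functions on $\mathbb{R}^p$ are automatically continuous, and the extension is upper semi-continuous and non-decreasing on $[-\infty,\infty)^p$; composing an u.s.c. non-decreasing function with u.s.c. functions, we conclude that $v$ is u.s.c. on $\Omega$ and not identically $-\infty$ on any component (since none of the $u_j$ is).

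For the submean inequality, fix a holomorphic disc $\varphi : \overline{\mathbb{D}} \to \Omega$. Since each $u_j$ is p.s.h., one has $u_j(\varphi(0)) \le \frac{1}{2\pi}\int_0^{2\pi} u_j(\varphi(e^{i\theta}))\,d\theta$. Monotonicity of $\chi$ in every argument, followed by Jensen's inequality applied to the convex function $\chi$ with the probability measure $d\theta/(2\pi)$, gives
\[
v(\varphi(0)) \;\le\; \chi\!\left(\tfrac{1}{2\pi}\!\int u_1\,d\theta,\ldots,\tfrac{1}{2\pi}\!\int u_p\,d\theta\right) \;\le\; \frac{1}{2\pi}\int_0^{2\pi} v(\varphi(e^{i\theta}))\,d\theta,
\]
which is precisely what is needed.

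The principal obstacle is the rigorous use of Jensen's inequality when some $u_j(\varphi(e^{i\theta}))$ attain the value $-\infty$. One can sidestep this by truncating: put $u_j^{(N)} := \max\{u_j,-N\}$, which is still p.s.h. and real-valued, run the computation above to obtain the submean inequality for $v^{(N)} := \chi(u_1^{(N)},\ldots,u_p^{(N)})$, and pass to the limit $N \to \infty$ using monotone convergence on the right and monotonicity of $\chi$ on the left. An equally clean alternative is to regularize each $u_j$ by a standard decreasing sequence of smooth p.s.h. functions via convolution, apply Jensen in the smooth case, and then let the regularization parameter tend to zero.

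Finally, the three examples follow by explicit choices of $\chi$: the map $\chi(t_1,\ldots,t_p) = t_1 + \cdots + t_p$ is linear (hence convex) and strictly increasing in each coordinate; $\chi(t_1,\ldots,t_p) = \max_j t_j$ is convex as a pointwise maximum of affine functions and is clearly non-decreasing; and $\chi(t_1,\ldots,t_p) = \log(e^{t_1} + \cdots + e^{t_p})$ is the log-sum-exp function, which is convex (its Hessian is positive semi-definite, being a difference of a diagonal matrix and a rank-one perturbation of Gram type) and strictly increasing in every variable.
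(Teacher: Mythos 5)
Your argument is correct and is the standard textbook proof of this classical result (via monotonicity plus Jensen for the submean inequality, truncation or regularization to handle $-\infty$, and a decreasing-limit extension of $\chi$ to get upper semicontinuity). The paper explicitly omits proofs in this preliminary chapter and refers to \cite{DEM} and \cite{HDS}, so there is no paper proof to compare against; your writeup supplies exactly what those references contain.
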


\begin{prop}
Let $u$ be a p.s.h. function on an open set $\Omega\subset\mathbb{C}^k$ and $\Omega'\subset \subset \Omega$ be an open set. Then there is a sequence of smooth p.s.h. functions $u_n$ on $\Omega'$ which decreases to $u$. 
\end{prop}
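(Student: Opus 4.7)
The plan is to regularize $u$ by convolution with a standard smooth radial mollifier and verify that the resulting family is smooth, p.s.h., monotonically decreasing in the regularization parameter, and converges pointwise to $u$.

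First I would fix a non-negative, smooth, radial function $\rho$ on $\mathbb{C}^k$ supported in the unit ball such that $\int \rho = 1$ and $\rho(w)$ depends only on $\lVert w \rVert$. For $\epsilon > 0$, set $\rho_\epsilon(w) = \epsilon^{-2k} \rho(w/\epsilon)$. Since $\Omega' \subset\subset \Omega$, there exists $\epsilon_0 > 0$ such that for all $\epsilon < \epsilon_0$, the set $\Omega_\epsilon := \{z \in \Omega : d(z, \partial\Omega) > \epsilon\}$ contains $\Omega'$. On $\Omega_\epsilon$ define
\[
u_\epsilon(z) = (u * \rho_\epsilon)(z) = \int_{\mathbb{C}^k} u(z - w)\rho_\epsilon(w)\, dV(w).
\]
Since $u$ is locally integrable (a standard consequence of being p.s.h.), the integral is well-defined, and standard properties of convolution give that $u_\epsilon$ is $C^\infty$ on $\Omega_\epsilon$. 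Plurisubharmonicity of $u_\epsilon$ follows from the fact that for any complex line $L$, the slice $\zeta \mapsto u_\epsilon(z + \zeta v)$ is obtained by averaging subharmonic slices of $u$ against the non-negative density $\rho_\epsilon$, hence is subharmonic; this is the analog of the classical result for subharmonic functions on $\mathbb{C}$.

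The key step is monotonicity: I claim $u_\epsilon(z)$ is a non-decreasing function of $\epsilon$ on $\Omega_\epsilon$. Using polar coordinates and the fact that $\rho$ is radial, $u_\epsilon(z)$ can be rewritten as a weighted integral of the spherical means
\[
M(z, r) = \frac{1}{\sigma_{2k-1}} \int_{\lVert \xi \rVert = 1} u(z + r\xi)\, d\sigma(\xi),
\]
namely $u_\epsilon(z) = \int_0^\epsilon M(z, r)\, \chi(r)\, dr$ for a suitable non-negative weight $\chi$ depending on $\rho$ and the surface measure. The submean inequality for p.s.h. functions ensures that $r \mapsto M(z, r)$ is non-decreasing, from which the monotonicity of $u_\epsilon$ in $\epsilon$ follows directly. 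The submean inequality also gives $u(z) \le M(z, r)$ for every admissible $r$, hence $u(z) \le u_\epsilon(z)$, and finally upper semi-continuity together with $M(z, r) \to u(z)$ as $r \downarrow 0$ yields $u_\epsilon(z) \searrow u(z)$ as $\epsilon \downarrow 0$.

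Picking any sequence $\epsilon_n \downarrow 0$ with $\epsilon_n < \epsilon_0$ and setting $u_n = u_{\epsilon_n}|_{\Omega'}$ produces the desired sequence of smooth p.s.h. functions on $\Omega'$ which decreases pointwise to $u$. The main obstacle is the monotonicity step, which is what distinguishes this regularization from a generic smoothing argument and which crucially uses both the radial choice of the mollifier and the submean property of p.s.h. functions; everything else is a routine verification based on standard convolution estimates.
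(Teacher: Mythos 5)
The paper states this as a classical preliminary fact and omits the proof entirely, so there is no ``paper's approach'' to compare against. Your mollification argument is the standard textbook proof and is essentially correct: a radial smoothing kernel, plurisubharmonicity preserved under averaging translates, monotonicity in $\epsilon$ from the monotonicity of spherical means, and convergence from upper semi-continuity.

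One small notational wrinkle worth tightening: when you write $u_\epsilon(z)=\int_0^\epsilon M(z,r)\,\chi(r)\,dr$, the weight $\chi$ actually depends on $\epsilon$ (it is $\sigma_{2k-1}\,\epsilon^{-2k}\rho(r/\epsilon)\,r^{2k-1}$), so monotonicity in $\epsilon$ does not follow ``directly'' from monotonicity of $M(z,\cdot)$ in the form you have written. The clean way to see it is to substitute $r=\epsilon s$ and obtain
\[
u_\epsilon(z)=\sigma_{2k-1}\int_0^1 \rho(s)\,s^{2k-1}\,M(z,\epsilon s)\,ds,
\]
where the weight is now independent of $\epsilon$ and integrates to $1$; monotonicity of $\epsilon\mapsto u_\epsilon(z)$, the lower bound $u(z)\le u_\epsilon(z)$, and the convergence $u_\epsilon(z)\searrow u(z)$ (by monotone convergence and upper semi-continuity) all then read off immediately. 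With that adjustment the proof is complete and is precisely the argument one finds in the standard references.
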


The following proposition gives an important characterization of p.s.h. functions.

\begin{prop}
A function $u: X\ra \mathbb{R}\cup \{-\infty\}$ is p.s.h. if and only if the following conditions hold:

\begin{itemize}
\item
u is strongly upper semi-continuous, i.e., for any subset $A$ of full Lebesgue measure in $X$ and for any point $a\in X$, we have $u(a)=\limsup_{z\ra a} u(z)$ for $z\in A$.
\item
u is locally integrable with respect to the Lebesgue measure on $X$ and $dd^c u$ is a positive closed $(1,1)$-current. 
\end{itemize}
\end{prop}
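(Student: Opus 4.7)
The plan is to verify the two implications separately, using convolution regularization together with the preceding propositions on approximation and on decreasing limits of p.s.h. functions.

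For the forward direction, assume $u$ is p.s.h. Upper semi-continuity makes $u$ locally bounded above; since $u\not\equiv-\infty$ on any component, the sub-mean inequality applied at any point where $u(a)>-\infty$ gives $\int_{B(a,r)}u>-\infty$, so $u\in L^1_{\rm loc}$. To see $dd^cu\ge 0$ as a current, I invoke the approximation proposition stated just above: on any relatively compact $\Omega'\subset X$ there exist smooth p.s.h. $u_n\searrow u$. For smooth p.s.h. functions the form $dd^cu_n$ is positive by definition, and $u_n\to u$ in $L^1_{\rm loc}$ yields $dd^cu_n\to dd^cu$ as currents, so positivity passes to the limit. For strong u.s.c., fix $a$ and a full measure set $A$. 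Ordinary upper semi-continuity gives $\limsup_{z\to a,\,z\in A}u(z)\le u(a)$. For the reverse inequality, the sub-mean property yields $u(a)\le\frac{1}{|B(a,r)|}\int_{B(a,r)}u\,dV$, and since $|A^c|=0$ the integral is carried by $A\cap B(a,r)$; were $\limsup_{z\to a,\,z\in A}u(z)<u(a)$ strict, one would bound this average by a quantity strictly less than $u(a)$ for small $r$, contradicting the sub-mean inequality.

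For the backward direction, assume $u$ is strongly u.s.c., $u\in L^1_{\rm loc}$ and $dd^cu\ge 0$. Let $\chi$ be a smooth, non-negative, radially symmetric bump function on $\mathbb{C}^k$ with $\int\chi=1$, set $\chi_\epsilon(z)=\epsilon^{-2k}\chi(z/\epsilon)$, and define $u_\epsilon=u\ast\chi_\epsilon$ on points at distance greater than $\epsilon$ from the boundary. Then $u_\epsilon$ is smooth and
\[
dd^cu_\epsilon=(dd^cu)\ast\chi_\epsilon\ge 0,
\]
so $u_\epsilon$ is a smooth p.s.h. function. Because $\chi$ is radial, $u_\epsilon(a)$ is a weighted mean of spherical averages of $u$ centered at $a$; using that $u_\epsilon$ is itself p.s.h. and applying the sub-mean inequality to the regularizations shows $u_\epsilon(a)$ is monotone non-increasing in $\epsilon$. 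Hence $\tilde u(a):=\lim_{\epsilon\to 0}u_\epsilon(a)$ exists, and by the earlier proposition on decreasing limits, $\tilde u$ is p.s.h.

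It remains to show $\tilde u\equiv u$. Since $u\in L^1_{\rm loc}$, standard convolution theory gives $u_\epsilon\to u$ in $L^1_{\rm loc}$, so $\tilde u=u$ on a set $A$ of full Lebesgue measure. Ordinary upper semi-continuity of $u$ combined with $u_\epsilon(a)=\int u\,\chi_\epsilon$ easily yields $\tilde u(a)\le u(a)$ for every $a$. For the opposite inequality, apply the strong u.s.c. hypothesis with the full measure set $A$ and use ordinary upper semi-continuity of the p.s.h. function $\tilde u$:
\[
u(a)=\limsup_{z\to a,\,z\in A}u(z)=\limsup_{z\to a,\,z\in A}\tilde u(z)\le\tilde u(a).
\]
Thus $u=\tilde u$ everywhere, so $u$ is p.s.h. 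The main obstacle is precisely this last identification: without extra information $u$ and its regularized limit $\tilde u$ can disagree on a thin (possibly non-pluripolar) set, and only the strong upper semi-continuity hypothesis is strong enough to upgrade a.e. equality to everywhere equality; this is exactly why the condition is formulated in terms of arbitrary full measure subsets $A$.
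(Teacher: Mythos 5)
The paper itself offers no proof of this proposition: the preliminaries chapter explicitly states that the results there are classical, with proofs omitted and references given to Demailly's notes and the Cetraro lecture volume. Your proof is a correct reconstruction of the standard argument one finds in those sources, so there is nothing to compare against at the level of strategy; what follows are two small points worth tightening.

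First, in the backward direction you write that $u_\epsilon(a)$ is ``monotone non-increasing in $\epsilon$,'' but the monotonicity runs the other way: since spherical means of a p.s.h.\ function increase with radius and $\chi$ is radial, $u_\epsilon(a)$ is non-\emph{decreasing} in $\epsilon$, so that $u_\epsilon \searrow \tilde u$ as $\epsilon \to 0^+$. Your subsequent appeal to the proposition on \emph{decreasing} limits shows you had the right picture in mind, but the sentence as written contradicts it. The monotonicity itself deserves one more line: for $\epsilon' < \epsilon$, commute the two mollifications, $u_{\epsilon'} * \chi_\epsilon = u_\epsilon * \chi_{\epsilon'}$, use that $u_{\epsilon'}*\chi_\epsilon(a)$ is non-decreasing in $\epsilon$ because $u_{\epsilon'}$ is a genuine smooth p.s.h.\ function, and then let $\epsilon' \to 0$ using continuity of $u_\epsilon$. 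Second, when invoking the proposition on decreasing limits you should note that $\tilde u \not\equiv -\infty$ on any component; this follows because $\tilde u = u$ a.e.\ and $u \in L^1_{\mathrm{loc}}$. With these two remarks the argument is complete, and your closing observation --- that strong upper semi-continuity is precisely what upgrades the a.e.\ identity $u = \tilde u$ to an everywhere identity --- is exactly the crux of the classical proof.
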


Conversely, if $S$ is a positive closed current of bidegree $(1,1)$, then for every point $p\in X$, there exists a neighborhood $\Omega_p$ of $p$ and a p.s.h. function $u$ on $\Omega_p$ such that $S=dd^c u$. This function is called a local potential of the current. Two local potentials are same modulo a pluriharmonic function. So there is well-defined correspondence between positive closed $(1,1)$-currents and p.s.h. functions. In particular, for a positive closed $(1,1)$-current $S$ in $\mathbb{C}^k$, there is a plurisubharmonic function $u\in \mathbb{C}^k$ such that $S=dd^c u$.

\begin{thm}
Let $u$ be a locally integrable real function satisfying $dd^c u=0$ on an open set $\Omega\subset \mathbb{C}^2$ in the sense of currents, then there is a pluriharmonic function $v$ on $\Omega$ such that $u=v$ in the sense of distribution.
\end{thm}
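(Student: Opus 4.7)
The plan is to reduce the condition $dd^c u = 0$ to the scalar Laplace equation and then invoke elliptic regularity; the pluriharmonicity will follow for free once we know $u$ agrees with a smooth function.

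First, I would unpack what the currential equation says. Writing $dd^c = 2i\,\partial\bar\partial$ and testing against $(1,1)$-forms whose coefficients are compactly supported test functions, the identity $dd^c u = 0$ means that the distributional second mixed derivatives $\partial^2 u/(\partial z_j \partial \bar z_k)$ vanish for every pair $j,k \in \{1,2\}$. Taking the trace $j=k$ and summing, we obtain
\[
\Delta u \;=\; 4\sum_{j=1}^{2} \frac{\partial^{2} u}{\partial z_{j}\,\partial \bar z_{j}} \;=\; 0
\]
on $\Omega$ in the sense of distributions, where $\Delta$ is the standard (real) Laplacian on $\mathbb{R}^{4}\cong \mathbb{C}^{2}$.

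Second, since $\Delta$ is an elliptic constant–coefficient differential operator, it is hypoelliptic (Weyl's lemma): any distribution $T$ with $\Delta T = 0$ on an open set is represented by a $C^{\infty}$ (in fact real analytic) function there. Applied to our locally integrable $u$, this produces a smooth harmonic function $v$ on $\Omega$ with $u = v$ as distributions. In particular $v$ defines the same $(1,1)$-current as $u$, so the hypothesis $dd^c u = 0$ transfers to $dd^c v = 0$.

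Finally, because $v$ is smooth, the equation $dd^c v = 0$ is a pointwise identity of $(1,1)$-forms on $\Omega$, which means $\partial^{2} v/(\partial z_{j}\,\partial \bar z_{k}) \equiv 0$ on $\Omega$ for all $j,k$. Restricting to any complex line $L = \{z_{0} + \zeta w : \zeta \in \mathbb{C}\} \cap \Omega$ yields $\partial^{2}(v\!\mid_{L})/(\partial\zeta\,\partial\bar\zeta) = 0$, i.e.\ $v\!\mid_{L}$ is harmonic. Thus $v$ is pluriharmonic, and $u = v$ in the sense of distributions, as required. The only genuinely non-formal step is the appeal to Weyl's lemma; everything else is linear-algebraic unwinding of bidegree $(1,1)$. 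Alternatively, one could avoid quoting Weyl's lemma by convolving $u$ with a radial smooth approximation to the identity and using the submean property for harmonic functions to show directly that the mollified functions stabilize to a smooth harmonic representative, but this is essentially the same argument.
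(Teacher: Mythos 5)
Your argument is correct. The paper states this result in Chapter 2 among the preliminaries without proof (it notes that all results there are classical and the proofs are omitted), so there is no proof in the paper to compare against; what you give is the standard hypoellipticity argument. The one thing worth being explicit about, which you do essentially correctly, is the unwinding of the hypothesis: for a $(1,1)$-current on $\mathbb{C}^{2}$, vanishing means that each coefficient distribution vanishes, and by pairing $u$ against test forms $\psi\, dz_{p}\wedge d\bar z_{q}$ one sees that $dd^{c}u=0$ forces all four distributional derivatives $\partial^{2}u/\partial z_{j}\partial\bar z_{k}$ to be zero, not merely the trace. From there Weyl's lemma gives the smooth harmonic representative $v$, and since $u=v$ as distributions the full system of equations transfers to $v$ pointwise, yielding pluriharmonicity as you observe by restricting to complex lines. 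You could in fact bypass the trace step entirely: once all mixed derivatives of $u$ vanish, mollification produces smooth functions with vanishing complex Hessian whose limits are pluriharmonic, which is the alternative route you sketch at the end; both versions are equally valid.
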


A subset $E\subset X$ is said to be complete pluripolar in $X$ if for every point $p\in X$ there exists a neighborhood $\Omega_p$ of $p$ and a locally integrable  p.s.h. function $u$ on $\Omega_p$ such that $E\cap \Omega_p=\{z\in \Omega_p: u(z)=-\infty\}$. Note that any closed analytic subset $A\subset X$ is complete pluripolar. Pluripolar sets are of Hausdorff dimension $\leq 2k-2$, in particular, they have zero Lebesgue measure. Finite and countable union of pluripolar sets are pluripolar. Thus finite and countable union of analytic subsets are pluripolar.

\begin{prop}
Let $E$ be a closed pluripolar set in $X$ and $u$ a p.s.h. function on $X\setminus E$, locally bounded above near $E$. Then the extension of $u$ to $X$ given by
\[
u(z):=\limsup_{w\ra z, w\in X\setminus E} u(z) \text{ for } z\in E
\]
is a p.s.h. function.
\end{prop}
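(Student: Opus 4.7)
The plan is to show the limsup extension $\tilde u$ is plurisubharmonic by exhibiting it as a regularized supremum of plurisubharmonic functions built from $u$ and the local pluripotential of $E$. First I would record the easy facts: by a standard property of limsup regularization, $\tilde u$ is upper semi-continuous on $X$; since $u$ is locally bounded above near $E$ and $\tilde u = u$ off $E$, the extension $\tilde u$ is locally bounded above on $X$. It therefore remains only to verify the submean inequality locally near each point $z_0\in E$.

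Next, fix $z_0\in E$ and, using the pluripolarity of $E$, choose a connected neighborhood $U$ of $z_0$ and a p.s.h.\ function $v\le 0$ on $U$ with $E\cap U\subset\{v=-\infty\}$. For each $\varepsilon>0$ set $w_\varepsilon:=u+\varepsilon v$ on $U\setminus E$ and extend $w_\varepsilon$ by $-\infty$ on $E\cap U$. Because $u$ is locally bounded above and $v$ is u.s.c.\ with $v\equiv-\infty$ on $E$, one has $w_\varepsilon(z)\to-\infty$ as $z\to E$, so this extension by $-\infty$ agrees with the limsup regularization of $w_\varepsilon$ and is u.s.c.\ on $U$. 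To check $w_\varepsilon$ is p.s.h.\ on $U$, consider an arbitrary holomorphic disc $\phi:\overline{\Delta}\to U$. If $\phi(\Delta)\subset E$, then $w_\varepsilon\circ\phi\equiv-\infty$ and the submean inequality is automatic. Otherwise $v\circ\phi$ is a subharmonic function on $\Delta$ that is not identically $-\infty$, so $\phi^{-1}(E)\subset\{v\circ\phi=-\infty\}$ is a closed polar subset of $\Delta$, and $w_\varepsilon\circ\phi$ is subharmonic on $\Delta\setminus\phi^{-1}(E)$ and locally bounded above there. The classical one-dimensional extension theorem for subharmonic functions across closed polar sets then shows $w_\varepsilon\circ\phi$ is subharmonic on all of $\Delta$, giving the submean inequality at every point and hence plurisubharmonicity of $w_\varepsilon$ on $U$.

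Finally, I would let $\varepsilon\searrow 0$ along the countable sequence $\varepsilon=1/n$ and set $W:=\sup_n w_{1/n}$. Since $v\le 0$, the family $\{w_{1/n}\}$ is locally uniformly bounded above (by the bound on $u$), and $W=u$ on $U\setminus E$ while $W\equiv-\infty$ on $E\cap U$. By the definition of $\tilde u$ on $E$, the upper semi-continuous regularization $W^{\ast}$ coincides with $\tilde u$ on $U$. The standard Brelot--Cartan-type result (the u.s.c.\ regularization of a supremum of a locally bounded family of p.s.h.\ functions is again p.s.h.) then yields that $\tilde u=W^{\ast}$ is p.s.h.\ on $U$. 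Since $z_0\in E$ was arbitrary and $\tilde u=u$ is already p.s.h.\ on $X\setminus E$, it follows that $\tilde u$ is p.s.h.\ on all of $X$.

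The key obstacle is the middle step: verifying that $w_\varepsilon$, extended by $-\infty$ across $E$, is genuinely p.s.h.\ on $U$. This is the place where the pluripolarity of $E$ is essential (it produces the auxiliary potential $v$ that forces $w_\varepsilon\to-\infty$ near $E$) and where the one-variable extension of subharmonic functions across polar sets must be invoked on slices by holomorphic discs; everything else (upper semi-continuity, local boundedness, and the final regularization to recover $\tilde u$) is a routine consequence of general plurisubharmonic function theory.
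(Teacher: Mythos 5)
The paper omits a proof of this proposition: it appears in the preliminaries (Chapter 2), where the text explicitly says the results ``are classical and their proofs are omitted here,'' so there is no in-paper argument to compare against. Your approach --- passing to the auxiliary functions $w_\varepsilon = u + \varepsilon v$, showing each is p.s.h.\ by restricting to holomorphic discs and invoking the one-variable removability of closed polar sets, and then regularizing the increasing family --- is the standard route, and the skeleton is sound.

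There is, however, a genuine inaccuracy in the last step. You assert that $W := \sup_n w_{1/n}$ equals $u$ on $U\setminus E$, but this is false in general: pluripolarity only gives the inclusion $E\cap U\subset\{v=-\infty\}$, not equality, so on the (possibly nonempty) set $(\{v=-\infty\}\setminus E)\cap U$ one has $w_{1/n}\equiv -\infty$ and hence $W=-\infty$ there, even though $u$ may be finite. Consequently the identification $W^{\ast}=\tilde u$ does not follow ``by the definition of $\tilde u$'' as stated; a priori $W^{\ast}$ could be strictly smaller than $\tilde u$ on $\{v=-\infty\}$. The fix is to observe that $\{v=-\infty\}$ is pluripolar and hence Lebesgue-null, and to use the strong upper semicontinuity of the p.s.h.\ function $u$ on the open set $U\setminus E$ (as recorded in the paper's earlier characterization of p.s.h.\ functions): for every $z\in U$ one then has
\[
\limsup_{\substack{w\to z\\ v(w)>-\infty}} u(w)\;=\;\limsup_{\substack{w\to z\\ w\notin E}} u(w),
\]
which is exactly what is needed to conclude $W^{\ast}=\tilde u$ on $U$. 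For the same reason, in the disc-slicing step the dichotomy should be ``$v\circ\phi\equiv-\infty$ or not'' rather than ``$\phi(\Delta)\subset E$ or not''; in the former case $w_\varepsilon\circ\phi\equiv-\infty$ directly, and in the latter your polar-set argument applies. With these two adjustments the proof is correct.
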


Skoda proved that a closed positive current $S$ defined in the complement of an analytic set $A$ can be extended through $A$ as a closed positive current $\tilde{S}$ if the mass of the current is locally finite near $A$. Later El. Mir showed that it is enough to assume that $A$ is complete pluripolar. This extension can be obtained by extending the measure coefficients of $S$ by $0$ on $A$. This is the trivial extension of $S$. We have the following theorem:

\begin{thm}
Let $A\subset X$ be a closed complete pluripolar set and $S$ be a positive closed $(p,p)$-current on $X\setminus A$. Assume that $S$ has finite mass in a neighborhood of every point of $A$. Then its trivial extension through $A$ is a positive closed $(p,p)$-current on $X$.  
\end{thm}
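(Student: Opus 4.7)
This is the El Mir--Skoda extension theorem. Positivity of the trivial extension $\ti S$ and its locally finite mass near $A$ are immediate (the latter by hypothesis), so the only real content is to verify that $d\ti S = 0$ across $A$. The plan is to work locally near a point of $A$ and then glue. Fix $p \in A$ and a small neighborhood $U$ of $p$. Since $A$ is complete pluripolar, there is a plurisubharmonic function $u$ on $U$ with $u \le 0$ and $\{u = -\infty\} \cap U = A \cap U$.

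Next I would construct plurisubharmonic cutoffs that kill a neighborhood of $A$ yet approach $1$ off $A$. Pick a smooth convex non-decreasing $\chi : \mbb R \to [0,1]$ with $\chi \equiv 0$ on $(-\infty, 0]$ and $\chi(t) \nearrow 1$ as $t \to +\infty$, and set $\chi_\ep := \chi(1 + \ep u)$ for $\ep > 0$. Then $\chi_\ep$ is plurisubharmonic, $0 \le \chi_\ep \le 1$, vanishes on the open neighborhood $\{u < -1/\ep\}$ of $A$, and $\chi_\ep \nearrow 1$ pointwise on $U \sm A$ as $\ep \to 0$. Since $\chi_\ep$ vanishes near $A$, the product $\chi_\ep \ti S$ is a well-defined positive $(p,p)$-current on $U$, and monotone convergence (using $\|\ti S\|(A) = 0$) gives $\chi_\ep \ti S \to \ti S$ weakly. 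Testing closedness against a smooth compactly supported form $\varphi$, and using $dS = 0$ on $U \sm A$ together with $d(\chi_\ep S) = d\chi_\ep \wedge S$ on $U \sm A$, one finds
\[
\langle \ti S, d\varphi \rangle = \lim_{\ep \to 0} \langle \chi_\ep \ti S, d\varphi \rangle = -\lim_{\ep \to 0} \langle d\chi_\ep \wedge S, \varphi \rangle,
\]
so everything reduces to proving that $\langle d\chi_\ep \wedge S, \varphi \rangle \to 0$.

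The vanishing of this boundary term is the crux and the main obstacle. I would handle it with a Cauchy--Schwarz-type inequality for positive currents, of the shape
\[
\big| \langle d\chi_\ep \wedge S, \varphi \rangle \big|^2 \le C_\varphi \int_K d\chi_\ep \wedge d^c \chi_\ep \wedge S \wedge \om^{k-p-1},
\]
for a compact $K \subset U$ and an auxiliary Hermitian form $\om$ on $X$. To bound the right-hand side I would invoke the Chern--Levine--Nirenberg trick: the identity $d\chi_\ep \wedge d^c \chi_\ep = \tfrac12 dd^c(\chi_\ep^2) - \chi_\ep dd^c \chi_\ep$, integration by parts exploiting $dS = 0$ on $U \sm A$, and the uniform bound $0 \le \chi_\ep \le 1$ together control the integral by a constant multiple of the mass of $S$ on a slightly enlarged neighborhood of $A \cap K$, which is finite by hypothesis. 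Combined with $\|\ti S\|(A) = 0$ and $\chi_\ep \to 1$ off $A$, a dominated-convergence argument then delivers the $o(1)$ conclusion. This proves $d\ti S = 0$ on $U$, and patching across a covering of $A$ gives the global statement.
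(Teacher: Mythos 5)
The thesis does not actually prove this statement: it appears in Chapter~2, ``Some Preliminaries,'' which opens by saying that all results there are classical and their proofs are omitted, with references to Demailly~\cite{DEM} and the volume~\cite{HDS}. This is the El Mir refinement of Skoda's extension theorem (Demailly, \emph{Complex Analytic and Differential Geometry}, Ch.~III, Thm.~2.3), so there is no in-text proof to compare against, and I can only assess your sketch on its own merits.

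The strategy you lay out is the standard one and correctly isolates the one nontrivial point, namely verifying $d\tilde S = 0$ across $A$ by means of plurisubharmonic cutoffs built from a local potential of $A$. Two remarks. First, a normalization slip in the cutoffs: with $\chi \equiv 0$ on $(-\infty,0]$ and $\chi(t)\nearrow 1$ only as $t\to+\infty$, the functions $\chi_\ep = \chi(1+\ep u)$ tend to $\chi(1)<1$, not to $1$, on $U\setminus A$. The downstream argument still yields $\chi(1)\,\langle \tilde S,d\varphi\rangle = 0$ because $\chi(1)>0$, so the conclusion survives, but the monotone-convergence claim as written is false; the usual fix is to normalize $\chi(1)=1$, which is compatible with convexity on the relevant half-line $(-\infty,1]$ since $u\le 0$ forces $1+\ep u \le 1$.

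Second, and more substantively, the last step is where the theorem is actually hard, and your account is too loose to stand on its own. The Cauchy--Schwarz reduction to $\int_K d\chi_\ep\wedge d^c\chi_\ep\wedge S\wedge\om^{k-p-1}$ is correct, and the Chern--Levine--Nirenberg device controls that integral \emph{uniformly} in $\ep$ by the mass of $S$ near $A$; but a uniform bound is not decay to zero. You write that ``a dominated-convergence argument then delivers the $o(1)$ conclusion,'' yet nothing you have set up is dominated in the relevant sense, and simply knowing $\chi_\ep\to 1$ off $A$ together with $\lVert\tilde S\rVert(A)=0$ does not by itself make the boundary term vanish. What the uniform bound \emph{does} give is that $d\tilde S = \lim d\chi_\ep\wedge S$ is a current of order zero supported on $A$; showing this limit is actually zero is precisely the technical core of El Mir's theorem and requires a further argument (Demailly carries it out by a more careful localization of the CLN estimate on the shrinking sublevel sets of $u$). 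You should either spell that step out or cite it, rather than gesture at it.
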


We have seen that positive closed currents generalize differential forms and analytic sets. But it is not always possible to extend the underlying calculus of forms and analytic sets to positive closed currents. The following discussion will show that how positive closed currents are flexible and how they are very rigid in some sense. In general, wedging of two positive closed $(1,1)$-currents does not make sense simply because both of the currents have measure coefficients and measures can not be multiplied. But one can define the wedge product $dd^c u \wedge S $ where $u$ is locally bounded p.s.h. function on $X$ and $S$ is a closed positive $(1,1)$-current on $X$. In this case, because of the locally boundedness of $u$, the current $uS$ is well-defined. According to Bedford-Taylor,
\[
dd^c u \wedge S= dd^c (uS)
\] 
where $dd^c$ is taken in the sense of distributions. This current is again a closed positive current. Given locally bounded p.s.h. functions $u_1,\ldots,u_q$, we define inductively
\[
dd^c u_1 \wedge dd^c u_2 \wedge \cdots \wedge dd^c u_q \wedge S=dd^c(u_1 dd^c u_2 \wedge \cdots \wedge dd^c u_q\wedge S) 
\]
which turns out to be a closed positive current.

\medskip

If $\omega$ is a fixed Hermitian form as above and $S$ is a $(p,p)$-current, then $S\wedge {\omega}^{k-p}$ is a positive measure which is called the trace measure of $S$ for $p\leq k-1$. In local coordinates, the coefficients of $S$ are measures, bounded by a constant times the trace measure. The following version of the Chern-Levine-Nirenberg inequality is useful.
\begin{thm}
Let $S$ be a positive closed $(p,p)$-current on $X$. Let $u_1,\ldots,u_q$, $q\leq k-p$, be locally bounded p.s.h. functions on $X$ and $K$ a compact subset of $X$. Then then there exists a constant $c>0$ depending only on $K$ and $X$ such that if $u$ is p.s.h. on $X$, $u$ satisfies
\[
{\lVert u\wedge dd^c u_1\wedge \cdots\wedge dd^c u_q \wedge S\rVert}_K \leq c {\lVert u \rVert}_{L^1(\sigma_S)}{\lVert u_1 \rVert}_{L^\infty(X)}\cdots {\lVert u_q \rVert}_{L^\infty(X)}
\] 
where $\sigma_S$ denotes the trace measure of $S$.
\end{thm}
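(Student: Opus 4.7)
The claim is purely local: both the mass on $K$ and the integral against $\sigma_S$ depend only on a neighbourhood of $K$. The first step is therefore to cover $K$ by finitely many coordinate charts and, via a partition of unity, reduce to the model situation in which $X$ is an open ball $B' \subset \mathbb{C}^k$, $K \Subset B'$, and $\omega = \tfrac{i}{2}\partial\bar\partial\lVert z\rVert^2$, so that $\omega$ is closed. Up to a constant depending only on the chart, the norm $\lVert uT\rVert_K$ for a $(p+q,p+q)$-current $uT$ equals $\int_K |u|\, T\wedge \omega^{k-p-q}$. I shall therefore estimate this integral by induction on $q$.

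The base case $q=0$ is immediate from the definition of the trace measure: $\int_K |u|\, S\wedge \omega^{k-p} = \int_K |u|\, d\sigma_S \leq \lVert u\rVert_{L^1(\sigma_S)}$. For the inductive step, fix an intermediate compact $K \Subset K'' \Subset B'$ and choose $\chi \in C_c^\infty(B')$ with $0\leq \chi \leq 1$, $\chi \equiv 1$ on $K$, and $\mathrm{supp}(\chi)\subset K''$. After subtracting a constant (which changes $\lVert u\rVert_{L^1(\sigma_S)}$ by at most a bounded amount that can be absorbed into $c$) one may assume $u\leq 0$ on $K''$. Set
\[
T := dd^c u_2 \wedge \cdots \wedge dd^c u_q \wedge S \wedge \omega^{k-p-q},
\]
which by Bedford--Taylor theory is a well-defined positive closed current. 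Regularising $u$ and $u_1$ by standard convolution, integration by parts (justified by $T$ being closed and $\chi$ being compactly supported) yields
\[
\int_K (-u)\, dd^c u_1 \wedge T \;\leq\; \int \chi(-u)\, dd^c u_1 \wedge T \;=\; \int u_1 \, dd^c\bigl(\chi(-u)\bigr) \wedge T.
\]

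Expanding $dd^c(\chi(-u))$ by the Leibniz rule produces four terms: $-\chi\, dd^c u$, $-u\, dd^c\chi$, and the two mixed gradient terms $-d\chi\wedge d^c u$ and $-du\wedge d^c\chi$. Using $|u_1|\leq \lVert u_1\rVert_{L^\infty}$ together with $|dd^c\chi|\leq C_\chi\, \omega$, each term reduces to a constant multiple of $\lVert u_1\rVert_{L^\infty}\int_{K''} |u|\,\omega\wedge T$: the $\chi\, dd^c u$ contribution requires one further integration by parts to transfer $dd^c$ from $u$ onto $\chi$; the mixed gradient terms are handled by a Cauchy--Schwarz inequality of the form $\pm\, d\chi\wedge d^c u \wedge T \lesssim (du\wedge d^c u + d\chi \wedge d^c\chi)\wedge T$ followed by the identity $2\,du\wedge d^c u \wedge T = dd^c(u^2)\wedge T - 2u\, dd^c u \wedge T$, which reduces these to already treated expressions. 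The resulting bound has the form
\[
\int_K |u|\, dd^c u_1 \wedge \cdots \wedge dd^c u_q \wedge S \wedge \omega^{k-p-q} \;\leq\; C\, \lVert u_1\rVert_{L^\infty} \int_{K''} |u|\, dd^c u_2 \wedge \cdots \wedge dd^c u_q \wedge S \wedge \omega^{k-p-q+1},
\]
and the right-hand side is exactly the quantity to which the inductive hypothesis (with $q-1$ factors $dd^c u_j$, $K''$ in place of $K$, and a slightly larger ambient compact) applies. Iterating finitely many times and composing the constants produces the desired bound with a constant depending only on $K$ and $X$.

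The main technical obstacle I anticipate is the rigorous control of the mixed gradient terms $d\chi\wedge d^c u \wedge T$ and $du\wedge d^c\chi\wedge T$: since $u$ is only plurisubharmonic and not locally bounded, these do not a priori make sense as currents and the integration by parts must be performed on smooth approximants $u^{(\varepsilon)} = u * \rho_\varepsilon$, with the final inequality recovered by monotone convergence as $\varepsilon \downarrow 0$. Once this regularisation step and the Cauchy--Schwarz type positivity estimate are in place, the induction proceeds in a routine manner.
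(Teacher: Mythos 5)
The thesis does not actually prove this theorem. Chapter~2 states explicitly that the results quoted there are classical, with proofs omitted and referred to \cite{DEM} and \cite{HDS}; so there is no in-paper argument to compare against and your attempt must stand on its own.

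The scaffolding of your proof --- localize, induct on $q$, cut off, pass $dd^c$ from $u_1$ onto $\chi(-u)$, expand by Leibniz --- is the right shape, and the terms $u_1\chi\,dd^c u$ and $u_1 u\, dd^c\chi$ are handled correctly. The gap is in the two mixed gradient terms $\int u_1\,d\chi\wedge d^c u\wedge T$ and $\int u_1\,du\wedge d^c\chi\wedge T$, and the obstacle you flag at the end is not a regularization technicality. After estimating $|u_1|\le\|u_1\|_{L^\infty}$ and invoking the Cauchy--Schwarz bound $\pm\,d\chi\wedge d^c u\wedge T\lesssim(du\wedge d^c u+d\chi\wedge d^c\chi)\wedge T$, you are left needing to bound $\int_{\mathrm{supp}(d\chi)}du\wedge d^c u\wedge T$, and this quantity is not controlled by $\|u\|_{L^1(\sigma_S)}$ --- it need not even be finite. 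Already with $k=1$, $T=1$, $u=\log|z-z_0|$ and $z_0\in\mathrm{supp}(d\chi)$, one has $\int_{\mathrm{supp}(d\chi)}du\wedge d^c u=\tfrac{1}{2\pi}\int|z-z_0|^{-2}\,dA=+\infty$ while $\|u\|_{L^1}<\infty$. Consequently, for the smooth approximants $u^{(\varepsilon)}=u*\rho_\varepsilon$ the Cauchy--Schwarz right-hand side blows up as $\varepsilon\downarrow0$, so no passage to the limit can recover the claimed estimate. Pursuing the identity $2\,du\wedge d^c u=dd^c(u^2)-2u\,dd^c u$ only makes this explicit: after a further integration by parts the $dd^c(u^2)$ piece becomes $\int u^2\,dd^c\chi_1\wedge T$, an $L^2$-type integral of $u$ that $\|u\|_{L^1(\sigma_S)}$ does not bound, while the remaining piece $\int\chi_1(-u)\,dd^c u\wedge T$ has exactly the structure of the quantity you set out to bound but with the \emph{unbounded} function $u$ sitting where $u_1$ used to be (indeed for $u=\log|z-z_0|$ the measure $(-u)\,dd^c u$ has infinite mass near $z_0$). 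Neither term is ``already treated''. A working proof must arrange the integrations by parts so that no unweighted $du\wedge d^c u\wedge T$ factor is ever produced: every surviving term should either be sign-definite and discardable, or of the form $(-u)\,\beta\wedge T'$ with $\beta$ a fixed smooth form and $T'$ a current covered by the inductive hypothesis. See \cite{DEM} or the Dinh--Sibony appendix in \cite{HDS} for such an arrangement.
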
 
The above theorem shows that p.s.h. functions are locally integrable with respect to the current $dd^c u_1\wedge\cdots \wedge dd^c u_q$. We deduce the following corollary.
\begin{cor}
Let $u_1,\ldots ,u_p$, $p\leq k$, be locally bounded p.s.h. functions on $X$. Then the current $dd^c u_1\wedge\cdots \wedge dd^c u_q $ has no mass on locally pluripolar sets, in particular, on proper analytic sets of $X$.
\end{cor}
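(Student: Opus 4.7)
The plan is to reduce the statement to a statement about the trace measure of $T := dd^c u_1 \wedge \cdots \wedge dd^c u_p$ and then invoke the Chern--Levine--Nirenberg inequality from the preceding theorem. Recall that in any local chart the coefficients of $T$ are complex measures absolutely continuous with respect to the trace measure $\sigma_T$ (up to a bounded density). Consequently, to show that $T$ puts no mass on a set $E$, it suffices to show $\sigma_T(E)=0$. Since the notion of being locally pluripolar is local, and since a countable union of $\sigma_T$-null sets is $\sigma_T$-null, it is enough to verify this assertion on a small neighborhood of an arbitrary point $x_0\in X$.

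First I would fix $x_0 \in X$ and choose a relatively compact neighborhood $U \Subset X$ of $x_0$ together with a p.s.h. function $u$ on a slightly larger neighborhood such that $E \cap U \subseteq \{u = -\infty\}$; such $U$ and $u$ exist by the definition of locally pluripolar set. I may also assume $u \le 0$ on $U$ after subtracting a constant. Then I would apply the Chern--Levine--Nirenberg inequality with $S = 1$ (the current associated to the constant function $1$, which is positive closed of bidegree $(0,0)$) and the locally bounded p.s.h. functions $u_1,\dots,u_p$, and with the given $u$ playing the role of the test p.s.h. function. This gives
\[
\int_{\bar U} |u|\, d\sigma_T \;\le\; c \,\|u\|_{L^1(\sigma_T)}\,\|u_1\|_{L^\infty}\cdots \|u_p\|_{L^\infty} < \infty,
\]
the key content being that $u \in L^1_{\mathrm{loc}}(\sigma_T)$. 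But if $\sigma_T(E \cap U) > 0$, then $u = -\infty$ on a set of positive $\sigma_T$-measure, contradicting integrability. Hence $\sigma_T(E \cap U) = 0$, and covering $E$ by countably many such $U$'s yields $\sigma_T(E) = 0$, hence $T$ has no mass on $E$.

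For the second assertion, any proper analytic subset $A \subset X$ is, locally near any point, contained in the zero set of a nontrivial holomorphic function $f$, and the function $\log|f|$ is p.s.h. with $\log|f| \equiv -\infty$ on $\{f = 0\} \supset A$. Thus $A$ is locally pluripolar, and the previous paragraph applies.

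The only mildly subtle point, and the one I would be most careful with, is the legitimacy of quoting Chern--Levine--Nirenberg exactly as stated: the inequality requires $u \in L^1(\sigma_T)$ on the right-hand side, yet what I actually want to conclude is that the right-hand side is finite. This circularity is avoided because the standard proof of the stated CLN inequality proceeds by approximating $u$ from above by smooth p.s.h. functions $u_\epsilon \searrow u$ and using monotone convergence, which simultaneously yields the finiteness and the bound. Thus the real obstacle is only verifying that the precise form of CLN stated in the preceding theorem implicitly delivers $u \in L^1_{\mathrm{loc}}(\sigma_T)$; this is standard and is exactly the content of the sentence immediately after the theorem in the excerpt.
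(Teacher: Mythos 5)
Your argument is the classical one the text alludes to (``The above theorem shows that p.s.h.\ functions are locally integrable with respect to the current\ldots''), and since proofs in this preliminaries chapter are explicitly omitted, it matches what the paper intends: apply the Chern--Levine--Nirenberg inequality with $S=1$ to deduce $u\in L^1_{\mathrm{loc}}(\sigma_T)$ for any p.s.h.\ $u$, localize so that the pluripolar set sits inside $\{u=-\infty\}$, conclude $\sigma_T$ gives it no mass, note the coefficient measures of $T$ are dominated by $\sigma_T$, and use $\log|f|$ to cover proper analytic sets. One slip to flag: in your displayed inequality the right-hand side should read $\|u\|_{L^1(\sigma_S)}$ with $\sigma_S$ the trace measure of $S=1$, i.e.\ essentially the ambient volume form $\omega^k$ --- against which every p.s.h.\ function is locally integrable on classical grounds --- and not $\|u\|_{L^1(\sigma_T)}$. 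Written correctly there is no circularity whatsoever, so your closing paragraph is resolving a worry that the theorem, read as stated, never raises; indeed the whole mechanism of CLN is precisely to trade $L^1$-control against the a priori well-understood measure $\sigma_S$ for $L^1$-control against the a priori poorly understood measure $\sigma_T$.
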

Here is a monotone continuity theorem due to Bedford-Taylor.

\begin{thm}
Let $u_1,\ldots ,u_q$ be locally bounded p.s.h. functions with $q\leq k-p$ and let $u_1^n,\ldots,u_q^n$ be decreasing sequence of p.s.h. functions converging pointwise to $u_1,\ldots,u_q$. Then 
\[
u_1^n dd^c u_2^n \wedge\cdots\wedge dd^c u_q^n\wedge S\ra u_1 dd^c u_2 \wedge\cdots\wedge dd^c u_q\wedge S
\]
and
\[
dd^c u_1^n \wedge\cdots \wedge dd^c u_q^n\wedge S \ra dd^c u_1^n \wedge \cdots \wedge dd^c u_q^n \wedge S
\]
weakly for any positive closed $(p,p)$-current $S$.
\end{thm}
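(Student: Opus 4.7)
My plan is to prove the first convergence by induction on $q$; the second then follows by applying $dd^c$, which is continuous on currents. The main analytic inputs are the Chern--Levine--Nirenberg estimate quoted in the text (for locally uniform mass control) and a key one-step lemma stating that locally bounded p.s.h. functions pass to the limit when tested against the weakly convergent sequences of positive closed currents produced at each stage.

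\textbf{Base case $q=1$.} The claim reduces to $u_1^n S \to u_1 S$ weakly. Uniform local boundedness of $(u_1^n)$ (dominated above by $u_1^1$ via upper semi-continuity, below by the locally bounded $u_1$) together with the fact that in any chart the coefficients of $S$ are complex measures dominated by its trace measure $\sigma_S$ allows Lebesgue's dominated convergence to give $\int u_1^n \varphi \wedge S \to \int u_1 \varphi \wedge S$ for every smooth compactly supported test form $\varphi$.

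\textbf{Induction step.} Assume the first convergence for $q-1$. Applying $dd^c$ gives the second convergence for $q-1$, so
\[
T^n := dd^c u_2^n \wedge \cdots \wedge dd^c u_q^n \wedge S \;\longrightarrow\; T := dd^c u_2 \wedge \cdots \wedge dd^c u_q \wedge S
\]
weakly; the Chern--Levine--Nirenberg inequality supplies locally uniformly bounded masses for $T^n$ and for $u_1^n T^n$. I decompose
\[
u_1^n T^n - u_1 T \;=\; (u_1^n - u_1)\,T^n \;+\; u_1\,(T^n - T).
\]
The second summand vanishes in the weak limit by the key lemma applied to $v = u_1$. For the first, monotonicity $0 \leq u_1^n - u_1 \leq u_1^m - u_1$ for $n \geq m$ gives, for any nonnegative smooth test form $\varphi$,
\[
0 \;\leq\; \int (u_1^n - u_1)\,\varphi \wedge T^n \;\leq\; \int (u_1^m - u_1)\,\varphi \wedge T^n;
\]
taking $\limsup_n$ via the key lemma applied to $u_1^m$ and $u_1$ separately, and then sending $m \to \infty$ with monotone convergence on the right, drives this summand to zero as well.

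\textbf{The key lemma and the main obstacle.} For a fixed locally bounded p.s.h. function $v$ and nonnegative smooth test form $\varphi$, one needs $\int v\,\varphi \wedge T^n \to \int v\,\varphi \wedge T$. Upper semi-continuity of $v$ and weak convergence of the positive measures $\varphi \wedge T^n \to \varphi \wedge T$ yield the upper bound $\limsup_n \int v\,\varphi \wedge T^n \leq \int v\,\varphi \wedge T$ immediately; the matching lower bound is the real technical obstacle. I would obtain it by regularizing through convolution with a standard nonnegative mollifier, producing smooth p.s.h. approximations $v^\epsilon := v \ast \rho_\epsilon \searrow v$. Smoothness gives the exact equality $\int v^\epsilon \varphi \wedge T^n \to \int v^\epsilon \varphi \wedge T$ for each fixed $\epsilon$, and the error $\int (v^\epsilon - v)\,\varphi \wedge T^n$ is nonnegative and must be controlled uniformly in $n$. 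Because $v$ is only upper semi-continuous, $v^\epsilon - v$ decreases to $0$ only pointwise, so uniformity in $n$ needs a diagonal extraction in $(n,\epsilon)$ exploiting the locally uniform mass bounds on $T^n$ together with a dominated-convergence estimate for the trace measures of the $T^n$. This diagonalization, together with the verification that no anomalous mass escapes onto the pluripolar locus where $v$ differs from its lower regularization, is where the technical weight of the argument sits.
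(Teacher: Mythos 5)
The paper states this Bedford--Taylor monotone continuity theorem in its preliminaries chapter without proof, citing \cite{DEM} and \cite{HDS}, so there is no internal argument to compare against; I can only assess your proposal on its own terms.

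Your induction scheme is the right skeleton, and the base case and the two-term decomposition of $u_1^n T^n - u_1 T$ are sound. The genuine gap is the ``key lemma.'' As you have stated it --- for a fixed locally bounded p.s.h.\ function $v$ and \emph{any} weakly convergent sequence of positive closed currents $T^n\to T$ with locally uniform mass bounds, $\int v\,\varphi\wedge T^n\to\int v\,\varphi\wedge T$ --- the claim is simply false. Weak convergence of the positive measures $\varphi\wedge T^n$ against an upper semi-continuous integrand only delivers $\limsup_n\int v\,\varphi\wedge T^n\le\int v\,\varphi\wedge T$; the matching $\liminf$ bound can fail when mass concentrates on the (non-pluripolar) set where $v$ is discontinuous. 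Your proposed remedy via mollification $v^\epsilon\searrow v$ does not close the gap: the error $\int(v^\epsilon-v)\,\varphi\wedge T^n$ is nonnegative but the measures $\varphi\wedge T^n$ change with $n$, so dominated or monotone convergence in $\epsilon$ gives no control that is uniform in $n$, and a bare ``diagonal extraction'' cannot be justified without additional input. The Bedford--Taylor proof succeeds precisely because it never treats $T^n$ as an abstract weakly convergent sequence; it exploits the fact that $T^n=dd^cu_2^n\wedge\cdots\wedge dd^cu_q^n\wedge S$ with each $u_j^n$ \emph{decreasing}, reducing locally to the case of globally defined bounded potentials and then using Stokes-type integration by parts together with the symmetry of the wedge product to obtain a quasi-monotonicity (Cauchy) estimate of the form $\int u_1^n\,dd^cu_2^n\wedge\cdots\wedge\varphi\ge\int u_1^m\,dd^cu_2^m\wedge\cdots\wedge\varphi - o(1)$ for $n\ge m$. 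In other words, the key lemma you isolate is essentially as hard as the theorem itself and must be proved \emph{using} the monotone wedge-product structure, not for abstract $T^n$; you should either prove it with that structure in hand (at which point the enveloping induction becomes redundant) or replace it with the integration-by-parts/quasi-monotonicity argument that the classical proof actually uses.
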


\section{Currents on projective spaces}

Complex projective $k$-space, denoted by $\mathbb{P}^k$, is defined to be the set of $1$-dimensional complex linear subspaces of $\mathbb{C}^{k+1}$ with the quotient topology inherited from the natural projection
\[
\pi: \mathbb{C}^{k+1}\setminus \{0\}\ra \mathbb{C}\mathbb{P}^k.
\]
Thus $\mathbb{P}^k$ is the quotient of $\mathbb{C}^{k+1}\setminus \{0 \}$ by the equivalence relation $z\sim w $ if and only if there exists a $\lambda\in \mathbb{C}\setminus \{0 \}$ such that $w=\lambda z$. We can cover $\mathbb{P}^k$ by open sets $U_i$ associated with the sets $\{w_i\neq 0\}$ in $\mathbb{C}^{k+1}\setminus \{0 \}$. Each $U_i$ is biholomorphic to $\mathbb{C}^k$ and $({w_0}/{w_i},\ldots,{w_{i-1}}/{w_i},{w_{i+1}}/{w_i},\ldots ,{w_k}/{w_i})$ is a coordinate system on it. The complement of $U_i$ is the hyperplane defined by $\{w_i=0\}$. Thus $\mathbb{P}^k$ can be recovered from  $\mathbb{C}^k$ by adding a hyperplane at infinity. Moreover, $\mathbb{P}^k$ can be viewed as a natural compactification of $\mathbb{C}^k$. We denote by $[w_0:\cdots:w_k]$ the point of $\mathbb{P}^k$ associated with the point $(w_0,\ldots,w_k)$ in $\mathbb{C}^{k+1}$. Further, the vector space structure of $\mathbb{C}^{k+1}$ induces an analogous structure on $\mathbb{P}^k$ by homogenization. In particular, each linear inclusion $\mathbb{C}^{r+1}\subset \mathbb{C}^{k+1}$ induces an inclusion $\mathbb{P}^r \subset \mathbb{P}^k$ and the image of a hyperplane in $\mathbb{C}^{k+1}$ is again a hyperplane $\mathbb{P}^k$.

\medskip

The projective space $\mathbb{P}^k$ is endowed with a K\"{a}hler form $\omega_{FS}$, called the Fubini-Study form. This is defined on the chart $U_i$ by
\[
\omega_{FS}:=\frac{1}{2\pi} dd^c \log \Big( \sum_{j=0}^k {\Big \lvert \frac{w_j}{w_i} \Big\rvert}^2\Big).
\] 
For a closed positive $(p,p)$-current $S$, the mass of $S$ is given by 
$$
\lVert S \rVert:=\langle S, \omega_{FS}^{k-p}\rangle.
$$ 
Since $H^{p,p}(\mathbb{P}^k, \mathbb{R})$ is generated by $\omega_{FS}^p$, a $(p,p)$-current $S$ is cohomologous to $\lVert S \rVert \omega_{FS}^p$. So $S-\lVert S \rVert \omega_{FS}^p$ is exact and by the $dd^c$-lemma there exists a $(p-1,p-1)$-current $U$ such that $S=\lVert S \rVert \omega_{FS}^p+dd^c U$. We call $U$ a quasi-potential of $S$.

\medskip

S. T. Yau introduced in \cite{Ya} the notion of quasi-p.s.h. functions. A quasi-p.s.h. function is locally the difference of a p.s.h. function and a smooth one. Various useful properties of quasi-p.s.h. functions can be deduced from those of p.s.h. functions. 

\medskip

In a local coordinates of $\mathbb{P}^k$, we set 
\[
\omega=\sqrt{-1}\sum_{i=1}^k dz_i \wedge d \bar{z_i}
\]
which is  the standard Hermitian form in $\mathbb{P}^k$. We have the following results.

\begin{prop}\label{pre 1}
If $U$ is quasi-p.s.h. on $\mathbb{P}^k$, then it belongs to $L^p(\mathbb{P}^k)$ for every $1\leq p <\infty$ and $dd^c U \geq -c \omega$ for some constant $c\geq 0$. If ${(U_n)}_{n\geq 1}$ is a decreasing sequence of quasi-p.s.h. functions on $\mathbb{P}^k$ satisfying $dd^c U_n \geq -c \omega$ with $c$ independent of $n$, then the limit is also quasi-p.s.h. function. If $S$ is  a positive closed $(1,1)$-current and $\alpha$ a smooth real form in the cohomology class of $S$, then there exists a quasi-p.s.h. function $U$, unique up to an additive constant, such that $dd^c U=S-\alpha$. 
\end{prop}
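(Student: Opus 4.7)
The plan is to verify each of the three assertions in turn, exploiting the compactness of $\mathbb{P}^k$ together with the local definition of a quasi-plurisubharmonic function as, locally, the difference of a psh function and a smooth one.

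For the first assertion, I would cover $\mathbb{P}^k$ by finitely many coordinate charts $V_i$ on each of which $U = \phi_i - \chi_i$ with $\phi_i$ psh and $\chi_i$ smooth. Since psh functions are locally in $L^p$ for every $1 \leq p < \infty$, and $\chi_i$ is continuous on $\overline{V_i}$, the restriction of $U$ to $V_i$ lies in $L^p(V_i)$; a finite covering argument then gives $U \in L^p(\mathbb{P}^k)$. For the inequality $dd^c U \geq -c \omega$, observe that on each $V_i$ we have $dd^c U = dd^c \phi_i - dd^c \chi_i \geq -dd^c \chi_i$. Since $\chi_i$ is smooth and $\overline{V_i}$ is compact, the smooth $(1,1)$-form $-dd^c \chi_i$ is bounded above by $c_i \omega$ for some constant $c_i \geq 0$; taking $c = \max_i c_i$ over the finite cover produces the global bound.

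For the second assertion, I work locally. In any small coordinate chart one can solve $dd^c \psi = c \omega$ for a smooth function $\psi$ (a local K\"ahler potential for $c\omega$); the hypothesis $dd^c U_n \geq -c \omega$ then translates to $dd^c(U_n + \psi) \geq 0$, i.e.\ $U_n + \psi$ is psh. A decreasing limit of psh functions is either identically $-\infty$ or psh; assuming the limit $U$ is not identically $-\infty$ (which is implicit in the statement that $U$ is quasi-psh), $U + \psi$ is psh, and hence $U = (U + \psi) - \psi$ is locally the difference of a psh and a smooth function. As this holds on each chart of a finite cover, $U$ is quasi-psh on $\mathbb{P}^k$.

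For the third assertion, the starting point is that $S$ and $\alpha$ lie in the same de Rham cohomology class, so $S - \alpha$ is a $d$-closed $(1,1)$-current representing zero in $H^2(\mathbb{P}^k, \mathbb{R})$. The $dd^c$-lemma for currents on the compact K\"ahler manifold $\mathbb{P}^k$ then yields a real distribution $U$ with $dd^c U = S - \alpha$. Positivity of $S$ and smoothness of $\alpha$ combine to give $dd^c U = S - \alpha \geq -\alpha \geq -c \omega$ for some $c \geq 0$ (here using boundedness of $\alpha$ against $\omega$ on the compact manifold). Locally, subtracting a smooth potential of $c \omega$ from $U$ produces a distribution with nonnegative $dd^c$, and such a distribution coincides almost everywhere with a psh function; hence $U$ agrees a.e.\ with a bona fide quasi-psh function. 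For uniqueness, if $U_1$ and $U_2$ are two quasi-psh solutions, then $dd^c(U_1 - U_2) = 0$, so $U_1 - U_2$ is pluriharmonic on $\mathbb{P}^k$; the maximum principle on the compact connected manifold forces this difference to be constant. The chief obstacle is the invocation of the $dd^c$-lemma for currents: one needs that cohomologically trivial $d$-closed currents on a compact K\"ahler manifold are $dd^c$-exact, which rests on Hodge theory and a regularization argument. Once that is granted, identifying the resulting distribution with a genuine quasi-psh function (rather than merely with a current-theoretic inequality) is essentially routine.
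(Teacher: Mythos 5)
The paper does not actually prove this proposition---it is stated as one of the ``classical'' preliminary facts with proofs ``omitted here'' and references given to Demailly's notes and the C.I.M.E. volume of Dinh--Sibony---so there is no in-paper argument to compare against. Your argument is correct and is the standard one found in those references: local decomposition of a quasi-p.s.h.\ function as $\phi_i-\chi_i$ with $\phi_i$ p.s.h.\ and $\chi_i$ smooth, finite compactness cover to produce the $L^p$ estimate and the uniform lower bound $dd^c U\geq -c\omega$; local K\"ahler potentials of $c\omega$ to reduce the monotone-limit claim to the classical fact that a decreasing sequence of p.s.h.\ functions converges to a p.s.h.\ function or $-\infty$; and the $dd^c$-lemma for currents on the compact K\"ahler manifold $\mathbb{P}^k$, followed by the identification of a distribution with $dd^c\geq -c\omega$ with a genuine quasi-p.s.h.\ function, for existence of the potential, plus pluriharmonicity and the maximum principle on a compact connected manifold for uniqueness. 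You are also right to flag that the monotone-limit statement needs the tacit hypothesis that the limit is not identically $-\infty$ (as $U_n\equiv -n$ shows); that caveat is indeed implicit in the way the source literature states this result, and your explicit acknowledgment of it is the correct reading.
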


\begin{prop}\label{pre 2}
Let $U$ be a quai-p.s.h. function on $\mathbb{P}^k$ such tha $dd^c U \geq -\omega$. Then there is a sequence ${(U_n)}_{n\geq 1}$ of smooth quasi-p.s.h functions decreasing to $U$ such that $dd^c U_n \geq -\omega$. In particular, if $S$ is a positive closed $(1,1)$-current on $\mathbb{P}^k$, then there are smooth positive closed $(1,1)$-forms $S_n$ converging to $S$. 
\end{prop}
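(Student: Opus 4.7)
First, I would observe that the second assertion reduces to the first via Proposition \ref{pre 1}: given a positive closed $(1,1)$-current $S$ on $\mathbb{P}^k$, there is a quasi-p.s.h. function $U$ with $S = \lVert S \rVert \omega_{FS} + dd^c U$ and $dd^c U \geq -\lVert S \rVert\omega_{FS}$. After rescaling one may assume $dd^c U \geq -\omega$, and the smooth approximating forms are then $S_n := \lVert S \rVert\omega_{FS} + dd^c U_n$. So the task is to produce the smooth decreasing approximation $U_n$ of the quasi-p.s.h. $U$.

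The key structural feature of $\mathbb{P}^k$ that I would exploit is that the unitary group $G := U(k+1)$ acts transitively on $\mathbb{P}^k$ by biholomorphisms and preserves the Fubini--Study form. Let $\chi_n: G \to \mathbb{R}^{\geq 0}$ be a sequence of smooth functions supported in shrinking neighborhoods of the identity, each of integral $1$ with respect to the Haar measure $dg$. Define the group-average regularization
\[
U_n(z) := \int_G U(g \cdot z)\, \chi_n(g)\, dg.
\]
Since the action is smooth and $\chi_n$ is smooth, $U_n$ is smooth on $\mathbb{P}^k$. Commuting $dd^c$ with the integral and using $g^*\omega = \omega$ for all $g \in G$ gives
\[
dd^c U_n = \int_G g^*(dd^c U)\,\chi_n(g)\,dg \;\geq\; -\int_G g^*\omega\,\chi_n(g)\,dg \;=\; -\omega,
\]
so each $U_n$ is quasi-p.s.h. with the required curvature bound. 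The $L^1$ convergence $U_n \to U$ is standard, and upper semi-continuity of $U$ forces $\limsup_n U_n \leq U$ pointwise.

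The delicate point, which I expect to be the main obstacle, is upgrading this to a \emph{decreasing} sequence. Group averaging alone does not produce monotonicity. To arrange it, I would work on the lifted picture: set $\tilde{U}(z) := U(\pi(z)) + \log\lVert z \rVert$ on $\mathbb{C}^{k+1}\setminus\{0\}$. Because $\pi^*\omega_{FS}$ is cohomologous to $dd^c\log\lVert z\rVert$ and $dd^c U \geq -\omega$, the function $\tilde{U}$ is p.s.h.\ on $\mathbb{C}^{k+1}\setminus\{0\}$, logarithmically homogeneous $\tilde{U}(\lambda z) = \tilde{U}(z) + \log|\lambda|$, and extends as a p.s.h.\ function on all of $\mathbb{C}^{k+1}$ by the previous extension theorem since $U$ is bounded above. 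Standard convolution with a radial mollifier $\chi_\epsilon$ on $\mathbb{C}^{k+1}$ produces smooth p.s.h.\ functions $\tilde{U}_\epsilon$ that decrease to $\tilde{U}$ as $\epsilon \downarrow 0$. After an extra averaging over the $S^1$-action $z \mapsto e^{i\theta}z$ (which preserves the decreasing property and p.s.h.-ness), the regularized functions can be made $S^1$-invariant, and one defines $U_\epsilon$ on $\mathbb{P}^k$ by restricting $\tilde{U}_\epsilon - \log\lVert z \rVert$ to the unit sphere and passing to the quotient.

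Finally I would verify that the resulting $U_\epsilon$ are smooth on $\mathbb{P}^k$, decrease to $U$ pointwise, and satisfy $dd^c U_\epsilon \geq -\omega$; the curvature bound follows because $dd^c \tilde{U}_\epsilon \geq 0$ on $\mathbb{C}^{k+1}$ and $dd^c \log\lVert z\rVert$ pushes forward (on the unit sphere) to $\omega_{FS}$. The hardest part of the argument is genuinely this monotone-smoothing lifting-and-descent step: convolution in $\mathbb{C}^{k+1}$ gives monotonicity but slightly breaks exact log-homogeneity, so the $S^1$-symmetrization and the sphere restriction must be executed carefully to preserve both the smoothness and the decreasing property simultaneously.
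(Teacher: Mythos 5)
The paper states at the start of Chapter~2 that the results there are classical and ``their proofs are omitted here,'' citing Demailly and the Dinh--Sibony lecture notes, so there is no in-paper proof to compare against; I am therefore evaluating your argument on its own terms. Your reduction of the second assertion to the first via Proposition~\ref{pre 1} is fine, and your first step (unitary group averaging on $\mathbb{P}^k$) correctly produces smooth quasi-p.s.h.\ functions with $dd^c U_n \geq -\omega$; you are also right to flag that this alone does not yield monotonicity. The genuine gap is in the lifting argument. After convolving $\tilde U = \pi^* U + \log\lVert z\rVert$ with a radial mollifier on $\mathbb{C}^{k+1}$, the resulting $\tilde U_\epsilon$ is smooth, p.s.h., and decreasing, but it is \emph{no longer logarithmically homogeneous}: $\tilde U_\epsilon(\lambda z) \neq \tilde U_\epsilon(z) + \log|\lambda|$ for general $\lambda \in \mathbb{C}^*$ ($S^1$-averaging only restores invariance under $|\lambda|=1$, which is strictly weaker). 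Consequently the descent you propose --- restrict $\tilde U_\epsilon$ to the unit sphere and pass to the Hopf quotient --- does \emph{not} recover the curvature estimate. The point is that $\pi^* U_\epsilon(z) = \tilde U_\epsilon(z/\lVert z\rVert)$, and the map $z \mapsto z/\lVert z\rVert$ is not holomorphic, so $dd^c\bigl(\pi^* U_\epsilon + \log\lVert z\rVert\bigr)$ is not a pullback or restriction of $dd^c\tilde U_\epsilon$; the Euclidean plurisubharmonicity of $\tilde U_\epsilon$ carries no direct information about $dd^c U_\epsilon + \omega$ on $\mathbb{P}^k$. Equivalently, in an affine chart $\zeta \mapsto \sigma(\zeta) = (1,\zeta)$ the function $\tilde U_\epsilon(\sigma(\zeta)) - \log\lVert\sigma(\zeta)\rVert$ has the right curvature but fails to glue across charts, precisely because log-homogeneity has been lost.

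The fix is to regularize by the \emph{holomorphic linear action of $GL(k+1,\mathbb{C})$ on $\mathbb{C}^{k+1}$} rather than by Euclidean convolution. Setting
\[
\tilde U_\epsilon(z) := \int_{\mathfrak{gl}(k+1,\mathbb{C})} \tilde U\bigl(\exp(\xi)\cdot z\bigr)\,\chi_\epsilon(\xi)\,dV(\xi),
\]
with $\chi_\epsilon$ a radial mollifier on the Lie algebra, one gets a $\tilde U_\epsilon$ that is simultaneously smooth (the action is a submersion onto $\mathbb{C}^{k+1}\setminus\{0\}$), plurisubharmonic (an average of pullbacks of $\tilde U$ by biholomorphisms), and exactly log-homogeneous (linear maps commute with scalars), hence it descends to a smooth $U_\epsilon$ on $\mathbb{P}^k$ with $\pi^*(dd^c U_\epsilon + \omega_{FS}) = dd^c\tilde U_\epsilon \geq 0$. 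Monotonicity in $\epsilon$ then comes from the observation that for fixed $\xi$ and $z$ the map $\zeta \mapsto \tilde U\bigl(\exp(\zeta\xi)\cdot z\bigr)$ is subharmonic in $\zeta \in \mathbb{C}$, so its circular averages increase with $|\zeta|$; this is exactly the submean-value mechanism you invoke, but realized along complex one-parameter subgroups of $GL(k+1,\mathbb{C})$ rather than along Euclidean translates. Note that the Euclidean translations $z \mapsto z + w$ are not holomorphic symmetries compatible with the $\mathbb{C}^*$-action, which is precisely why your construction breaks; the $GL$-action is the replacement that makes everything cohere.
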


A function on $\mathbb{P}^k$ is called d.s.h. (difference of quasi-p.s.h. functions) if it is the difference of two quasi-p.s.h. functions outside a pluripolar set. The notion of d.s.h. functions was introduced by Dinh and Sibony (see \cite{DS1}). Two d.s.h. functions are said to be equal if they agree outside a pluripolar set. We denote the space of d.s.h. functions by DSH$(\mathbb{P}^k)$. The following proposition can be easily deduced from the properties of p.s.h. functions.

\begin{prop}\label{pre 3}
The space DSH$(\mathbb{P}^k)$ is contained in $L^p(\mathbb{P}^k)$ for $1\leq p <\infty$. If $U$ is a d.s.h. function, then we can write $dd^c U=S^+ - S^-$ where $S^{\pm}$ are positive closed $(1,1)$-currents in the same cohomology class. Conversely if $S^{\pm}$ are positive closed $(1,1)$-currents of the same mass, then there is a d.s.h. function $U$, unique upto a constant, such that $dd^c U=S^+-S^-$. 
\end{prop}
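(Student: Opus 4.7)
The plan is to prove the three assertions in order, invoking Propositions \ref{pre 1} and \ref{pre 2} together with the earlier characterization of positive closed $(1,1)$-currents as $dd^c$ of a p.s.h. potential. For the first assertion, I would take $U \in \text{DSH}(\mathbb{P}^k)$ and write $U = U_1 - U_2$ (outside a pluripolar set) with $U_1, U_2$ quasi-p.s.h. Since Proposition \ref{pre 1} gives $U_j \in L^p(\mathbb{P}^k)$ for every $1 \leq p < \infty$, the triangle inequality in $L^p$ yields $U \in L^p(\mathbb{P}^k)$, noting that pluripolar sets have zero Lebesgue measure.

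For the second assertion, starting from $U = U_1 - U_2$ with $U_j$ quasi-p.s.h., Proposition \ref{pre 1} furnishes a constant $c \geq 0$ (taking the maximum of the two individual constants) with $dd^c U_j + c\, \omega \geq 0$ for $j = 1, 2$. Setting
\[
S^+ := dd^c U_1 + c\, \omega, \qquad S^- := dd^c U_2 + c\, \omega,
\]
gives two positive closed $(1,1)$-currents whose difference is $dd^c U$. In particular $S^+ - S^-$ is $dd^c$-exact, so $S^+$ and $S^-$ are cohomologous, which in the case of $\mathbb{P}^k$ is equivalent to saying they have the same mass (since $H^{1,1}(\mathbb{P}^k, \mathbb{R})$ is one dimensional and generated by $\omega_{FS}$).

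For the converse, let $S^{\pm}$ be positive closed $(1,1)$-currents of the same mass. Because $H^{1,1}(\mathbb{P}^k, \mathbb{R})$ is generated by $\omega_{FS}$, the currents $S^+$ and $S^-$ are cohomologous; fix a smooth real form $\alpha$ in this common cohomology class. Proposition \ref{pre 1} then produces quasi-p.s.h. functions $U^{\pm}$, unique up to an additive constant, with $dd^c U^{\pm} = S^{\pm} - \alpha$. The function $U := U^+ - U^-$ is d.s.h. and satisfies $dd^c U = S^+ - S^-$.

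Finally, uniqueness is the only step requiring care, and is where I expect the only subtlety: if $V$ is another d.s.h. function with $dd^c V = S^+ - S^-$, then $dd^c(U - V) = 0$ as currents. The theorem quoted earlier in the chapter (applied locally on each coordinate chart of $\mathbb{P}^k$) shows that $U - V$ agrees almost everywhere with a pluriharmonic, hence real analytic, function on $\mathbb{P}^k$. Since $\mathbb{P}^k$ is compact and connected, the maximum principle for pluriharmonic functions forces $U - V$ to be constant. The mild subtlety is that $U - V$ is only defined outside a pluripolar set and lies in $L^1$, so one has to use the distributional form of the statement and the convention that d.s.h. functions are identified when they agree outside a pluripolar set.
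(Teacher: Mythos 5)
Your proof is correct. The paper does not supply an argument for this proposition — it simply remarks that it ``can be easily deduced from the properties of p.s.h.\ functions'' and leaves the details to the cited references (Dinh--Sibony, Demailly). Your argument is exactly the standard deduction: writing a d.s.h.\ function as a difference of quasi-p.s.h.\ functions and invoking Proposition \ref{pre 1} for the $L^p$ membership, the lower bound $dd^c U_j \geq -c\,\omega$ to manufacture the positive closed pieces $S^\pm = dd^c U_j + c\,\omega$, the one-dimensionality of $H^{1,1}(\mathbb{P}^k,\mathbb{R})$ to pass between ``same mass'' and ``same cohomology class,'' and Proposition \ref{pre 1} again for the converse. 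The uniqueness step is handled properly: $dd^c(U-V)=0$ with $U-V\in L^1$ forces $U-V$ to coincide a.e.\ with a pluriharmonic function on each chart, and these patch to a global pluriharmonic function on the compact connected manifold $\mathbb{P}^k$, hence a constant. You are also right to flag that d.s.h.\ functions are only defined modulo pluripolar sets, so the conclusion is equality up to an additive constant in that sense.
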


For a $(1,1)$-current $S$ which is the difference of two positive closed $(1,1)$-currents, define
\[
{\lVert S \rVert}_*:=\inf (\lVert S^+ \rVert+\lVert S^- \rVert)
\]
where the infimum is taken over all positive closed $(1,1)$-currents $S^\pm$ such that $S=S^+- S^-$. Now we define a norm on DSH$(\mathbb{P}^k)$. For $U\in \text{DSH}(\mathbb{P}^k)$, define
\[
{\lVert U \rVert}_{\text{DSH}}:=\lvert \langle \omega,U\rangle\rvert + {\lVert dd^c U \rVert}_*.
\]
The term $\lvert \langle \omega^k,U\rangle\rvert$ may be replaced with ${\lVert U \rVert}_{L^p}$, $1\leq p <\infty$. This replacement will give an equivalent norm on DSH$(\mathbb{P}^k)$. The space of d.s.h. functions endowed with the above norm is a Banach space. In fact, replacing $\omega$ with any positive measure, for which quasi-p.s.h. functions are integrable, will give an equivalent norm. Here is a useful result.

\begin{lem}\label{pre 4}
Let $\mathcal{F}$ be a family of d.s.h. functions on $\mathbb{P}^k$ such that ${\lVert dd^c U \rVert}_*$ is bounded by a fixed constant for every $U\in \mathcal{F}$. Let $\Omega$ be a non-empty open subset of $\mathbb{P}^k$. Then $\mathcal{F}$ is bounded in DSH$(\mathbb{P}^k)$ if and only if $\mathcal{F}$ is bounded in $L^1(\Omega)$.
\end{lem}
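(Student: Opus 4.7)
One direction follows immediately from Proposition~\ref{pre 3}: if $\mathcal{F}$ is bounded in $\text{DSH}(\mathbb{P}^k)$, then the $L^1(\mathbb{P}^k)$ norms (and hence the $L^1(\Omega)$ norms) are uniformly bounded, since an equivalent DSH norm may be defined using $\|U\|_{L^1(\mathbb{P}^k)}$ in place of $|\langle \omega, U\rangle|$. For the nontrivial implication, assume $\|U\|_{L^1(\Omega)} \leq M_1$ and $\|dd^c U\|_\ast \leq M_2$ for every $U \in \mathcal{F}$. Since $\|dd^c U\|_\ast$ is already under control, it suffices to bound $\|U\|_{L^1(\mathbb{P}^k)}$ uniformly.

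The first step is a canonical decomposition. For each $U \in \mathcal{F}$ choose positive closed $(1,1)$-currents $S^{\pm}$ with $dd^c U = S^+ - S^-$ and $\|S^+\| + \|S^-\| \leq 2M_2$. Each $S^{\pm}$ is cohomologous to $\|S^\pm\|\omega_{FS}$, so Proposition~\ref{pre 1} produces quasi-p.s.h. functions $V^\pm$, unique up to additive constants, with $dd^c V^\pm = S^\pm - \|S^\pm\|\omega_{FS}$; in particular $dd^c V^\pm \geq -c\omega$ for a single constant $c$ depending only on $M_2$. Normalize so that $\sup_{\mathbb{P}^k} V^\pm = 0$. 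Then $U = V^+ - V^- + h_U$ for a uniquely determined real constant $h_U$.

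The key input is the classical compactness fact: the family
\[
\mathcal{K} = \big\{\, V \text{ quasi-p.s.h. on } \mathbb{P}^k :\ dd^c V \geq -c\omega,\ \sup_{\mathbb{P}^k} V = 0 \,\big\}
\]
is bounded in $L^1(\mathbb{P}^k)$ by some constant $C_0 = C_0(c)$. Granting this, $\|V^\pm\|_{L^1(\mathbb{P}^k)} \leq C_0$ and in particular $\|V^\pm\|_{L^1(\Omega)} \leq C_0$. The identity $h_U\operatorname{vol}(\Omega) = \int_\Omega (U - V^+ + V^-)$ then forces $|h_U| \leq (M_1 + 2C_0)/\operatorname{vol}(\Omega)$, and consequently
\[
\|U\|_{L^1(\mathbb{P}^k)} \leq 2C_0 + |h_U|\operatorname{vol}(\mathbb{P}^k)
\]
is uniformly bounded over $\mathcal{F}$, yielding the desired bound on $\|U\|_{\text{DSH}}$.

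The main obstacle is the compactness assertion for $\mathcal{K}$. This is a standard property of quasi-p.s.h. functions on compact K\"ahler manifolds: locally $V + c\rho$ is genuinely p.s.h. for a suitable smooth $\rho$, and the normalization $\sup V = 0$ (so $V$ actually attains the value $0$) rules out the $V \equiv -\infty$ alternative of Hartogs' compactness lemma, giving uniform $L^1_{\text{loc}}$ bounds that patch across the compact manifold.
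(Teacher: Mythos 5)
Your proof is correct, and it is essentially the only possible one. Note that the paper itself does not prove this lemma: the preliminaries chapter explicitly states that the results there are classical and that proofs are omitted, referring to Demailly's notes and the Dinh--Sibony references. Your argument is precisely the one those sources give. One small point worth making explicit: before you can conclude that $U - V^+ + V^-$ is a constant $h_U$, you need $\lVert S^+\rVert = \lVert S^-\rVert$, so that the $\omega_{FS}$-terms cancel in $dd^c(V^+ - V^-) = S^+ - S^- - (\lVert S^+\rVert - \lVert S^-\rVert)\omega_{FS}$; this holds automatically because $dd^c U$ is exact, hence $S^+$ and $S^-$ lie in the same cohomology class $\lVert S^\pm\rVert[\omega_{FS}]$, but the reader should not have to supply that step. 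With that inserted, the chain of reasoning --- canonical normalized quasi-potentials, Hartogs-type $L^1$-compactness of the family $\{\,V : dd^c V \ge -c\omega,\ \sup V = 0\,\}$ on the compact manifold, and then trapping the constant $h_U$ by integrating over the fixed nonempty open set $\Omega$ --- is complete and standard.
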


The following result can be inferred from the properties of p.s.h. functions.

\begin{thm}\label{pre 5}
The canonical embedding of DSH$(\mathbb{P}^k)$ in $L^p(\mathbb{P}^k)$ is compact. Further, for a bounded subset $\mathcal{F}\subset \text{DSH}(\mathbb{P}^k)$, there are positive constants $\alpha$ and $A$ such that 
\[
\int_{\mathbb{P}^k} e^{\alpha \lvert U \rvert} \omega \leq A
\]
for all $U\in \mathcal{F}$.
\end{thm}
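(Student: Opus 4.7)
The strategy has two parts: first establish uniform exponential integrability for a bounded family $\mathcal{F}\subset \mathrm{DSH}(\mathbb{P}^k)$, then combine it with the classical $L^1$-compactness of bounded families of quasi-p.s.h.\ functions to obtain $L^p$-compactness via the Vitali convergence theorem.

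Normalization: for $U\in\mathcal{F}$ with $\lVert U\rVert_{\mathrm{DSH}}\leq M$, Proposition \ref{pre 3} yields a decomposition $dd^c U=S^+-S^-$ with $S^\pm$ positive closed $(1,1)$-currents of mass $\leq M+1$. Let $U^\pm$ be the quasi-potentials of $S^\pm$ produced by Proposition \ref{pre 1}, normalized by $\int U^\pm\,\omega^k=0$. Then $dd^c U^\pm\geq -C\omega$ for a constant $C$ depending only on $M$, and $U=U^+-U^-+c_U$ where $|c_U|$ is controlled by $|\langle\omega^k,U\rangle|\leq M$. In particular both $U^\pm$ lie in the compact class
\[
\mathcal{Q}_C=\Big\{V\ \text{quasi-p.s.h. on }\mathbb{P}^k\ :\ dd^c V\geq -C\omega,\ \int V\,\omega^k=0\Big\}.
\]

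For exponential integrability I invoke a uniform Skoda--Zeriahi estimate for $\mathcal{Q}_C$: there exist $\alpha_0,A_0>0$ (depending only on $C$) such that $\int_{\mathbb{P}^k} e^{-\alpha_0 V}\,\omega^k\leq A_0$ for every $V\in\mathcal{Q}_C$. The ingredients are that Lelong numbers of $V$ are controlled by $C$, that $\sup V$ is uniformly bounded on $\mathcal{Q}_C$ by submean inequalities in charts, and that Skoda's classical local $L^2$-estimate combined with a finite cover of $\mathbb{P}^k$ yields a global uniform bound. Then $U^\pm\leq \sup U^\pm\leq \mathrm{const}$, so $|U|\leq -U^++\mathrm{const}-U^-+\mathrm{const}+|c_U|$, and Cauchy--Schwarz gives
\[
\int_{\mathbb{P}^k}e^{\alpha|U|}\omega^k\ \leq\ e^{\alpha|c_U|+\alpha\,\mathrm{const}}\Big(\int e^{-2\alpha U^+}\omega^k\Big)^{1/2}\Big(\int e^{-2\alpha U^-}\omega^k\Big)^{1/2}
\]
for $\alpha<\alpha_0/2$, yielding the claimed bound $\int_{\mathbb{P}^k}e^{\alpha|U|}\omega^k\leq A$ with $\alpha,A$ depending only on $M$.

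For compactness of the embedding, take a bounded sequence $\{U_n\}\subset\mathrm{DSH}(\mathbb{P}^k)$ and apply the above decomposition to obtain $U_n^\pm\in\mathcal{Q}_C$ and constants $c_{U_n}$ with $|c_{U_n}|\leq M$. Bounded families of quasi-p.s.h.\ functions with a uniform lower bound on $dd^c$ are relatively compact in $L^1(\mathbb{P}^k)$, by the classical $L^1_{\mathrm{loc}}$-compactness of subharmonic functions applied in each affine chart together with a diagonal extraction. Passing to a subsequence, $U_n^\pm\to U_\infty^\pm$ in $L^1(\mathbb{P}^k)$ and $c_{U_n}\to c_\infty$, hence $U_n\to U_\infty:=U_\infty^+-U_\infty^-+c_\infty$ in $L^1$. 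To upgrade to $L^p$ for any $1\leq p<\infty$, I use the uniform exponential integrability established above, which supplies uniform integrability of $\{|U_n|^p\}$; the Vitali convergence theorem then promotes $L^1$-convergence to $L^p$-convergence. This proves compactness of the embedding.

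The main obstacle is the \emph{uniform} Skoda--Zeriahi estimate on $\mathcal{Q}_C$. The single-function integrability $\int e^{-\alpha V}\omega^k<\infty$ is classical, but arranging the same $\alpha,A$ to work across the entire class $\mathcal{Q}_C$ requires the compactness of $\mathcal{Q}_C$ in $L^1$ together with the quantitative form of Skoda's $L^2$-estimate in local coordinates and a careful patching argument; all other ingredients are more or less automatic once this uniform bound is in hand.
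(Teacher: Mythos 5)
The paper does not prove this result: Theorem~\ref{pre 5} sits in the preliminaries chapter, which the text explicitly frames as a collection of ``classical'' facts whose ``proofs are omitted here'' and refers the reader to \cite{DEM} and \cite{HDS} (in practice these estimates go back to Dinh--Sibony and to Zeriahi's quantitative refinement of Skoda's integrability theorem). So there is no in-paper proof to compare against; your proposal has to be judged on its own merits.

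Your outline is the standard argument and it is correct. The decomposition $U=U^+-U^-+c_U$ via Proposition~\ref{pre 3}, with $U^\pm$ normalized quasi-potentials of $S^\pm$ lying in a fixed class $\mathcal Q_C$ with $dd^cU^\pm\ge -C\omega$ and $\int U^\pm\,\omega^k=0$, is precisely the right reduction: the DSH bound $\lVert U\rVert_{\mathrm{DSH}}\le M$ controls both the mass of $S^\pm$ (hence $C$) and the constant $c_U$. You correctly identify the crux, the uniform Skoda--Zeriahi inequality $\int e^{-\alpha_0 V}\,\omega^k\le A_0$ valid for all $V\in\mathcal Q_C$ with $\alpha_0,A_0$ depending only on $C$; this is the one genuinely nontrivial ingredient, and your list of what goes into it (uniform bound on $\sup V$ from the submean inequality, control of Lelong numbers by the mass bound, local Skoda $L^2$-estimate plus a finite cover) is the right list. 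The passage from there to $\int e^{\alpha|U|}\omega^k\le A$ via $|U^\pm|\le 2B-U^\pm$ and Cauchy--Schwarz is correct, and for the compactness claim the combination of $L^1$-precompactness of $\mathcal Q_C$ (Hörmander-type compactness of quasi-p.s.h.\ families in charts plus diagonal extraction) with the uniform exponential integrability to upgrade $L^1$-convergence to $L^p$-convergence via Vitali is exactly how this is done. One cosmetic point: the statement in the paper writes the integral against $\omega$, which for $k>1$ must be read as $\omega^k$ (the normalized volume form), as you do. The only thing your write-up stops short of is a self-contained proof of the uniform Skoda--Zeriahi estimate, but you flag this honestly, and for a preliminary fact that the paper itself takes as known this is reasonable.
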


We deduce the following useful corollary.

\begin{cor}\label{pre 6}
Let $\mathcal{F}$ be a bounded subset in DSH$(\mathbb{P}^k)$. If $\mu$ is a probability measure associated with a bounded form of maximal degree on $\mathbb{P}^k$, then there is a positive constant $A$ such that 
\[
\langle \mu, \lvert U \rvert\rangle \leq A(1+ \log^+ {\lVert \mu \rVert}_\infty)
\]
for all $U\in \mathcal{F}$ where $\log^+:=\max \{\log,0\}$.
\end{cor}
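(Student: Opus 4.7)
The plan is to combine the exponential integrability provided by Theorem~\ref{pre 5} with a truncation argument whose cut-off level is tuned to $\|\mu\|_\infty$. First I would write $\mu = \varphi\,\omega^k$ with $\varphi\in L^\infty(\mathbb{P}^k)$, where $\omega^k$ is normalised to have total mass one; then $\|\varphi\|_{L^\infty}$ is comparable to $\|\mu\|_\infty$. Theorem~\ref{pre 5} applied to the bounded family $\mathcal{F}$ yields constants $\alpha, A_0 > 0$ with
\[
\int_{\mathbb{P}^k} e^{\alpha|U|}\,\omega^k \leq A_0 \qquad \text{for every } U\in\mathcal{F},
\]
and combining this with the elementary inequality $x \leq C_\alpha\, e^{\alpha x/2}$ valid for $x\geq 0$ upgrades it to a uniform bound $\int_{\mathbb{P}^k} |U|\,e^{\alpha|U|/2}\,\omega^k \leq A_1$ on $\mathcal{F}$.

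Next, for a threshold $T \geq 0$ to be chosen later, I would split the integral over $\{|U|\leq T\}$ and its complement:
\[
\langle\mu,|U|\rangle \;\leq\; T\,\|\mu\| + \|\varphi\|_\infty \int_{\{|U|>T\}} |U|\,\omega^k \;\leq\; T + \|\varphi\|_\infty A_1\,e^{-\alpha T/2},
\]
using $\|\mu\| = 1$ in the first summand and, in the second, the bound $1 \leq e^{\alpha(|U|-T)/2}$ on $\{|U|>T\}$ together with the $L^1$ estimate of the previous paragraph.

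Finally I would optimise in $T$. Choosing $T := \max\{0,\,(2/\alpha)\log(\|\varphi\|_\infty A_1)\}$ makes the second summand at most $1$, yielding
\[
\langle\mu,|U|\rangle \;\leq\; 1 + (2/\alpha)\log^+(\|\varphi\|_\infty A_1),
\]
which is of the required form $A(1+\log^+\|\mu\|_\infty)$ once $A_1$ and the comparability constant between $\|\varphi\|_\infty$ and $\|\mu\|_\infty$ are absorbed into $A$. Once Theorem~\ref{pre 5} is in hand the rest is routine arithmetic, and I do not anticipate a serious obstacle. The only mildly delicate point is the use of $e^{\alpha|U|/2}$ rather than $e^{\alpha|U|}$ inside the tail integral: pulling out $e^{-\alpha T/2}$ while leaving $e^{\alpha|U|/2}$ to be absorbed by the exponential bound is precisely what causes $\|\mu\|_\infty$ to enter the final estimate only logarithmically instead of polynomially.
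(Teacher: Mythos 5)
Your proof is correct and is exactly the standard derivation of this estimate from the exponential integrability statement of Theorem~\ref{pre 5}; the paper does not spell out a proof (it simply says the corollary is ``deduced''), and the threshold-splitting argument you give, with the cut-off $T$ tuned to $\log\lVert\mu\rVert_\infty$, is precisely what that deduction amounts to. The only small point worth flagging is that you should note $\langle\mu,|U|\rangle$ is finite to begin with (it is, since $\mu$ has bounded density and each $U$ lies in $L^1$ by Theorem~\ref{pre 5}), and that the case $\lVert\varphi\rVert_\infty A_1\le 1$ is handled by $T=0$ so the $\max$ in the definition of $T$ is not superfluous; with those noted, every step checks out.
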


\thispagestyle{empty}

\thispagestyle{empty}
\chapter{Dynamics of skew products of H\'{e}non maps}
In this chapter, we study the dynamics of skew products of H\'{e}non maps. This work can also be found in \cite{KP}.
\section{Existence of filtrations}
\no The existence of a filtration $V^{\pm}_R, V_R$ for a H\'{e}non map is useful in localizing its dynamical behavior. To study a family of such maps, it is therefore essential to first establish the existence of a uniform filtration that works for all of them. Let
\begin{align*}
V_R^+ &= \big\{ (x, y) \in \mbb C^2 : \vert y \vert > \vert x \vert, \vert y \vert > R \big\},\\
V_R^- &= \big\{ (x, y) \in \mbb C^2 : \vert y \vert < \vert x \vert, \vert x \vert > R \big\} \text{ and }\\
V_R   &= \big\{ (x, y) \in \mbb C^2 : \vert x \vert, \vert y \vert \le R\}
\end{align*}
be a filtration of $\mathbb{C}^2$ where $R$ is large enough so that
\[
H_{\la}(V_R^+) \subset V_R^+
\]
for each $\la \in M$. The existence of such a $R$ is shown in the following lemma.
\subsection{A lemma on uniform filtration}
\begin{lem} \label{le1}
There exists $R>0$ such that
$$
H_\lambda(V_R^+)\subset V_R^+, \ \ H_\lambda(V_R^+\cup V_R)\subset V_R^+\cup V_R
$$
and
$$
H_\lambda^{-1}(V_R^-)\subset V_R^-, \ \ H_\lambda^{-1}(V_R^-\cup V_R)\subset V_R^-\cup V_R
$$
for all $\lambda \in M$. Furthermore,
$$
I_\lambda^{\pm}=\mathbb{C}^2\setminus K_\lambda^{\pm}=\bigcup_{n=0}^\infty (H_{\la}^{\pm n})^{-1}(V_R^{\pm}).
$$
\end{lem}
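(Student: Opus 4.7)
The plan is to use the compactness of $M$, together with the continuity and non-vanishing of the coefficients $a_j$ and of the coefficients of $p_{j,\la}$, to make the classical Bedford--Smillie filtration argument for a single H\'{e}non map work uniformly in $\la \in M$, and then to propagate the resulting filtration through the finite composition $H_\la = H_\la^{(m)} \circ \cdots \circ H_\la^{(1)}$. First I would fix the uniform constants. Compactness of $M$ gives $0 < A_{\min} \le \vert a_j(\la)\vert \le A_{\max}$ for all $1 \le j \le m$ and all $\la \in M$. Since each $p_{j,\la}$ is monic of degree $d_j \ge 2$ with continuous coefficients on the compact set $M$, there is $R_0 > 0$, independent of $\la$ and $j$, such that $\vert p_{j,\la}(y)\vert \ge \tfrac{1}{2}\vert y\vert^{d_j}$ whenever $\vert y\vert \ge R_0$.

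Next I would establish the filtration for each individual H\'{e}non factor $H_\la^{(j)}$. Pick $R \ge R_0$ large enough that $\tfrac{1}{2} R^{d_j - 1} - A_{\max} \ge 2$ and $A_{\min}^{-1}\bigl(\tfrac{1}{2} R^{d_j - 1} - 1\bigr) \ge 2$ for every $j$. For $(x,y) \in V_R^+$ the estimate
\[
\vert p_{j,\la}(y) - a_j(\la) x \vert \;\ge\; \tfrac{1}{2}\vert y\vert^{d_j} - A_{\max}\vert y\vert \;\ge\; 2\vert y\vert
\]
gives both $H_\la^{(j)}(V_R^+) \subset V_R^+$ and the quantitative growth $\Vert H_\la^{(j)}(x,y)\Vert \ge 2\Vert(x,y)\Vert$ on $V_R^+$. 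For $(x,y)\in V_R$ the first coordinate of $H_\la^{(j)}(x,y)$ equals $y$ with $\vert y\vert\le R$, so the image cannot lie in $V_R^-$, giving $H_\la^{(j)}(V_R)\subset V_R\cup V_R^+$. The symmetric statements $(H_\la^{(j)})^{-1}(V_R^-)\subset V_R^-$ and $(H_\la^{(j)})^{-1}(V_R)\subset V_R\cup V_R^-$ follow from the formula $(H_\la^{(j)})^{-1}(x,y)=\bigl((p_{j,\la}(x)-y)/a_j(\la),\,x\bigr)$ and the lower bound $\vert a_j(\la)\vert\ge A_{\min}$. The four filtration statements of the lemma for the composition $H_\la$ then follow by an easy induction on $j$, since each factor preserves both $V_R^+$ and $V_R\cup V_R^+$, and similarly for the backward direction.

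Finally I would prove the identity $I_\la^{\pm}=\mbb C^2\setminus K_\la^{\pm}=\bigcup_{n\ge 0}(H_\la^{\pm n})^{-1}(V_R^{\pm})$. The inclusion $\bigcup_n (H_\la^{+n})^{-1}(V_R^+)\subset I_\la^+$ is immediate, since the quantitative growth $\Vert H_\mu(x,y)\Vert\ge 2^m\Vert(x,y)\Vert$ on $V_R^+$ is uniform over $\mu\in M$ and hence iterates along the $\si$-orbit of $\la$ grow exponentially. The inclusion $I_\la^+\subset \mbb C^2\setminus K_\la^+$ is obvious. For the remaining inclusion $\mbb C^2\setminus K_\la^+\subset \bigcup_n(H_\la^{+n})^{-1}(V_R^+)$, suppose $z$ has unbounded forward orbit but no forward iterate lies in $V_R^+$; then the orbit lies entirely in $V_R\cup V_R^-$. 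Once the orbit enters $V_R$, forward invariance of $V_R\cup V_R^+$ combined with disjointness from $V_R^-$ forces it to stay in $V_R$ forever, hence bounded -- a contradiction. So the orbit stays in $V_R^-$. Now trace through one application of $H_\la$ from $(x_n,y_n)\in V_R^-$ to $(x_{n+1},y_{n+1})\in V_R^-$: by a downward induction on the factor index, using forward invariance of $V_R^+$ and of $V_R\cup V_R^+$ under each $H_\la^{(j)}$, every intermediate image must also lie in $V_R^-$; and within $V_R^-$ each factor $H_\la^{(j)}$ sends $(\alpha,\beta)$ to $(\beta,\cdot)$ with $\vert \beta\vert<\vert\alpha\vert$, so the magnitude of the first coordinate strictly decreases across every factor. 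This yields $\vert x_{n+1}\vert<\vert x_n\vert$, so $\{\vert x_n\vert\}$ is a strictly decreasing positive sequence, hence bounded, contradicting the unboundedness of the orbit. The argument for $I_\la^-$ is entirely symmetric via $H_\la^{-n}$.

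The main obstacle is the intermediate-image analysis in the composition: for a single H\'{e}non map the monotonicity $\vert x_{n+1}\vert\le \vert x_n\vert$ in $V_R^-$ is transparent because $x_{n+1}=y_n$, but for $H_\la=H_\la^{(m)}\circ\cdots\circ H_\la^{(1)}$ one must first rule out every alternative scenario for the intermediate images -- which is precisely where forward invariance of $V_R^+$ and of $V_R\cup V_R^+$ under each individual factor is indispensable -- before the monotonicity can be chained across all $m$ factors to close the contradiction.
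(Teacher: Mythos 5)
Your argument for the filtration inclusions follows the paper's route: compactness of $M$ gives uniform bounds on $|a_j(\lambda)|$ and on the lower bound $|p_{j,\lambda}(y)|\gtrsim|y|^{d_j}$, the invariance is checked factor by factor, and the statements for $H_\lambda$ follow by composing. The paper works with the (equivalent) estimate $|p_{j,\lambda}(y)|\ge(2+a)|y|$ rather than $\tfrac12|y|^{d_j}$, but the content is the same. Where you genuinely diverge is the identity $I_\lambda^\pm=\mathbb{C}^2\setminus K_\lambda^\pm=\bigcup_n(H_\lambda^{\pm n})^{-1}(V_R^\pm)$. The paper first produces a uniform expansion factor $\rho>1$ with $|\pi_1(H_\mu^{-1}(x,y))|>\rho^m|x|$ on $\overline{V_R^-}$, deduces $(H_\lambda^{+n})^{-1}(\overline{V_R^-})\subseteq\{|x|>\rho^{nm}R\}$, hence $\bigcap_n(H_\lambda^{+n})^{-1}(\overline{V_R^-})=\emptyset$; every forward orbit therefore eventually and permanently enters $W_R^+=\mathbb{C}^2\setminus\overline{V_R^-}\subset V_R\cup\overline{V_R^+}$, from which the dichotomy (trapped in $V_R$, or enters $\overline{V_R^+}$ and escapes) is immediate. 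You instead argue by contradiction: an escaping orbit that never visits $V_R^+$ and never visits $V_R$ is confined to $V_R^-$, where each H\'enon factor sends $(\alpha,\beta)$ to $(\beta,\cdot)$ with $|\beta|<|\alpha|$ (this needs the intermediate-image claim you correctly justify via the forward invariance of $V_R\cup V_R^+$), so the first-coordinate moduli decrease strictly and the orbit is bounded---contradiction. Your version is softer (no quantitative $\rho$ is needed) and it works; both proofs hinge on the same mechanism, that $V_R^-$ cannot trap a forward orbit of an escaping point, but you exploit it via forward contraction rather than the paper's backward expansion.

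One small gap needs patching. The sets $V_R$, $V_R^+$, $V_R^-$ do not cover $\mathbb{C}^2$: the ``diagonal'' $\{|y|=|x|>R\}$ belongs to none of them. So the step ``then the orbit lies entirely in $V_R\cup V_R^-$'' requires a sentence you have omitted: a point on that diagonal lies in $\overline{V_R^+}$, and your strict inequality $|p_{j,\mu}(y)-a_j(\mu)x|\ge 2|y|>|y|$ shows $H_\mu(\overline{V_R^+})\subset V_R^+$, so the next iterate would land in $V_R^+$, contradicting the hypothesis. This is precisely why the paper phrases the invariances in terms of the closures $\overline{V_R^\pm}$ throughout. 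Also, the claim ``$\Vert H_\lambda^{(j)}(x,y)\Vert\ge 2\Vert(x,y)\Vert$ on $V_R^+$'' is slightly overstated as written (your bound doubles the second coordinate, which only yields a factor $\sqrt2$ on the norm), but what you actually use---geometric growth of the second coordinate---is correct and suffices.
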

\begin{proof}
Let
\[
p_{j,\lambda}(y)=y^{d_j} + c_{\lambda(d_j-1)}y^{d_j-1} + \cdots + c_{\lambda 1}y + c_{\lambda 0}
\]
be the polynomial that occurs in the definition of $H_\lambda^{(j)}$. Then

\begin{equation}
\vert y^{-d_j} p_{j, \la}(y) - 1 \vert \le \vert c_{\la(d_j - 1)} y^{-1} \vert + \cdots + \vert c_{\la 1} y^{-d_j + 1} \vert + \vert c_{\la 0} y^{-d_j} \vert. \label{3}
\end{equation}

Let $a=\sup_{\lambda,j}|a_{\lambda,j}|$. Since the coefficients of $p_{j,\lambda}$ are continuous on $M$, which is assumed to be compact, and $d_j \ge 2$ it follows that there exists
$R>0$ such that
\[
\vert p_{j,\lambda}(y)  \vert \geq (2 + a) \vert y \vert
\]
for $\vert y \vert>R$, $\lambda\in M$ and $1\leq j \leq m$. To see that $H_\lambda(V_R^+)\subset V_R^+$ for this $R$, pick $(x,y)\in V_R^+$. Then
\begin{equation}
\lvert p_{j,\lambda}(y)-a_j(\lambda)x\rvert \geq \lvert p_{j,\lambda}(y)\rvert -\lvert a_j(\lambda)x\rvert
\geq \lvert y \rvert \label{4}
\end{equation}
for all $1\leq j \leq m$. It follows that the second coordinate of each $H_\lambda^{(j)}$ dominates the first one. This implies that
\[
H_\lambda(V_R^+)\subset V_R^+
\]
for all $\lambda\in M$. The other invariance properties follow by using similar arguments.

\medskip

Let $\rho>1$ be such that
\[
\lvert p_{j,\lambda}(y)-a_j(\lambda)x \rvert > \rho \lvert y \rvert
\]
for $(x,y)\in \overline{V_R^+}$, $\lambda\in M$ and $1\leq j \leq m$. That 
 a $\rho$ exists follows from  (\ref{4}). By letting $\pi_1$ and $\pi_2$ be the projections on the first
and second coordinate respectively, one can conclude inductively that
\begin{equation}
H_\lambda(x,y)\in V_R^+ \text{ and } \vert \pi_2(H_\lambda(x,y)) \vert >\rho^m \vert y \vert. \label{5}
\end{equation}
Analogously, for all $(x,y)\in \overline{V_R^{-}}$ and for all $\lambda\in M$, there exists a  $\rho>1$ satisfying
\begin{equation}
H_\lambda^{-1}(x,y)\in V_R^- \text{ and }|\pi_1(H_\lambda^{-1}(x,y))|>\rho^m|x|.\label{6}
\end{equation}
These two facts imply that
\begin{equation}
\overline{V_R^+} \subset H_{\la}^{-1}(\overline{V_R^+})\subset H_{\la}^{-1} \circ H_{\si(\la)}^{-1} (\ov{V_R^+}) \subset  \cdots \subset (H_{\la}^{+n})^{-1}(\overline{V_R^+})\subset
\cdots
\end{equation}
and
\begin{equation}
\overline{V_R^{-}} \supset H_{\la}^{-1}(\overline{V_R^{-}})\supset H_{\la}^{-1} \circ H_{\si(\la)}^{-1}(\ov{V_R^-}) \supset \cdots \supset (H_{\la}^{+n})^{-1}(\overline{V_R^-})\supset
\cdots .
\label{7}
\end{equation}

\medskip

At this point one can observe that if we start with a point in $\overline{V_R^+}$, it eventually escapes toward the point at infinity under forward iteration determined by the
continuous function $\sigma$, i.e., $\vert H_{\la}^{+n}(x, y) \vert \rightarrow \infty$ as $n\rightarrow \infty$. This can be
justified by using (\ref{5}) and observing that
\begin{equation*}
\lvert y_\lambda^n \rvert > \rho^m \lvert y_\lambda^{n-1} \rvert> \rho^{2m}\lvert y_\lambda^{n-2} \rvert> \cdots >\rho^{nm}\lvert y \rvert>\rho^{nm}R
\end{equation*}
where $H_{\la}^{+n}(x, y) =(x_\lambda^n,y_\lambda^n)$. A similar argument shows that if we start with any point in $(x,y)\in
\bigcup_{n=0}^{\infty} (H_{\la}^{+n})^{-1}(V_R^+)$, the orbit of the point never remains bounded. Therefore
\begin{equation}
\bigcup_{n=0}^{\infty} (H_{\la}^{+n})^{-1}(V_R^+)\subseteq I_\lambda^+.
\end{equation}
 Moreover using (\ref{5}) and (\ref{6}), we get
\[
(H_{\la}^{-n})^{-1}(V_R^+)\subseteq \big\{(x,y):\lvert y\rvert > \rho^{nm}R \big\}
\]
and
\[
(H_{\la}^{+n})^{-1}(V_R^-)\subseteq \big\{(x,y):\lvert x \rvert > \rho^{nm}R \big\}
\]
which give
\begin{equation}
\bigcap_{n=0}^{\infty} (H_{\la}^{-n})^{-1}(V_R^+) = \bigcap_{n=0}^{\infty} (H_{\la}^{-n})^{-1}(\overline{V_R^+})=\phi \label{8}
\end{equation}
and
\begin{equation}
\bigcap_{n=0}^{\infty} (H_{\la}^{+n})^{-1}(V_R^-)= \bigcap_{n=0}^{\infty} (H_{\la}^{+n})^{-1}(\overline{V_R^{-}})=\phi \label{9}
\end{equation}
respectively. Set
\[
W_R^+=\mathbb{C}^2\setminus \overline{V_R^{-}} \text{ and }W_R^-=\mathbb{C}^2\setminus \overline{V_R^+}.
\]
Note that (\ref{7}) and (\ref{9}) are equivalent to
\begin{equation}
W_R^+\subset H_{\lambda}^{-1}(W_R^+) \subset \cdots \subset (H_{\la}^{+n})^{-1}(W_R^+)\subset \cdots
\end{equation}
and
\begin{equation}
\bigcup_{n=0}^{\infty} (H_{\la}^{+n})^{-1}(W_R^+)= \mathbb{C}^2 \label{10}
\end{equation}
respectively. Now (\ref{10}) implies that for any point $(x,y)\in \mathbb{C}^2$, there exists $n_0>0$ such that $H_{\la}^{+n}(x,y)\in
W_R^+\subset V_R\cup \overline{V_R^+}$ for all $n\geq n_0$. So either
\[
H_{\la}^{+n}(x,y)\in V_R
\]
for all $n \ge n_0$ or there exists $n_1 \geq n_0$ such that $H_{\la}^{+n_1}(x,y)\in \overline{V_R^+}$. In the latter case,
$H_{\la}^{+(n_1+1)}(x,y)\in V_R^+$ by (\ref{5}). This implies that
\begin{equation*}
I_\lambda^{+}=\mathbb{C}^2\setminus K_\lambda^{+}=\bigcup_{n=0}^\infty (H_{\la}^{+n})^{-1}(V_R^{+}).\label{11}
\end{equation*}
A set of similar arguments yield
\begin{equation*}
I_\lambda^{-}=\mathbb{C}^2\setminus K_\lambda^{-}=\bigcup_{n=0}^\infty (H_{\la}^{-n})^{-1}(V_R^{-}).
\end{equation*}
\end{proof}

\begin{rem}\label{re1}
It follows from Lemma \ref{le1} that for any compact $A_\lambda \subset \mathbb{C}^2$ satisfying $A_\la \cap K_\lambda^+=\phi$, there exists $N_\lambda>0$
such that $H_{\la}^{+n_{\la}}(A_\lambda)\subseteq V_R^+$. More generally, for any compact $A \subset \mathbb{C}^2$
that satisfies $A\cap K_\lambda^+=\phi$ for each $\lambda\in M$,
there exists $N>0$ so that $H_{\la}^{+N}(A)\subseteq V_R^+$ for all $\lambda\in M$. The proof again relies on the fact that
the coefficients of $p_{j,\lambda}$ and $a_j(\lambda)$
vary continuously in $\lambda$ on the compact set $M$ for all $1 \le j \le m$.
\end{rem}

\begin{rem}\label{re2}
By applying the same kind of techniques as in the case of a single H\'{e}non map, it is possible to show that $I_\lambda^{\pm}$ are nonempty, pseudoconvex domains and $K_\lambda^{\pm}$
are closed sets satisfying $K_\lambda^{\pm}\cap V_R^{\pm}=\phi$ and having nonempty intersection with the $y$-axis and $x$-axis respectively. In particular, $K_\lambda^{\pm}$ are
nonempty and unbounded.
\end{rem}

\section{Fibered Green functions and Green currents}
This section proves Proposition \ref{pr1} and Proposition \ref{pr2} stated in the Introduction. 
\subsection{Proof of Proposition \ref{pr1}}

\begin{proof}
Since the polynomials $p_{j, \la}$ are all monic, it follows that for every small $\ep_1 > 0$ there is a large enough $R > 1$ so that for all
$(x,y)\in \overline{V_R^+}$, $1\leq j \leq m$ and for all $\lambda\in M$, we have $H_\lambda^{(j)}(x,y)\in V_R^+$
and
\begin{equation}
(1-\ep_1)\lvert y \rvert^{d_j}<\lvert \pi_2\circ H_\lambda^{(j)}(x,y)\rvert < (1+\ep_1)\lvert y \rvert^{d_j}. \label{12}
\end{equation}
For a given $\ep > 0$, choose an $\epsilon_1>0$ small enough so that the constants
\[
A_1=\prod_{j=1}^m (1-\epsilon_1)^{d_{j+1} \cdots d_m} \text{ and }
A_2=\prod_{j=1}^m (1+\epsilon_1)^{d_{j+1} \cdots d_m}
\]
(where $d_{j+1} \cdots d_m=1$ by definition when $j=m$) satisfy $1-\epsilon \leq A_1$ and $A_2 \leq 1+\epsilon$. Therefore by applying (\ref{12}) inductively, we get
\begin{equation}
(1-\epsilon)\lvert y \rvert^{d} \leq A_1\lvert y \rvert^{d}<\lvert \pi_2\circ H_\lambda(x,y)\rvert<A_2\lvert y \rvert^{d}\leq (1+\epsilon)\lvert y \rvert^{d}  \label{13} 
\end{equation} 
for all $\lambda\in M$ and for all $(x,y)\in \overline{V_R^+}$. Let $(x,y)\in \overline{V_R^+}$. In view of (\ref{5}) there exists a large $R>1$ so that
$H_\lambda^{+n}(x,y)=(x_\lambda^n,y_\lambda^n)\in V_R^+$ for all $n\geq 1$ and for all $\lambda\in M$. Therefore
$$
G_{n,\lambda}^+(x,y)=\frac{1}{d^n}\log\lvert \pi_2\circ H_\lambda^{+n}(x,y)\rvert
$$
and by applying (\ref{13}) inductively we obtain
\begin{equation*}
(1-\epsilon)^{1+d+ \cdots +d^{n-1}} \lvert y \rvert^{d^n}<\lvert y_\lambda^n \rvert<
(1+\epsilon)^{1+d+ \cdots +d^{n-1}}\lvert y \rvert^{d^n}.
\end{equation*}
Hence
\begin{equation}
0<\log\lvert y \rvert+K_1<G_{n,\lambda}^+(x,y)=\frac{1}{d^n}\log\lvert \pi_2\circ H_\lambda^{+n}(x,y)\rvert<\log\lvert y \rvert+K_2,\label{14}
\end{equation}
with $K_1= \frac{d^n-1}{d^n(d-1)} \log(1-\epsilon)$ and $K_2= \frac{d^n-1}{d^n(d-1)} \log(1+\epsilon)$.

\medskip

By (\ref{14}) it follows that
$$
\lvert G_{n+1,\lambda}^+(x,y)-G_{n,\lambda}^+(x,y)\rvert=\left | d^{-n -1} \log\lvert {y_\lambda^{n+1}}/{(y_\lambda^n)^d}\rvert\right | \lesssim d^{-n-1}
$$
which proves that $\{G_{n,\lambda}^+\}$ converges uniformly on $\overline{V_R^+}$. As a limit of a sequence of uniformly convergent pluriharmonic functions $\{G_{n,\lambda}^+\}$,
$G_\lambda^+$ is also pluriharmonic for each $\lambda\in M$ on $V_R^+$. Again by (\ref{14}), for each $\lambda\in M$,
\[
G_\lambda^+-\log\lvert y \rvert
\]
is a bounded pluriharmonic function in $\overline{V_R^+}$. Therefore its restriction to vertical lines of the form $x = c$ can be continued across the point $(c, \infty)$
as a pluriharmonic function. Since
\[
\lim_{\lvert y \rvert\rightarrow \infty}(G_\lambda^+(x,y)-\log\lvert y \rvert)
\]
is bounded in $x\in \mathbb{C}$, by (\ref{14}) it follows that $\lim_{\lvert y \rvert\rightarrow \infty}(G_\lambda^+(x,y)-\log\lvert y \rvert)$ must be a constant, say $\gamma_\lambda$ which also satisfies
$$
\log (1-\epsilon)/(d-1) \leq \gamma_\lambda \leq
\log (1+\epsilon)/(d-1).
$$
As $\epsilon > 0$ is arbitrary, it follows that
\begin{equation}
G_\lambda^+(x,y)=\log\lvert y \rvert + u_\lambda(x,y) \label{15}
\end{equation}
on $V_R^+$ where $u_\lambda$ is a bounded pluriharmonic function satisfying $u_\lambda(x,y) \ra 0$ as $\vert y \vert \ra \infty$.

\medskip

Now fix $\lambda\in M$ and $n\geq 1$. For any $r > n$
\begin{eqnarray*}
G_{r,\lambda}^+(x,y)&=& d^{-r}{\log}^+\lvert H_\lambda^{+r}(x,y)\rvert \\
&=& d^{-n}G_{(r-n),\sigma^n(\lambda)}^+\circ H_\lambda^{+n}(x,y).
\end{eqnarray*}

As $r\rightarrow \infty$, $G_{r,\lambda}^+$ converges uniformly on $(H_{\la}^{+n})^{-1}(V_R^+)$ to the pluriharmonic function  $d^{-n} G_{\sigma^n(\lambda)}^+ \circ H_\lambda^{+n}$.
Hence
$$
d^n G_\lambda^+(x,y)=G_{\sigma^n(\lambda)}^+\circ H_\lambda^{+n}(x,y)
$$
for $(x,y)\in (H_\lambda^{+n})^{-1}(V_R^+)$. By (\ref{14}), for $(x,y)\in (H_\lambda^{+n})^{-1}(V_R^+)$
$$
G_{r,\lambda}^+(x,y)=d^{-n}G_{(r-n),\sigma^n(\lambda)}^+\circ H_\lambda^{+n}(x,y)> d^{-n}(\log R + K_1)> 0,
$$
for each $r>n$ which shows that
\[
G_\lambda^+(x,y)\geq d^{-n}(\log R + K_1)>0
\]
for $(x,y)\in (H_\lambda^{+n})^{-1}(V_R^+)$. This is true for each $n\geq 1$. Hence $G_{r,\lambda}^+$ converges uniformly to the pluriharmonic function $G_\lambda^+$ on every compact
set
of
\[
\bigcup_{n=0}^\infty (H_\lambda^{+n})^{-1}(V_R^+)=\mathbb{C}^2\setminus K_\lambda^+.
\]
Moreover $G_\lambda^+ >0$ on $\mathbb{C}^2\setminus K_\lambda^+$.

\medskip

Note that for each $\lambda\in M$, $G_\lambda^+=0$ on $K_\lambda^+$.
By Remark \ref{re2}, there exists a large enough $R>1$ so that $K_\lambda^+\subseteq V_R \cup V_R^-$ for all $\lambda\in M$. Now choose any $A>R>1$. We will show that
$\{G_{n,\lambda}^+\}$ converges uniformly to $G_\lambda^+$ on the bidisc
\[
\Delta_A=\{(x,y):\lvert x \rvert\leq A,\lvert y \rvert\leq A\}
\]
as $n\ra\infty$. Consider the sets
\[
N=\{(x,y)\in \mathbb{C}^2: \lvert x \rvert \leq A\}, \;N_\lambda=N\cap K_\lambda^+
\]
for each $\lambda\in M$. Start with any point $z=(x_0,x_1)\in \mathbb{C}^2$ and define
$(x_i^\lambda,x_{i+1}^\lambda)$ for $\lambda\in M$ and $i\geq 1$ in the following way:
\[
(x_0^\lambda,x_1^\lambda) \xrightarrow{H_\lambda^{(1)}} (x_1^\lambda,x_2^\lambda) \xrightarrow{H_\lambda^{(2)}} \cdots \xrightarrow{H_\lambda^{(m)}}
(x_m^\lambda,x_{m+1}^\lambda)\xrightarrow{H_\lambda^{(1)}} (x_{m+1}^\lambda,x_{m+2}^\lambda)\ra \cdots ,
\]
where $(x_0^\lambda, x_1^\lambda)=(x_0,x_1)$ and we apply $H_\lambda^{(1)}, \cdots ,H_\lambda^{(m)}$ periodically for all $\lambda\in M$. Inductively one can show that if
$(x_i^\lambda,x_{i+1}^\lambda)\in N_\lambda$ for $0\leq i \leq j-1$, then $\lvert x_i^\lambda\rvert \leq A $ for $0\leq i \leq j$.


\medskip

This implies that there exists $n_0>0$ independent of $\lambda$ so that
\begin{equation}
G_{n,\lambda}^+(x,y)< \epsilon \label{16}
\end{equation}
for all $n\geq n_0$ and for all $(x,y)\in N_\lambda$.
Consider a line segment
\[
L_a=\{(a,w):\lvert w \rvert\leq A\} \subset \mbb C^2
\]
with $\lvert a \rvert \leq A$. Then $G_{n,\lambda}^+-G_\lambda^+$ is harmonic on
$L_a^\lambda=\{(a,w):\lvert w \rvert < A\}\setminus K_\lambda^+$ viewed as a subset of $\mathbb{C}$ and the boundary of $L_a^\lambda$ lies in $\{(a,w):\lvert w \rvert=A\}\cup
(K_\lambda^+\cap L_a)$. By Remark \ref{re1}, there exists $n_1>0$ so that
$$
-\epsilon< G_{n,\lambda}^+(a,w)-G_{\lambda}^+(a,w)<\epsilon
$$
for all $n\geq n_1$ and for all $(a,w)\in\{\lvert a \rvert\leq A,\lvert w \rvert=A\}$. The maximum principle shows that
$$
-\epsilon <G_{n,\lambda}^+(x,y)-G_\lambda^+(x,y) < \epsilon
$$
for all  $n\geq \max\{n_0,n_1\}$ and for all  $(x,y)\in L_a^\lambda$. This shows that for any given $\epsilon>0$, there exists $n_2>0$ such that
\begin{equation}
-\epsilon< G_{n,\lambda}^+(z)-G_\lambda^+(z)<\epsilon \label{17}
\end{equation}
for all $n\geq n_2$ and for all $(\lambda,z)\in M\times \Delta_A$.

\medskip

Hence $G_{n,\lambda}^+$ converges uniformly to $G_\lambda^+$ on any compact subset of $\mathbb{C}^2$ and this is also uniform with respect to $\lambda\in M$. In particular, this implies that for each $\lambda\in M$, $G_\lambda^+$ is continuous  on $\mathbb{C}^2$ and pluriharmonic on $\mathbb{C}^2\setminus K_\lambda^+$. Moreover $G_\lambda^+$ vanishes on $K_\lambda^+$. In particular, for each $\lambda\in M$, $G_\lambda^+$ satisfies the submean value property on $\mathbb{C}^2$. Hence $G_\lambda^+$ is plurisubharmonic on $\mathbb{C}^2$. Analogous results are true for $G_\lambda^-$.

\medskip

Next, to show that the correspondence $\lambda \mapsto G_\lambda^{\pm}$ is continuous, take a compact set $S\subset \mathbb{C}^2$ and $\lambda_0\in M$. Then
\begin{multline*}
\vert G_{\la}^+(x,y)-G_{\la_0}^+(x,y) \vert \le \vert G_{n,\la}^+(x,y)-G_{\la}^+(x,y)\vert + \vert G_{n,\lambda}^+(x,y)-G_{n,\lambda_0}^+(x,y)\vert \\
                                              + \vert G_{n,\lambda_0}^+(x,y)-G_{\lambda_0}^+(x,y)\vert
\end{multline*}
for $(x,y)\in S$. It follows from (\ref{17}) that for given $\epsilon>0$, one can choose a large $n_0>0$ such that the first and third  terms above are less that $\ep/3$. By
choosing $\lambda$ close enough to $\lambda_0$, it follows that $G_{n_0,\lambda}^+(x,y)$ and $G_{n_0,\lambda_0}^+(x,y)$ do not differ by more than $\ep/3$. Hence the correspondence
$\lambda\mapsto G_\lambda^{+}$ is continuous. Similarly, the correspondence $\lambda\mapsto G_\lambda^-$ is also continuous.

\medskip

To prove that $G_\lambda^+$ is H\"{o}lder continuous for each $\lambda\in M$, fix a compact $S \subset \mbb C^2$ and let $R > 1$ be such that $S$ is compactly contained in $V_R$. Using
the continuity of $G_{\la}^+$ in $\la$, there exists a $\de > 0$ such that $G_{\la}^+(x, y) > (d + 1)\de$ for each $\la \in M$ and $(x, y) \in V_R^+$.
Now note that the correspondence $\la \mapsto K_{\la}^+ \cap V_R$ is upper semi-continuous. Indeed, if this is not the case, then there exists a $\la_0 \in M$, an $\ep > 0$ and a sequence
$\la_n \in M$ converging to $\la_0$ such that for each $n \ge 1$, there exists a point $a_n \in K_{\la_n}^+ \cap V_R$ satisfying $\vert a_n - z \vert \ge \ep$ for all $z \in
K_{\la_0}^+$. Let $a$ be a limit point of the $a_n$'s. Then by the continuity of $\la \mapsto G_{\la}^+$ it follows that
\[
0 = G_{\la_n}^+(a_n) \ra G_{\la}^+(a)
\]
which implies that $a \in K_{\la_0}^+$. This is a contradiction. For each $\lambda\in M$, define
\[
\Omega_\delta^\lambda= \big\{ (x,y)\in V_R : \delta < G_\lambda^+(x,y) \leq d \delta \big\}
\]
and
\[
C_\lambda=\sup\big\{ \lvert {\partial G_\lambda^+}/{\partial x}\rvert,\lvert {\partial G_\lambda^+}/{\partial y}\rvert :(x,y)\in \Omega_\delta^\lambda \big\}.
\]
The first observation is that the $C_{\la}$'s are uniformly bounded above as $\la$ varies in $M$. To see this, fix $\la_0 \in M$ and $\tau > 0$ and let $W \subset M$ be a neighbourhood
of $\la_0$ such that the sets
\[
\Om_W = \ov{\bigcup_{\la \in W} \Om_{\de}^{\la}} \;\; \text{and} \;\; K_W = \ov{ \bigcup_{\la \in W} (K_{\la}^+ \cap V_R)}
\]
are separated by a distance of at least $\tau$. This is possible since $K_{\la}^+ \cap V_R$ is upper semi-continuous in $\la$. For each $\la \in W$, $G_{\la}^+$ is pluriharmonic on a
fixed slightly larger open set containing $\Om_W$. Cover the closure of this slightly larger open set by finitely many open balls and on each ball, the mean value property shows that
the derivatives of $G_{\la}^+$ are dominated by a universal constant times the sup norm of $G_{\la}^+$ on it -- and this in turn is dominated by the number of open balls (which is the
same for all $\la \in W$) times the sup norm of $G_{\la}^+$ on $V_R$ upto a universal constant. Since $G_{\la}^+$ varies continuously in $\la$, it follows that the $C_{\la}$'s are
uniformly bounded for $\la \in W$ and the compactness of $M$ gives a global bound, say $C > 0$ independent of $\la$.

\medskip

Fix $\la_0 \in M$ and pick $(x, y) \in S \setminus K_{\la_0}^+$. Let $N > 0$ be such that
\[
d^{-N} \de < G_{\la_0}^+(x, y) \le d^{-N + 1} \de
\]
so that
\[
\de < d^N G_{\la_0}^+(x, y) \le d \de.
\]
The assumption that $N > 0$ means that $(x, y)$ is very close to $K_{\la_0}^+$. But
\[
d^N G_{\la_0}^+(x, y) = G_{\si^N(\la_0)}^+ \circ H_{\la_0}^{+N}(x, y)
\]
which implies that $H_{\la_0}^{+N}(x, y) \in \Om_{\de}^{\si^N(\la_0)}$ where $G_{\si^N(\la_0)}^+$ is pluriharmonic. Note that
\[
H_{\la_0}(V_R \cup V_R^+) \subset V_R \cup V_R^+, \; H_{\la_0}(V_R^+) \subset V_R^+
\]
which shows that $H_{\la_0}^{+k} \in V_R$ for all $k \le N$ since all the $G_{\la}^+$'s are at least $(d+1)\de$ on $V_R^+$.
Differentiation of the above identity leads to
\[
d^N \frac{\pa G_{\la_0}^+}{\pa x}(x, y) = \frac{\pa G_{\si^N(\la_0)}^+}{\pa x} (H_{\la_0}^{+N}) \frac{ \pa (\pi_1 \circ H_{\la_0}^{+N}) }{\pa x}(x, y) + \frac{\pa
G_{\si^N(\la_0)}^+}{\pa y}(H_{\la_0}^{+N})  \frac{ \pa (\pi_2 \circ H_{\la_0}^{+N}) }{\pa x}(x, y).
\]
Let the derivatives of $H_{\la}$ be bounded above on $V_R$ by $A_{\la}$ and let $A = \sup A_{\la} < \infty$. It follows that the derivatives of $H_{\la_0}^{+N}$ are bounded above by
$2^{N-1}A^N$ on $V_R$. Hence
\[
\vert d^N \pa G_{\la_0}^+ / \pa x (x, y) \vert \le C (2A)^N.
\]
Let $\ga = \log 2A/ \log d$ so that $C (2A)^N = C d^{N \ga}$. Therefore
\[
\vert \pa G_{\la_0}^+ / \pa x \vert \le C d^{N(\ga - 1)} \le C (d \de/G_{\la_0}^+)^{\ga - 1}
\]
which implies that
\[
\vert \pa (G_{\la_0}^+)^{\ga}/ \pa x \vert \le C \ga(d \de)^{\ga - 1}.
\]
A similar argument can be used to bound the partial derivative of $(G_{\la_0}^+)^{\ga}$ with respect to $y$. Thus the gradient of $(G_{\la_0}^+)^{\ga}$ is bounded uniformly at all
points that are close to $K_{\la_0}^+$.

\medskip

Now suppose that $(x, y) \in S \setminus K_{\la_0}^+$ is such that
\[
d^{N} \de < G_{\la_0}^+(x, y) \le d^{N + 1} \de
\]
for some $N > 0$. This means that $(x, y)$ is far away from $K_{\la_0}^+$ and the above equation can be written as
\[
\de < d^{-N} G_{\la_0}^+(x, y) \le d \de.
\]
By the surjectivity of $\si$, there exists a $\mu_0 \in M$ such that $\si^N(\mu_0) = \la_0$. With this the invariance property of the Green's functions now reads
\[
G_{\mu_0}^+ \circ (H_{\mu_0}^{+N})^{-1}(x, y) = d^{-N} G_{\la_0}^+(x, y).
\]
The compactness of $S$ shows that there is a fixed integer $m < 0$ such that if $(x, y)$ is far away from $S \setminus K_{\la_0}^+$, then it  can be brought into the strip
\[
\big\{ (x,y) : \delta < G_{\lambda_0}^+(x,y) \leq d \delta \big\}
\]
by $(H_{\la}^{+ \vert k \vert})^{-1}$ for some $m \le k < 0$ and for all $\la \in M$. By enlarging $R$, we may assume that the image of $S$ under all the maps $(H_{\la}^{+ \vert k
\vert})^{-1}$, $m \le k < 0$, is contained in $V_R$. By increasing $A$, we may also assume that all the derivatives of $H_{\la}$ and $H_{\la}^{-1}$ are bounded by $A$ on $V_R$.
Now repeating the same argument as above, it follows that the gradient of $(G_{\la_0}^+)^{\ga}$ is bounded uniformly at all points that are far away from $K_{\la_0}^+$ -- the nuance
about choosing $\ga$ as before is also valid. The choice of $\mu_0$ such that $\si^{N}(\mu_0) = \la_0$ is irrelevant since the derivatives involved are with respect to $x, y$ only.
The only remaining case is when $(x, y) \in \Om_{\la_0}^{\de}$ which precisely means that $N = 0$. But in this case, $(G_{\la_0}^+)^{\ga - 1}$ is uniformly bounded
on $V_R$ and so are the derivatives of $G_{\la_0}^+$ on $\Om_{\la_0}^{\de}$ by the reasoning given earlier. Therefore there is a uniform bound on the gradient of $(G_{\la_0}^+)^{\ga}$
everywhere on $S$. This shows that $(G_{\la_0}^+)^{\ga}$ is Lipschitz on $S$ which implies that $G_{\la_0}^+$ is H\"{o}lder continuous on $S$ with exponent $1/\ga = \log d/ \log 2A$.
A set of similar arguments can be applied to deduce analogous results for $G_{\la}^{-}$.
\end{proof}


\subsection{Proof of Proposition \ref{pr2}}
\begin{proof}
We have
\[
(H_{\la}^{\pm 1})^{\ast}(\mu_{\sigma(\lambda)}^\pm) = (H_{\la}^{\pm 1})^{\ast}(dd^cG_{\sigma(\lambda)}^\pm) = dd^c(G_{\sigma(\lambda)}^\pm \circ H_\lambda^{\pm 1}) = dd^c(d G_\lambda^\pm) = d\mu_\lambda^\pm
\]
where the third equality follows from Proposition \ref{pr1}. A similar exercise shows that
\[
(H_{\la}^{\pm 1})_{\ast} \mu_{\la}^{\pm} = d^{-1} \mu_{\si(\la)}^{\pm}.
\]
If $\si$ is the identity on $M$, then
\[
G_\lambda^\pm \circ H_\lambda^{\pm 1}=d G_\lambda^\pm,
\]
which in turn imply that
\[
(H_{\la}^{\pm 1})^{\ast} \mu_{\la} = (H_{\la}^{\pm 1})^{\ast} (\mu_{\la}^+ \wedge \mu_{\la}^-) = (H_{\la}^{\pm 1})^{\ast} \mu_{\la}^+ \wedge (H_{\la}^{\pm 1})^{\ast} \mu_{\la}^- =
d^{\pm 1} \mu_{\la}^+ \wedge d^{\mp 1} \mu_{\la}^- = \mu_{\la}.
\]

\medskip

By Proposition \ref{pr1}, the support of $\mu_\lambda^+$ is contained in $J_\lambda^+$. To prove the converse, let $z_0\in J_\lambda^+$ and suppose that $\mu_\lambda^+ =0$ on a
neighbourhood $U_{z_0}$ of $z_0$. This means that $G_\lambda^+$ is pluriharmonic on $U_{z_0}$ and $G_\lambda^+$ attains its minimum value zero at $z_0$. This implies
that $G_\lambda^+ \equiv 0$ on $U_{z_0}$ which contradicts the fact that $G_\lambda^+>0$ on $\mathbb{C}^2\setminus K_\lambda^+$. Similar arguments can be applied to prove that
supp$(\mu_\lambda^-)=J_\lambda^-$.

 \medskip

Finally, to show that $\la \mapsto J_{\la}^+$ is lower semi-continuous, fix $\la_0 \in M$ and $\ep > 0$. Let $x_0\in
J_{\lambda_0}^+= {\rm supp}(\mu_{\lambda_0}^+)$. Then $\mu_{\lambda_0}^+(B(x_0, {\epsilon}/{2}))\neq 0$. Since the correspondence $\lambda \mapsto \mu_\lambda^+$ is continuous, there
exists a $\delta>0$ such that
\[
d(\lambda,\lambda_0)<\delta \text{ implies } \mu_\lambda^+(B(x_0;{\epsilon}/{2}))\neq 0.
\]
Therefore $x_0\in {(J_\lambda^+)}^\epsilon=\bigcup_{a\in J_\lambda^+}B(a,\epsilon)$ for all $\lambda \in M$ satisfying $d(\lambda,\lambda_0)< \delta$.
Hence the correspondence $\lambda\mapsto J_\lambda^{\pm}$ is lower semi-continuous.

\medskip

Let $\mathcal L$ be the class of plurisubharmonic functions on $\mbb C^2$ of logarithmic growth, i.e.,
$$
\mathcal{L}=\{ u\in \mathcal{PSH}(\mathbb{C}^2): u(x,y)\leq \log^+\lVert (x,y) \rVert +L \}
$$
for some $L>0$ and let
$$
\tilde{\mathcal{L}}=\{ u\in \mathcal{PSH}(\mathbb{C}^2):\log^+\lVert (x,y) \rVert -L \leq u(x,y)\leq \log^+\lVert (x,y) \rVert +L\}
$$
for some $L>0$. Note that there exists $L>0$ such that
$$
G_\lambda^+(z)\leq \log^+ \lVert z \rVert +L
$$
for all $z\in \mathbb{C}^2$ and for all $\lambda\in M$. Thus $G_\lambda^+ \in \mathcal{L}$ for all $\lambda\in M$. For $E\subseteq \mathbb{C}^2$, the pluricomplex Green function
of $E$ is
$$
L_E(z)=\sup\{u(z):u\in\mathcal{L},u\leq 0 \text{ on } E\}
$$
and let $L_E^{\ast}(z)$ be its upper semi-continuous regularization.

\medskip

It turns out that the pluricomplex Green function of $K_\lambda^{\pm}$ is $G_\lambda^{\pm}$ for all $\lambda\in M$. The arguments are similar to those employed for a single H\'{e}non
map and we merely point out the salient features. Fix $\lambda\in M$. Then $G_{\lambda}^+=0$ on $K_\lambda^+$ and  $G_\lambda^+ \in \mathcal{L}$. So $G_\lambda^+ \leq L_{K_{\lambda}^+}$.
To show equality, let $u\in \mathcal{L}$
be such that $u\leq 0=G_\lambda^+$ on $K_\lambda^+$. By Proposition \ref{pr1}, there exists $M>0$ such that
\[
\log\lvert y \rvert-M<G_\lambda^+(x,y)<\log\lvert y \rvert+M
\]
for $(x,y)\in V_R^+$. Since $u\in \mathcal{L}$,
\[
u(x,y)-G_\lambda^+(x,y)\leq M_1
\]
for some $M_1 > 0$ and $(x,y)\in V_R^+.$

Fix $x_0 \in\mbb C$ and note that $u(x_0,y)-G_\lambda^+(x_0,y)$ is a bounded subharmonic function on the vertical line $T_{x_0}=\mathbb{C}\setminus (K_\lambda^+ \cap \{x=x_0\})$ and
hence it can be extended across the point $y=\infty$ as a subharmonic function. Note also that
$$
u(x_0,y)-G_\lambda(x_0,y)\leq 0
$$
on $\partial T \subseteq K_\lambda^+ \cap \{x=x_0\}$. By the maximum principle, it follows that $u(x_0,y)-G_\lambda(x_0,y)\leq 0$ on $T_{x_0}$. This implies that $u\leq G_\lambda^+
\text{ in } \mathbb{C}^2\setminus K_\lambda^+$ which in turn shows that
$L_{K_{\lambda}^{+}}=G_{\lambda}^{+}$. Since $G_\lambda^+$ is continuous on $\mathbb{C}^2$, we have
\[
L_{K_{\lambda}^{+}}=L^{\ast}_{K_{\lambda}^{+}}=G_\lambda^+.
\]
Similar arguments show that
\[
L_{K_{\lambda}^{-}}=L^{\ast}_{K_{\lambda}^{-}}=G_{\lambda}^{-}.
\]
Let $u_\lambda=\max \{G_\lambda^+,G_\lambda^-\}$. Again by Proposition \ref{pr1}, it follows that $u_\lambda\in \tilde{\mathcal{L}}$.
For $\epsilon>0$, set $G_{\lambda,\epsilon}^{\pm}=\max \{G_\lambda^{\pm},\epsilon\}$ and $u_{\lambda,\epsilon}=\max \{G_{\lambda,\epsilon}^+, G_{\lambda,\epsilon}^{-}\}$.
By Bedford--Taylor,
\[
{(dd^c u_{\lambda,\epsilon})}^2=dd^c G_{\lambda,\epsilon}^+ \wedge dd^c G_{\lambda,\epsilon}^{-}.
\]
Now for a $z\in \mathbb{C}^2 \setminus K_\lambda^{\pm}$ ,
there exists a small neighborhood $\Omega_{z}\subset \mathbb{C}^2\setminus K_\lambda^{\pm}$ of $z$ such that
${(dd^c u_{\lambda,\epsilon})}^2=0$ on $\Omega_z$ for sufficiently small $\epsilon$. It follows that supp${((dd^c u_\lambda))}^2 \subset K_\lambda$.


\medskip

Since $G_\lambda^{\pm}=L^{\ast}_{{K_\lambda}^{\pm}} \leq L^{\ast}_{K_\lambda}$, we have $u_\lambda\leq L^{\ast}_{K_\lambda}$. Further, note that $L^{\ast}_{K_\lambda} \leq
L_{K_\lambda}\leq 0=u_\lambda$ almost every where on $K_\lambda$ with respect to the measure ${(dd^c u_\lambda )}^2$. This is because the set
$\{L_{K_\lambda}^* > L_{K_\lambda}\}$ is pluripolar and consequently has measure zero with respect to ${(dd^c u_\lambda)}^2$. Therefore $L^{\ast}_{K_\lambda}\leq u_\lambda$ in
$\mathbb{C}^2$. Finally, $L_{K_\lambda}$ is continuous and thus $L^{\ast}_{K_\lambda}=L_{K_\lambda}=\max \{G_\lambda^+, G_\lambda^-\}$.

For a non-pluripolar bounded set $E$ in $\mathbb{C}^2$, the complex equilibrium measure is $\mu_E={(dd^c L^{\ast}_E)^2}$. Again by
Bedford--Taylor, $\mu_{K_\lambda}=
\lim_{\epsilon \ra 0}{(dd^c \max\{G_\lambda^+, G_\lambda^-,\epsilon\})}^2$ which when combined with
$$
\mu_\lambda=\mu_\lambda^+ \wedge \mu_\lambda^-= \lim_{\epsilon\ra 0}dd^c G_{\lambda,\epsilon}^+ \wedge dd^c G_{\lambda,\epsilon}^-
$$
and
$$
{(dd^c \max \{G_\lambda^+, G_\lambda^-,\epsilon\})}^2=dd^c G_{\lambda,\epsilon}^+ \wedge dd^c G_{\lambda,\epsilon}^-
$$
shows that $\mu_\lambda$ is the equilibrium measure of $K_\lambda$. Since supp$(\mu_\lambda^{\pm})=J_\lambda^{\pm}$, we have supp$(\mu_\lambda) \subset J_\lambda$.
\end{proof}

\section{Convergence of currents}
In this section, we prove Theorem \ref{thm1} and Proposition \ref{pr2}.  
\subsection{Proof of Theorem \ref{thm1}}
\begin{proof}
Let $\mathcal L_y$ be the subclass of $\mathcal L$ consisting of all those functions $v$ for which there exists $R > 0$ such that
\[
v(x, y) - \log \vert y \vert
\]
is a bounded pluriharmonic function on $V_R^+$.

\medskip

Fix $\lambda\in M$ and let $\omega= 1/4 \;dd^c  \log (1 + \Vert z \Vert^2)$. For a $(1, 1)$ test form $\varphi$ on $\mbb C^2$, it follows that  there exists a $C >0$ such that
\[
-C \Vert
\varphi \Vert \omega \leq \varphi \le C \Vert \varphi \Vert \omega
\]
by the positivity of $\om$.

\medskip
\no
{\it Step 1:} $S_{\la}$ is nonempty.\\

Note that
\begin{eqnarray}
\frac{1}{d^n}\left |
\int_{\mathbb{C}^2}(H_\lambda^{+n})^{\ast}(\psi T)\wedge \varphi
\right|
&\lesssim &\frac{\Vert \varphi \Vert}{d^n}\int_{\mathbb{C}^2}(H_\lambda^{+n})^{\ast}(\psi T)\wedge dd^c \log (1 + \Vert z \Vert^2) \nonumber \\
& \lesssim & \frac{\Vert \varphi \Vert}{d^n}\int_{\mathbb{C}^2}dd^c(\psi T)\wedge \log (1 + \Vert (H_{\la}^{+n})^{-1}(z) \Vert ).\label{18}
\end{eqnarray}

\medskip

A direct calculation shows that
\[
\frac{1}{d^n}\log^+ \| (H_\lambda^{+n})^{-1}(z) \| \leq \log^+|z|+C \label{19}
\]
for some $C>0$, for all $n\geq 1$, $\lambda\in M$ and
\begin{equation}
\log (1 + \Vert z \Vert^2) \leq 2 \log^+|z|+2\log 2.\label{20}
\end{equation}
It follows that
\begin{equation*}
0 \le \frac{1}{d^n} \log \left( 1 + \Vert (H_{\la}^{+n})^{-1} \Vert \right) \le 2 \log^+ \vert z \vert + C
\end{equation*}
for some $C>0$, for all $n>0$ and $\lambda\in M$. Hence
\begin{equation}
\frac{1}{d^n}\left | \int_{\mathbb{C}^2} (H_\lambda^{+n})^{\ast}(\psi T)\wedge \varphi \right| \lesssim \Vert \varphi \Vert. \label{21}
\end{equation}

\medskip

The Banach-Alaoglu theorem gives that there is a subsequence $\frac{1}{d^{n_j^\lambda}}(H_{\la}^{+n_j^{\la}})^{\ast}(\psi T)$ that converges in the sense of currents to a positive $(1,1)$-current, say
$\gamma_\lambda$. This shows that $S_\lambda$ is nonempty. It also follows from the above discussion that $\int_{\mbb C^2} \gamma_{\la} \wedge \om < + \infty$.

\medskip

\no {\it Step 2:} Each $\gamma_{\la} \in S_{\la}$ is closed. Further, the support of $\ga_{\la}$ is contained in $K_{\la}^+$.\\

Let $\chi$ be a smooth real $1$-form with compact support in $\mbb C^2$ and  $\psi_1 \ge 0$ be such that $\psi_1 = 1$ in a neighborhood of ${\rm supp}(\psi)$. Then
\[
\int_{\mathbb{C}^2}d \chi \wedge (H_\lambda^{+n_j^{\lambda}})^{\ast}(\psi T) =
\int_{\mathbb{C}^2}\chi \circ (H_\lambda^{+n_j^{\la}})^{-1} \wedge d\psi \wedge \psi_1 T.
\]
To obtain which the assumption that ${\rm supp}(\psi) \cap {\rm supp}(dT) = \phi$ is used. By the Cauchy-Schwarz inequality it follows that the term on the right above is dominated by
the square root of
\[
\left(\int_{\mathbb{C}^2} \big( (J \chi \wedge \chi)\circ (H_\lambda^{+n_j^{\la}})^{-1} \big) \wedge \psi_1 T\right)
\left( \int_{\mathbb{C}^2} d\psi \wedge d^c \psi \wedge \psi_1 T \right)
\]
whose absolute value in turn is bounded above by a harmless constant times $d^{n_j^\lambda}$. Here $J$ is the standard $\mbb R$-linear map on $1$-forms satisfying $J(d z_j) = i d \ov
z_j$ for $j = 1, 2$.
Therefore
\[
\left| \frac{1}{d^{n_j^{\la}}}
\int_{\mathbb{C}^2}(\chi \circ (H_{\la}^{+n_j^{\la}})^{-1} \wedge d\psi \wedge \psi_1 T
\right| \lesssim d^{- n_j^{\la} / 2}.
\]
Evidently, the right hand side tends to zero as $j \ra \infty$. This shows that $\gamma_\lambda$ is closed.

\medskip

Let $R>0$ be large enough so that supp$(\psi T)\cap V_R^+=\phi$. Let $z\notin K_\lambda^+$ and $B_z$ a small open ball around it such that
$\overline{B_z} \cap K_\lambda^+=\phi$. By Lemma \ref{le1}, there exists an $N>0$ such that $H_\lambda^{+n}(B_z)\subset V_R^+$ for all $n>N$. Therefore
$B_z \cap \text{supp}(H_\lambda^{+n})^{\ast}(\psi T)= B_z \cap (H_{\la}^{+n})^{-1}(\text{supp}(\psi T))=\phi$ for all $n>N$. Since supp$(\gamma_\lambda)\subset
\overline{\bigcup_{n=N}^\infty \text{supp}(H_\lambda^{+n})^{\ast}(\psi T)})$, we have $z\notin \text{supp}(\gamma_\lambda)$. This implies $\text{supp}(\gamma_\lambda)\subset
K_\lambda^+$. Since $K_\lambda^+\cap V_R^+=\phi$ for all $\lambda\in M$, it also follows that $\text{supp}(\gamma_\lambda)$ does not intersect $\ov{V_R^+}$.

\medskip

\no {\it Step 3:} Each $\ga_{\la}$ is a multiple of $\mu_{\la}^+$.

\medskip

It follows from Proposition 8.3.6 in \cite{MNTU} that $\ga_{\la} = c_{\ga, \la} dd^c U_{\ga, \la}$ for some $c_{\ga, \la} > 0$ and $U_{\ga, \la} \in \mathcal L_y$. In this
representation, $c_{\ga, \la}$ is unique while $U_{\ga, \la}$ is unique upto additive constants. We impose the following condition on $U_{\gamma,\lambda}$:
\[
\lim_{|y|\rightarrow \infty} (U_{\gamma,\lambda}-\log|y|)=0 \label{22}
\]
and this uniquely determines $U_{\ga, \la}$. It will suffice to show that $U_{\ga, \la} = G_{\la}^+$.

\medskip

Let $\gamma_{\lambda,x}$ denote the restriction of $\gamma_\lambda$ to the plane $\{(x,y):y\in \mathbb{C}\}$. Since $U_{\ga, \la} \in \mathcal L_y$, it follows that

\begin{equation}
\int_{\mathbb{C}}\gamma_{\lambda,x}=2\pi c_{\gamma,\lambda}, \;\; U_{\gamma,\lambda}(x,y)=\frac{1}{2\pi c_{\gamma,\lambda}} \int_{\mathbb{C}}\log |y-\zeta|\gamma_{\lambda,x}(\zeta).
\label{18}
\end{equation}

\medskip

Consider a uniform filtration $V^{\pm}_R, V_R$ for all the maps $H_{\la}$ where $R^d > 2R$ and $\vert p_{j, \la}(y) \vert \ge \vert y \vert^d / 2$ for $\vert y \vert \ge R$. Let $0
\not= a = \sup \vert a_j(\la) \vert < \infty$ (where the supremum is taken over all $1 \le j \le m$ and $\la \in M$) and choose $R_1 > R^d /2$. Define
\[
A = \big \{ (x, y) \in \mbb C^2 : \vert y \vert^d \ge 2(1 + a) \vert x \vert + 2 R_1 \big \}.
\]
Evidently $A \subset \{ \vert y \vert > R \}$. Lemma \ref{le1} shows that for all $\la \in M$, $H_{\la}(x, y) \subset V_R^+$ when $(x, y) \in A \cap V_R^+$. Furthermore, for $(x, y) \in
A \cap (\mbb C^2 \setminus V_R^+)$, it follows that
\[
\vert p_{j, \la}(y) - a_j(\la)x \vert \ge \vert y \vert^d / 2 - a \vert x \vert \ge \vert y \vert + R.
\]
This shows that $H_{\la}(A) \subset V_R^+$. By Lemma \ref{le1}, again it can be seen that $H_{\la}^{+n}(A) \subset V_R^+$ for all $n \ge 1$ which shows that $A \cap K_{\la}^+ = \phi$ for
all $\la \in M$. Let $C>0$ be such that
\[
C^d \geq \max\{2(1+\lvert a \rvert), 2R_1\}.
\]
If $|y|\geq C(\lvert x \rvert^{1/d}+1)$, then
\begin{equation*}
{|y|}^{d} \geq C^{d}(\lvert x\rvert+1) \geq 2(1+\lvert a \rvert)\lvert x \rvert + 2R_1
\end{equation*}
which implies that
\[
B= \big\{ (x,y)\in \mathbb{C}^2: |y|\geq C(\lvert x \rvert^{1/d}+1) \big\}\subset A
\]
and hence $K_\lambda^+ \cap B=\phi $. Since $V_R^+ \subset B $ for sufficiently large $R$, by applying Lemma \ref{le1} once again it follows that
\begin{equation}
K_\lambda^+\cap B=\phi \text{ and } \bigcup_{n=0}^\infty (H_{\la}^{+n})^{-1}(B)=\mathbb{C}^2\setminus K_\lambda^+\label{23}
\end{equation}
for all $\lambda\in M$.

\medskip

Set $r=C(|x|^{1/d}+1)$. Since supp$(\gamma_\lambda)\subset K_\lambda^+$, it follows that
\[
\text{supp}(\gamma_{\lambda,x})\subset \{\lvert y \rvert \leq r\}
\]
for all $\lambda\in M$. Since
\[
\lvert y \rvert-r\leq \lvert y-\zeta\rvert\leq \lvert y \rvert+r
\]
for $\lvert y \rvert>r$ and $\lvert \zeta\rvert\leq r$, (\ref{18}) yields
\[
\log(\lvert y \rvert-r) \leq U_{\gamma,\lambda}(x,y) \leq \log(\lvert y \rvert+r)
\]
which implies that
\[
-(r/{\lvert y \rvert})/(1- r/{\lvert y \rvert})\leq U_{\gamma,\lambda}(x,y)- \log \lvert y \rvert \leq  r/{\lvert y \rvert}.
\]
Hence for $\lvert y \rvert > 2r$, we get
\begin{equation}
-2r/{\lvert y \rvert} \leq U_{\gamma,\lambda}(x,y)- \log \lvert y \rvert \leq r/{\lvert y \rvert} \label{24}
\end{equation}
for all $\lambda\in M$.

\medskip

For each $N \ge 1$, let $\gamma_\lambda(N) = d^{N}(H_{\la}^{+N})_{\ast}(\gamma_\lambda)$. Then
\[
\gamma_\lambda(N)=\lim_{j \ra \infty} d^{-n_j + N}\big( H_{\si^N(\la)}^{+(n_j - N)} \big)^{\ast}(\psi T) \in S_{\sigma^N(\lambda)}(\psi T).
\]
Therefore
\[
\ga_{\si^N(\la)} = c_{\gamma,\sigma^N(\lambda)}dd^c U_{\gamma,\sigma^N(\lambda)}
\]
for some  $c_{\gamma,\sigma^N(\lambda)}>0$ and $U_{\gamma,\sigma^N(\lambda)}\in \mathcal{L}_y$ and moreover,
\[
c_{\gamma,\lambda}dd^c U_{\gamma,\lambda} = \ga_{\la} = d^{-N} \big( H_{\la}^{+N}
\big)^{\ast} \ga_{\si^N(\la)} = c_{\gamma,\sigma^N(\lambda)}dd^c \big(d^{-N} \big(H_{\la}^{+N} \big)^{\ast} U_{\ga, \si^N(\la)} \big).
\]
Note that both $d^{-N} \big(H_{\la}^{+N} \big)^{\ast} U_{\ga, \si^N(\la)}$ and $U_{\gamma,\sigma^N(\lambda)}$ belong to $\mathcal{L}_y$. It follows that $c_{\ga,
\la} = c_{\ga, \si^N(\la)}$ and $d^{-N} \big(H_{\la}^{+N} \big)^{\ast} U_{\ga, \si^N(\la)}$ and $U_{\gamma,\lambda}$ coincide up to an additive constant which can be shown to be
zero as follows.

\medskip

By the definition of the class $\mathcal L_y$, there exists a pluriharmonic function $u_{\la, N}$ on some $V_R^+$ such that
\[
U_{\gamma,\sigma^N(\lambda)}(x,y)- \log \lvert y \rvert = u_{\lambda,N} \text{ and } \lim_{\lvert y \rvert \rightarrow \infty}u_{\lambda,N}(x,y)= u_0 \in \mathbb{C}.
\]
Therefore if $(x,y)\in (H_{\la}^{+N})^{-1}(V_R^+)$ and $(x_N^{\la}, y_N^{\la}) = H_{\la}^{+N}(x, y)$, then
\[
d^{-N} \big(H_{\la}^{+N} \big)^{\ast} U_{\ga, \si^N(\la)} (x, y) - d^{-N}\log \lvert y_N^\lambda \rvert = d^{-N}u_{\lambda, N}(x_N^\lambda,y_N^\lambda).
\]
By (\ref{15}), we have that
\[
d^{-N}\log\lvert y_N^\lambda\rvert - \log\lvert y \rvert \rightarrow 0
\]
as $\lvert y \rvert \ra \infty$ which shows that
\[
d^{-N} \big(H_{\la}^{+N} \big)^{\ast} U_{\ga, \si^N(\la)}(x, y) - \log\lvert y \rvert \rightarrow 0
\]
as $\vert y \vert \ra \infty$. But by definition,
\[
U_{\gamma,\lambda}(x,y) - \log\lvert y \rvert \rightarrow 0
\]
as $\lvert y \rvert\rightarrow \infty$ and this shows that $ d^{-N} \big(H_{\la}^{+N} \big)^{\ast} U_{\ga, \si^N(\la)} = U_{\gamma,\lambda}$.

\medskip

Let $(x,y)\in \mathbb{C}^2\setminus K_\lambda^+$ and $\epsilon>0$. For a sufficiently large $n$, $(x_n^\lambda, y_n^\lambda)=H_\lambda^{+n}(x,y)$ satisfies $\lvert x_n^\lambda\rvert
\leq \lvert y_n^\lambda\rvert$ and $(x_n^\lambda,y_n^\lambda)\in B$ as defined above. Hence by (\ref{24}), we get
\[
\left| d^{-n} \big(H_{\la}^{+n} \big)^{\ast} U_{\ga, \si^n(\la)} - d^{-n}\log \lvert y_n^\lambda \rvert \right| \leq \frac{2C}{d^n\lvert y _n^\lambda\rvert}({\lvert x_n^\lambda
\rvert}^{1/d}+1)<\epsilon.
\]
On the other hand, by using (\ref{15}), it follows that
\[
\left| G_\lambda^+(x,y)- d^{-n}\log\lvert y_n^\lambda\rvert\right|<\epsilon
\]
for large $n$. Combining these two inequalities and the fact that $ d^{-n} \big(H_{\la}^{+n} \big)^{\ast} U_{\ga, \si^n(\la)}=U_{\gamma,\lambda}$ for all $n\geq 1$, we get
\[
\left| G_\lambda^+(z)-U_{\gamma,\lambda}(z)\right|<2\epsilon.
\]
Hence $U_{\gamma,\lambda}=G_\lambda^+$ in $\mathbb{C}^2\setminus K_\lambda^+$.

\medskip

The next step is to show that $U_{\gamma,\lambda}=0$ in the interior of $K_\lambda^+$. Since $U_{\gamma,\lambda}=G_\lambda^+$ in $\mathbb{C}^2\setminus K_\lambda^+$, the maximum
principle applied to $U_{\gamma,\lambda}(x,.)$ with $x$ being fixed, gives $U_{\gamma,\lambda}\leq 0$ on $K_\lambda^+$. Suppose that there exists a nonempty
$\Omega\subset\subset K_\lambda^+$ satisfying
$U_{\gamma,\lambda}\leq -t$ in $\Omega$ with $t>0$. Let $R>0$ be so large that $\bigcup_{n=0}^{\infty}H_\lambda^{+n}(\Omega)\subset V_R$ -- this follows from Lemma \ref{le1}. Since
$d^{-n} \big(H_{\la}^{+n} \big)^{\ast} U_{\ga, \si^n(\la)}=U_{\gamma,\lambda}$ for each $n \ge 1$, it follows that
\[
H_\lambda^{+n}(\Omega)\subset \big\{U_{\gamma,\sigma^n(\lambda)}\leq -d^n t\big\}\cap V_R
\]
for each $n \ge 1$. The measure of the last set with $x$ fixed and $\lvert x \rvert\leq R$ can be estimated in this way -- let
\[
Y_x=\big\{ y\in \mathbb{C}:U_{\gamma,\sigma^n(\lambda)}\leq -d^n t\big\}\cap \big\{\lvert y \rvert <R\big\}.
\]
By the definition of capacity
\[
\text{cap}(Y_x)\leq \exp (-d^n t)
\]
and since the Lebesgue measure of $Y_x$, say $m(Y_x)$ is at most $\pi e {\text{cap}(Y_x)}^2$ (by the compactness of $Y_x \subset \mbb C$) we get
\[
m(Y_x)\leq \pi \exp(1-2d^n t).
\]
Now for each $\la\in M$, the Jacobian determinant of $H_\la$ is a constant given by $a_\la= a_1(\la) a_2(\la) \cdots a_m(\la)\neq 0$ and since the correspondence $\la \mapsto a_\la$ is
continuous, an application of Fubini's theorem yields
\[
a^n m(\Omega)\leq \lvert a_{\sigma^{n-1}(\la)}\cdots a_\la\rvert m(\Omega)=m(H_\la^{+n}(\Omega))\leq \int_{\lvert x \rvert\leq R}m(Y_x)dv_x \leq \pi^2 R^2 \exp (1-2d^n t)
\]
where $a=\inf_{\la\in M} \lvert a_\la \rvert $. This is evidently a contradiction for large $n$ if $m(\Omega)>0$.

\medskip

So far, it has been shown that $U_{\gamma,\lambda}=G_\lambda^+$ in $\mathbb{C}^2\setminus J_\lambda^+$. By using the continuity of $G_\lambda^+$ and the upper semi-continuity of
$U_{\gamma,\lambda}$, we have that $U_{\gamma,\lambda}\geq G_\lambda^+$ in $\mathbb{C}^2$. Let $\epsilon>0$ and consider the slice $D_\lambda=\{y:G_\lambda^+<\epsilon\}$ in the
$y$-plane for some fixed $x$. Note that $U_{\gamma,\lambda}(x,.)=G_\lambda^+(x,.)=\epsilon$ on the boundary $\partial D$. Hence by the maximum principle, $U_{\gamma,\lambda}(x,.)\leq
\epsilon$ in $D_\lambda$. Since $x$ and $\epsilon$ are arbitrary, it follows that $U_{\gamma,\lambda}=G_\lambda^+$ in $\mathbb{C}^2$. This implies that
\[
\gamma_\lambda=c_{\gamma,\lambda}\mu_\lambda^+
\]
for any $\gamma_\lambda\in S_\lambda(\psi T)$.

\medskip

$(i)$ follows as the step $2$ and $(ii)$ follows from the fact that the mass of ${\big( H_\lambda^{+n}\big)}^*\big( T\wedge dd^c\psi \big)$ is bounded uniformly in $n$.

\medskip

This completes the proof of Theorem \ref{thm1}.
\end{proof}

\subsection{Proof of Proposition \ref{pr3}}

\begin{proof}
Let $\si : M \ra M$ be an arbitrary continuous map and pick a $\ga_{\la} \in S(\psi, T)$. Let $\theta = 1/2 \;dd^c \log (1 + \vert x \vert^2)$ in $\mbb C^2$ (with coordinates $x, y$)
which is a positive closed $(1, 1)$-current depending only on $x$. Then for any test function $\varphi$ on $\mbb C^2$,
\[
\int_{\mathbb{C}^2}\varphi \gamma_\lambda \wedge \theta = c_{\gamma,\lambda}\int_{\mathbb{C}^2}U_{\gamma,\lambda}dd^c \varphi \wedge \theta
= c_{\gamma,\lambda}\int_{\mathbb{C}}\theta \int_{\mathbb{C}}U_{\gamma,\lambda} \Delta_y \varphi
= c_{\gamma,\lambda} \int_{\mathbb{C}}\theta \int_{\mathbb{C}}\varphi \Delta_y U_{\gamma,\lambda}.
\]
Since $y \mapsto U_{\gamma,\lambda}(x,y)$ has logarithmic growth near infinity and $\varphi$ is arbitrary, it follows that
\begin{equation}
\int_{\mathbb{C}^2}\gamma_\lambda \wedge \theta = 2\pi c_{\gamma,\lambda}\int_{\mathbb{C}^2}\theta
={(2\pi)}^2c_{\gamma,\lambda}.\label{25}
\end{equation}
Let $R > 0$ be large enough so that $\text{supp}(\psi T) \cap V_R^+ = \phi$, which implies that $\text{supp}(\psi T)$ is contained in the closure of $V_R \cup V_R^-$. Then
\begin{eqnarray*}
\int_{\mathbb{C}^2}\frac{1}{d^{n_j^\lambda}} (H_\lambda^{{+ n_j^\lambda}})^{\ast}(\psi T) \wedge \theta
&=& \frac{1}{d^{n_j^\lambda}}\int_{\mathbb{C}^2}\psi T \wedge \frac{1}{2} (H_\lambda^{{+ n_j^\lambda}})_{\ast}dd^c\log (1+|x|^2)\\
&=& \frac{1}{d^{n_j^\lambda}}\int_{\mathbb{C}^2} (\psi T)\wedge dd^c\left(\frac{1}{2}\log (1+|\pi_1\circ (H_\lambda^{{+ n_j^\lambda}})^{-1}|^2)\right)\\
&=& \frac{1}{d^{n_j^\lambda}} \int_{\overline{V_R\cup V_R^-}}\psi T \wedge dd^c\left(\frac{1}{2}\log (1+|\pi_1\circ (H_\lambda^{+ n_j^\lambda})^{-1}|^2)\right).
\end{eqnarray*}
It is therefore sufficient to study the behavior of $\log (1+|\pi_1\circ (H_\lambda^{+ n_j^\lambda})^{-1}|^2)$. But
\[
\log^+ \vert x \vert \le \log^+ \vert (x, y) \vert \le \log^+ \vert x \vert + R
\]
for $(x, y) \in V_R \cup V_R^-$ and by combining this with
\[
2 \log^+ \vert x \vert \le \log (1 + \vert x \vert^2) \le 2 \log^+ \vert x \vert + \log 2,
\]
it follows that the behavior of $(1/2) d^{-n_j^{\la}} \log (1+|\pi_1\circ (H_\lambda^{+ n_j^\lambda})^{-1}|^2)$ as $j \ra \infty$ is similar to that of
$d^{-n_j^{\la}} \log^+ \vert (H_\lambda^{+ n_j^\lambda})^{-1} \vert$.

\medskip

Now suppose that $\si$ is the identity on $M$. In this case, $(H_\lambda^{+ n_j^\lambda})^{-1}$ is just the usual  $n_j^{\la}$--fold iterate of the map $H_{\la}$ and by Proposition \ref{pr1},
it follows that
\[
\lim_{j \ra \infty} d^{-n_j^{\la}} \log \Vert (H_\lambda^{+n_j^\lambda})^{-1} \Vert = G_{\la}^-
\]
and hence
\[
4 \pi^2 c_{\ga, \la} = \int_{\mbb C^2} \ga_{\la} \wedge \theta = \int_{\mathbb{C}^2} \lim_{j \ra \infty} \frac{1}{d^{n_j^\lambda}} (H_\lambda^{{+ n_j^\lambda}})^{\ast}(\psi T) \wedge
\theta = \int_{\mbb C^2} \psi T \wedge \mu_{\la}^-.
\]
The right hand side of the above equation is independent of the subsequence used in the construction of $\ga_{\la}$ and hence $S_\lambda(\psi, T)$ contains a unique element.

\medskip

The other case to consider is when there exists a $\la_0 \in M$ such that $\si^n(\la) \ra \la_0$ for all $\la$. For each $n \ge 1$, let
\[
\ti G_{n, \la}^- = \frac{1}{d^n} \log^+ \Vert (H_{\la}^{+n})^{-1} \Vert.
\]
Note that $\ti G_{n, \la}^-  \not= G_{n, \la}^-!$ It will suffice to show that $\ti G_{n, \la}^-$ converges uniformly on compact subsets of $\mbb C^2$ to a plurisubharmonic function,
say
$\ti G_{\la}^-$. Let
\[
\ti K_{\la}^- = \big\{ z \in \mbb C^2 : \;\text{the sequence} \;\{ (H_{\la}^{+n})^{-1}(z) \}  \;\text{is bounded} \;\big\}
\]
and let $A \subset \mbb C^2$ be a relatively compact set such that $A \cap \ti K_{\la}^- = \phi$ for all $\la \in M$. The arguments used in Lemma \ref{le1} show that
\[
\mbb C^2 \setminus \ti K_{\la}^- = \bigcup_{n=0}^{\infty} H_{\la}^{+n}(V_R^-)
\]
for a sufficiently large $R > 0$. As Proposition \ref{pr1}, it can be shown that $\ti G_{n, \la}^-$ converges to a pluriharmonic function $\ti G_{\la}^-$ on $V_R^-$. Hence for large $m, n$,
\begin{equation} \label{con}
\vert \ti G_{m, \la}^-(p) - \ti G_{n, \la}^-(q) \vert < \ep 
\end{equation}
for $p, q \in V_R^-$ that are close enough. Let $n_0$ be such that $(H_{\la_0}^{+n_0})^{-1}(A) \subset V_R^-$ and pick a relatively compact set $S \subset V_R^-$
such that $(H_{\la_0}^{+n_0})^{-1}(A) \subset S$. Pick any $\la$. Since $\si^n(\la) \ra \la_0$ and the maps $H_{\la}^{\pm 1}$ depend continuously on $\la$, it follows that
$H_{\si^n(\la)}^{+n_0}(A) \subset S$. By choosing $m, n$ large enough, it is possible to ensure that for all $(x, y) \in A$, $(H_{\si^{m - n_0}(\la)}^{+n_0})^{-1}(x, y)$ and
$(H_{\si^{n - n_0}(\la)}^{+n_0})^{-1}(x, y)$ are as close to each other as desired. By writing
\[
\ti G_{n, \la}^-(x, y) = \frac{1}{d^{n_0}} \frac{1}{d^{n - n_0}} \log^+ \Vert H_{\la}^{-1} \circ \cdots \circ H_{\si^{n - n_0 + 1}(\la)}^{-1} \circ
(H_{\si^{n - n_0}(\la)}^{+n_0})^{-1}(x, y) \Vert
\]
and using (\ref{con}) it follows that $\ti G_{n, \la}^-$ converges uniformly to a pluriharmonic function on $A$. To conclude that this convergence is actually uniform on compact sets of
$\mbb C^2$,
it suffices to appeal to the arguments used in Proposition \ref{pr1}.
\end{proof}

\thispagestyle{empty}
\chapter{Random iterations of H\'{e}non maps}
In this chapter, we study dynamics of random iterations of H\'{e}non maps. Several results obtained here generalize the results in Chapter $3$. 
\section{Random Green functions and their averages}
In this section, we prove Theorem \ref{R thm1} and Proposition \ref{R pr1}.

\medskip
\no
From Chapter $3$ we have learned that there exists a uniform filtration for the family of H\'{e}non maps ${\{H_\lambda\}}_{\lambda\in M}$ given by:
\begin{align*}
V_R^+ &= \big\{ (x, y) \in \mbb C^2 : \vert y \vert > \vert x \vert, \vert y \vert > R \big\},\\
V_R^- &= \big\{ (x, y) \in \mbb C^2 : \vert y \vert < \vert x \vert, \vert x \vert > R \big\}\text{ and }\\
V_R   &= \big\{ (x, y) \in \mbb C^2 : \vert x \vert, \vert y \vert \le R\}
\end{align*}
where $R>0$. 
Further, for a sufficiently large $R>0$ and for all $\lambda\in M$, we have:
$$
H_\lambda(V_R^+)\subset V_R^+, \ \ H_\lambda(V_R^+\cup V_R)\subset V_R^+\cup V_R
$$
and
$$
H_\lambda^{-1}(V_R^-)\subset V_R^-, \ \ H_\lambda^{-1}(V_R^-\cup V_R)\subset V_R^-\cup V_R.
$$
As in Lemma \ref{le1}, it can be shown that for sufficiently large $R>0$,  
$$
I_\Lambda^{\pm}=\mathbb{C}^2\setminus K_\Lambda^{\pm}=\bigcup_{n=0}^\infty {(H_{n,\Lambda}^\pm)}^{-1}(V_R^{\pm})
$$
for each $\Lambda\in X$.

\subsection{Proof of Theorem \ref{R thm1}}
\begin{proof}
Since $I^+$ is an attracting fixed point for each $H_\lambda$ and the correspondence $\lambda\mapsto H_\lambda$ is continuous, one can choose a suitable neighborhood of $I^+$ in $\mathbb{P}^2$, say $U$, such that $H_\lambda(Y)\subset \subset Y$ for all $\lambda\in M$ where $Y=\mathbb{P}^2\setminus \overline{U}$. Now start with a compact set $K$ in $\mathbb{C}^2$. One can choose $R$ sufficiently large so that $K\subset V_R\cup V_R^+$ and $H_\lambda(V_R\cup V_R^+)\subset V_R \cup V_R^+$ for all $\lambda\in M$. Now define 
$$
v_\lambda(z)=\frac{1}{d}\log^+ \lVert H_\lambda(z)\rVert-\log^+\lVert z \rVert
$$
for $z\in V_R\cup V_R^+$. Proof of Proposition $\ref{pr1}$ provides a constant $K>0$ such that $\lvert v_\lambda(z) \rvert < K$ for all $z\in V_R^+ \cup V_R $ and for all $\lambda\in M$. Consider $(Y,d_{FS})$ where $d_{FS}$ is the Fubini-Study metric on $Y$. Thus $Y$ is a metric space of finite diameter. Note that  each $v_\lambda$ and $H_\lambda$ are Lipschitz on $Y$ with  uniform Lipschitz coefficients $A$ and $L$ respectively. In particular, one can choose $L$ to be $\sup_{\lambda\in M} L_\lambda$ where each $L_\lambda=\sup_{\mathbb{P}^2\setminus U}\lVert DH_\lambda \rVert$.   \\
\indent
Now 
\begin{equation}
G_{n+1,\Lambda}^+(z)-G_{n,\Lambda}^+(z) = \frac{1}{d^{n+1}} \log^+ \big \lVert H_{n+1,\Lambda}^+(z)\big \rVert-\frac{1}{d^{n}} \log^+ \big\lVert H_{n,\Lambda}^+(z)\big \rVert 
= \frac{1}{d^n} \big ( v_{\lambda_{n+1}}\circ H_{n,\Lambda}^+(z)\big ).
\end{equation}
Thus 
\begin{equation}
\big \lvert G_{n+1,\Lambda}^+(z)-G_{n,\Lambda}^+(z)\big \rvert=d^{-n}\big\lvert  v_{\lambda_{n+1}}\circ H_{n,\Lambda}^+(z) \big\rvert \leq Kd^{-n} \label{R2}
\end{equation}
on $V_R\cup V_R^+$ for all $n\geq 1$ and for all $\Lambda\in X$. Therefore $G_{n,\Lambda}^+$ converges uniformly to the continuous function $G_\Lambda^+$ on any compact subset of $\mathbb{C}^2$. Also note that 
\[
d G_{n+1,\Lambda'}^+=G_{n,\Lambda}^+\circ H_\lambda
\]
for all $n\geq 1$, which yields $d G_{\Lambda'}^+=G_\Lambda^+ \circ H_\lambda$. That $G_\Lambda^{+}$ is strictly positive pluriharmonic function on $\mathbb{C}^2\setminus K_\Lambda^{+}$ and vanishes precisely on $K_\Lambda^{+}$ follow by using  similar kind of arguments as Proposition $\ref{pr1}$.\\
\indent
Note that 
\[
G_\Lambda^+(z)=\log^+\lVert z \rVert+\sum_{n\geq 0} d^{-n}\big(v_{\lambda_{n+1}} \circ H_{n,\Lambda}^+(z)\big) 
\]
for all $\Lambda\in X$. Since $(Y,d_{FS})$ has finite diameter, it is sufficient to work with $a,b \in V_R\cup V_R^+$ with $d_{FS}(a,b)<<1$. Now
\begin{eqnarray}
&&\big\lvert \sum_{n\geq 0}d^{-n}v_{\lambda_{n+1}}\circ H_{n,\Lambda}^+(a)- \sum_{n\geq 0}d^{-n}v_{\lambda_{n+1}}\circ H_{n,\Lambda}^+(b) \big\rvert \nonumber \\
&\leq& \sum_{0\leq n \leq N-1} d^{-n}\big\lvert v_{\lambda_{n+1}}\circ H_{n,\Lambda}^+(a)- v_{\lambda_{n+1}}\circ H_{n,\Lambda}^+(b)\big\rvert 
+  \sum_{n \geq N} d^{-n}\big\lvert v_{\lambda_{n+1}}\circ H_{n,\Lambda}^+(a)- v_{\lambda_{n+1}}\circ H_{n,\Lambda}^+(b)\big\rvert \nonumber\\
&\leq&  A \sum_{0\leq n \leq N-1} d^{-n} \big\lvert H_{n,\Lambda}^+(a)-H_{n,\Lambda}^+(b)\big\rvert + 2K \sum_{n\geq N} d^{-n} 
\leq A d_{FS}(a,b) \sum_{0\leq n \leq N-1} d^{-n}L^n + K'd^{-N}\nonumber \\
\label{R3}
\end{eqnarray} 
where $K'={2Kd}/{(d-1)}$. If $L<d$, the last sum in (\ref{R3}) is of order at most equal to $ d_{FS}(a,b)+ d^{-N}$. So for a given $0<\beta<1$, if we choose $N> {-\beta \log d_{FS}(a,b)}/{\log d}$, then the sum in (\ref{R3}) is $\lesssim {d_{FS}(a,b)}^\beta$. Thus in this case, $G_\Lambda^+$ is locally H\"{o}lder continuous in $\mathbb{C}^2$ for any $0<\beta <1$. For $L\geq d$, the sum in (\ref{R3}) is of order $d_{FS}(a,b){(Ld^{-1})}^N+d^{-N}$. Now if we choose 
${-\log d_{FS}(a,b)}/{\log L}\leq N <({-\log d_{FS}(a,b)}/{\log L})+1$, the sum in (\ref{R3}) is $ \lesssim {d_{FS}(a,b)}^{{\log d}/{\log L}}$ since $d^{-N}< {d_{FS}(a,b)}^{{\log d}/{\log L}}$. Therefore $G_\Lambda^+$ is locally H\"{o}lder continuous in $\mathbb{C}^2$ for any $0<\beta <{\log d}/{\log L}$. So for each $\Lambda\in X$, $G_\Lambda^+$ is $\beta$-H\"{o}lder continuous for any $\beta$ such that $0<\beta< \min\{1,{\log d}/{\log L}\}$. A similar argument can be implemented to show that $G_\Lambda^-$ is H\"{o}lder continuous. 

\medskip

Now to prove that the correspondence $\Lambda \mapsto G_\Lambda^{+}$ is continuous, take a compact set $S\subset \mathbb{C}^2$ and $\Lambda_0\in M$. Then
\begin{equation*}
\vert G_{\Lambda}^+(z)-G_{\Lambda_0}^+(z) \vert \le \vert G_{n,\Lambda}^+(z)-G_{\Lambda}^+(z)\vert + \vert G_{n,\Lambda}^+(z)-G_{n,\Lambda_0}^+(z)\vert  + \vert G_{n,\Lambda_0}^+(z)-G_{\Lambda_0}^+(z)\vert
\end{equation*}
for all $z \in S$. For a given $\epsilon>0$, using (\ref{R2}) we can choose $n$ large enough so that the first and third terms in the right hand side of the above equation are less than $\epsilon/3$. The middle term can also be managed to be less than $\epsilon/3$ by taking $\Lambda$ very close to $\Lambda_0$. Hence we prove that the correspondence $\Lambda\mapsto G_\Lambda^+$ is continuous. The corresponding results related to $G_\Lambda^-$ can be proved by applying a similar set of arguments. 
\end{proof}

\subsection{Proof of Proposition \ref{R pr1}} 
\begin{proof}
Pick any $z\in \mathbb{C}^2$ and let $\Delta$ be a small linear disk centered at $z$. Then we have
\begin{multline*}
EG^+(z)=\int_X G_\Lambda^+ (z)
\leq \int_X \frac{1}{\lvert \partial \Delta \rvert} \int_{\partial \Delta} G_\Lambda^+ (\xi) 
= \frac {1}{\lvert\partial \Delta \rvert} \int_{\partial \Delta} \int_X G_\Lambda^+(\xi)
= \frac{1}{\lvert\partial \Delta \rvert} \int_{\partial \Delta} EG^+(\xi).
\end{multline*}
The first inequality follows due to the fact that $G_\Lambda^+$ is subharmonic in $\mathbb{C}^2$ for each $\Lambda\in X$ and the next equality is obtained by applying Fubini's theorem to the continuous function $G^+: X \times \mathbb{C}^2 \ra \mathbb{R}$ defined as follows: $(\Lambda, z) \mapsto G_\Lambda^+(z)$. This shows that $EG^+$ is plurisubharmonic. That for a fixed compact set  in $\mathbb{C}^2$, a fixed H\"{o}lder coefficient and a fixed H\"{o}lder exponent work for each $G_\Lambda^+$ implies the local H\"{o}lder continuity and in particular, the continuity of the average Green function $EG^+$ in $\mathbb{C}^2$. Similar statement is valid for $EG^{-}$.  

\medskip

For a $z\in \bigcap_{\Lambda\in X} K_\Lambda^+$, clearly $EG^+(z)=0$. To see the converse, let 
\[
EG^+(z)=\int_{\Lambda\in X} G_\Lambda^+(z)=0
\] 
for a $z\in \mathbb{C}^2$. Since the correspondence $\Lambda\mapsto G_\Lambda^+(z)$ is continuous for a fixed $z\in \mathbb{C}^2$ and $G_\Lambda^+$ is positive function on $\mathbb{C}^2$ for all $\Lambda\in X$, we have $G_\Lambda^+(z)=0$ for all $\Lambda\in X$. Hence $z\in \bigcap_{\Lambda} K_\Lambda^+$. Further, it is easy to observe that $EG^{\pm}$ are pluriharmonic outside $\overline{\bigcup_{\Lambda}K_\Lambda^\pm}$.
\end{proof}

\section{Random Green currents and their averages}
In this section, we prove Proposition \ref{R pr2}.
\subsection{Proof of Proposition \ref{R pr2}} 
\begin{proof}
For a test form $\varphi$ in $\mathbb{C}^2$, we note the followings:
\begin{equation*}
\langle \mu^\pm,\varphi\rangle=\langle dd^c{EG}^{\pm},  \varphi\rangle
= \int_X\langle G_\Lambda^{\pm}, dd^c \varphi\rangle 
=\int_X \langle dd^c G_\Lambda^{\pm},\varphi\rangle
= \int_X \langle \mu_\Lambda^{\pm},\varphi\rangle
= \langle \int_X \mu_\Lambda^{\pm},\varphi \rangle.
\end{equation*}

Thus we get $\mu^{\pm}=\int_X \mu_\Lambda^{\pm}$.

\medskip

By Theorem \ref{R thm1}, it follows that  $G_\Lambda^{\pm}\circ H_\lambda^{\pm 1}=d G_{\lambda\Lambda}^{\pm}$
which in turn gives $ {(H_\lambda^{\pm 1})}^*(dd^c G_\Lambda^\pm)= d (dd^c G_{\lambda\Lambda}^\pm)$. Hence ${(H_\lambda^{\pm 1})}^* \mu_\Lambda^{\pm}=d \mu_{\lambda\Lambda}^\pm$ for all $\lambda\in M$ and for all $\Lambda\in X$.

\medskip

That for a $z\in \mathbb{C}^2$, $\int_{M} {EG}^{\pm}\circ H_\lambda^{\pm}(z)d\nu(\lambda)= d {(EG)}^{\pm}(z)$ follows from the fact that $G_\Lambda^{\pm}\circ H_\lambda^{\pm 1}=d G_{\lambda \Lambda}^{\pm}$ for all $\lambda\in M$ and for all $\Lambda\in X$ .

\medskip

To prove the next assertion observe the followings:
\begin{multline*}
\langle \int_{M} {(H_\lambda^{\pm 1})}^* \mu^{\pm},\varphi \rangle = \int_{M} \langle dd^c ({EG}^{\pm}\circ H_\lambda^{\pm 1 }),\varphi \rangle
= \int_{M}\langle {EG}^{\pm}\circ H_\lambda^{\pm 1 }, dd^c \varphi \rangle\\
= \langle \int_{M} {EG}^{\pm}\circ  H_\lambda^{\pm 1 },dd^c \varphi\rangle
= \langle d {(EG)}^{\pm},dd^c \varphi\rangle
= d \langle dd^c ({EG}^{\pm}),\varphi \rangle
= d \langle \mu^{\pm},\varphi\rangle
\end{multline*}
Therefore $\int_{M} {(H_\lambda^{\pm 1})}^* \mu^{\pm}=d \mu^{\pm}$ holds.

\medskip

That the support of $\mu_\Lambda^\pm$ is $J_\Lambda^\pm$ and the correspondence $\Lambda\mapsto J_\Lambda^\pm$ is lower semi-continuous can be shown by implementing the same techniques as in Proposition $\ref{pr2}$.

\medskip

Since $\mu^\pm=\int_{X} \mu_\Lambda^\pm$, we have ${ \rm supp} (\mu^{\pm})\subset \overline{\bigcup_{\Lambda\in X} J_\Lambda^\pm }$. Now to prove the other inclusion, first note that $\mu_\Lambda^{\pm}$ vary continuously in $\Lambda$. Therefore if some $\mu_{\Lambda_0}^{\pm}$ has mass in some open set $\Omega\subset\mathbb{C}^2$, then there exists a neighborhood $U_{\Lambda_0}\subset X$ of $\Lambda_0$ such that $\mu_\Lambda^\pm$ has nonzero mass in $\Omega$ for all $\Lambda\in U_{\Lambda_0}$ and thus $\mu^{\pm}$ also has nonzero mass in $\Omega$. This completes the proof. 
\end{proof}

\section{Convergence theorems}
In this section, we prove Theorem \ref{R thm2} and Theorem $\ref{R thm3}$.

\subsection{Proof of Theorem \ref{R thm2}}
\begin{proof}
Since the correspondence $\lambda\mapsto H_\lambda$ is continuous, we can assume $U$ to be a suitable domain satisfying $H_\lambda(U)\subset \subset U$ for all $\lambda\in M$. Let $\varphi$ be a $(1,1)$-form in $\mathbb{P}^2$. Since $\varphi$ is of class $C^2$, $dd^c \varphi$ is a continuous form of maximal degree. Thus we can assume that the signed measure given by $dd^c \varphi$ has no mass on set of volume zero and in particular, on the hyperplane at infinity. Hence multiplying $\varphi$ with a suitable constant, we can assume 
${\lVert dd^c \varphi \rVert}_\infty \leq 1$. Thus $\gamma=dd^c \varphi$ is a complex measure in $\mathbb{C}^2$ with mass less than or equal to $1$.   
 Define $\gamma_{n}={(H_{\lambda_n}\circ \cdots\circ H_{\lambda_1})}_* (\gamma)$ for all $n\geq 1$. Note that $\gamma_n$ has the same mass as $\gamma$ for all $n\geq 1$ since $H_\lambda$ is automorphism on $\mathbb{C}^2$ for all $\lambda\in M$. Let ${\gamma_n}'$ and ${\gamma_n}''$ are the restriction of $\gamma_n$ to $\mathbb{P}^2 \setminus U$ and $U$ respectively. Clearly $\lVert {\gamma_n}' \rVert \leq 1$ on $\mathbb{P}^2\setminus U$ and $ \lVert{\gamma_n}''\rVert \leq 1$ on $U$. Observe that due to our choice of $U$, $H_\lambda^{-1}$ defines a map from $\mathbb{P}^2\setminus U$ to $\mathbb{P}^2\setminus U$ with $C^1$ norm bounded by some $L>0$ for all $\lambda\in M$. So the $C^1$ norm of 
 $(H_{\lambda_1}^{-1}\circ\cdots\circ H_{\lambda_n}^{-1})$ is bounded by $L^n$ on $\mathbb{P}^2\setminus U$ for all $n\geq 1$ and consequently we have  ${\lVert\gamma_n' \rVert}_{\infty}\leq L^{4n}$ for all $n\geq 1$.\\
\indent
For each $n\geq 1$, we define:
\begin{equation*}
 q_{\Lambda_n}^+ := \frac{1}{2\pi}G_{\Lambda_n}^+ - \frac{1}{2\pi} \log {(1+ {\lVert z \rVert}^2)}^{\frac{1}{2}} .
\end{equation*}
 Note that $ q_{\Lambda_n}^+$ is a quasi potential of $ \mu_{\Lambda_n}^+$ for all $n\geq 1$. Define $v_{\Lambda_n} := u_n -  q_{\Lambda_{n+1}}^+$.
 Using a similar argument as (\ref{15}), one can show that $q_{\Lambda_n}^+$ is bounded on $U$ by a fixed constant for all $n\geq 1$. This implies that there exists $A_1>0$ such that $\lvert v_{\Lambda_n}\rvert \leq A_1$ on $U$ for all $n\geq 1$. Now observe the followings:
 \begin{eqnarray}
 && \big\langle  d^{-n} {(H_{\lambda_n}\circ \cdots\circ H_{\lambda_1} )}^* ({S_n})-\mu_\Lambda^+, \varphi \big\rangle \nonumber\\
  &=& \big\langle d^{-n} {(H_{\lambda_n}\circ \cdots\circ H_{\lambda_1} )}^* ({S_n})- d^{-n} {(H_{\lambda_n}\circ \cdots\circ H_{\lambda_1} )}^* (\mu_{\Lambda_{n+1}}^+),\varphi \big\rangle \nonumber \\
 &=& d^{-n} \big\langle {(H_{\lambda_n}\circ \cdots\circ H_{\lambda_1} )}^* (dd^c (v_{\Lambda_n})), \varphi \big\rangle \nonumber\\
 &=&d^{-n} \big\langle v_{\Lambda_n}, {(H_{\lambda_n}\circ \cdots\circ H_{\lambda_1} )}_*(dd^c \varphi) \big\rangle \nonumber\\
 &=& d^{-n} \big\langle v_{\Lambda_n}, \gamma_n  \big\rangle 
 = d^{-n} \big\langle \gamma_n',  v_{\Lambda_n} \big\rangle + d^{-n} \big\langle \gamma_n'', v_{\Lambda_n} \big\rangle  .\label{R4}
\end{eqnarray}
\indent
Consider the family $\{v_{\Lambda_n}\}_{n\geq 1}$. Note that $dd^c (v_{\Lambda_n})= {S_n}- \mu_{\Lambda_{n+1}}^+$. This implies that 
${\lVert dd^c (v_{\Lambda_n})\rVert}_*$ is uniformly bounded by a fixed constant for all $n\geq 1$ since ${S_n}$ and $\mu_{\Lambda_n}^+$ both have mass $1$ for all $n\geq 1$. Hence by Lemma \ref{pre 4}, it follows that $\{v_{\Lambda_n}\}_{n\geq 1}$ is a bounded subset in DSH$(\mathbb{P}^2)$. Since $ \lVert\gamma_n'\rVert \leq 1$ and ${\lVert\gamma_n'\rVert}_\infty \leq L^{4n}$, Corollary \ref{pre 6} gives the following:
\begin{equation}
\big \lvert d^{-n} \big\langle \gamma_n', v_{\Lambda_n}\big \rangle \big \rvert \leq d^{-n} c (1+\log^+  {L^{4n}}) \lesssim nd^{-n}.
\end{equation} 
Now the second term in (\ref{R4}) is of order $O(d^{-n})$ since $\lVert \gamma_n'' \rVert \leq 1$ and $\lvert v_{\Lambda_n}\rvert \leq A_1$ on $U$. This estimate along with (\ref{R4}) gives 
\begin{equation}
\big\lvert \big\langle  d^{-n} {(H_{\lambda_n}\circ \cdots\circ H_{\lambda_1} )}^* ({S_n})-\mu_\Lambda^+, \phi \big\rangle \big\rvert 
\leq And^{-n} {\lVert \phi \rVert}_{C^1}.\label{uni}
\end{equation}
\end{proof}

\begin{rem}
It is clear from (\ref{uni}) and Lemma \ref{pre 4} that the constant $A$ does not depend on the $\Lambda$ we are starting with. In particular it shows that for given currents ${\{S_n\}}_{n\geq 1}$ as prescribed before $d^{-n} {(H_{\lambda_n}\circ \cdots\circ H_{\lambda_1} )}^*(S_n)$ converges uniformly to $\mu_\Lambda^+$  for all $\Lambda\in X$  in the weak sense of currents.
\end{rem}

\subsection{Proof of Theorem \ref{R thm3}}
\begin{proof}
First note that 
\begin{equation*}
\Theta^n(S_n)=\int_{M^n} {\frac{{(H_{n,\Lambda}^+)}^*(S_n)}{d^n}}.
\end{equation*}
Now for a test form $\varphi$ on $\mathbb{P}^2$, we have
\begin{eqnarray}
\big\langle \Theta^n(S_n),\varphi \big\rangle &=& \int_{M^n} \Big\langle \frac{{(H_{n,\Lambda}^+)}^*(S_n)}{d^n},\varphi \Big\rangle \nonumber 
= \int_{X} \Big\langle \frac{{(H_{n,\Lambda})}^*(S_n)}{d^n},\varphi \Big\rangle.
\end{eqnarray}
For each $n\geq 1$, we define $F_n: X \ra \mathbb{C}$ as follows:
$$
F_n(\Lambda):=  \Big\langle \frac{{(H_{n,\Lambda})}^*(S_n)}{d^n},\varphi \Big\rangle.
$$ 
By Theorem \ref{R thm2}, 
$
F_n(\Lambda) \ra F(\Lambda)=\langle \mu_\Lambda^+,\varphi\rangle
$
for all $\Lambda\in X$. Note that each $F_n$ and $F$ are integrable. Also we have $\lvert F_n(\Lambda)\rvert \lesssim {\lVert \varphi \rVert}_\infty$ for all $n\geq 1$ and for all $\Lambda\in X$. Now we use dominated convergence theorem to get the followings:
\begin{eqnarray*}
\big\langle\Theta^n(S_n),\varphi \big\rangle =\int_X F_n &\ra& \int_X F 
= \int_X \big\langle \mu_\Lambda^+,\varphi \big\rangle
= \langle \mu^+,\varphi \rangle.
\end{eqnarray*}
This completes the proof.
\end{proof}

\section{Rigidity of random Julia sets}
In this section, we prove Theorem \ref{R thm4}.
\subsection{Proof of Theorem \ref{R thm4}}
\begin{proof}
Fix $\Lambda\in X$ and let $S$ be a positive closed $(1,1)$-current of mass $1$ with support in $\overline{K_\Lambda^+}$. We show that $S=\mu_\Lambda^+$.  For each $n\geq 1$, define 
$$
S_{n,\Lambda}=d^n (H_{\lambda_n}\circ \cdots\circ H_{\lambda_1})_* S
$$
on $\mathbb{C}^2$. Note that each $S_{n,\Lambda}$ is a positive closed $(1,1)$-current on $\mathbb{C}^2$ with support in $K_{\Lambda_n}^+$. Therefore it can be extended through the hyperplane by $0$ as a positive closed $(1,1)$-current on $\mathbb{P}^2$. Now since 
$$
{(H_{\lambda_n}\circ \cdots\circ H_{\lambda_1})}^*{(H_{\lambda_n}\circ \cdots\circ H_{\lambda_1})}_* (S_{\Lambda,n})=S
$$  
on $\mathbb{C}^2$, we have
\begin{equation*}
d^{-n}{(H_{\lambda_n}\circ \cdots\circ H_{\lambda_1})}^* (S_{n,\Lambda})=S
\end{equation*}
on $\mathbb{C}^2$ and thus
\begin{equation}
d^{-n}{(H_{\lambda_n}\circ \cdots\circ H_{\lambda_1})}^* (S_{n,\Lambda})=S \label{R5}
\end{equation}
on $\mathbb{P}^2$. As a current on $\mathbb{P}^2$ each $S_{n,\Lambda}$ vanishes in a neighborhood of $I^-$  and is of mass $1$. Since for each $n\geq 1$, as a current on $\mathbb{C}^2$, $S_{\Lambda,n}$ has support in $K_{\Lambda_n}^+$, we have ${\rm{supp}}(S_{n,\Lambda})\cap \overline{V_R^+}=\phi$ for sufficiently large $R>0$. By Proposition $8.3.6$ in \cite{MNTU}, for each $n\geq 1$ there exists $u_{n,\Lambda}$ such that 
$$
S_{n,\Lambda}=c_{n,\Lambda} dd^c (u_{n,\Lambda})
$$ 
where $u_{n,\Lambda}(x,y)-\log\lvert y \rvert$ is a bounded pluriharmonic function  on $\overline{V}_R^+$ and $c_{n,\Lambda}>0$. Hence ${(S_{n,\Lambda})}_{n\geq 1}$ satisfies the required hypothesis of Theorem \ref{R thm2} and thus we get  
$$
d^{-n}{(H_{\lambda_n}\circ \cdots\circ H_{\lambda_1})}^* (S_{n,\Lambda})\ra \mu_\Lambda^+
$$
in the sense of current as $n\ra \infty$. Therefore by (\ref{R5}), we have $S=\mu_\Lambda^+$.
\end{proof}

\begin{cor}
For each $\Lambda\in X$, the random Julia set $J_\Lambda^+=\partial K_\Lambda^+$ is rigid.
\end{cor}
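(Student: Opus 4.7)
The plan is to deduce this corollary directly from Theorem \ref{R thm4}, combined with the identification of the support of the random Green current $\mu_\Lambda^+$ provided by Proposition \ref{R pr2}. The point is that $J_\Lambda^+$ is already contained in $\overline{K_\Lambda^+}$, so any current concentrated on $J_\Lambda^+$ is automatically concentrated on the larger set treated in the theorem.

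First I would record the inclusion $\overline{J_\Lambda^+} \subseteq \overline{K_\Lambda^+}$ in $\mathbb{P}^2$. This is immediate: $J_\Lambda^+ = \partial K_\Lambda^+ \subseteq K_\Lambda^+$ in $\mathbb{C}^2$, and taking closures in $\mathbb{P}^2$ preserves inclusions. Next, by Proposition \ref{R pr2} the support of $\mu_\Lambda^+$ (viewed as a positive closed $(1,1)$-current on $\mathbb{P}^2$ via trivial extension across the hyperplane at infinity) equals $J_\Lambda^+$, so $\mu_\Lambda^+$ itself is a non-zero positive closed $(1,1)$-current of mass $1$ supported on $\overline{J_\Lambda^+}$; in particular $\overline{J_\Lambda^+}$ does carry at least one such current.

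For uniqueness up to a multiplicative constant, let $T$ be any non-zero positive closed $(1,1)$-current on $\mathbb{P}^2$ supported on $\overline{J_\Lambda^+}$. Normalize so that $T$ has mass $1$. By the inclusion above, $T$ is supported on $\overline{K_\Lambda^+}$, and Theorem \ref{R thm4} forces $T = \mu_\Lambda^+$. Consequently $\overline{J_\Lambda^+}$ supports $\mu_\Lambda^+$ as its only positive closed $(1,1)$-current up to a positive scalar, which is exactly the rigidity of $J_\Lambda^+$ in the sense defined just before Theorem \ref{R thm4}.

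There is no real obstacle to overcome: the corollary is essentially Theorem \ref{R thm4} transported from $\overline{K_\Lambda^+}$ to its subset $\overline{J_\Lambda^+}$. The only minor point that merits explicit mention in the write-up is the shift of ambient space from $\mathbb{C}^2$ (where $J_\Lambda^+$ is originally defined as $\partial K_\Lambda^+$) to $\mathbb{P}^2$ (where Theorem \ref{R thm4} is stated), which is handled by extending currents trivially through the line at infinity exactly as was done for $\mu^+$ and $\mu_\Lambda^+$ earlier in the chapter.
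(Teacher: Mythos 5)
Your argument is correct and is exactly the one the paper leaves implicit (the corollary is stated without a written proof immediately after Theorem \ref{R thm4}): since $K_\Lambda^+$ is closed in $\mathbb{C}^2$ we have $J_\Lambda^+ = \partial K_\Lambda^+ \subseteq K_\Lambda^+$, hence $\overline{J_\Lambda^+} \subseteq \overline{K_\Lambda^+}$ in $\mathbb{P}^2$, so any normalized positive closed $(1,1)$-current supported on $\overline{J_\Lambda^+}$ falls under Theorem \ref{R thm4} and must equal $\mu_\Lambda^+$, while Proposition \ref{R pr2} supplies $\mu_\Lambda^+$ itself as a current supported there. Your approach matches the paper's intent and needs no modification.
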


\thispagestyle{empty}
\chapter{ Some remarks on global dynamics}
In this chapter, we discuss some important dynamical features of the skew map $H$. As mentioned before, in this chapter $\sigma$ is considered to be a homeomorphism on $M$.
\section{Existence of global Green functions}
In this section, we prove Proposition \ref{G pr1}. 
\subsection{Proof of Proposition \ref{G pr1}}

\begin{proof}
Since $M$ is compact subset of $\mathbb{C}^2$, the existence of fibered Green functions $\mathcal{G}_\lambda^\pm$ implies that the global Green functions 
\[
\mathcal{G}^{\pm}= \lim_{n\ra \infty} \mathcal{G}^{\pm}_n
\]
exist in $ M\times \mathbb{C}^2$. Further, note that $\mathcal{G}^\pm(\lambda,z)=\mathcal{G}_\lambda^\pm(z)$ for all $\lambda\in M$ and for all $z\in \mathbb{C}^2$.\\
\indent
Now the sequences $\mathcal{G}_{n,\lambda}^{\pm}$ converge uniformly to the continuous functions $\mathcal{G}_\lambda^{\pm}$ as $n\ra \infty$ on compact subset of $\mathbb{C}^2$ and this convergence is uniform in $\lambda$. Thus it follows that $\mathcal{G}_n^{\pm}$ converge uniformly to $\mathcal{G}^{\pm}$ as $n\ra \infty$ on compact subset of $M\times \mathbb{C}^2$. Further, (\ref{G1}) gives the following: 
\[
\mathcal{G}^{\pm}\circ H=d^{\pm 1}\mathcal{G}^\pm 
\]
on $M\times \mathbb{C}^2$. Being the uniform limit of  sequence of plurisubharmonic functions, $\mathcal{G}^\pm$ is also plurisubharmonic on $M\times \mathbb{C}^2$. Further, $\mathcal{G}_\lambda^\pm$ vanishes precisely on $\mathcal{K}_\lambda^{\pm}$ and positive on $\mathcal{I}_\lambda^\pm$. Thus by (\ref{Esc}), it follows that $\mathcal{G}^\pm$ vanishes precisely on $\mathcal{K}^\pm$ and positive on $\mathcal{I}^\pm$.
\end{proof}

\section{Mixing}
In this section, we prove Theorem \ref{G thm2} and Proposition \ref{G pr2}.

\medskip

The following lemma is crucial to prove the main result in this section. 
\subsection{A convergence lemma}
\begin{lem}\label{G le1}
For $\lambda\in M$ and for a function $\psi \in C_0(M\times \mathbb{C}^2)$, we have
\begin{equation*}
\frac{1}{d^n}{(H_\lambda^n)}^*(\psi_{\sigma^n(\lambda)} \vartheta_{\sigma^n(\lambda)}^+ )-\langle
\vartheta_{\sigma^n(\lambda)},\psi_{\sigma^n(\lambda)}\rangle \vartheta_\lambda^+ \ra 0
\end{equation*}
as $n\ra \infty$ on $\mathbb{C}^2$ where $\psi_\lambda$ is the restriction of $\psi$ on the fiber $\{\lambda\}\times \mathbb{C}^2$.
\end{lem}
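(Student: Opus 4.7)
The plan is to first simplify the expression using the invariance $(H_\lambda^n)^\ast\vartheta_{\sigma^n(\lambda)}^+ = d^n\,\vartheta_\lambda^+$ from Proposition \ref{pr2} in the $\mathcal G$-setting and the commutation of pullback by a biholomorphism with multiplication by a continuous function, obtaining
\[
\tilde\gamma_n := \frac{1}{d^n}(H_\lambda^n)^\ast\bigl(\psi_{\sigma^n(\lambda)}\,\vartheta_{\sigma^n(\lambda)}^+\bigr) = \bigl(\psi_{\sigma^n(\lambda)}\circ H_\lambda^n\bigr)\,\vartheta_\lambda^+.
\]
Setting $c_n := \langle\vartheta_{\sigma^n(\lambda)},\psi_{\sigma^n(\lambda)}\rangle$, the lemma becomes the weak convergence
\[
\xi_n := \tilde\gamma_n - c_n\,\vartheta_\lambda^+ = \bigl(\psi_{\sigma^n(\lambda)}\circ H_\lambda^n - c_n\bigr)\,\vartheta_\lambda^+ \longrightarrow 0
\]
as $(1,1)$-currents on $\mathbb{C}^2$. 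Since $\|\psi_{\sigma^n(\lambda)}\circ H_\lambda^n\|_\infty \le \|\psi\|_\infty$ and $|c_n|\le \|\psi\|_\infty$, both $\{\tilde\gamma_n\}$ and $\{c_n\}$ are relatively compact, and after extraction I may assume $\tilde\gamma_{n_k}\to \gamma$ weakly as currents and $c_{n_k}\to c_\infty\in\mathbb{C}$.

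The main step is to prove that any such subsequential limit satisfies $\gamma = c'\vartheta_\lambda^+$ for some $c'\ge 0$, by adapting the proof of Theorem \ref{thm1} to the $n$-dependent initial data $\psi_n := \psi_{\sigma^n(\lambda)}$, $T_n := \vartheta_{\sigma^n(\lambda)}^+$. The requisite uniform bounds are in place: as $\psi\in C_0(M\times\mathbb{C}^2)$ and $M$ is compact, all $\mathrm{supp}(\psi_n)$ lie in a single compact subset of $\mathbb{C}^2$, and $\psi$ may be mollified to enjoy uniform $C^2$-control; each $T_n$ is closed, positive, of mass $1$. These bounds allow the Cauchy--Schwarz argument of Step 2 of Theorem \ref{thm1} to force $\gamma$ to be closed, and the filtration argument of Step 3 to place $\mathrm{supp}(\gamma)\subset\mathcal K_\lambda^+$. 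Proposition 8.3.6 of \cite{MNTU} combined with the slice-by-slice identification of the resulting logarithmic potential with $\mathcal G_\lambda^+$, exactly as in Step 3 of Theorem \ref{thm1}, then yields $\gamma = c'\vartheta_\lambda^+$.

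To identify $c' = c_\infty$, I exploit the exact cancellation $\int \xi_n\wedge\vartheta_\lambda^- = 0$, valid for every $n$. Indeed, since $\mathcal G_\lambda^\pm$ are continuous (Proposition \ref{pr1} in the $\mathcal G$-version), $\vartheta_\lambda^+\wedge\vartheta_\lambda^- = \vartheta_\lambda$ is a well-defined probability measure, and
\[
\int\xi_n\wedge\vartheta_\lambda^- = \int(\psi_{\sigma^n(\lambda)}\circ H_\lambda^n - c_n)\,d\vartheta_\lambda = \int\psi_{\sigma^n(\lambda)}\,d[(H_\lambda^n)_\ast\vartheta_\lambda] - c_n = c_n - c_n = 0,
\]
where the third equality uses $(H_\lambda^n)_\ast\vartheta_\lambda = \vartheta_{\sigma^n(\lambda)}$, a consequence of $(H_\lambda)^\ast\vartheta_{\sigma(\lambda)}=\vartheta_\lambda$ and the biholomorphy of $H_\lambda$. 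A Bedford--Taylor approximation through smooth forms $\vartheta_\lambda^{-,\epsilon}$ obtained by mollifying $\mathcal G_\lambda^-$, together with the uniform $L^\infty$-bound on $\psi_{\sigma^n(\lambda)}\circ H_\lambda^n - c_n$, passes this exact identity to the weak limit to give $(c'-c_\infty)\int\vartheta_\lambda = c'-c_\infty = 0$. Hence $\xi_{n_k}\to \gamma - c_\infty\vartheta_\lambda^+ = 0$, and since every subsequence admits such a sub-subsequence, the full sequence $\xi_n\to 0$. The principal obstacle is the adaptation of Theorem \ref{thm1}'s structural characterization to the $n$-varying data, ensuring that the Cauchy--Schwarz closedness bound and the potential-identification step depend only on $\|\psi\|_{C_0(M\times\mathbb{C}^2)}$ and on the uniform mass $\|T_n\|=1$.
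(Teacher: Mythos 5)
The rewriting $\tilde\gamma_n = (\psi_{\sigma^n(\lambda)}\circ H_\lambda^n)\,\vartheta_\lambda^+$ and the exact identity $\int \xi_n\wedge\vartheta_\lambda^- = 0$ are both correct, but the passage of that identity to the weak limit is a genuine gap. Weak convergence of currents does not interact continuously with a wedge by a fixed positive closed $(1,1)$-current whose potential is merely continuous, and your Bedford--Taylor mollification does not supply the missing uniformity: you need
\[
\Bigl|\int\bigl(\psi_{\sigma^n(\lambda)}\circ H_\lambda^n - c_n\bigr)\,d\bigl(\vartheta_\lambda^+\wedge\vartheta_\lambda^{-,\epsilon} - \vartheta_\lambda\bigr)\Bigr| \longrightarrow 0 \quad (\epsilon\to 0) \quad\text{\emph{uniformly in } } n,
\]
but the integrands are only uniformly bounded, not equicontinuous (the Lipschitz constant of $\psi_{\sigma^n(\lambda)}\circ H_\lambda^n$ blows up with $n$), and $\vartheta_\lambda^+\wedge\vartheta_\lambda^{-,\epsilon}\to\vartheta_\lambda$ only weakly, not in total variation. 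So the double limit cannot be interchanged as written, and $c' = c_\infty$ does not follow.

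The paper avoids this exactly by choosing a \emph{smooth} test current in place of $\vartheta_\lambda^-$. It pairs $\tilde\gamma_n$ with $\tfrac{1}{2\pi}dd^c\log(1+\|z\|^2)^{1/2}$ (the Fubini--Study form), pushes forward by $H_\lambda^n$ to obtain $\tfrac{1}{2\pi}\int \psi_{\sigma^n(\lambda)}\,\vartheta_{\sigma^n(\lambda)}^+\wedge dd^c\mathcal{G}_{n,\sigma^n(\lambda)}^-$, and uses the uniform convergence $\mathcal{G}_{n,\mu}^-\to\mathcal{G}_\mu^-$ to conclude $\|\tilde\gamma_n\| - c_n\to 0$. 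Because the Fubini--Study form is smooth on $\mathbb{P}^2$, the mass pairing is continuous under weak convergence, and the constant is fixed without any limit interchange. If you replace your pairing with $\vartheta_\lambda^-$ by this mass computation, your constant identification becomes correct.

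The structural step is also handled differently. You propose re-running Steps 2 and 3 of Theorem \ref{thm1} with $n$-varying data $(\psi_{\sigma^n(\lambda)}, \vartheta_{\sigma^n(\lambda)}^+)$; this is plausible in outline (the invariance $d^N(H_\lambda^{+N})_*\tilde\gamma_n$ is again of the same form with base point $\sigma^N(\lambda)$, so the $\mathcal{L}_y$-potential argument should go through), but it is considerably heavier and you leave the $n$-varying adaptation unverified. The paper instead simply invokes the rigidity Theorem \ref{R thm4}: the normalized limit is a positive closed current of mass $1$ supported in $\overline{K_\lambda^+}$, and $K_\lambda^+ = K_\Lambda^+$ with $\Lambda = (\lambda,\sigma(\lambda),\ldots)$, so rigidity pins it to $\vartheta_\lambda^+$ directly. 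That theorem was already established (via Theorem \ref{R thm2}, which handled exactly the $n$-varying currents you would need to re-handle here), so the paper's route is both shorter and safer.
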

\begin{proof}
Without loss of generality, we consider $\psi \geq 0$. Now observe that the difference between 
\begin{eqnarray}\label{diff}
&&\Big \lVert \frac{1}{d^n} {(H_\lambda^n)}^*(\psi_{\sigma^n(\lambda)}\vartheta_{\sigma^n(\lambda)}^+)\Big \rVert-\langle \vartheta_{\sigma^n(\lambda)},\psi_{\sigma^n(\lambda)}\rangle \nonumber\\ 
&=& \frac{1}{2\pi{d^n}} \int_{\mathbb{C}^2} {(H_\lambda^n)}^* (\psi_{\sigma^n(\lambda)} \vartheta_{\sigma^n(\lambda)}^+ )\wedge dd^c \log {(1+{\lVert z\rVert}^2)}^{\frac{1}{2}}-\langle \vartheta_{\sigma^n(\lambda)},\psi_{\sigma^n(\lambda)}\rangle\nonumber\\
&=& \frac{1}{2\pi}\int_{\mathbb{C}^2} (\psi_{\sigma^n(\lambda)} \vartheta_{\sigma^n(\lambda)}^+ ) \wedge (dd^c \mathcal{G}_{n,\sigma^n(\lambda)}^-)-\langle \vartheta_{\sigma^n(\lambda)},\psi_{\sigma^n(\lambda)}\rangle.
\end{eqnarray}
The last equality holds since  $\log {(1+{\lVert z\rVert}^2)}^{\frac{1}{2}}\sim \log^+\lVert z \rVert$ in $\mathbb{C}^2$. A similar result as  Proposition \ref{pr1} shows that $\mathcal{G}_{n,\lambda}^-$ converges uniformly to $\mathcal{G}_\lambda^-$ as $n\ra \infty$ on compact subset of $\mathbb{C}^2$ and the convergence is uniform in $\lambda$. Thus the quantity in (\ref{diff}) which is equal to
\begin{eqnarray}\label{C1}
\frac{1}{2\pi}\int_{\mathbb{C}^2} (\psi_{\sigma^n(\lambda)} \vartheta_{\sigma^n(\lambda)}^+ ) \wedge (dd^c \mathcal{G}_{n,\sigma^n(\lambda)}^-) -\frac{1}{2\pi}\int_{\mathbb{C}^2} (\psi_{\sigma^n(\lambda)} \vartheta_{\sigma^n(\lambda)}^+ ) \wedge (dd^c \mathcal{G}_{\sigma^n(\lambda)}^-)
\end{eqnarray}
 tends to zero as $n\ra \infty$. \\ 
\indent
Consider the sequence of currents $\{T_n\}$ where 
\begin{equation}\label{proof}
T_n= \frac{1}{d^n}{(H_\lambda^n)}^*(\psi_{\sigma^n(\lambda)} \vartheta_{\sigma^n(\lambda)}^+ )-\langle
\vartheta_{\sigma^n(\lambda)},\psi_{\sigma^n(\lambda)}\rangle \vartheta_\lambda^+
\end{equation}
for all $n\geq 1$. We show that $T_n \ra 0$ as $n\ra \infty$. Let $A=\big\{n\geq 0: \langle
\vartheta_{\sigma^n(\lambda)},\psi_{\sigma^n(\lambda)}\rangle=0 \big\}$. Then it follows by (\ref{C1}) that any subsequence of $\{T_n\}$, which corresponds to the set $A$, tends to $0$ as $n\ra \infty$. So to prove (\ref{proof}), we assume $\langle\vartheta_{\sigma^n(\lambda)},\psi_{\sigma^n(\lambda)}\rangle \neq 0$ for all $n\geq 0$. Now consider the sequence of currents 
$$
C_n=\frac{1}{d^n}{\langle\vartheta_{\sigma^n(\lambda)},\psi_{\sigma^n(\lambda)}\rangle}^{-1}{(H_\lambda^n)}^*(\psi_{\sigma^n(\lambda)} \vartheta_{\sigma^n(\lambda)}^+ )
$$
for $n\geq 0$. Let $\gamma$ be a limit point of the sequence $\{C_n\}$. Then by (\ref{C1}), it follows that $\lVert \gamma\rVert=1$. Further, as step $2$ of Theorem \ref{thm1}, it can be shown that $\gamma$ is a closed positive $(1,1)$-current having support in $K_\lambda^+$. So by Theorem \ref{R thm4}, $\gamma=\vartheta_\lambda^+$. This completes the proof.
\end{proof}

\begin{rem}\label{des}
We proved in Theorem \ref{thm1} that each member of $S_\lambda(\psi, T)$ is a positive multiple of $\mu_\lambda^+=2\pi \vartheta_\lambda^+$ for each $\lambda\in M$. Using a similar set of arguments as Lemma \ref{G le1}, it can be shown that for each $\lambda\in M$,
\[
\frac{1}{d^n}{(H_\lambda^n)}^*(\psi T)-\langle \psi T, \vartheta_{\sigma^n(\lambda)}^-\rangle \vartheta_\lambda^+\ra 0
\]
as $n\ra \infty$. Therefore when $\sigma$ is a homeomorphism, $S_\lambda(\psi, T)$ contains a unique point if and only if the sequence $\big\{\langle \psi T,\vartheta_{\sigma^n(\lambda)}^-\rangle\big\}$ has a unique limit point. 
\end{rem}

\subsection{Proof of Theorem \ref{G thm2}}
\begin{proof}
To show that $H$ is mixing, it is sufficient to prove that
$$
\big \langle \vartheta, \varphi(\psi\circ H^n) \big \rangle \ra \big \langle \vartheta, \varphi \big \rangle \big \langle \vartheta,\psi \big \rangle
$$
as $n\ra \infty$ where $\varphi, \psi \in C^0(M\times \mathbb{C}^2)$. Let $\varphi_\lambda$ be the restriction of $\varphi$ to $\{\lambda\}\times \mathbb{C}^2$ and define $\tilde{\varphi}(\lambda)=\langle \vartheta_\lambda, \varphi_\lambda\rangle$. Similarly, we define $\psi_\lambda$ and $\tilde{\psi}$. Now
\begin{eqnarray}
\big\langle \vartheta, \varphi(\psi\circ H^n) \big\rangle 
&=& \int_M \big\langle \vartheta_\lambda, \varphi_\lambda(\psi_{\sigma^n(\lambda)}\circ H_\lambda^n)\big\rangle \vartheta'(\lambda) \nonumber \\
&=& \int_M \big\langle {(H_\lambda^n)}^* \vartheta_{\sigma^n(\lambda)}, \varphi_\lambda(\psi_{\sigma^n(\lambda)}\circ H_\lambda^n)\big\rangle \vartheta'(\lambda)\nonumber  \\
&=& \int_M \big\langle \vartheta_{\sigma^n(\lambda)}, \psi_{\sigma^n(\lambda)} {(H_\lambda^n)}_* \varphi_\lambda \big\rangle \vartheta'(\lambda)\nonumber \\
&=&\int_M \big\langle \vartheta_{\sigma^n(\lambda)}, \psi_{\sigma^n(\lambda)} \tilde{\varphi}(\lambda)\big\rangle \vartheta'(\lambda) + \int_M \big\langle \vartheta_{\sigma^n(\lambda)},\psi_{\sigma^n(\lambda)}\big({(H_\lambda^n)}_*\varphi_\lambda-\tilde{\varphi}(\lambda) \big)\big\rangle \vartheta'(\lambda)\nonumber\\
&=&\big\langle \vartheta', \tilde{\varphi}(\tilde{\psi}\circ \sigma^n)\big\rangle + \int_M \big\langle \vartheta_{\sigma^n(\lambda)},\psi_{\sigma^n(\lambda)}\big({(H_\lambda^n)}_*\varphi_\lambda-\tilde{\varphi}(\lambda) \big)\big\rangle \vartheta'(\lambda).\label{G3}
\end{eqnarray}
Since $\sigma$ is mixing for $\vartheta'$, the first term in (\ref{G3}) tends to $\big\langle \vartheta',\tilde{\varphi}\big\rangle \big\langle \vartheta',\tilde{\psi} \big\rangle= \big\langle \vartheta,\varphi\big\rangle \big\langle \vartheta,\psi \big\rangle$ as $n\ra \infty$. Now we show that for each $\lambda\in M$,
\begin{equation}\label{G4}
\big\langle \vartheta_{\sigma^n(\lambda)}, \psi_{\sigma^n(\lambda)}\big( {(H_\lambda^n)}_*\varphi_\lambda\big)\big\rangle-\big\langle \vartheta_\lambda,\varphi_\lambda\big\rangle
\big\langle \vartheta_{\sigma^n(\lambda)},\psi_{\sigma^n(\lambda)}\big\rangle \ra 0
\end{equation} 
as $n\ra \infty$. To prove (\ref{G4}), observe that for each $\lambda\in M$ and $n\geq 1$, the term \\ $\big\langle \vartheta_{\sigma^n(\lambda)}, \psi_{\sigma^n(\lambda)}\big( {(H_\lambda^n)}_*\varphi_\lambda\big)\big\rangle$ is equal to the following:
\begin{eqnarray}
&&\int_{\mathbb{C}^2} \varphi_\lambda {(H_\lambda^n)}^*\big (\psi_{\sigma^n(\lambda)} \vartheta_{\sigma^n(\lambda)}^+\big) \wedge {(H_\lambda^n)}^*\big(\vartheta_{\sigma^n(\lambda)}^-\big)\nonumber \\
&=&\frac{1}{2\pi}\int_{\mathbb{C}^2} \varphi_\lambda (d^{-n}){(H_\lambda^n)}^*\big (\psi_{\sigma^n(\lambda)}\vartheta_{\sigma^n(\lambda)}^+\big) \wedge dd^c \mathcal{G}_\lambda^- \nonumber \\
&=& \frac{1}{2\pi}\int_{\mathbb{C}^2} (dd^c \varphi_\lambda) (d^{-n}){(H_\lambda^n)}^*(\psi_{\sigma^n(\lambda)} \vartheta_{\sigma^n(\lambda)}^+ )\wedge \mathcal{G}_\lambda^- 
+\frac{1}{2\pi}\int_{\mathbb{C}^2} (d^c \varphi_\lambda)(d^{-n}) {(H_\lambda^n)}^*(d\psi_{\sigma^n(\lambda)})\wedge \vartheta_{\sigma^n(\lambda)}^+\wedge \mathcal{G}_\lambda^- \nonumber\\
&+&\frac{1}{2\pi}\int_{\mathbb{C}^2} (d \varphi_\lambda)(d^{-n}) {(H_\lambda^n)}^*(d^c\psi_{\sigma^n(\lambda)})\wedge \vartheta_{\sigma^n(\lambda)}^+\wedge \mathcal{G}_\lambda^- 
+\frac{1}{2\pi}\int_{\mathbb{C}^2}  \varphi_\lambda(d^{-n}) {(H_\lambda^n)}^*dd^c (\psi_{\sigma^n(\lambda)})\wedge \vartheta_{\sigma^n(\lambda)}^+\wedge \mathcal{G}_\lambda^- \nonumber \\ \label{G5}
\end{eqnarray}
By using a similar result as the second part of Theorem \ref{thm1}, it follows that the last three terms of (\ref{G5}) tend to zero as $n\ra \infty$. Hence using Lemma \ref{G le1}, for each $\lambda\in M$, we get 
$$
\big\langle \vartheta_{\sigma^n(\lambda)}, \psi_{\sigma^n(\lambda)}\big( {(H_\lambda^n)}_*\varphi_\lambda\big) \big\rangle-\frac{1}{2\pi}\big\langle \vartheta_{\sigma^n(\lambda)}, \psi_{\sigma^n(\lambda)}\big\rangle  \int_{\mathbb{C}^2} (dd^c \varphi_\lambda)(\vartheta_\lambda^+)\wedge \mathcal{G}_\lambda^-\ra 0
 $$
 as $n\ra \infty$. Thus for each $\lambda\in M$,
 \begin{equation}
 \big\langle \vartheta_{\sigma^n(\lambda)}, \psi_{\sigma^n(\lambda)}\big( {(H_\lambda^n)}_*\varphi_\lambda\big)\big\rangle-\big\langle \vartheta_{\sigma^n(\lambda)}, \psi_{\sigma^n(\lambda)}\big\rangle \big\langle \vartheta_\lambda,\varphi_\lambda \big\rangle\ra 0\label{G6}
 \end{equation}
 as $n\ra \infty$. Now for each $n\geq 1$, define $R_n: M \ra \mathbb{C}$ as follows:
 $$
 R_n(\lambda):= \big\langle \vartheta_{\sigma^n(\lambda)}, \psi_{\sigma^n(\lambda)}\big( {(H_\lambda^n)}_*\varphi_\lambda\big) \big\rangle-\big\langle \vartheta_{\sigma^n(\lambda)}, \psi_{\sigma^n(\lambda)}\big\rangle \big\langle \vartheta_\lambda,\varphi_\lambda \big\rangle.
 $$
Then applying dominated convergence theorem, it follows by (\ref{G6}) that 
 $$
 \int_M R_n(\lambda)\vartheta'(\lambda)\ra 0
 $$
 as $n\ra \infty$. This completes the proof.
\end{proof}

\subsection{Proof of Proposition \ref{G pr2}}
\begin{proof}
For any two compactly supported continuous functions $\varphi$, $\psi$ in $\mathbb{C}^2$  and for a fixed $\lambda\in M$, a similar calculation as (\ref{G6}) gives 
\[
 \big\langle \vartheta_{\sigma^n(\lambda)}, \psi\big( {(H_\lambda^n)}_*\varphi\big)\big\rangle-\big\langle \vartheta_{\sigma^n(\lambda)}, \psi\big\rangle \big\langle \vartheta_\lambda,\varphi \big\rangle\ra 0
\]
as $n\ra \infty$. Now since for each $\lambda\in M$, ${(H_\lambda)}^*(\vartheta_{\sigma(\lambda)})=\vartheta_\lambda$, it follows that
\[
\big\langle \vartheta_\lambda, \big( {(H_\lambda^n)}^*\psi\big)\varphi \big\rangle-\big\langle \vartheta_{\sigma^n(\lambda)}, \psi\big\rangle \big\langle \vartheta_\lambda,\varphi \big\rangle\ra 0
\]
as $n\ra \infty$. This completes the proof.
\end{proof}

\section{A lower bound on the entropy of $\vartheta$}
This section  obtains a lower bound for the measure theoretic entropy of $H$ for $\vartheta$  and thus to get a lower bound for the topological entropy of $H$. First we recall the definitions of measure theoretic entropy and topological entropy.

\medskip

Suppose $X$ is a measure space with an invariant probability measure $m$ and $T: X\ra X$ is a measure preserving transformation. Let $\mathcal{A}=\{\mathcal{A}_1,\ldots,\mathcal{A}_k\}$ be a partition of $X$ and $\bigvee _{i=0}^{n-1}T^{-i}\mathcal{A}$ be the partition generated by $\mathcal{A},T^{-1}\mathcal{A},\ldots, T^{-n+1}\mathcal{A}$ . Then 
\[
h(T,\mathcal{A})=\lim_{n\ra \infty} \frac{1}{n} H\Big(\bigvee_{i=0}^{n-1} T^{-i} \mathcal{A} \Big)
\]
always exists and is called the entropy of $T$ with respect to $\mathcal{A}$ where 
$$
H(\mathcal{A})=- \sum_{i=1}^k m(\mathcal{A}_i)\log m(\mathcal{A}_i).
$$
The {\it{measure theoretic entropy}} of $T$ is given by $h_m(T)=\sup h(T,\mathcal{A})$ where the supremum is taken over all finite partitions $\mathcal{A}$ of $X$.

\medskip

Now let $X$ be a metric space endowed with a metric $d$ and $T$ be a continuous self map on it. For $n\geq 0$, we define metric $d_n$ on $X$ by 
$$d_n(x,y)=\max_{0\leq i \leq {n-1}} d(T^i(x), T^i(y)).
$$
For a given $n\geq 0$ and $\epsilon>0$, a set $E\subset X$ is said to be $(n,\epsilon)$-separated with respect to $T$ if for all $x,y\in E$ with $x\neq y$, we have $d_n(x,y)>\epsilon$. For each $n\geq 1$, let $s_n(\epsilon)$ denotes the cardinality of largest $(n,\epsilon)$-separated set. One can show that 
\[
s(\epsilon)=\limsup_{n\ra \infty} \frac{1}{n}\log s_n (\epsilon)
\]
exists for any $\epsilon>0$. The topological entropy of $T$ is given by
\[
h_{\text{top}}(T)=\lim_{\epsilon\ra 0}s(\epsilon).
\] 
Misiurewicz's variational principle states that 
\[
h_{\text{top}}(T)=\sup h_m (T)
\] 
where the supremum is taken over all invariant probability measures.

\subsection{Proof of Theorem \ref{thm2}}
\begin{proof}
Let $L$ be a $C^2$-smooth subharmonic function of $\lvert y \rvert$ such that $L(y)=\log \lvert y \rvert$ for $\lvert y \rvert >R$ and define $\Theta=\frac{1}{2\pi}dd^c L$ and $\Theta_{n,\lambda}={(H_\lambda^n)}^* \Theta$ for each $\lambda\in M$ and for each $n\geq 1$. Now consider the disc $\mathcal{D}= \{x=0,\vert y \vert < R\} \subset \mbb C^2$ 
and let for each $\lambda\in M$ and for each $n\geq 1$,
\[
\alpha_{n,\la}= [\mathcal{D}] \wedge \Theta_{n,\lambda}.
\]
Note that $\int_{\mathcal{D}} \alpha_{n,\lambda}=d^n$ and thus $\rho_{n,\lambda}=d^{-n} \alpha_{n,\lambda}$ is a probability measure. 

\medskip

\no
{\it Step 1:} For each $\lambda\in M$ and $n\geq 1$, define
\[
\vartheta_{n,\lambda} =\frac{1}{n} \sum_{j=0}^{n-1} \frac{d^{-n}}{2\pi} {\big(H_{\sigma^{-j}(\lambda)}^{-1} \circ \cdots \circ H_{\sigma^{-1}(\lambda)}^{-1}\big)}^*(\alpha_{n,\sigma^{-j}(\lambda)}).
\]
 {\it Claim:}  For each $\lambda\in M$, 
$
\vartheta_{n,\lambda}\ra \vartheta_\lambda
$
as $n\ra \infty$.\\
\indent
Let $\{l_n\}_{n\geq 1}$ be a sequence of natural numbers such that $l_n \ra \infty$ and $\frac{l_n}{n}\ra 0$ as $n\ra \infty$. Then for each $n\geq 1$ and for each $\lambda\in M$, $\vartheta_{n,\lambda}$ is equal to the following:
\begin{multline} \label{eq a}
\frac{d^{-n}}{2n\pi} \Big( \sum_{j=0}^{l_n} {\big(H_{\sigma^{-j}(\lambda)}^{-1} \circ \cdots \circ H_{\sigma^{-1}(\lambda)}^{-1}\big)}^*\big(\left[\mathcal{D}\right]\wedge \Theta_{n,\sigma^{-j}(\lambda)}\big)\\
 + \sum_{j=l_n+1}^{n-l_n}{\big(H_{\sigma^{-j}(\lambda)}^{-1} \circ \cdots \circ H_{\sigma^{-1}(\lambda)}^{-1}\big)}^*\big(\left[ \mathcal{D}\right]\wedge \Theta_{n,\sigma^{-j}(\lambda)}\big)\\ 
 + \sum_{j=n-l_n+1}^{n-1}{\big(H_{\sigma^{-j}(\lambda)}^{-1} \circ \cdots \circ H_{\sigma^{-1}(\lambda)}^{-1}\big)}^*\big(\left[\mathcal{D}\right]\wedge \Theta_{n,\sigma^{-j}(\lambda)}\big)\Big). 
\end{multline}
We show that the first and the third terms of the above equation tend to zero as $n\ra \infty$. Let $\chi$ be a test function, then consider
\begin{multline}
{d^{-n}}\Big\langle  \sum_{j=0}^{l_n} {\big(H_{\sigma^{-j}(\lambda)}^{-1} \circ \cdots \circ H_{\sigma^{-1}(\lambda)}^{-1}\big)}^* \big(\left[\mathcal{D}\right]\wedge\Theta_{n,\sigma^{-j}(\lambda)}\big)\\
+\sum_{j=n-l_n+1}^{n-1}{\big(H_{\sigma^{-j}(\lambda)}^{-1} \circ \cdots \circ H_{\sigma^{-1}(\lambda)}^{-1}\big)}^*\big(\left[\mathcal{D}\right]\wedge\Theta_{n,\sigma^{-j}(\lambda)}\big),\chi\Big\rangle \nonumber 
\end{multline}
which is equal to 
\begin{multline}
\sum_{j=0}^{l_n}\Big\langle {d^{-n}}{\big(H_{\sigma^{-j}(\lambda)}^{-1} \circ \cdots \circ H_{\sigma^{-1}(\lambda)}^{-1}\big)}^* \big(\left[\mathcal{D}\right]\wedge\Theta_{n,\sigma^{-j}(\lambda)}\big), \chi\Big\rangle\\
 + \sum_{j=n-l_n+1}^{n-1} \Big\langle{d^{-n}} {\big(H_{\sigma^{-j}(\lambda)}^{-1} \circ \cdots \circ H_{\sigma^{-1}(\lambda)}^{-1}\big)}^*\big(\left[\mathcal{D}\right]\wedge\Theta_{n,\sigma^{-j}(\lambda)}\big) 
,\chi\Big\rangle. \label{eq b}
\end{multline}
\no
Note that each term of (\ref{eq b}) is uniformly bounded by some fixed constant independent of $n$ and the total number of term in (\ref{eq b}) are $2 l_n$. This shows that the first and the third terms in (\ref{eq a}) tend to $0$ as $n\ra \infty$ since ${l_n}/{n}\ra 0$ as $n\ra \infty$. \\
\indent
Furthermore,
\begin{eqnarray*}
 &&d^{-n} {\big(H_{\sigma^{-j}(\lambda)}^{-1} \circ \cdots \circ H_{\sigma^{-1}(\lambda)}^{-1}\big)}^* \big(\left[\mathcal{D}\right]\wedge \Theta_{n,\sigma^{-j}(\lambda)}\big)\\
 &&= d^{-j}{\big(H_{\sigma^{-j}(\lambda)}^{-1} \circ \cdots \circ H_{\sigma^{-1}(\lambda)}^{-1}\big)}^* \left[\mathcal{D}\right] \wedge (d^{-n+j}){\big(H_{\sigma^{n-j-1}(\lambda)}\circ \cdots \circ H_\lambda\big)}^*(dd^c L)\\
 &&= \vartheta_{j,\lambda}^- \wedge dd^c \mathcal{G}_{n-j,\lambda}^+
 \end{eqnarray*}
 where $\vartheta_{j,\lambda}^-=d^{-j}{\big(H_{\sigma^{-j}(\lambda)}^{-1} \circ \cdots \circ H_{\sigma^{-1}(\lambda)}^{-1}\big)}^* \left[\mathcal{D}\right]$.\\
\indent 
 Now note that
 \begin{eqnarray}
 \frac{1}{2n\pi }\Big(\sum_{j=l_n+1}^{n-l_n} \mathcal{G}_{n-j,\lambda}^+ \vartheta_{j,\lambda}^-\Big) 
&=& \mathcal{G}_\lambda^+ \frac{1}{2n\pi } \sum_{j=l_n+1}^{n-l_n}\vartheta_{j,\lambda}^- + \frac{1}{2n\pi} \sum_{j=l_n+1}^{n-l_n}\Big( \mathcal{G}_{n-j,\lambda}^+ -\mathcal{G}_\lambda^+\Big)\vartheta_{j,\lambda}^- \nonumber\\
\label{eq c}
 \end{eqnarray}
 \no
 and since on any compact subset of $\mathbb{C}^2$, the sequence of functions $\mathcal{G}_{n-j,\lambda}^-$ converge to $\mathcal{G}_\lambda^-$ as $n-j\ra \infty$ and $\vartheta_{j,\lambda}^-$ has uniformly bounded mass, the last term of (\ref{eq c}) tends to zero as $n\ra \infty$. Since $\left[ \mathcal{D}\right]$ is a closed positive $(1,1)$-current of mass $1$ on $\mathbb{C}^2$ vanishing outside $\mathcal{D}$, an analogous result to Theorem \ref{R thm2} implies that $\vartheta_{j,\lambda}^-$ converges to $\vartheta_\lambda^-$ as $j\ra \infty$. Therefore
 \[
\lim_{n\ra \infty} \frac{d^{-n}}{2n\pi}\sum_{j=l_n+1}^{n-l_n}{\big(H_{\sigma^{-j}(\lambda)}^{-1} \circ \cdots \circ H_{\sigma^{-1}(\lambda)}^{-1}\big)}^* \big(\left[\mathcal{D}\right]\wedge \Theta_{n,\sigma^{-j}(\lambda)}\big)=\vartheta_\lambda
 \]
 for each $\lambda\in M$ and thus
\[
\lim_{n\ra \infty}\vartheta_{n,\lambda}=\vartheta_\lambda
\] 
for each $\lambda\in M$.

\medskip

\no
{\it Step 2:} For an arbitrary compactly supported probability measure $\vartheta'$ on $M$ and for each $n \ge 0$ let $\vartheta_n$ and $\rho_n$ be defined by the recipe in (\ref{mu}), i.e., for a test function
$\varphi$,
\begin{equation}
\langle \vartheta_n, \varphi \rangle = \int_M \left ( \int_{\{ \la \} \times \mbb C^2} \varphi \; \vartheta_{n, \la}  \right) \vartheta'(\la) \;\; \text{and} \;\; \langle \rho_n, \varphi \rangle  = \int_M \left (
\int_{\{ \la \} \times \mbb C^2} \varphi \; \rho_{n, \la} \right) \vartheta'(\la). 
\end{equation}

\medskip

\no
{\it Claim:} 
$$
\lim_{n\ra \infty} \vartheta_n=\vartheta \; \text{and} \; \vartheta_n=\frac{1}{2n\pi}\sum_{j=0}^{n-1}H_*^j\rho_n
$$
where $H$ is as in (\ref{homeo}).  For the first claim, note that for all test functions $\varphi$
\begin{eqnarray}
 \lim_{n\ra \infty}\langle \vartheta_n,\varphi\rangle &=& \lim_{n\ra \infty}\int_M \langle \vartheta_{n,\la},\varphi\rangle \vartheta'(\la) = \int_M \lim_{n\ra \infty}\langle \vartheta_{n,\la},\varphi\rangle \label{28}
\vartheta'(\la)\\ \notag
 &=& \int_M \langle \vartheta_\la,\varphi\rangle \vartheta'(\la) = \langle\vartheta,\varphi\rangle  
\end{eqnarray}
where the second equality follows by the dominated convergence theorem. For the second claim, note that
\begin{eqnarray}
\left \langle \frac{1}{2n\pi}\sum_{j-0}^{n-1}H^j_*\rho_n,\varphi \right \rangle 
&=& \int_M \left \langle \frac{1}{2n\pi} \sum_{j=0}^{n-1} {\big(H_{\sigma^{-j}(\lambda)}^{-1} \circ \cdots \circ H_{\sigma^{-1}(\lambda)}^{-1}\big)}^*(\rho_{n,\sigma^{-j}(\lambda)}),\varphi \right \rangle \vartheta'(\la)\nonumber\\
 &=& \int_M \langle\vartheta_{n,\la},\varphi \rangle \vartheta'(\la) = \langle \vartheta_n,\varphi\rangle.
\end{eqnarray}
Hence we get
\[
\lim_{n\ra\infty}\frac{1}{2n\pi}\sum_{j-0}^{n-1}H^j_*\rho_n=\vartheta.
\]

\medskip

\no
{\it Step 3:} Note that the support of $\vartheta$ is contained in $\text{supp}(\vartheta') \times V_R$. Let $\mathcal{P}$ be a partition of $M\times V_R$ so that the $\vartheta$-measure of the boundary of  each
element of $\mathcal{P}$ is zero and each of its elements has diameter
less than $\epsilon$. This choice is possible by Lemma 8.5 in \cite{W}. For each $n\geq 0$, define the $d_n$ metric on $M\times V_R$ by
$$
d_n(p,q)=\max_{0\leq i \leq {n-1}}d(H^i(p),H^i(q))
$$
where $d$ is the product metric on $M\times V_R$. Note that each element $\mathcal{B}$ of
$\bigvee_{j=0}^{n-1}H^{-j}\mathcal{P}$ is inside an $\epsilon$-ball in the $d_n$ metric and if $\mathcal B_{\la} = (\mathcal B \times \{ \la \}) \cap V_R$, then the $\rho_n$ measure of $\mathcal{B}$ is given by
\begin{equation*}
 \rho_n(\mathcal{B}) = \int_M {\rho_{n,\la}(\mathcal{B}_\la)}\vartheta'(\la)
 = \int_M \left ( d^{-n}\int_{{\mathcal{B}_\la}\cap \mathcal{D}} {H_\la^{n}}^{\ast} \Theta \right) \vartheta'(\la)
 = \int_M \left ( d^{-n}\int_{H_\la^n({\mathcal{B}_\la}\cap \mathcal{D})} \Theta \right ) \vartheta'(\la).
\end{equation*}
Therefore, since $\Theta$ is bounded above on $\mathbb{C}^2$, there exists $C>0$ such that
\begin{equation}
 \rho_n'(\mathcal{B})=\frac{1}{2\pi} \rho_n(\mathcal{B})\leq  C \; d^{-n}\int_M \text{Area}( H_\la^n(\mathcal{B}_\la\cap \mathcal{D})) \vartheta'(\la) \label{29}
 = C \; d^{-n} \text{Area} \left( H^n ( \mathcal{B}\cap (\mathcal{D}\times M)) \right).
 \end{equation}

\medskip

 Now we work with $X = \text{supp}(\vartheta) \subset M \times V_R$ and view $H$ as a self map of $X$. If $v^0(H,n,\epsilon)$ denotes the supremum of the areas of $H^n(\mathcal{B}\cap (\mathcal{D}\times M))$ over all $\epsilon$-balls $\mathcal{B}$, then
\begin{equation*}
\mathcal H_{ \rho_n'}\left( \bigvee_{j=0}^{n-1} H^{-j}\mathcal{P} \right) \geq -{\log \big(\frac{C}{2\pi}\big)}+n \log d -\log v^0(H,n,\epsilon)
\end{equation*}
by (\ref{29}). By appealing to Misiurewicz's variational principle as explained in \cite{BS3}, we get a lower bound for the measure theoretic entropy $h_\vartheta$ of $H$ with respect to the
measure $\vartheta$ as follows:
\begin{equation*}
h_\vartheta \geq \limsup_{n\ra \infty} \frac{1}{n}\Big(-\log\big(\frac{C}{2\pi}\big)+n \log d -\log v^0(H,n,\epsilon)\Big) \geq \log d -\limsup_{n\ra \infty} v^0(H,n,\epsilon).
\end{equation*}
By Yomdin's result (\cite{Y}), it follows that $\lim_{\epsilon\ra 0} v^0(H,n,\epsilon)=0$. Thus $h_\vartheta\geq \log d$. To conclude, note that $\text{supp}(\vartheta) \subset \mathcal J \subset
M\times V_R$ and therefore by the variational principle the topological entropy of $H$ on $\mathcal J$ is also at least $\log d$.

\end{proof}

\thispagestyle{empty}
\chapter{Dynamics of fibered maps on $\mathbb{P}^k$}
In this chapter, we study skew product of holomorphic endomorphisms of $\mathbb{P}^k$ that are fibered over compact base. We show the existence of fibered Green functions and fibered basins of attraction which are pseudoconvex and Kobayashi hyperbolic. This work can be found in \cite{KP}.
\subsection{Proof of Proposition \ref{pr4}}
\begin{proof}
By (\ref{ineq}), there exists a $C>1$ such that
\[
C^{-1} \Vert F_{\sigma^{n-1}(\la)} \circ \cdots \circ F_\la(x) \Vert^d \leq \Vert F_{\sigma^{n}(\la)} \circ \cdots \circ F_\la(x)\Vert \leq C \Vert F_{\sigma^{n-1}(\la)}\circ \cdots
\circ F_\la(x)\Vert^d
\]
for all $\la\in M$, $x\in \mathbb{C}^{k+1}$ and for all $n\geq 1$. Consequently,
\begin{equation}
\vert G_{n+1,\la}(x)-G_{n,\la}(x) \vert \leq \log C/d^{n+1}. \label{30}
\end{equation}
Hence for each  $\la\in M$,  as $n\ra\infty$, $G_{n,\la}$ converges uniformly to a continuous plurisubharmonic function $G_\la$ on $\mathbb{C}^{k+1}$. If $G_n(\la, x) = G_{n, \la}(x)$,
then (\ref{30}) shows that $G_n \ra G$ uniformly on $M \times (\mbb C^{k+1} \setminus \{0\})$.

\medskip

Furthermore, for $\la\in M$ and $c\in \mathbb{C}^*$
\begin{eqnarray}
G_\la(cx)&=&\lim_{n\ra \infty}\frac{1}{d^n}\log \Vert F_{\sigma^{n-1}(\la)}\circ \cdots \circ F_\la(cx)\Vert \nonumber\\
&=&\lim_{n\ra\infty}\left( \frac{1}{d^n}\log {|c|}^{d^n}+\frac{1}{d^n}\log
\Vert F_{\sigma^{n-1}(\la)}\circ \cdots \circ F_\la(z) \Vert \right) = \log \vert c \vert + G_\la(x).\nonumber\\
\label{31}
\end{eqnarray}
We also note that
\[
G_{\sigma(\la)}\circ F_\la(x) = d \lim_{n\ra \infty }\frac{1}{d^{n+1}}\log \Vert F_{\sigma^{n}(\la)}\circ \cdots \circ F_\la(x) \Vert = d G_\la(x) 
\]
for each $\la\in M$.

\medskip

Finally, pick $x_0 \in \mathcal A_{\la_0}$ which by definition means that $\Vert F_{\si^{n-1}(\la_0)} \circ \cdots \circ F_{\si(\la_0)} \circ F_{\la_0}(x_0) \Vert \le \ep$ for all large $n$.
Therefore $G_{n, \la_0}(x_0) \le d^{-n} \log \ep$ and hence $G_{\la_0}(x_0) \le 0$. Suppose on the contrary that $G_{\la_0}(x_0) = 0$. Note that there exists a uniform $r >
0$ such that
\[
\Vert F_{\la}(x) \Vert \le (1/2) \Vert x \Vert
\]
for all $\la \in M$ and $\Vert x \Vert \le r$. This shows that the ball $B_r$ around the origin is contained in all the basins $\cal A_{\la}$. Now $G_{\la}(0) = -\infty$ for all $\la
\in M$ and since $G_{\la_n} \ra G_{\la}$ locally uniformly on $\mbb C^{k+1} \setminus \{0\}$ as $\la_n\ra \la$ in $M$, it follows that there exists a large $C > 0$ such that
\[
\sup_{(\la, x) \in M \times \pa B_r} G_{\la}(x) \le - C. 
\]
By the maximum principle, it follows that for all $\la \in M$
\begin{equation}
G_{\la}(x) \le -C \label{32}
\end{equation}
on $B_r$. On the other hand, the invariance property $G_{\si(\la)} \circ F_{\la} = d G_{\la}$ implies that
\[
d^n G_{\la} = G_{\si^n(\la)} \circ F_{\si^{n-1}(\la)} \circ \cdots \circ F_{\la}
\]
for all $n \ge 1$. Since we are assuming that $G_{\la_0}(x_0) = 0$, it follows that
\[
G_{\si^n(\la_0)} \circ F_{\si^{n-1}(\la_0)} \circ \cdots \circ F_{\la_0}(x_0) = 0
\]
for all $n \ge 1$ as well. But $F_{\si^{n-1}(\la_0)} \circ \cdots \circ F_{\si(\la_0)} \circ F_{\la_0}(x_0)$ is eventually contained in $B_r$ for large $n$ and this means that
\[
0 = G_{\si^n(\la_0)} \circ F_{\si^{n-1}(\la_0)} \circ \cdots \circ F_{\la_0}(x_0) \le -C
\]
by (\ref{32}). This is a contradiction. Thus $\cal A_{\la} \subset \{G_{\la} < 0\}$ for all $\la \in M$.

\medskip

For the other inclusion, let $x \in \mathbb{C}^{k+1}$ be such that $G_\la(x)=-a$ for some $a>0$. This implies that for a given $\epsilon>0$ there exist $j_0$ such that
\[
-(a+\epsilon)< \frac{1}{d^j}\log \Vert F_{\sigma^{j-1}(\la)}\circ \cdots \circ F_\la(x)\Vert < -a+\epsilon
\]
for all $j\geq j_0$. This shows that $ F_{\sigma^{j-1}(\la)}\circ \cdots \circ F_\la(x) \ra 0$ as $j\ra \infty$. Hence $x\in \mathcal{A}_\la$.

\end{proof}

\subsection{Proof of Proposition \ref{pr5}}
\begin{proof}
 Recall that $\Om_{\la} = \pi(\mathcal Q_{\la})$ where $\mathcal Q_{\la} \subset \mathbb{C}^{k+1}$ is the collection of those points in a neighborhood of
which $G_{\la}$ is pluriharmonic and $\Om'_{\la} \subset \mbb P^k$ consists of those points $z \in \mbb P^k$ in a neighborhood of which the sequence
\[
\{ f_{\si^{n-1}(\la)} \circ \cdots \circ f_{\si(\la)} \circ f_{\la} \}_{n \ge 1}
\]
is normal, i.e., $\Om'_{\la}$ is the Fatou set. Once it is known that the basin $\mathcal A_{\la} = \{ G_{\la} < 0 \}$, showing
that $\Om_{\la} = \Om'_{\la}$ and that each $\Om_{\la}$ is in fact pseudoconvex and Kobayashi hyperbolic follows in much the same way as in \cite{U}. Here are the main points in the proof:

\medskip

\no {\it Step 1:} For each $\la \in M$, a point $p \in \Om_{\la}$ if and only if there exists a neighborhood $U_{\la, p}$ of $p$ and a holomorphic section $s_{\la} : U_{\la, p} \ra \mbb
C^{k+1}$ such that $s_{\la}(U_{\la, p}) \subset \pa \cal A_{\la}$. The choice of such a section $s_{\la}$ is unique upto a constant with modulus $1$.

\medskip

Suppose that $p\in \Omega_\lambda$. Let $U_{\lambda,p}$ be an open ball with center at $p$ that lies in a single coordinate chart with respect to
the standard coordinate system of $\mathbb{P}^k$. Then $\pi^{-1}(U_{\lambda,p})$ can be identified with $\mathbb{C}^{\ast} \times U_{\lambda,p}$ in canonical way and each point of
$\pi^{-1}(U_{\lambda,p})$ can be written as $(c,z)$. On $\pi^{-1}(U_{\lambda,p})$, the function $G_{\la}$ has the form
\begin{equation}
G_\la(c,z)=\log|c|+\gamma_\la(z) \label{33}
\end{equation}
by (\ref{31}). Assume that there is a section $s_\lambda$ such that
$s_\la(U_{\la,p})\subset \partial \mathcal{A}_\lambda$. Note that $s_\lambda(z)=(\sigma_\la(z),z)$ in $U_{\la,p}$ where $\sigma_\la$ is a non--vanishing holomorphic function on
$U_{\la,p}$. By Proposition \ref{pr4}, $G_\lambda\circ s_\la=0$ on $U_{\la,p}$. Thus
\[
0=G_\lambda\circ s_\lambda(z)=\log|\sigma_\lambda(z)|+\gamma_\lambda(z).
\]
Thus $\gamma_\lambda(z)=-\log \vert \sigma_\lambda(z)\vert$ is pluriharmonic on $U_{\lambda,p}$ and consequently $G_\lambda$ is pluriharmonic on
$\pi^{-1}(U_{\lambda,p})$ by (\ref{33}). On the other hand,
suppose that $\gamma_\la$ is pluriharmonic. Then there exists a conjugate function $\gamma_\la^{\ast}$ on $U_{\la,p}$
such that $\gamma_\la+i\gamma_\la^{\ast}$ is holomorphic. Define $\sigma_\la(z)=\exp (-\gamma_\la(z)-i\gamma_\la^{\ast}(z))$ and $s_\la(z)=(\sigma_\la(z),z)$.
Then $G_\la(s_\la(z))=\log |\sigma_\la(z)|+\gamma_\la(z)=0$ which shows that $s_\la(U_{\la,p})\subset \partial \mathcal{A}_\la$.

\medskip

\no {\it Step 2:} $\Om_{\la} = \Om'_{\la}$ for each $\la \in M$.

\medskip

Let $p\in \Omega_\la'$ and suppose that $U_{\la,p}$ is a neighborhood of $p$ on which there is a subsequence of
\[
\{f_{\sigma^{j-1}(\la)}\circ \cdots \circ f_\la\}_{j\geq 1}
\]
which is  uniformly convergent. Without loss of generality we may assume that
\[
g_\la = \lim_{j\ra\infty} f_{\sigma^{j-1}(\la)}\circ \cdots \circ f_\la
\]
on $U_{\la, p}$. By rotating the homogeneous coordinates $[x_0:x_1: \cdots : x_k]$ on $\mathbb{P}^k$, we may assume that
$g_\la(p)$ avoids the hyperplane at infinity $H = \big\{x_0=0\big\}$ and that $g_\lambda(p)$ is of the form $[1:g_1: \cdots : g_k]$.
Now choose an $\epsilon$ neighborhood
\[
N_\epsilon=\big\{\vert x_0 \vert < \epsilon {\big({\vert x_0 \vert}^2+ \cdots +{\vert x_k \vert}^2\big)}^{1/2} \big\}
\]
of $\pi^{-1}(H)$ in $\mathbb{C}^{k+1}\setminus \big\{0\big\}$ so that
\[
1>\epsilon {\big(1+{\vert g_1 \vert}^2+ \cdots +{ \vert g_k \vert}^2\big)}^{1/2}.
\]
Clearly $g_\lambda(p)\notin \pi(N_\epsilon)$. Shrink $U_{\la,p}$ if needed so that
\[
f_{\sigma^{j - 1}(\la)}\circ \cdots \circ f_\la (U_{\la,p})
\]
is uniformly separated from $\pi(N_\epsilon)$ for sufficiently large $l$. Define
\[
s_\la(z)=
\begin{cases}
\log  \Vert z \Vert & ;\text{ if } z\in N_\epsilon, \\
\log(\vert z_0 \vert / \vert \epsilon \vert ) & ;\text{ if } z\in \mathbb{C}^{k+1}\setminus (N_\epsilon \cup \{0\}).
\end{cases}
\]

\no Note that $0\leq s(z)-\log \Vert z \Vert \leq \log(1/\epsilon)$ which implies that
\[
d^{-{j}}s_\lambda (f_{\sigma^{j-1}(\la)}\circ \cdots \circ f_\la(z))
\]
converges uniformly to the Green function $G_\la$ as $j\ra \infty$ on $\mathbb{C}^{k+1}$. Further, if $z\in \pi^{-1}(U_{\la,p})$, then
\[
F_{\sigma^{j-1}(\la)}\circ \cdots \circ F_\la (z)\in \mathbb{C}^{k+1}\setminus (N_\epsilon \cup \{0\}).
\]
This shows that $d^{-{j}}s_\lambda(f_{\sigma^{j-1}(\la)}\circ \cdots \circ f_\la(z))$ is pluriharmonic in  $\pi^{-1}(U_{\la,p})$ and as a consequence the limit function $G_\la$ is also
pluriharmonic in $ \pi^{-1}(U_{\la,p})$. Thus $p\in \Omega_\la$.

\medskip

Now pick a point $p\in \Omega_\lambda$. Choose a neighborhood $U_{\lambda,p}$ of $p$ and a section $s_\lambda: U_{\lambda,p}\ra \mathbb{C}^{k+1}$ as in Step 1.
Since $F_{\la} : \cal A_{\la} \ra \cal A_{\si(\la)}$ is a proper map for each $\la$, it follows that
\[
(F_{\sigma^{j-1}(\lambda)}\circ \cdots \circ F_{\sigma(\lambda)}\circ
F_\lambda)(s_\lambda(U_{\lambda, p}))\subset \partial \mathcal{A}_{\sigma^j(\lambda)}.
\]
It was noted earlier that there exists a $R > 0$ such that $\Vert F_{\la}(x) \Vert \ge 2 \Vert x \Vert$ for all $\la$ and $\Vert x \Vert \ge R$. This shows that
$\mathcal{A}_\lambda\subset {B}_R$ for all $\lambda\in M$, which in turn implies that the sequence
\[
\big\{(F_{\sigma^{j-1}(\lambda)}\circ \cdots \circ F_{\sigma(\lambda)}\circ F_\lambda)\circ s_\lambda\big\}_{j\geq 0}
\]
is uniformly bounded on $U_{\la, p}$. We may assume that it converges and let $g_\lambda:U_{\lambda,p} \ra \mathbb{C}^{k+1}$ be its limit function.
Then $g_\lambda(U_{\lambda,p})\subset \mathbb{C}^{k+1}\setminus \{0\}$ since all the boundaries $\pa \cal A_{\la}$ are at a uniform distance away from the origin; indeed,
recall that there exists a uniform $r > 0$ such that the ball ${B}_r \subset \mathcal{A}_\lambda$ for all $\lambda\in M$. Thus $\pi \circ g_\lambda$ is
well-defined and the sequence
$\big\{f_{\sigma^{j-1}(\lambda)}\circ \cdots \circ f_{\sigma(\lambda)}\circ f_\lambda\big\}_{j\geq 0}$ converges to $\pi\circ g_\lambda$ uniformly on compact sets.
Thus $\big\{f_{\sigma^{j-1}(\lambda)}\circ \cdots \circ f_{\sigma(\lambda)}\circ f_\lambda \big\}_{j\geq 0}$ is a normal family in $U_{\lambda,p}$. Hence $p\in \Omega_{\lambda}'$.

\medskip

\no {\it Step 3:} Each $\Om_{\la}$ is pseudoconvex and Kobayashi hyperbolic.

\medskip

In Lemma 2.4 of \cite{U} Ueda proved that
if $h$ is a plurisubharmonic function on $\mathbb{C}^k$ and
\[
\mathcal{Q}=\big\{x\in \mathbb{C}^k: h \text{ is pluriharmonic in a neighborhood of x}\big\}
\]
is nonempty, then $\mathcal{Q}$ is pseudoconvex in $\mathbb{C}^k$. This implies that $\Omega_\lambda$ is pseudoconvex for each $\lambda\in M$.\\
\indent
Next we want to prove that $\Omega_\lambda$ is Kobayashi hyperbolic for each $\lambda\in M$. A complex manifold $M$ is called Kobayashi hyperbolic if the Kobayashi pseudo distance on it is a distance.  We recall some relevant facts about Kobayashi hyperbolicity which we use in the course of the following proof -- $(1)$ Any bounded domain in $\mathbb{C}^k$ turns out to be Kobayashi hyperbolic, $(2)$ If $M$ is Kobayashi hyperbolic and  $g:N\ra M$ is injective holomorphic map, then $N$ is Kobayashi hyperbolic, $(3)$ A  manifold $M$ is hyperbolic if its covering manifold is hyperbolic. \\
\indent
  To show that $\Omega_\lambda$ is Kobayashi hyperbolic, it suffices to prove that each component $U$ of
$\Omega_\lambda$ is Kobayashi hyperbolic. For a point $p$ in $U$ choose $U_{\lambda,p}$ and $s_\lambda$ as in Step $1$. Then $s_\lambda$ can be analytically continued to $U$. This
analytic continuation of $s_\lambda$ gives a holomorphic map $\tilde{s}_{\lambda}: \widetilde{U}\ra \mathbb{C}^{k+1}$ satisfying $\pi\circ \tilde{s}_{\lambda}=p$ where $\widetilde{U}$
is
a covering of $U$ and $p: \widetilde{U}\ra U$ is the corresponding covering map. Note that there exists a uniform $R>0$ such that $\lVert F_\lambda(z)\rVert \geq 2 \lVert z \rVert$ for
all $\lambda\in M$ and for all $z\in \mathbb{C}^{k+1}$  with $\lVert z \rVert \geq R$. Thus $\mathcal{A}_\lambda \subset B(0,R)$ and  $\tilde{s}_{\lambda}(\widetilde{U})\subset
B(0,2R)$.
Since $\tilde{s}_{\lambda}$ is injective and $B(0,2R)$ is Kobayashi hyperbolic in $\mathbb{C}^{k+1}$, it follows that $\widetilde{U}$ is Kobayashi hyperbolic. Hence $U$ is Kobayashi
hyperbolic.
\end{proof}

\thispagestyle{empty}
\addcontentsline{toc}{chapter}{Bibliography}

\bibliographystyle{amsplain}

\bibliography{ref}

\end{document}